\declaretheorem[numberwithin=section]{theorem}
\declaretheorem[sibling=theorem]{lemma}
\declaretheorem[sibling=theorem]{corollary}
\declaretheorem[sibling=theorem]{proposition}
\declaretheorem[sibling=theorem]{remark}
\declaretheorem[sibling=theorem]{definition}
\declaretheorem[sibling=theorem]{example}
\numberwithin{equation}{section}
\numberwithin{theorem}{section}
\newcommand{\blank}[1]{}
\newcommand{\ghost}{\mathfrak{g}}
\newcommand{\conn}{\leftrightarrow}
\newcommand{\nconn}{\not\leftrightarrow}
\newcommand{\1}{{\bf 1}}
\newcommand{\Comp}{\operatorname{Comp}}
\newcommand{\obs}{\star}
\newcommand{\cVbulk}{\cV^\varnothing}
\newcommand{\cKbulk}{\cK^\varnothing}
\newcommand{\Vbulk}{V^\varnothing}
\newcommand{\Vobs}{V^\obs}
\newcommand{\uobs}{u^\obs}
\newcommand{\Kbulk}{K^\varnothing}
\newcommand{\Kobs}{K^\obs}
\newcommand{\consta}{{\sf c}_1}
\newcommand{\constb}{{\sf c}_2}
\newcommand{\consto}{{\sf c}}
\newcommand{\ellaj}{\ell_{a,j}}
\newcommand{\ellbj}{\ell_{b,j}}
\newcommand{\ellxj}{\ell_{x,j}}
\newcommand{\ellabj}{\ell_{ab,j}}
\newcommand{\ellaN}{\ell_{a,N}}
\newcommand{\ellxN}{\ell_{x,N}}
\newcommand{\ellabN}{\ell_{ab,N}}
\newcommand{\OL}{O_L}
\newcommand{\dplus}{(\frac{d-2}{2} \wedge 1)}
\newcommand{\clambda}{\lambda}
\newcommand{\cgamma}{\gamma}
\newcommand{\Loc}{\operatorname{Loc}}
\newcommand{\loc}{\operatorname{loc}}
\newcommand{\tVobs}{\tilde{V}^{\obs}}
\newcommand{\hVobs}{\hat V^\obs}
\title{Percolation transition for random forests in $d\geq 3$}
\date{}
\author{Roland Bauerschmidt}
\address{University of Cambridge, DPMMS; Courant Institute of Mathematical Sciences, NYU}
\email{bauerschmidt@cims.nyu.edu}
\author{Nicholas Crawford}
\address{The Technion, Department of Mathematics}
\email{nickc@technion.ac.il}
\author{Tyler Helmuth}
\address{Durham University, Department of Mathematical Sciences}
\email{tyler.helmuth@durham.ac.uk}
\begin{document}

\maketitle

\begin{abstract}
  The arboreal gas is the probability measure on (unrooted spanning)
  forests of a graph in which each forest is weighted by a factor
  $\beta>0$ per edge. It arises as the $q\to 0$ limit of
  the $q$-state random cluster model with $p=\beta q$.  We prove that in dimensions
  $d\geq 3$ the arboreal gas undergoes
  a percolation phase transition.  This contrasts
  with the case of $d=2$ where no percolation transition occurs.
  
  The starting point for our analysis is an exact relationship between
  the arboreal gas and a non-linear sigma model with target space the
  fermionic hyperbolic plane $\HH^{0|2}$.  This latter model can be
  thought of as the $0$-state Potts model, with the arboreal gas being
  its random cluster representation. Unlike the standard Potts
  models, the $\HH^{0|2}$ model has continuous symmetries. By
  combining a renormalisation group analysis with Ward identities we
  prove that this symmetry is spontaneously broken at low
  temperatures.  In terms of the arboreal gas, this symmetry breaking
  translates into the existence of infinite trees in the thermodynamic
  limit.  Our analysis also establishes massless free field
  correlations at low temperatures and the existence of a macroscopic
  tree on finite tori.
\end{abstract}

\setcounter{tocdepth}{2}
\tableofcontents

\section{Introduction}
\label{sec:intro}

This paper has two distinct motivations. The first is to study the
percolative properties of the \emph{arboreal gas}, and the second is
to understand \emph{spontaneously broken continuous symmetries}. We
first present our results from the percolation perspective, and then
turn to continuous symmetries.

\subsection{Main results for the arboreal gas}

The arboreal gas is the uniform measure on (unrooted spanning) forests
of a weighted graph.  More precisely, given an undirected graph
$G=(\Lambda,E)$, a forest $F=(\Lambda,E(F))$ is an acyclic
subgraph of $G$ having the same vertex set as $G$.  Given an edge weight
$\beta>0$ (inverse temperature) and a vertex weight $h\geq 0$
(external field), the probability of a forest $F$ under the arboreal
gas measure is
\begin{equation} \label{e:P-forest}
  \P_{\beta,h}^{G}[F] \bydef \frac{1}{Z_{\beta,h}^{G}} \beta^{|E(F)|} \prod_{T\in F} (1+h|V(T)|)
\end{equation}
where $T\in F$ denotes that $T$ is a tree in the forest, i.e., a
connected component of $F$, $|E(F)|$ is the number of edges in $F$,
and $|V(T)|$ is the number of vertices in $T$.  The arboreal gas is
also known as the (weighted) uniform forest model, as Bernoulli bond
percolation conditioned to be acyclic, and as the $q\to 0$ limit of
the $q$-state random cluster model with $p/q$ converging to $\beta$,
see \cite{MR2243761}.

We study the arboreal gas on a sequence of tori
$\Lambda_N = \Z^d/L^N\Z^d$ with $L$ fixed and $N\to\infty$.  To
simplify notation, we will use $\Lambda_N$ to denote both the graph
and its vertex set. From the percolation point of view, the most
fundamental question concerns whether a typical forest $F$ under the
law~\eqref{e:P-forest} contains a giant tree.  In all dimensions,
elementary arguments show that giant trees can exist only if $h=0$ and
if $\beta$ is large enough, in the sense that connection probabilities
decay exponentially whenever $h>0$ or $\beta$ is small; see
Appendix~\ref{sec:th0}.

The existence of a percolative phase for $h=0$ and $\beta$ large does
not, however, follow from standard techniques. The
subtlety of the existence of a percolative phase is perhaps best
evidenced by considering the case $d=2$: in this case giant trees do
not exist for any $\beta>0$~\cite{MR4218682}.  Our main result is that
for $d\geq 3$ giant trees do exist for $\beta$ large and $h=0$, and
that truncated correlations have massless free field decay.  To state
our result precisely, let $\{0 \conn x\}$ denote the event that $0$
and $x$ are connected, i.e., in the same tree.

\begin{theorem}  \label{thm:forest-macro}
  Let $d\geq 3$ and $L \geq L_0(d)$. Then there is
  $\beta_0 \in (0,\infty)$ such that
  for $\beta\geq \beta_0$
  there exist $\zeta_d(\beta) = 1-O(1/\beta)$,
  $\consto(\beta) = \consto + O(1/\beta)$ with $\consto>0$, and
  $\kappa>0$ such that 
  \begin{equation}
    \label{e:thm1}
    \P_{\beta,0}^{\Lambda_N}[0\conn x]
      = \zeta_d(\beta) + \frac{\consto(\beta)}{\beta|x|^{d-2}} +  O(\frac{1}{\beta|x|^{d-2+\kappa}}) + O(\frac{1}{\beta L^{\kappa N}}),
  \end{equation}
  where $|x|$ denotes the Euclidean norm.
\end{theorem}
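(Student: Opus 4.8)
The plan is to exploit the stated exact correspondence between the arboreal gas and the $\HH^{0|2}$ sigma model, so that the connection probability $\P_{\beta,0}^{\Lambda_N}[0\conn x]$ becomes (after integrating out the fermions) a correlation function of the sigma model fields. Concretely, I would first recall/derive the identity expressing $\P_{\beta,0}^{\Lambda_N}[0\conn x]$ in terms of a ratio of partition functions of the $\HH^{0|2}$ model with sources inserted at $0$ and $x$; the natural combination is something like $1 - \langle F_x\rangle$ where $F_x$ is a field built from the bosonic coordinate (the $z$-component of the $\HH^{0|2}$ field, which plays the role of $\cosh$ of a horocyclic coordinate), so that connection corresponds to the ``aligned'' part and the decaying correction encodes the fluctuations. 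The constant $\zeta_d(\beta)$ will emerge as the spontaneous magnetisation of the sigma model, and the $|x|^{-(d-2)}$ tail as the massless (Gaussian) transverse fluctuations around the symmetry-broken vacuum.

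Second, I would set up the renormalisation group analysis for the $\HH^{0|2}$ model on $\Lambda_N$ at large $\beta$ (small coupling). Writing the field in horospherical-type coordinates splits it into a non-compact bosonic direction and Grassmann directions; at large $\beta$ the model is a perturbation of a massless Gaussian free field for the fluctuation fields, with the interaction controlled by $1/\beta$. I would run a finite-range (Wilson-type) RG, tracking the effective potential through scales $L^0,L^1,\dots,L^N$, showing the interaction stays small (irrelevant in $d\geq 3$), so that the large-scale behaviour is that of a GFF with a finite, explicitly controlled wave-function renormalisation; this produces $\consto(\beta)=\consto+O(1/\beta)$ as the amplitude of the limiting lattice Green's function $G_{\Lambda_N}(0,x)$, whose asymptotics are $\frac{1}{|x|^{d-2}}$ up to $O(|x|^{-(d-2+\kappa)})$ on $\Z^d$ and up to $O(L^{-\kappa N})$ finite-size corrections on the torus.

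Third, I would use the continuous symmetry of the $\HH^{0|2}$ model together with Ward identities to pin down the constant term. The Ward identities relate the two-point function of the ``radial'' observable to $1$ minus (Green's-function-weighted) two-point functions of the Goldstone modes; combined with the RG control of the latter, they force $\P_{\beta,0}^{\Lambda_N}[0\conn x]\to\zeta_d(\beta)$ with $\zeta_d(\beta)=1-O(1/\beta)$ as $|x|\to\infty$ (before the torus cutoff), i.e.\ they establish that the symmetry is spontaneously broken and identify the order parameter. The error terms in \eqref{e:thm1} then come from (a) the RG remainder after finitely many steps, giving the $|x|^{-(d-2+\kappa)}$ improvement over the leading Green's function asymptotics, and (b) the difference between the torus and $\Z^d$ Green's functions, giving the $L^{-\kappa N}$ term; assembling these yields the claimed expansion.

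The main obstacle I expect is executing the RG analysis for a \emph{non-compact} target (the hyperbolic factor) while simultaneously handling the Grassmann variables: one must control the large-field regime of the bosonic horocyclic coordinate (no uniform a priori bounds as in compact spin systems), show the relevant/marginal directions are actually irrelevant in $d\geq3$, and verify that the supersymmetry/Ward structure is preserved along the RG flow so that the final identification of $\zeta_d(\beta)$ and $\consto(\beta)$ is exact rather than merely perturbative. A secondary difficulty is making the translation between the probabilistic event $\{0\conn x\}$ and the sigma-model observable rigorous with all signs and normalisations, and controlling the exchange of limits (thermodynamic limit versus the $|x|\to\infty$ limit) so that the torus error term is genuinely $O(L^{-\kappa N})$ uniformly in $x$.
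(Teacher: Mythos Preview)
Your high-level strategy (exact correspondence with the $\HH^{0|2}$ sigma model, followed by a renormalisation group analysis at large $\beta$, with Ward identities to fix the constant term) matches the paper. However, your proposed technical implementation diverges from the paper's at a crucial point, and the divergence is precisely where you anticipate the main obstacle.

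You propose to write the field in ``horospherical-type coordinates'' with ``a non-compact bosonic direction and Grassmann directions'', and you flag the large-field problem for this bosonic coordinate as the central difficulty. But the $\HH^{0|2}$ model has \emph{no} bosonic degrees of freedom: the field is $u_x=(\xi_x,\eta_x,z_x)$ with $z_x=1-\xi_x\eta_x$ determined by the two Grassmann generators, so the entire model is purely fermionic. The paper exploits exactly this. After the rescaling $\psi=\sqrt{\beta}\eta$, $\bar\psi=\sqrt{\beta}\xi$, the action becomes a fermionic perturbation of $(\psi,-\Delta\bar\psi)$ with a quartic term $\frac{1}{\beta}\psi\bar\psi(\nabla\psi)(\nabla\bar\psi)$ that is irrelevant in $d\geq 3$; the RG is then run on this fermionic $\psi^4$-type theory, with norms controlled by Gram-type inequalities and no large-field region whatsoever. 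The paper is explicit that this is what makes the analysis tractable compared to, say, Balaban's treatment of $O(n)$ models. A horospherical/bosonic route would require passing through $\HH^{2|4}$ and confronting a nonconvex, nonlocal $\nabla\phi$ model---the paper mentions this representation but does not use it.

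Two further points where the paper's execution differs from your sketch. First, for the finite-volume statement the paper computes $\P_{\beta,0}^{\Lambda_N}[0\conn x]$ via the four-point identity $1-\langle\xi_0\eta_0\xi_x\eta_x\rangle_{\beta,0}$ rather than the two-point function, because the required precision in the presence of the torus zero mode is easier to obtain for that observable. Second, the Ward identity is used in a more specific way than you indicate: the finite-volume identity $\langle z_0\rangle_{\beta,0}=0$ is combined with the RG output to pin down the last-scale effective mass $\tilde a_{N,N}$, and this is what forces the constant term to equal $\theta_d(\beta)^2$ and produces the cancellation giving the $O(\beta^{-1}L^{-\kappa N})$ error. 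Your description of the Ward mechanism as relating radial and Goldstone two-point functions is in the right spirit but misses this finite-volume, zero-mode aspect, which is where most of the work beyond a standard RG analysis lies.
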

Numerical evidence for this phase transition of the arboreal gas was
given in~\cite{PhysRevLett.98.030602}. More broadly our work was
inspired by
\cite{MR2110547,MR3622573,MR2567041,PhysRevLett.98.030602,MR2157859,MR2142209},
and we discuss further motivation later.

Although both the arboreal gas and Bernoulli bond percolation have
phase transitions for $d\geq 3$, the supercritical phases of these
models behave very differently: \eqref{e:thm1} shows that the arboreal
gas behaves like a critical model even in the supercritical phase, in
the sense that it has massless free field truncated correlation
decay. While this behaviour looks unusual when viewed through the lens
of supercritical percolation, it is natural from the viewpoint of
broken continuous symmetries.  We will return to this point in
Section~\ref{sec:spinintro}.

Theorem~\ref{thm:forest-macro} concerns the arboreal gas on large
finite tori in zero external field (i.e., $h=0$).  Another
limit to consider the arboreal gas in is the weak infinite volume
limit.  To this end, we consider the limit obtained by first taking
$N\to\infty$ with $h>0$ and then taking $h\downarrow 0$.  In a manner
similar to that for Bernoulli bond percolation in
\cite[Section~5]{MR378660} and \cite[Section~2.2]{MR874906}, the
external field is equivalent to considering the arboreal gas on an
extended graph
$G^{\ghost} = (\Lambda\cup\{\ghost\}, E\cup E^{\ghost})$ where
$E^{\ghost} = \Lambda\times\{\ghost\}$ and each edge in $E^{\ghost}$
has weight $h$. The additional vertex $\ghost$ is called the
\emph{ghost} vertex. The measure~\eqref{e:P-forest} is then obtained
by forgetting the connections to the ghost. This rephrases that the
product in~\eqref{e:P-forest} is equivalent to connecting a uniformly
chosen vertex in each tree $T$ to $\ghost$ with probability
$h|V(T)|/(1+h|V(T)|)$.  For vertices $x,y \in \Lambda$, we continue to
denote by $\{x\conn y\}$ the event that $x$ and $y$ are connected in
the random forest subgraph of $G$ with law \eqref{e:P-forest}, i.e.,
$\{x\conn y\}$ denotes the event that there is a path from $x$
to $y$ in the random subgraph, and that this (necessarily unique)
path does not contain $\ghost$. We write $\{x\conn \ghost\}$ to
denote the event that $x$ is connected to $\ghost$.

The event $\{0\conn\ghost\}$ is a finite volume proxy for the event
that the tree $T_0$ containing $0$ becomes infinite in the infinite
volume limit when $h\downarrow 0$. 
Indeed, let us define
\begin{equation}
  \label{eq:order}
  \theta_d(\beta)
  \bydef \lim_{h\downarrow 0}\lim_{N\to\infty} \P^{\Lambda_N}_{\beta,h}[0\conn \ghost],
\end{equation}
and let $\P_\beta^{\Z^d}$ be any (possibly subsequential) weak
infinite volume limit
$\lim_{h\downarrow 0}\lim_{N\to\infty}\P^{\Lambda_N}_{\beta,h}$.
Then 
\begin{equation}
\theta_d(\beta)=\P_\beta^{\Z^d}[|T_0|=\infty],
\end{equation}
see Proposition~\ref{prop:theta}.
By a stochastic domination argument it
is straightforward to show that 
\begin{equation}
  \label{eq:th0}
  \theta_d(\beta) =0 \qquad \text{for $0\leq \beta<p_c(d)/(1-p_c(d))<\infty$,}
\end{equation}
where $p_c(d)$ is the critical probability for Bernoulli bond
percolation on $\Z^d$, see Proposition~\ref{prop:theta0}. 
When $d=2$,
$\theta_2(\beta) = 0$ for all $\beta>0$ by
\cite[Section~4.2]{MR4218682}.  The next theorem shows that for
$d\geq 3$ the arboreal gas also has
a phase transition in this infinite volume limit. 

\begin{theorem}
  \label{thm:forests0}
  Let $d\geq 3$ and $L \geq L_0(d)$. Then there is
  $\beta_0 \in (0,\infty)$ such that
  for $\beta\geq \beta_0$
  the limit \eqref{eq:order} exists and 
  \begin{equation}
    \label{e:thm-forest-theta}
    \theta_{d}(\beta)^2 = \zeta_d(\beta)
    = 1-O(1/\beta),
  \end{equation}
  where $\zeta_d(\beta)$ is the finite volume density of the tree containing $0$
    from Theorem~\ref{thm:forest-macro}.
\end{theorem}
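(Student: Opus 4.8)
The plan is to connect $\theta_d(\beta)$ to the finite-volume tree density $\zeta_d(\beta)$ from Theorem~\ref{thm:forest-macro} by passing the asymptotic relation \eqref{e:thm1} through the infinite-volume limit and then appealing to the ghost-vertex interpretation. First I would observe that for fixed $h>0$, the limit $\P_\beta^{\Z^d}\bydef\lim_{N\to\infty}\P^{\Lambda_N}_{\beta,h}$ exists (at least subsequentially) and that $\{0\conn\ghost\}$ and $\{x\conn\ghost\}$ are, in this infinite-volume measure, increasing events. The key algebraic identity is that $0$ and $x$ lie in the same tree precisely when they are both connected to the ghost \emph{through that common tree}; more usefully, one has the inclusion-type relation that $\{0\conn x\}$ forces $\{0\conn\ghost\}=\{x\conn\ghost\}$ as events on the $h$-augmented graph. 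Writing $p_h\bydef\P^{\Z^d}_{\beta,h}[0\conn\ghost]$ and using translation invariance, a short computation with the FKG inequality and the law of total probability gives bounds sandwiching $\P^{\Z^d}_{\beta,h}[0\conn x]$ between $p_h^2$ and $p_h^2 + \P^{\Z^d}_{\beta,h}[0\conn x,\ 0\nghostconn x]$.

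Next I would take $|x|\to\infty$. By Theorem~\ref{thm:forest-macro}, after sending $N\to\infty$ first (legitimate since $\{0\conn x\}$ depends on finitely many edges up to the $O(1/(\beta L^{\kappa N}))$ error, which vanishes), the quantity $\P^{\Z^d}_{\beta,0}[0\conn x]\to\zeta_d(\beta)$ as $|x|\to\infty$, because the $|x|^{-(d-2)}$ and $|x|^{-(d-2+\kappa)}$ terms decay. I then need the corresponding statement with $h>0$ and, crucially, that $\zeta_d(\beta)=\lim_{h\downarrow 0}\lim_{|x|\to\infty}\P^{\Z^d}_{\beta,h}[0\conn x]$. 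The term $\P^{\Z^d}_{\beta,h}[0\conn x,\ 0\nghostconn x]$ is the probability that $0$ and $x$ are in a common \emph{finite} tree (one not touched by the ghost); as $|x|\to\infty$ this should vanish for $h>0$ by exponential decay of finite-cluster connectivities, and then one takes $h\downarrow 0$. Combined with the sandwich, this yields $\zeta_d(\beta)=\bigl(\lim_{h\downarrow 0} p_h\bigr)^2=\theta_d(\beta)^2$, and the existence of the limit \eqref{eq:order} follows along the way from monotonicity in $h$. The estimate $\zeta_d(\beta)=1-O(1/\beta)$ is immediate from Theorem~\ref{thm:forest-macro}.

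The main obstacle is the interchange of the limits $N\to\infty$, $h\downarrow0$, and $|x|\to\infty$, together with justifying that the ``common finite tree'' probability is negligible. For fixed $h>0$, exponential decay of truncated connectivities (cf.\ the elementary arguments referenced in Appendix~\ref{sec:th0} and the stochastic-domination bound \eqref{eq:th0}) controls $\P^{\Z^d}_{\beta,h}[0\conn x,\ 0\nghostconn x]$ uniformly enough, but one must check this bound survives $h\downarrow 0$ after $|x|$ has already been sent to infinity --- i.e.\ that $\lim_{h\downarrow0}\lim_{|x|\to\infty}$ of this term is $0$, which is where the order of limits genuinely matters. I would handle this by a monotonicity argument: the events and probabilities involved are monotone in $h$, so the iterated limits exist, and the key point is that for each fixed $h>0$ the inner limit in $|x|$ is exactly $0$, so the outer limit in $h$ is also $0$. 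The remaining bookkeeping --- that \eqref{e:thm1} combined with the ghost formalism of the excerpt genuinely computes $\lim_{|x|\to\infty}\P^{\Z^d}_{\beta,0}[0\conn x]$, and that $\P^{\Z^d}_{\beta,h}[0\conn x]\to\P^{\Z^d}_{\beta,0}[0\conn x]$ as $h\downarrow0$ for fixed $x$ by finite-energy continuity --- is routine.
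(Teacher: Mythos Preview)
Your proposal has a genuine gap: the arboreal gas does \emph{not} satisfy the FKG inequality. The acyclicity constraint makes edges negatively correlated (conditioning on an edge being present forbids other edges that would close a cycle), and indeed the model is the $q\to 0$ limit of the random cluster model, well below the range $q\geq 1$ where positive association holds. So the sandwich $p_h^2 \leq \P^{\Z^d}_{\beta,h}[0\conn x]$ cannot be obtained this way. Even setting FKG aside, the lower bound is problematic for a second reason: the events $\{0\conn\ghost\}$ and $\{x\conn\ghost\}$ can both occur with $0$ and $x$ in \emph{different} trees of the base graph (each connected separately to $\ghost$), so $\{0\conn\ghost\}\cap\{x\conn\ghost\}$ does not imply $\{0\conn x\}$. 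In fact the paper records precisely this distinction in \eqref{e:u0uxforest}. There is also a subtler issue with your limit-exchange: $\{0\conn x\}$ is not a cylinder event, so passing from $\P^{\Lambda_N}_{\beta,0}[0\conn x]$ to a weak-limit probability is not automatic; this is not ``routine bookkeeping.''

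The paper's route is entirely different and avoids all of this. It never argues probabilistically that $\P[0\conn x]\to\theta_d(\beta)^2$; instead it computes both sides directly from the renormalisation group analysis of the $\HH^{0|2}$ model. The quantity $\theta_d(\beta)$ is obtained from Theorem~\ref{thm:h02} (via the explicit constant \eqref{e:constants}), while the finite-volume connection probability $\P^{\Lambda_N}_{\beta,0}[0\conn x]$ is computed in Lemma~\ref{cor:conn4pt} from the more refined finite-volume RG output (Theorem~\ref{thm:psi4-full}) combined with Ward identities (Lemmas~\ref{cor:macro}--\ref{cor:anm0}). The outcome of that computation is exactly the expansion in Theorem~\ref{thm:forest-macro} with leading constant $\theta_d(\beta)^2$, so $\zeta_d(\beta)$ is \emph{defined} to be $\theta_d(\beta)^2$ in the proof of Theorem~\ref{thm:forest-macro}. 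The identity in Theorem~\ref{thm:forests0} is then immediate.
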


In fact, our proof shows that $\theta_d(\beta)\sim 1-c/\beta$
with $c=(-\Delta^{\Z^d})^{-1}(0,0)>0$ the expected time a simple random
walk spends at the origin.  This behaviour is different from that of
Bernoulli bond percolation and more generally that of the random
cluster model with $q>0$.  For these models the percolation
probability is governed by Peierls' contours and is $1-O((1-p)^{2d})$
by \cite[Remark~5.10]{MR2410872}.

That the arboreal gas behaves critically within its supercritical
phase can be further quantified in terms of the following truncated
two-point functions:
\begin{align}
  \label{e:tau-def}
  \tau_{\beta}(x) = \lim_{h\downarrow 0} \tau_{\beta,h}(x),
  &  \qquad
    \tau_{\beta,h}(x)=\lim_{N\to\infty}\P_{\beta,h}^{\Lambda_N}[0\conn x, 0 \nconn\ghost],\\
    \label{e:sigma-def}
  \sigma_{\beta}(x) = \lim_{h\downarrow 0} \sigma_{\beta,h}(x),
  & \qquad
    \sigma_{\beta,h}(x) = \lim_{N\to\infty}\pB{\P_{\beta,h}^{\Lambda_N}[0\nconn \ghost]^2 - \P_{\beta,h}^{\Lambda_N}[0\nconn x, 0\nconn \ghost,x\nconn\ghost]}.
\end{align}

\begin{theorem} \label{thm:forests}
  Under the assumptions of Theorem~\ref{thm:forests0}, for $\beta\geq \beta_0$,
  the limits \eqref{e:tau-def}--\eqref{e:sigma-def} exist and there exist
  constants ${\sf c}_i(\beta) = {\sf c}_i+O(1/\beta)$ and $\kappa>0$ such that
  \begin{align}
    \label{e:thm-forest-tau}
    \tau_{\beta}(x) &= \frac{\consta(\beta)}{\beta|x|^{d-2}} +
                      O(\frac{1}{\beta |x|^{d-2+\kappa}}), 
    \\
    \label{e:thm-forest-sigma}
    \sigma_{\beta}(x) &= \frac{\constb(\beta)}{\beta^2|x|^{2d-4}} + O(\frac{1}{\beta^2 |x|^{2d-4+\kappa}}).
  \end{align}
  The constants satisfy
  $(\constb(\beta)/\consta(\beta)^2)\theta_d(\beta)^2=1$  and
  $\consto(\beta) = 2\consta(\beta)$, $\consto(\beta)$ from
  Theorem~\ref{thm:forest-macro}.
\end{theorem}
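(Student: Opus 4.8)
The plan is to deduce everything from the $\HH^{0|2}$ sigma model. The paper's central exact identity expresses arboreal‐gas connection events in terms of correlation functions of the fermionic fields $(u,\xi,\eta)$ (or whatever parametrisation of $\HH^{0|2}$ the paper adopts). Concretely, I would use: (i) $\P_{\beta,h}^{\Lambda_N}[0\conn x,\,0\nconn\ghost]$ equals a two‐point function of the fermionic coordinates $\langle \bar\psi_0\psi_x\rangle$‐type object (the ``$\tau$'' correlator); (ii) the complementary event $\P[0\nconn\ghost]$, $\P[0\nconn x,\,0\nconn\ghost,\,x\nconn\ghost]$ are expressed via the $u$‐field (the bosonic-like horocycle coordinate of $\HH^{0|2}$), so that $\sigma_{\beta,h}$ becomes a truncated $\langle u_0 u_x\rangle^c$ correlator. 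One first establishes existence of the limits $h\downarrow 0$ after $N\to\infty$: this is where the renormalisation group output from earlier in the paper is invoked — the RG analysis produces, for $\beta\geq\beta_0$, uniform (in $N$, and in $h$ small) control of these correlators with the stated massless decay, plus convergence as $h\downarrow 0$. So Steps 1–2 are: set up the exact sigma-model representation of $\tau_{\beta,h}$ and $\sigma_{\beta,h}$, then quote the RG estimates to pass to the limits and obtain $\tau_\beta(x) = \consta(\beta)/(\beta|x|^{d-2}) + O(\cdot)$ and $\sigma_\beta(x) = \constb(\beta)/(\beta^2|x|^{2d-4}) + O(\cdot)$, with $\consta(\beta),\constb(\beta)$ read off as the RG-renormalised field strengths (hence $= {\sf c}_i + O(1/\beta)$, the leading constant being the lattice Green's function amplitude and its square).

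The next task is the two algebraic identities among the constants. For $\consto(\beta) = 2\consta(\beta)$: in Theorem~\ref{thm:forest-macro} the event $\{0\conn x\}$ decomposes, on the ghost-extended graph, as $\{0\conn x,\,0\nconn\ghost\} \sqcup \{0\conn x,\,0\conn\ghost\}$; after $N\to\infty$ then $h\downarrow 0$ the first piece contributes $\tau_\beta(x)$ and the second contributes $\zeta_d(\beta)$ plus a correction. I would show the correction to the second piece is itself of order $\tau_\beta(x)$ with the \emph{same} amplitude — intuitively, conditioned on both $0$ and $x$ lying in infinite trees, the excess probability that it is the \emph{same} infinite tree (beyond the independent guess) is governed by the same free-field fluctuation. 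Matching $\consto(\beta)/(\beta|x|^{d-2}) = \tau_\beta(x) + (\text{that correction}) = 2\consta(\beta)/(\beta|x|^{d-2}) + O(\cdot)$ gives the factor $2$. Cleanly, this is the statement that in the $\HH^{0|2}$ model the relevant ``$0\conn x$'' correlator is the sum of the fermionic bilinear and (one power of) the $u$-field correlator, and the two have equal leading amplitude by the internal supersymmetry/Ward identity of the target space. For the relation $(\constb(\beta)/\consta(\beta)^2)\,\theta_d(\beta)^2 = 1$: combine $\sigma_\beta(x) = \constb(\beta)/(\beta^2|x|^{2d-4}) + O(\cdot)$ with the fact that, again by the Ward identity relating the $u$-field two-point function to the \emph{square} of the fermionic two-point function (the $\HH^{0|2}$ analogue of $|\vec\phi|^2$-correlations being built from $\phi$-correlations), $\sigma_\beta(x) = \theta_d(\beta)^2\,\tau_\beta(x)^2 + O(\cdot)$; substituting the asymptotics of $\tau_\beta$ and $\theta_d^2 = \zeta_d$ from Theorem~\ref{thm:forests0} and comparing amplitudes yields $\constb(\beta) = \theta_d(\beta)^2\,\consta(\beta)^2$, i.e. the claimed identity. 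The appearance of $\theta_d^2$ rather than $1$ is exactly the spontaneous-magnetisation factor: $\sigma$ measures fluctuations of the finite-cluster indicator, whose field-theoretic avatar has its zero mode removed, contributing the symmetry-breaking constant.

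I expect the main obstacle to be the \textbf{bookkeeping of the $h\downarrow 0$ limit in the truncated/connected observables}, especially $\sigma_{\beta,h}$: it is defined as a difference $\P[0\nconn\ghost]^2 - \P[0\nconn x,0\nconn\ghost,x\nconn\ghost]$ of quantities each tending to $\theta_d(\beta)$-dependent limits, so naive substitution gives $0$, and one must extract the $O(|x|^{-(2d-4)})$ next-order term uniformly. Handling this requires the RG analysis to supply not just pointwise asymptotics but control of the \emph{connected} $u$-correlator with its decay rate, uniformly as $h\downarrow 0$ — i.e. a genuinely quantitative version of clustering. The other delicate point is justifying the exact decomposition identities (the ``factor $2$'' and the ``$\theta^2$'') at the level of finite $h>0$ and finite $N$, where they hold as exact sigma-model Ward identities, and then showing the error terms survive the two limits; this is routine in spirit but needs the estimates to be stated with enough uniformity. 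Once those uniformities are in hand, the amplitude-matching in the last two sentences of the theorem is a short computation comparing coefficients of $|x|^{-(d-2)}$ and $|x|^{-(2d-4)}$.
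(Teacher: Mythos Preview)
Your outline for the main asymptotics \eqref{e:thm-forest-tau}--\eqref{e:thm-forest-sigma} is essentially the paper's: translate $\tau_{\beta,h}$ and $\sigma_{\beta,h}$ into $\HH^{0|2}$ correlators via Proposition~\ref{prop:h02-forest} (indeed $\tau_{\beta,h}(x)=\avg{\xi_0\eta_x}_{\beta,h}$ and $\sigma_{\beta,h}(x)=-(\avg{z_0z_x}_{\beta,h}-\avg{z_0}_{\beta,h}^2)$), rescale to $\psi,\bar\psi$, and quote the renormalisation group output (Theorem~\ref{thm:psi4}). This is exactly how the paper proceeds, via Theorem~\ref{thm:h02} and Corollary~\ref{cor:ward}.

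However, your derivation of the constant identities contains a genuine error. You claim a Ward identity gives $\sigma_\beta(x)\sim\theta_d(\beta)^2\,\tau_\beta(x)^2$, hence $\constb(\beta)=\theta_d(\beta)^2\consta(\beta)^2$. But this yields $(\constb/\consta^2)\theta_d^2=\theta_d^4$, not $1$; the correct relation is $\constb=\consta^2/\theta_d^2$. More fundamentally, there is no such Ward identity. What the renormalisation group actually produces (Theorem~\ref{thm:psi4}, equation~\eqref{e:psi4-00xx}) is
\[
\avg{\bar\psi_0\psi_0;\bar\psi_x\psi_x}=-\lambda^2\,(-\Delta+m^2)^{-1}(0,x)^2+\cdots,
\]
where $\lambda=1+O(b_0)$ is a nontrivial field-strength renormalisation. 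After the change of variables one reads off $\consta(\beta)=(1+s_0^c)c_d$ and $\constb(\beta)=\lambda^2(1+s_0^c)^2c_d^2$, so $\constb/\consta^2=\lambda^2$. The identity $(\constb/\consta^2)\theta_d^2=1$ is then equivalent to $\lambda\,\theta_d(\beta)=1$, and this is \emph{not} a consequence of any symmetry of the sigma model. The paper proves it (Lemma~\ref{cor:aNlimit}) by a separate \emph{finite-volume} analysis at $h=0$: one computes the density $w_N=\E^{\Lambda_N}_{\beta,0}|T_0|/|\Lambda_N|$ in two different ways (Lemmas~\ref{cor:macro}--\ref{cor:conn4pt}, using the sharper Theorem~\ref{thm:psi4-full} with its explicit zero-mode control), obtaining both $w_N\to\theta_d^2$ and $\lambda w_N\to\theta_d$, whence $\lambda\theta_d=1$.

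The same mechanism handles $\consto(\beta)=2\consta(\beta)$: $\consto$ is the coefficient in the \emph{finite-volume, $h=0$} asymptotic of Theorem~\ref{thm:forest-macro}, computed in Lemma~\ref{cor:conn4pt} as $\consto(\beta)=2\lambda\theta_d(1+s_0^c)c_d$; once $\lambda\theta_d=1$ this is $2\consta(\beta)$. Your proposed decomposition of $\{0\conn x\}$ via the ghost does not directly apply here, since at $h=0$ there is no ghost and the relevant quantity lives in a different limit than $\tau_\beta$. The factor $2$ you are seeing is really the $-2\xi_0\eta_x$ inside $u_0\cdot u_x$, but making that precise across the two limits is exactly what Section~\ref{sec:finvol} accomplishes.
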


Further results could be deduced from our analysis, but to maintain
focus we have not carried these out in detail.  We mention some of
them below in Section~\ref{sec:conn-open-direct} when discussing our
results and open problems.

\subsection{The $\HH^{0|2}$  model and its continuous symmetries}
\label{sec:spinintro}

In \cite{MR2110547,MR3622573}, the arboreal gas was related to a
fermionic field theory and a supersymmetric non-linear sigma model
with target space one half of the degenerate super-sphere
$\mathbb S^{0|2}$. In \cite{MR4218682} this was
reinterpreted as a non-linear sigma model with hyperbolic target space
$\HH^{0|2}$, which we refer to as the $\HH^{0|2}$
model for short.   The reinterpretation was essential in
\cite{MR4218682}; it is less essential for the present work, but
nevertheless we continue to use the $\HH^{0|2}$ formulation of the model.  

Briefly, the $\HH^{0|2}$
model is defined as follows, see
\cite[Section~2]{MR4218682} for further details.  For every vertex
$x\in \Lambda$, there are two (anticommuting) Grassmann variables
$\xi_x$ and $\eta_x$ and we then set
\begin{equation}
  z_x \bydef \sqrt{1-2\xi_x\eta_x} \bydef 1-\xi_x\eta_x.
\end{equation}
Thus the $z_x$ commute with each other and with the odd elements
$\xi_x$ and $\eta_x$.  The formal triples
$u_x\bydef(\xi_x,\eta_x,z_x)$ are supervectors with two odd components
$\xi_x,\eta_x$ and an even component $z_x$.  These supervectors
satisfy the sigma model constraint $u_x\cdot u_x=-1$ for the super
inner product
\begin{equation}
  \label{e:uxdotuy}
  u_x\cdot u_y \bydef -\xi_x\eta_y-\xi_y\eta_x - z_xz_y.
\end{equation}
In analogy with the tetrahedral representation of the $q$-state Potts
model, see~\cite[Section~2.2]{MR2581604}, the sigma model constraint
can be thought of as $u_x\cdot u_x = q-1$ with $q=0$. The constraint
is also reminiscent of the embedding of the hyperbolic space $\HH^2$
in $\R^3$ equipped with the standard quadratic form with Lorentzian
signature $(1, 1, -1)$. Indeed, $-\xi_x\eta_y-\xi_y\eta_x$ is the
fermionic analogue of the Euclidean inner product on $\R^2$.

Let $F$ be a (non-commutative) polynomial in the variables
$\{\xi_{x},\eta_{x}\}_{x\in \Lambda}$. The expectation of $F$ with
respect to the $\HH^{0|2}$ model is
\begin{equation}
  \label{e:h02-def}
  \avg{F}_{\beta,h} \bydef \frac{1}{Z_{\beta,h}}\int (\prod_{x\in\Lambda} \partial_{\eta_x}\partial_{\xi_x} \frac{1}{z_x}) e^{\frac{\beta}{2} (u,\Delta u) - h(1,z-1)} F
  .
\end{equation}
In this expression,
$\int \prod_{x\in\Lambda} \partial_{\eta_x}\partial_{\xi_x}$ denotes
the Grassmann integral (i.e., the coefficient of the top degree
monomial of the integrand), $Z_{\beta,h}$ is a normalising constant, and 
\begin{equation}
  \frac12
  (u,\Delta u) = -\frac12 \sum_{xy\in E(\Lambda) }
  (u_x-u_y)\cdot(u_x-u_y)
  = \sum_{xy\in E(\Lambda) }
  (u_x\cdot u_y+1),
  \qquad (1,z) = \sum_{x\in \Lambda} z_x,
\end{equation}
where $xy\in E(\Lambda)$ denotes that $x$ and $y$ are nearest
neighbours (counting every pair once), and the inner products are
given by \eqref{e:uxdotuy}.  The factors $1/z_x$ in \eqref{e:h02-def}
are the canonical fermionic volume form invariant under the symmetries
associated with \eqref{e:uxdotuy} as discussed further below.

As explained in \cite[Section~2.1]{MR4218682} (see also
\cite{MR2110547} where such relations were first observed) connection
and edge probabilities of the arboreal gas are equivalent to
correlation functions of the $\HH^{0|2}$ model.  The following
proposition summarises the relations we need, see
Appendix~\ref{app:h02forest} for the proof.

\begin{proposition}
  \label{prop:h02-forest}
  For any finite graph $G$,
  any $\beta \geq 0$ and $h \geq 0$,
  \begin{align} \label{e:zforest}
  \P_{\beta,h}[0\conn \ghost] &= \avg{z_0}_{\beta,h},
    \\
    \label{e:xietaforest}
    \P_{\beta,h}[0\conn x, 0 \nconn \ghost] &= \avg{\xi_0\eta_x}_{\beta,h},\\
  \label{e:u0uxforest}
  \P_{\beta,h}[0\conn x]+\P_{\beta,h}[0\nconn x, 0\conn \ghost, x\conn\ghost]  &= -\avg{u_0\cdot u_x}_{\beta,h}
  ,
  \end{align}
  and the normalising constants in \eqref{e:P-forest} and \eqref{e:h02-def} are equal.
  In particular,
  \begin{equation} \label{e:P0x4pt}
    \P_{\beta,0}[0\conn x] = -\avg{u_0\cdot u_x}_{\beta,0} = -\avg{z_0z_x}_{\beta,0} = \avg{\xi_0\eta_x}_{\beta,0} = 1-\avg{\xi_0\eta_0\xi_x\eta_x}_{\beta,0}.
  \end{equation}
\end{proposition}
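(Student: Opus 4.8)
The plan is to derive all four identities from a single combinatorial expansion of the Grassmann integral in \eqref{e:h02-def}, following the strategy of \cite{MR2110547,MR4218682}. First I would expand $e^{\frac{\beta}{2}(u,\Delta u)} = e^{\beta \sum_{xy} (u_x\cdot u_y + 1)}$ as a product over edges, $\prod_{xy \in E} (1 + \beta(u_x\cdot u_y + 1) + \dots)$; since $u_x \cdot u_y + 1 = -\xi_x\eta_y - \xi_y\eta_x - z_xz_y + 1$ and each vertex carries only the two Grassmann variables $\xi_x,\eta_x$, the higher-order terms in the edge exponential vanish once one accounts for nilpotency, so effectively each edge contributes either a factor $1$ (absent) or a factor $\beta(u_x\cdot u_y+1)$ (present). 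Organizing the sum over which edges are present gives a sum over subgraphs $E(F) \subseteq E$ weighted by $\beta^{|E(F)|}$ times a Grassmann integral of $\prod_{xy \in E(F)}(u_x\cdot u_y + 1)$ against $\prod_x \partial_{\eta_x}\partial_{\xi_x} z_x^{-1}$ and the field term $e^{-h(1,z-1)}$. The key computational lemma is that this per-subgraph Grassmann integral vanishes unless the subgraph is a forest, and when it is a forest each tree component $T$ contributes a factor that, after integrating out the $h$-dependence, equals $(1 + h|V(T)|)$ — this is precisely the matrix-tree / Pfaffian identity underlying the arboreal gas, and it simultaneously yields $Z_{\beta,h} = Z_{\beta,h}^G$.

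With this expansion in hand, the individual identities follow by inserting the relevant observable $F$ and tracking which trees are "decorated." For \eqref{e:zforest}, inserting $F = z_0$ modifies the contribution of the tree $T_0$ containing $0$: expanding $z_0 = 1 - \xi_0\eta_0$ and redoing the Grassmann integral for that component, the term where $T_0$ is connected to $\ghost$ survives with the right sign, giving $\avg{z_0}_{\beta,h} = \P_{\beta,h}[0 \conn \ghost]$. For \eqref{e:xietaforest}, inserting $F = \xi_0\eta_x$ forces $0$ and $x$ into a common tree component (otherwise the Grassmann integral over that part factorizes and kills the monomial by degree count) and, because the $\xi_0\eta_x$ already saturates the relevant Grassmann degrees at $0$ and $x$, that tree cannot also connect to $\ghost$; the surviving weight is exactly $\beta^{|E(F)|}\prod(1+h|V(T)|)$ summed over forests with $0 \conn x \nconn \ghost$. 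For \eqref{e:u0uxforest}, inserting $F = u_0 \cdot u_x = -\xi_0\eta_x - \xi_x\eta_0 - z_0 z_x$ and combining the fermionic bilinear pieces with the $z_0 z_x$ piece reproduces, via the two previous computations, the sum of the probability that $0 \conn x$ and the probability that $0$ and $x$ lie in distinct trees both attached to $\ghost$; the minus sign is absorbed into the definition \eqref{e:uxdotuy}. Finally, \eqref{e:P0x4pt} is the specialization $h = 0$: then $\avg{z_0}_{\beta,0} = 1$ and $\P_{\beta,0}[0 \nconn x, 0 \conn \ghost, x \conn \ghost] = 0$ since there is no $\ghost$, so the three displayed expressions collapse; the last equality $\avg{\xi_0\eta_x}_{\beta,0} = 1 - \avg{\xi_0\eta_0\xi_x\eta_x}_{\beta,0}$ is the purely algebraic identity $\xi_0\eta_x = \xi_0\eta_0\xi_x\eta_x + (\text{terms integrating to } 1 - (\cdots))$ obtained by using $z_0 z_x = (1-\xi_0\eta_0)(1-\xi_x\eta_x)$ inside the integral and the normalization $\avg{1} = 1$.

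The main obstacle is the key combinatorial lemma: showing that the Grassmann integral of $\prod_{xy\in E(F)} (u_x\cdot u_y+1)$ against $\prod_x \partial_{\eta_x}\partial_{\xi_x} z_x^{-1} e^{-h(1,z-1)}$ is $\indicthat{F \text{ is a forest}} \prod_{T \in F}(1 + h|V(T)|)$. This requires carefully expanding the product over edges, using the nilpotency relations $\xi_x^2 = \eta_x^2 = 0$ to see that any cyclic subgraph forces a repeated Grassmann variable and hence vanishes, and then on a tree performing the Grassmann integration component-by-component — a computation most cleanly done by induction on the number of edges in each tree, peeling off a leaf at a time, where the factor $z_x^{-1} = 1 + \xi_x\eta_x$ in the volume form and the field factor $e^{-h(1,z-1)} = \prod_x(1 + h\xi_x\eta_x)$ conspire to produce exactly the $(1 + h|V(T)|)$ weight. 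This is essentially the content of \cite[Section~2.1]{MR4218682}, so in the write-up I would state it as a lemma, give the leaf-peeling induction, and then deduce \eqref{e:zforest}--\eqref{e:P0x4pt} as above; the equality of normalizing constants is automatic since both $Z$'s are the $F=1$ case of the same expansion.
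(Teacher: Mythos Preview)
Your approach is sound in outline, but the paper's proof is organised rather differently and is considerably shorter. Rather than inserting each observable directly into the forest expansion, the paper first cites \cite[Theorem~2.1 and Corollary~2.2]{MR4218682} for the partition function identity and for \eqref{e:P0x4pt} at $h=0$, and then derives the $h>0$ identities by two algebraic tricks. First, nilpotency gives $z_0 = e^{-(1-z_0)}$, so $\avg{z_0}_{\beta,h} = Z_{\beta,h-1_0}/Z_{\beta,h}$ and similarly $\avg{z_0z_x}_{\beta,h} = Z_{\beta,h-1_0-1_x}/Z_{\beta,h}$; these ratios can be read off directly from the combinatorial formula \eqref{e:P-forest} without reopening the Grassmann integral. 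Second, \eqref{e:u0uxforest} is obtained by applying the $h=0$ identity \eqref{e:Pxconny} on the \emph{extended} graph $G^\ghost$: there the left-hand side is exactly the probability that $0$ and $x$ lie in the same tree of $G^\ghost$, which decomposes into the two events on $G$. The identity \eqref{e:xietaforest} then follows by subtracting, since $2\avg{\xi_0\eta_x} = -\avg{u_0\cdot u_x} - \avg{z_0z_x}$. Your direct-insertion route would also work, but it requires redoing the leaf-peeling induction with observables present, whereas the paper's reductions avoid this entirely.

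One slip to flag: you wrote $\avg{z_0}_{\beta,0} = 1$, but in fact $\avg{z_0}_{\beta,0} = \P_{\beta,0}[0\conn\ghost] = 0$ (there is no ghost at $h=0$). What you need for the last equality in \eqref{e:P0x4pt} is rather $\avg{\xi_0\eta_0}_{\beta,0} = 1 - \avg{z_0}_{\beta,0} = 1$, which together with $z_0z_x = 1 - \xi_0\eta_0 - \xi_x\eta_x + \xi_0\eta_0\xi_x\eta_x$ gives $-\avg{z_0z_x}_{\beta,0} = 1 - \avg{\xi_0\eta_0\xi_x\eta_x}_{\beta,0}$.
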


These relations resemble those between the Potts model and the random
cluster model, giving further credence to our proposal that the
$\HH^{0|2}$ model may be interpreted as the $0$-state Potts model,
with the arboreal gas playing the role of the $0$-state random cluster
model.  Nevertheless, there are important differences from the
$q$-state Potts model with $q \geq 2$.  Chief amongst them is that the
$\HH^{0|2}$ model has continuous symmetries. To make this precise, let
\begin{equation} \label{e:Tdef}
  T=\sum_{x\in\Lambda} z_x \partial_{\xi_x}, \qquad
  \bar T=\sum_{x\in\Lambda} z_x \partial_{\eta_x}
  .
\end{equation}
One way to understand the significance of $T, \bar{T}$ is via the
identities $\avg{TF}_{\beta,0}=\avg{\bar T F}_{\beta,0}=0$ for any
polynomial $F$ in the variables $\xi$ and $\eta$; see
\cite[Section~2.2]{MR4218682}. For example,
$\avg{T\xi_{0}}_{\beta,0}=\avg{z_{0}}_{\beta,0}=0$. Identities derived
in this way are conventionally called Ward identities.

The maps $T$ and $\bar T$ are infinitesimal generators of two global
internal supersymmetries of the $\HH^{0|2}$ model.  These
supersymmetries are explicitly broken if $h \neq 0$.  They are
analogues of infinitesimal Lorentz boosts or infinitesimal
rotations. Together with a further internal symmetry corresponding to
rotations in the $\xi, \eta$ plane, these operators generate the
symmetry algebra $\mathfrak{osp}(1|2)$ of the $\HH^{0|2}$ model.  For
details and further explanations we again refer to
\cite[Section~2.2]{MR4218682}.  As generators of continuous
symmetries, $T$ and $\bar T$ imply Ward identities that are not
available for the Potts model with $q\geq 2$. These identities are
crucial for our analysis and will be discussed below.

The phase transition of the arboreal gas corresponds to a spontaneous
breaking of the above supersymmetries in the infinite volume
limit. By spontaneous symmetry breaking we mean that there is
an observable $F$ for which
$\lim_{N\to\infty}\lim_{h\downarrow 0}\avg{F}_{\beta,h}\neq \lim_{h\downarrow 0}\lim_{N\to\infty}\avg{F}_{\beta,h}$.
Indeed, this is shown in our next theorem for the $\HH^{0|2}$ model from which
Theorems~\ref{thm:forests0} and~\ref{thm:forests} follow immediately
by \eqref{e:zforest}--\eqref{e:u0uxforest} (except for the same
statements relating the constants, which we omitted here). A similar
reformulation applies to Theorem~\ref{thm:forest-macro}.

\begin{theorem}\label{thm:h02}
  Let $d\geq 3$ and $L\geq L_0(d)$. There exists  $\beta_0\in (0,\infty)$ and constants $\theta_d(\beta)=1+O(1/\beta)$
  and ${\sf c}_i(\beta) = {\sf c}_i+O(1/\beta)$ and $\kappa >0$ (all dependent on $d$)
  such that for $\beta \geq \beta_0$,
  \begin{align}
    \lim_{h\downarrow 0}\lim_{N\to\infty}\avg{z_0}_{\beta,h}
    &= \theta_d(\beta)
    \\
    \lim_{h\downarrow 0}\lim_{N\to\infty}
    \avg{\xi_0\eta_x}_{\beta,h}
    &= \frac{\consta(\beta)}{\beta |x|^{d-2}} + O(\frac{1}{\beta |x|^{d-2+\kappa}})
    \\
    \lim_{h\downarrow 0}\lim_{N\to\infty}\pB{\avg{z_0z_x}_{\beta,h}-\avg{z_0}_{\beta,h} \avg{z_x}_{\beta,h}}
    &= -\frac{\constb(\beta)}{\beta^2 |x|^{2d-4}} + O(\frac{1}{\beta^2|x|^{2d-4+\kappa}})
      .
  \end{align}
  In particular,
  \begin{equation}
    \lim_{h\downarrow 0}\lim_{N\to\infty}
    \avg{u_0\cdot u_x}_{\beta,h} = -\theta_d(\beta)^2 -\frac{2\consta(\beta)}{\beta |x|^{d-2}} + O(\frac{1}{\beta |x|^{d-2+\kappa}}).
  \end{equation}
\end{theorem}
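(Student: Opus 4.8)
The plan is to prove Theorem~\ref{thm:h02} via a rigorous renormalisation group (RG) analysis of the $\HH^{0|2}$ model at large $\beta$, combined with the Ward identities generated by $T,\bar T$. At large $\beta$ the model is a small perturbation of a Gaussian (massless free) field: parametrising $u_x$ by its Grassmann coordinates $(\xi_x,\eta_x)$ and expanding $z_x = 1-\xi_x\eta_x$, the action $\frac{\beta}{2}(u,\Delta u)$ becomes $\beta(\xi,-\Delta\eta)$ plus quartic and higher interaction terms; the $h$-term adds a mass-like regularisation $h(1,1-z)=h(1,\xi\eta)$. The leading behaviour of $\avg{\xi_0\eta_x}$ is therefore $\frac{1}{\beta}(-\Delta+\tfrac{h}{\beta})^{-1}(0,x)$, which on $\Z^d$ with $d\ge3$ gives $\frac{\consta}{\beta|x|^{d-2}}$ as $h\downarrow0$, with $\consto = c = (-\Delta^{\Z^d})^{-1}(0,0)$. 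The RG is needed to control the corrections uniformly in $N$ and to extract the $O(|x|^{-(d-2+\kappa)})$ error with an explicit $\kappa>0$.

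**The key steps, in order.** (i) Set up the RG: decompose the Gaussian part with the finite-range covariance decomposition on the torus $\Lambda_N$, organise the flow of the local coupling constants (the effective $\beta$, the effective field $h$, and irrelevant vertex couplings), and prove that the flow stays in a small neighbourhood of the Gaussian fixed point for all scales $j=0,\dots,N$ — this is the standard but technically heavy ``stable manifold'' argument, and it is where most of the work lives. (ii) From the RG flow extract the two-point function $\avg{\xi_0\eta_x}_{\beta,h}$ as (effective constant)$\times$(massless lattice Green's function) plus a contribution from the irrelevant directions that decays by an extra power $|x|^{-\kappa}$; take $N\to\infty$ then $h\downarrow0$. (iii) Use the Ward identities to tie everything together: $\avg{Tz_0}_{\beta,h}=0$ gives a relation, and more importantly the $h\to0$ Ward identity forces $\avg{z_0}^2$ to equal the zero-mode weight $\zeta_d(\beta)$ of the $\xi\eta$-field — concretely, $1-\avg{\xi_0\eta_0\xi_x\eta_x} \to \theta_d^2 + \tau_\beta(x)$ with $\theta_d^2 = \zeta_d$, yielding the first and the relation to $\consta$. (iv) Obtain $\avg{z_0z_x}-\avg{z_0}\avg{z_x}$: since $z_0z_x = (1-\xi_0\eta_0)(1-\xi_x\eta_x)$, this truncated correlation is, to leading order, the ``square'' of the $\xi\eta$-propagator (a one-loop bubble), giving decay $|x|^{-(2d-4)}$ with constant $\constb$; the Ward identity then fixes $(\constb/\consta^2)\theta_d^2=1$ and $\consto=2\consta$ as claimed, and the combination $\avg{u_0\cdot u_x} = -\avg{z_0z_x} - \avg{\xi_0\eta_x} - \avg{\xi_x\eta_0}$ assembles the final display.

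**The main obstacle** is step (i): showing that the RG trajectory is well-defined and remains perturbative for all $N$ scales, which requires the full machinery of a rigorous RG (norms on the space of interactions, bounds on the renormalisation map, contraction in the irrelevant directions, and correct tuning of the relevant directions $\beta$ and $h$). A secondary subtlety, special to this model rather than to bosonic $\varphi^4$, is handling the Grassmann/supersymmetric structure: one must check that the RG preserves the $\mathfrak{osp}(1|2)$ symmetry (so that the Ward identities survive along the flow and in the limit), and that the horospherical-type change of variables used to linearise the constraint interacts well with the covariance decomposition. The extraction of the precise constant $\consto=c$ and the sign of $\constb$ (positivity, so that $\avg{z_0z_x}-\avg{z_0}\avg{z_x}<0$) then follows from the explicit leading-order computation once the RG control is in hand. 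I expect the combinatorial identities relating $\consta,\constb,\consto,\theta_d$ to reduce, via Proposition~\ref{prop:h02-forest}, to simple algebra among connection probabilities, so the genuine difficulty is entirely in the analytic RG estimates.
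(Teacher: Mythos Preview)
Your overall architecture (finite-range RG for the fermionic field plus Ward identities) matches the paper, but you have the role of the Ward identity backwards, and this is the one genuinely nontrivial idea in the reduction. You propose to preserve the $\mathfrak{osp}(1|2)$ symmetry along the RG flow so that Ward identities survive scale by scale. The paper does the opposite: it \emph{drops} the constraint linking the quadratic and quartic coupling constants in \eqref{e:h02-psi}, runs a generic fermionic RG on the four-parameter family $(m^2,s_0,a_0,b_0)$ with interaction \eqref{e:V0}, and proves Theorem~\ref{thm:psi4} (existence of a critical manifold $s_0^c(b_0,m^2),a_0^c(b_0,m^2)$ and the correlation asymptotics) without any reference to the sigma-model symmetry. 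Only \emph{after} the RG is complete does the Ward identity $\avg{z_0}_{\beta,h}=h\sum_x\avg{\xi_0\eta_x}_{\beta,h}$ enter, and it is used exactly once: in Corollary~\ref{cor:ward} it forces $h(b_0,0)=0$, i.e.\ identifies the RG-critical point with the physical point $h=0$ of the $\HH^{0|2}$ model. This is what lets you take $h\downarrow 0$ and land on the massless asymptotics. Your version would require showing that the RG map preserves a nonlinear symmetry exactly, which is substantially harder and unnecessary.

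A few smaller points: the identity you quote, $\avg{Tz_0}_{\beta,h}=0$, is trivially $-\avg{\eta_0}=0$ and carries no information; the useful identity is $\avg{T\xi_0}=\avg{z_0}$ combined with the $h$-broken version \eqref{e:ward}. No horospherical change of variables is used here; the analysis is entirely in the fermionic coordinates $\psi=\sqrt{\beta}\eta$, $\bar\psi=\sqrt{\beta}\xi$. Finally, the relations $\theta_d^2=\zeta_d$ and $(\constb/\consta^2)\theta_d^2=1$ that you list in step (iii)--(iv) are not part of Theorem~\ref{thm:h02}; they belong to the finite-volume analysis (Section~\ref{sec:finvol}) and require the separate zero-mode computation of Theorem~\ref{thm:psi4-full}. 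For Theorem~\ref{thm:h02} itself you only need Theorem~\ref{thm:psi4}, Corollary~\ref{cor:ward}, and the change of variables \eqref{e:xietapsi}.
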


In fact, the constants ${\sf c}_{i}(\beta)$ both satisfy ${\sf
  c}_{i}(\beta) = (c_{d})^{i} + O(1/\beta)$, where $c_{d}$ is the
leading constant in the asymptotics of the Green function of the
Laplacian $-\Delta^{\Z^{d}}$ on $\Z^{d}$:
\begin{equation}
  \label{eq:LapZd}
  (-\Delta^{\Z^d})^{-1}(0,x)
  = \frac{c_{d}}{|x|^{d-2}} + O(|x|^{-(d-2)-1}).
\end{equation}

Our proof of Theorem~\ref{thm:h02} is by a rigorous renormalisation
group analysis aided by Ward identities.  We set
$\psi=\sqrt{\beta}\eta$ and $\bar\psi = \sqrt{\beta}\xi$. 
Algebra then shows the fermionic density in
\eqref{e:h02-def} is equivalent to
\begin{equation}
  \label{e:h02-psi}
  \exp\qa{- (\psi,-\Delta\bar\psi)
    -\frac{1}{\beta}(1+h)  \sum_{x\in\Lambda}\psi_x\bar\psi_x - \frac{1}{2\beta} \sum_{x\in\Lambda} \psi_x\bar\psi_x \sum_{e\in \mathcal{E}_d} \psi_{x+e} \bar\psi_{x+e}
  },
\end{equation}
where the $1$ in the quadratic term arises from putting the $\bbH^{0|2}$ volume form (see   \eqref{e:h02-def})
into the exponential, i.e.,
\begin{equation}
   \prod_{x\in\Lambda}\frac{1}{z_x} = \prod_{x\in\Lambda} e^{+\xi_x\eta_x}=\prod_{x\in\Lambda}e^{-\eta_x\xi_x} = \exp\qa{-\frac{1}{\beta}\sum_{x\in\Lambda}\psi_x\bar\psi_x},
\end{equation}
and $\mathcal{E}_{d}=\{e_{1},\dots, e_{2d}\}$ are the standard unit
vectors (where $e_{d+j}=-e_{j}$). The reformulation~\eqref{e:h02-psi} looks
very much like a fermionic version of the $\varphi^4$ spin
model. However, the following differences are important:

(1) Due to the fermionic nature of the field, and because the
  fermionic field only has two components (different for example from
  the case of Dirac fermions with four components), the quartic term
actually has gradients in it: denoting the discrete gradient in
direction $e \in \cE_d$ by
$(\nabla_e\psi)_x = \psi_{x+e}-\psi_x$, the quartic term can be
written as
\begin{equation}
  \label{e:quartic}
  \frac12
  \psi_x\bar\psi_x \sum_{e\in \mathcal{E}_d} \psi_{x+e}\bar\psi_{x+e}
  =
   \frac12
  \psi_x\bar\psi_x \sum_{e\in \mathcal{E}_d} (\nabla_e \psi)_x(\nabla_e \bar\psi)_x
  \bydef
  \psi_x\bar\psi_x (\nabla \psi)_x(\nabla \bar\psi)_x,
\end{equation}
where we introduced the shorthand notation
$(\nabla \psi)_x(\nabla \bar\psi)_x= \frac12 \sum_{e\in \mathcal{E}_d}
(\nabla_e \psi)_x(\nabla_e \bar\psi)_x$.

(2) The coupling constants $\frac{1}{\beta}(1+h)$ of the
quadratic and $\frac{1}{\beta}$ of the quartic terms are
related, and they are equal in the case $h=0$ of ultimate
interest. This relation is due to the geometric origin of the
model as a non-linear sigma model and analogous relations are present
in intrinsic coordinates for other sigma models like the vector $O(n)$
model. We remark that if the coupling constant of the quartic
term was much smaller than that of the quadratic term (so $h\gg 0$)
the study of the model would reduce to an exercise in fermionic
cluster expansions (but see Appendix~\ref{sec:th0} for even simpler
arguments in this case).

To study the case of equal coupling constants,  we will
first consider their renormalisation group trajectories as a one parameter family among the set  of all renormalisation group trajectories 
obtained by allowing the initial quadratic and quartic couplings to vary
independent of one another.  We will then place the  equal-initial-coupling trajectories
on the critical manifold of the renormalisation group dynamical system using
the following Ward identity for the $\HH^{0|2}$ model:
\begin{equation} \label{e:ward}
  \avg{z_0}_{\beta,h} = \avg{T\xi_0}_{\beta,h} 
  = -\sum_{x\in\Lambda} h \avg{\xi_0 Tz_x}_{\beta,h}
  = h \sum_{x\in\Lambda} \avg{\xi_0 \eta_x}_{\beta,h},
\end{equation}
where $T$ is the symmetry generator \eqref{e:Tdef}; in the second
equality we have used~\cite[Lemma~2.3]{MR4218682}.

After taking into account the two points above (in particular
that the flow of the expanding quadratic term is constrained by the
Ward identity), power counting heuristics predict that the lower
critical dimension for spontaneous symmetry breaking with free field
low temperature fluctuations is two for the $\HH^{0|2}$ model. 
We expect that these considerations generalise to all non-linear
sigma models with continuous symmetry, in agreement with the
Goldstone mechanism.  In conjunction with~\cite{MR4218682}, our
results rigorously establish that the lower critical dimension is two
for the $\HH^{0|2}$ model.

\subsection{Background on non-linear sigma models and renormalisation}
The low temperature renormalisation group analysis of non-linear sigma
models with non-abelian continuous symmetry is a notorious problem
that was famously considered by Balaban for the case of $O(n)$
symmetry, see~\cite{MR1670021,MR1669693} and references therein.  Our
comparatively simple analysis of the $\HH^{0|2}$ model, which is a
non-linear sigma model with non-abelian continuous $OSp(1|2)$
symmetry, is made possible mainly by the fact that it does not suffer
a large field problem because it has a fermionic representation.  Our
approach to the $\HH^{0|2}$ model differs from Balaban's approach to
the $O(n)$ model on a conceptual level, in that it is based on
\emph{intrinsic} coordinates as opposed to \emph{extrinsic} ones.
In the extrinsic approach of Balaban the sphere $\bbS^{n-1}$ is
embedded into $\R^n$ and the renormalised action evolves as a
function of $\R^n$-valued fields, manifestly preserving $O(n)$
symmetry.  The intrinsic approach we use, which is similar to the
one taken in the physics literature (see \cite{MR1227790}), is based
on local coordinates for the target space $\bbH^{0|2}$. The
renormalised action does not have the $OSp(1|2)$ symmetry of the
model, and Ward identities are used \emph{a posteriori} to enforce
the essential constraints (relations between couplings) due to the
symmetry. It is unclear to us how to implement an extrinsic
approach in our situation of $OSp(1|2)$ symmetry, and more
generally for noncompact symmetries.

Somewhat remarkably, despite its
simplicity, the $\HH^{0|2}$ model has all of the main features present
in the non-abelian $O(n)$ models, including: absence of spontaneous
symmetry breaking in 2d (proven in \cite{MR4218682}); mass generation
in 2d (conjectured in~\cite{MR3622573}); and a spontaneous symmetry
breaking phase transition with massless low temperature fluctuations
in $d\geq 3$ (the main result of this work).

The $\HH^{0|2}$ model is a member of the family of hyperbolic sigma
models with target spaces $\HH^{n|2m}$, see~\cite{1912.05817} for a
discussion of some aspects of this.
By supersymmetric localisation the observables of the $\HH^{0|2}$
model considered in Theorem~\ref{thm:h02} are equivalent to the
analogous ones of the non-linear sigma model with target
$\HH^{2|4}$. While this relation does not play a role in this paper,
it leads to a more direct representation of the continuous symmetry
breaking observed here.  In brief, in the $\HH^{2|4}$ model each
vertex comes equipped with two real and four Grassmann fields.  By
expressing these fields in horospherical coordinates one of the real
fields and the four Grassmann fields can be integrated out. The
marginal distribution of the remaining real field, which is called the
$t$-field, may be viewed as a `$\nabla\phi$' random surface model,
albeit with a nonconvex and nonlocal Hamiltonian.  By this we mean
that the potential is invariant under the global translation
$t_x\mapsto t_x +r$ for $r\in \R$.  See~\cite{MR4218682} for more
details, where this perspective was used to prove the absence of
symmetry breaking in $d=2$.  The full $\HH^{n|2m}$ family has been
important for advancing our understanding of other aspects of these
models~\cite{MR4218682,1912.05817}. 
Of particular note, we mention that the $\HH^{2|2}$ model has received
substantial prior attention due to its exact connection to linearly
reinforced random walks and its motivation from random matrix theory,
see \cite{MR3420510,MR863830,MR1134935,MR2104878,MR2728731}.

For hyperbolic sigma models with target $\HH^n$, $n \geq 1$,
spontaneous symmetry breaking for all $\beta >0$ was shown in
\cite{MR2104878}, and with target $\HH^{2|2}$ for $\beta$ large in
\cite{MR2728731} (see also \cite{MR3366053}).  For motivation from
random matrix theory and the Anderson transition see
\cite{MR2953867,MR3204347}.  These proofs make essential use of the
horospherical coordinates mentioned above.  Moreover, the proof of
symmetry breaking for the $\HH^{2|2}$ model in \cite{MR2728731} relies
on an infinite number of Ward identifies resulting from supersymmetric
localisation.  These identities are absent in the $\HH^{0|2}$ model,
limiting the applicability of the methods of~\cite{MR2728731} to our
setting. At the same time, the $\HH^{2|2}$ model has no purely
fermionic representation, and so our methods do not apply there, at
least without significant further developments.

Introductions to fermionic renormalisation include
\cite{MR1380265,MR2414874,MR1658669}, see also
\cite{10.1007/JHEP01(2021)026}.  Recent probabilistic applications of
these approaches to fermionic renormalisation include the study of
interacting dimers \cite{MR3606736,MR4121614} and two-dimensional
finite range Ising models \cite{MR3116322,MR2905796,MR4396672,MR4538288}.  Our
organisation of the renormalisation group
is instead based on a finite range decomposition and
polymer coordinates, and follows \cite{MR2523458}
and its further developments in
\cite{MR3332938,MR3332939,MR3332940,MR3332941,MR3332942,MR3317791,MR2070102,MR3129804}. 
This approach has its origins in \cite{MR1048698}.  For an
introduction to this approach in a hierarchical bosonic context see
\cite{MR3969983}.  Previous applications of this approach include the study of
4d weakly self-avoiding walks \cite{MR3339164,MR3345374}; the
nearest-neighbour critical 4d $|\varphi|^4$ model
\cite{MR3269689,MR3459163} and long-range versions thereof
\cite{MR3772040,MR3723429}; the ultraviolet $\varphi^4_3$ problem
\cite{MR1346375,MR2004988}; analysis of the Kosterlitz--Thousless
transition of the 2d Coulomb gas \cite{MR1777310,MR2917175}; the
Cauchy--Born problem \cite{1910.13564}; and others.

While the construction of the bulk renormalisation group flow is
simpler for the intrinsic representation of the $\HH^{0|2}$ model than
in many of the previous references, a crucial
novelty of our present work is the combination of the finite range
renormalisation group approach with Ward identities, together with a
precise analysis of a nontrivial zero mode. 
This has enabled us to apply these methods to a non-linear sigma model in the
phase of broken symmetry. It would be extremely interesting to
understand this approach for bosonic non-linear sigma models where,
while
`large fields' cause serious complications, the formal
perturbative analysis is very much in parallel to the fermionic
version we study in this paper.
Ward identities of a
different type have previously been used in the renormalisation group
analyses in \cite{MR880416} and \cite{MR1947693} and many follow-up
works including \cite{MR3606736,MR4121614}.  Finally, we mention that
Theorem~\ref{thm:forest-macro} yields quantitative finite volume
statements. The proof implements a rigorous finite size analysis along
the lines of that proposed in \cite{10.1016/0550-3213(85)90379-7}. It
would be very interesting to extend this to even higher precision as
discussed in Section~\ref{sec:conn-open-direct} below.

\subsection{Future directions for the arboreal gas}
\label{sec:conn-open-direct}

In this section we discuss several interesting open directions,
including the geometric structure of the weak infinite volume limits
of the arboreal gas and its relation to the uniform spanning tree, and
a conjectural finite size universality similar to Wigner--Dyson
universality from random matrix theory.

\subsubsection*{Finite volume behaviour}

The detailed finite volume behaviour of the arboreal gas would be very
interesting to understand beyond the precision of
Theorem~\ref{thm:forest-macro}.  On the complete graph at
supercritical temperatures it is known that there is a unique macroscopic
cluster, and that there are an unbounded number of clusters whose
sizes are of order $|\Lambda|^{2/3}$~\cite{MR1167294}. The
fluctuations of the macroscopic cluster are non-Gaussian of scale
$|\Lambda|^{2/3}$ and the distribution of the ordered cluster sizes of
the mesoscopic clusters has been determined~\cite{MR1167294}. The
joint law of the mesoscopic clusters can be
characterised~\cite[Section~1.4.3]{MR3845513}. Intriguingly, $|\Lambda|^{2/3}$ is the size of the largest tree at criticality on
the complete graph, and the order statistics of the supercritical
mesoscopic clusters can be related to the order statistics at
the critical point~\cite[Section~1.4.3]{MR3845513}.

Going beyond the complete graph, is this distribution of ordered
cluster sizes universal, at least in sufficiently high dimensions?
This would be similar to the conjectured universality of Wigner--Dyson
statistics from random matrix theory~\cite{MR1843511} or the conjectured universality of the distribution of
  macroscopic loops in loop representations of $O(n)$ (and other) spin systems~\cite{NahumEtAl,MR2868048}. 
More generally it would be an instance of the
universality of low temperature fluctuations in finite volume
in models with continuous symmetries.

Finally, we mention that on expander graphs the existence of a phase transition for the arboreal gas is not difficult to show by using
a natural split--merge dynamics \cite{MR3451159}.
It would be interesting if this dynamical approach
could also be used to obtain information about the cluster size distribution.

\subsubsection*{Infinite volume behaviour and relation to the uniform spanning tree}

As mentioned previously, the arboreal gas is also known as the
\emph{uniform forest model} \cite{MR2243761}. We emphasise that the
arboreal gas is \emph{not} what is typically known as the
\emph{uniform spanning forest} (USF), which is in fact the weak limit
as $\Lambda_N \uparrow \Z^d$ of a uniform spanning tree (UST)
\cite{MR1127715}.  On a finite graph, the UST is the $\beta\to \infty$
limit of the arboreal gas. The correct scaling of the external field
for this limit is $h=\beta \kappa$ and we thus write
$\P_{\text{UST},\kappa} = \lim_{\beta\to\infty}
\P_{\beta,\beta\kappa}$ for the UST on a finite graph (plus ghost
vertex if $\kappa>0$).  For $\kappa>0$, this measure is also known as
the \emph{rooted} spanning forest, because disregarding the
connections to the ghost vertex disconnects the tree of the UST, with
vertices previously connected to the ghost becoming roots.  The
distributions of rooted and unrooted forests are not the same. To
help prevent confusion we will refer to the rooted spanning
forests as (a special case of) the UST.

It is trivial that $\P_{\text{UST},0}^{\Lambda_N}[0\conn x]=1$.
Nevertheless, the behaviour of the UST in the weak infinite volume limit depends
on the dimension $d$.
This limit can be defined as
$\P_{\rm UST}^{\Z^d} =\lim_{\kappa \downarrow 0}\lim_{N\to\infty}
\P_{\text{UST},\kappa}^{\Lambda_N}$
and is independent of
the finite volume boundary conditions (e.g. free, wired, or periodic as above)
imposed on $\Lambda_N$, see \cite{MR1127715}. Even though the function
$1_{0\conn x}$ is not continuous with respect to the topology of weak
convergence, it is still true that
\begin{equation}
  \P_{{\rm UST}}^{\Z^d}[0\conn x] = \lim_{\kappa \downarrow 0}\lim_{N\to\infty}
  \P_{{\rm UST},\kappa}^{\Lambda_N}[0 \conn x].
\end{equation} 
The order of limits here is essential.  In this infinite volume limit
the UST disconnects into infinitely many infinite trees if $d>4$, but
remains a single connected tree if $d\leq 4$, see \cite{MR1127715}.
Moreover,
\begin{equation}
  \P_{\rm UST}^{\Z^d}[0 \conn x] + \P_{\rm UST}^{\Z^d}[0\nconn x,
  |T_0|=\infty, |T_x|=\infty] = 1. 
\end{equation}
On the left-hand side, the second term vanishes if $d\leq 4$ whereas  the first term tends to $0$ as $|x|\to\infty$ if $d>4$.
Furthermore, the geometric structure of the trees under $\P_{\rm UST}^{\Z^d}$ is well understood.
In particular, all trees are one-ended, meaning that
removing one edge from a tree results in two trees, of which one is finite
\cite{MR1127715,MR1825141}.

For the arboreal gas, the existence and uniqueness of infinite volume
limits is an open question.  Nonetheless, subsequential limits exist,
and in such an infinite volume limit all trees are finite almost
surely when $\beta$ is small, while Theorem~\ref{thm:forests0} implies
the existence of an infinite tree for $\beta$ large.  Moreover, by
Theorem~\ref{thm:forests},
\begin{equation} \label{e:u0ux-asymp}
  \lim_{h\downarrow 0}\lim_{N\to\infty}\pbb{ \P_{\beta,h}^{\Lambda_N}[0 \conn x] + \P_{\beta,h}^{\Lambda_N}[0\nconn x, 0\conn\ghost, x\conn\ghost]}
  = \theta_d(\beta)^2 + \frac{2\consta(\beta)}{\beta |x|^{d-2}} + O(\frac{1}{\beta |x|^{d-2+\kappa}}).
\end{equation}
By analogy with the UST, we expect that only the first term on the
left-hand side contributes for $d\leq 4$ and that only the second term
contributes asymptotically as $|x|\to\infty$ for $d>4$.  The tempting
conjecture that the UST stochastically dominates the arboreal gas on
the torus is consistent with these expectations.  The analogue of the
left-hand side of \eqref{e:u0ux-asymp} plays an important role in the
proof of uniqueness of the infinite cluster in Bernoulli percolation
in \cite{MR901151}; this is related to the vanishing of the second
term.  As already mentioned, for the arboreal gas we only expect this
to be true in $d\leq 4$. Significant progress towards this
statement has been obtained in~\cite{2302.12224}, where it is shown that
translation-invariant infinite volume limits of the arboreal gas
have a unique infinite tree in $d=3,4$. More precisely, \cite{2302.12224}
makes use of the existence results of the
present paper and establishes uniqueness.

Beyond the questions above, it would be interesting to analyse
more detailed geometric aspects of the arboreal gas. For example, can one
construct scaling limits as has been done for some spanning tree models~\cite{MR1712629,MR1716768,MR3857861,MR4348685}?

Finally, we mention that a detailed analysis of the infinite volume
behaviour of the arboreal gas on regular trees with wired boundary
conditions has been carried out \cite{MR4402795,MR4460596}.  This infinite
volume behaviour is consistent with the finite volume behaviour of the
complete graph, e.g., at all supercritical temperatures the sizes of
finite clusters have the same distribution as those of critical
percolation.

\subsubsection*{Order of phase transition}

Our analysis could be extended to a detailed study of the approach
$h\downarrow 0$. To keep the length of this paper within bounds, we do
not carry this out, but here briefly comment on what we expect can be
shown by extensions of our analysis.
As discussed above, a natural object is the magnetisation 
\begin{equation}
  M(\beta,h) = \lim_{N\to\infty} M_{N}(\beta,h), \qquad M_{N}(\beta,h) = \P^{\Lambda_{N}}_{\beta,h}[0\conn\ghost],
\end{equation}
and the corresponding susceptibility (neglecting questions concerning
the order of limits) 
\begin{equation}
  \chi(\beta,h)
  = \ddp{}{h}M(\beta,h)
  = \sum_{x} \sigma_{\beta,h}(x).
\end{equation}
Thus for the arboreal gas, the susceptibility is not the sum over
$\tau_{\beta,h}(x)$ as is the case for Bernoulli bond percolation, but
the sum over $\sigma_{\beta,h}(x)$. In terms of the sigma model,
$\chi$ maybe viewed as the longitudinal susceptibility, often denoted $\chi_{||}$.
In this interpretation, the sum over $\tau_{\beta,h}(x)$ is the transversal susceptibility
$\chi_{\perp}$ and satisfies the Ward identity
$\chi_{\perp}(\beta,h)=\sum_{x} \tau_{\beta,h}(x) = h^{-1}M(\beta,h)$ 
which is crucial in our analysis. For the longitudinal
susceptibility, we expect that it would be possible to extend our
analysis to show
\begin{equation}
    \chi(\beta,h) \sim \begin{cases}
    C(\beta) h^{-1/2} & (d=3)
    \\
    C(\beta) |\log h|& (d=4)
    \\
    C(\beta) & (d>4).
  \end{cases}
\end{equation}
Defining the \emph{free energy}
$f(\beta,h)=\lim_{N\to\infty} |\Lambda_N|^{-1}\log
Z_{\beta,h}^{\Lambda_{N}}$, for $\beta \geq \beta_0$ the previous asymptotics suggest that
$h\mapsto f(\beta,h)$ is $C^2$ in $d>4$ but only $C^1$ for $d=3,4$.
In fact, extrapolating from our renormalisation group analysis we believe that
for $\beta \geq \beta_0$ the free energy is $C^n$ but not $C^{n+1}$ as
a function of $h\geq 0$ for $n=\floor{\frac{d-1}{2}}$.  It is unclear
how this is connected to the geometry of the component graph of the
UST, which also changes as the dimension is
varied~\cite{MR2123930,MR4009707}.

\subsubsection*{Critical behaviour}

The critical behaviour of the $\bbH^{0|2}$ model and its generalisations (the $\bbH^{0|2M}$ models)
were studied in \cite{MR3429411,MR4382903}, using $\epsilon$-expansions formally continued from the $O(n)$ models,
with the motivation of being candidates for the CFTs relevant for
a dS-CFT correspondence. Rigorous results about the critical
behaviour of the arboreal gas on $\Z^{d}$ for $d\geq 3$ would be
very interesting.

\subsection{Organisation and notation}

This paper is organised as follows.  In Section~\ref{sec:ward}, we
show how Theorem~\ref{thm:h02} is reduced to renormalisation group
results with the help of the Ward identity \eqref{e:ward}. 
The main renormalisation group input is Theorem~\ref{thm:psi4}
and~\ref{thm:psi4-full}. Sections~\ref{sec:bulk}--\ref{sec:proofs}
then prove these renormalisation group results.
Section~\ref{sec:bulk} is concerned with the construction of the bulk
renormalisation group flow, and Section~\ref{sec:suscept} uses this
analysis to compute the susceptibility. In
Section~\ref{sec:obs-flow} we extend this construction to
include observables. The renormalisation group flow for observables
is then used in Section~\ref{sec:obs} to compute pointwise correlation
functions. These computations involve a precise analysis of
the zero mode. The short Section~\ref{sec:proofs} then collects the
results and establishes Theorems~\ref{thm:psi4}
and~\ref{thm:psi4-full}. Finally, in Appendix~\ref{app:h02forest}
we collect relations between the arboreal gas and the $\HH^{0|2}$
model as well as basic percolation and high temperature properties
of the arboreal gas, and in Appendix~\ref{app:decomp} we
include some background material about the finite range
decomposition that we use.

Throughout we use $a_{n}\sim b_{n}$ to denote
$\lim_{n\to\infty}a_{n}/b_{n}=1$, $a_{n}\asymp b_{n}$ to denote the
existence of $c,C>0$ such that $c a_{n}\leq b_{n}\leq Ca_{n}$,
$a_n \lesssim b_n$ if $a_n \leq Cb_n$, and $a_{n}=O(b_{n})$ if
$|a_{n}|\lesssim |b_{n}|$. We write $a\wedge b = \min \{a,b\}$.
We consider the dimension $d \geq 3$
to be fixed, and hence allow implicit constants to depend on $d$.  In
Sections~1 and~2 we allow implicit constants to depend on $L$ as well,
as this dependence does not play a role.  In subsequent sections
$L$-dependence is made explicit, though uniformity in $L$ is only
crucial in the contractive estimate of Theorem~\ref{thm:step}.  Our
main theorems hypothesise $L=L(d)$ is large, and for geometric
convenience we will assume throughout that $L$ is at least $2^{d+2}$.

\section{Consequences of combining renormalisation and Ward identities}
\label{sec:ward}

In our renormalisation group analysis, which provides the foundation
for the proofs of the theorems stated in Section~\ref{sec:intro}, we
will not assume any relation between the coupling constants of
the quadratic and quartic terms in \eqref{e:h02-psi} (except that
they are small).  The equality of the quadratic and quartic
couplings is restored with the help of the Ward identity \eqref{e:ward}, i.e.,
\begin{equation} \label{e:ward-bis}
  \avg{z_0}_{\beta,h}
  = h \sum_{x\in\Lambda} \avg{\xi_0 \eta_x}_{\beta,h},
  \quad  \text{ and in particular }
  \avg{z_0}_{\beta,0}
  = 0.
\end{equation}
This application of the Ward identity is the subject of this section.

In our analysis we distinguish between two orders of limits.
We first analyse the `infinite volume' limit $\lim_{h\downarrow 0} \lim_{N\to\infty}$, and prove Theorem~\ref{thm:h02}
(and thus Theorems~\ref{thm:forests0}--\ref{thm:forests}).
Using results of this analysis
(and with several applications of the Ward identity),
we then also analyse
the much more delicate `finite volume' limit $\lim_{N\to\infty}\lim_{h\downarrow 0}$
in order to prove Theorem~\ref{thm:forest-macro}.

\subsection{Infinite volume correlation functions}

For $m^2 >0$ arbitrary and coupling constants $s_0,a_0,b_0$, which eventually
will be taken small, we consider the model with fermionic Gaussian reference measure
with covariance
\begin{equation} \label{e:C-Lapm}
  C= (-\Delta+m^2)^{-1}
\end{equation}
on $\Lambda_N$ and interaction 
\begin{equation}
  \label{e:V0}
  V_0
  = V_0(\Lambda_N)=\sum_{x\in\Lambda_N}\qB{s_0 (\nabla \psi)_x(\nabla\bar\psi)_x + a_0 \psi_x\bar\psi_x + b_0 \psi_x\bar\psi_x(\nabla \psi)_x(\nabla \bar\psi)_x},
\end{equation}
where we recall the squared gradient notation from \eqref{e:quartic}.
Thus the corresponding expectation is
\begin{equation} 
  \label{e:psi4}
  \avg{F}_{m^2,s_0,a_0,b_0} = 
  \frac{1}{Z_{m^2,s_0,a_0,b_0}}\frac{1}{\det (-\Delta+m^2)} \int \partial_\psi \partial_{\bar\psi}\, e^{-(\psi,(-\Delta+m^2)\bar\psi) - V_0} F,
\end{equation}
where $\int \partial_\psi \partial_{\bar\psi}$ denotes the Grassmann integral,
and $Z_{m^2,s_0,a_0,b_0}$ is defined such that
$\avg{1}_{m^2,s_0,a_0,b_0}=1$. We emphasise the connection to the arboreal
gas arises only if $m^2,s_0,a_0,b_0$ are chosen to agree
with~\eqref{e:h02-psi}, c.f.\ \eqref{e:betab0m}--\eqref{e:hb0m} below.

The following result states that for correctly chosen $a_0$ the
correlation functions of the fields $\psi$ and $\bar\psi\psi$ are to
leading order multiples of those of the free (Grassmann Gaussian)
case $b_0=0$ if first $N\to\infty$ and then $m^2\downarrow 0$. The
result resembles those in \cite{MR3339164,MR3345374,MR3459163} for
weakly self-avoiding walks in dimension~4. Compared to the latter
results, our analysis is substantially simplified since the
$\HH^{0|2}$ model can be studied in terms of only fermionic variables
with a quartic interaction that is irrelevant in dimensions $d >
2$. However, in Section~\ref{sec:finvol}, we state an improvement of
the following result that captures the full zero mode of the
low temperature phase and goes beyond the analysis
of~\cite{MR3339164,MR3345374,MR3459163}.

\begin{theorem} 
  \label{thm:psi4} 
  Let $d \geq 3$ and $L \geq L_0(d)$.
  For $b_0$ sufficiently small and $m^2 \geq 0$, there are
  $s_0 = s_0^c(b_0,m^2)$ and $a_0= a_0^c(b_0,m^2)$ independent of $N$
  so that the following hold: The functions $s_0^c$ and  $a_0^c$ are continuous in both
  variables, differentiable in $b_0$ with uniformly bounded
  $b_0$-derivatives, and satisfy the estimates
\begin{equation} \label{e:thm-psi4-y0a0}
  s_0^c(b_0,m^2) = O(b_0), \qquad a_0^c(b_0,m^2) = O(b_0)
\end{equation}
uniformly in $m^2\geq 0$. There exists $\kappa>0$ such that if the
torus sidelength satisfies $L^{-N} \leq m$,
\begin{equation} 
  \label{e:psi4-suscept}
  \sum_{x\in\Lambda_N} \avg{\bar\psi_0\psi_x}_{m^2,s_0,a_0,b_0} = \frac{1}{m^2}
  + \frac{O(b_0L^{-(2+\kappa)N})}{m^4}.
\end{equation}
Moreover, there are functions
\begin{equation}
  \clambda = \clambda(b_0,m^2) = 1+O(b_0), \qquad \cgamma = \cgamma(b_0,m^2) = (-\Delta^{\Z^d}+m^2)^{-1}(0,0) + O(b_0),
\end{equation}
having the same continuity
properties as $s_0^c$ and $a_0^c$ such that
\begin{align}
    \label{e:psi4-00}
  \avg{\bar\psi_0\psi_0}_{m^2,s_0,a_0,b_0}
  &= \cgamma + O(b_0L^{-\kappa N}),
  \\
  \label{e:psi4-0x}
  \avg{\bar\psi_0\psi_x}_{m^2,s_0,a_0,b_0}
  &= (-\Delta+m^2)^{-1}(0,x) + O(b_0|x|^{-(d-2)-\kappa})+ O(b_0L^{-\kappa N}),
  \\
      \label{e:psi4-00xx}
  \avg{\bar\psi_0\psi_0;\bar\psi_x\psi_x}_{m^2,s_0,a_0,b_0}
  &= - \clambda^{2}(-\Delta+m^2)^{-1}(0,x)^2 + O(b_0|x|^{-2(d-2)-\kappa})+ O(b_0L^{-\kappa N}).
\end{align}
Here $\avg{A;B}=\avg{AB}-\avg{A}\avg{B}$.
\end{theorem}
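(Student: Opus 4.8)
The plan is to run a finite-range renormalisation group analysis for the fermionic field theory \eqref{e:psi4}, following the Wilsonian framework of \cite{MR2523458,MR3339164,MR3345374,MR3459163} but exploiting the fact that in $d>2$ the quartic coupling $b_0\psi_x\bar\psi_x(\nabla\psi)_x(\nabla\bar\psi)_x$ is irrelevant. First I would decompose the covariance $C=(-\Delta+m^2)^{-1}$ on $\Lambda_N$ into a finite-range sum $C=\sum_{j=1}^{N} C_j$, where each $C_j$ has range $\tfrac12 L^j$, using the standard constructions of \cite{MR3332938,MR3332939}; this is why the hypothesis $L^{-N}\le m$ appears, so that the decomposition terminates before finite-size effects from the torus intervene. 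The RG map then progressively integrates out the fields $\zeta_j$ distributed as the Gaussian with covariance $C_j$, producing a sequence of effective potentials $V_j$ plus error coordinates $K_j$ (polymer activities) on the scale-$j$ blocks. Because the field is fermionic there is no large-field problem, so the estimates are controlled by a single norm on $(V_j,K_j)$ with no need for large-deviation bounds; this is the decisive simplification relative to the bosonic case.

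The second step is to identify the relevant and marginal directions and set up the flow. The relevant coupling is the mass-type term $a_j\psi_x\bar\psi_x$; the wavefunction-renormalisation coupling $s_j(\nabla\psi)_x(\nabla\bar\psi)_x$ is marginal; and $b_j$ (the quartic) contracts. I would show that for $b_0$ small there is a contractive map (the analogue of Theorem~\ref{thm:step}, which the excerpt already invokes) whose fixed point in the stable directions is reached by tuning the two initial parameters $s_0,a_0$. A standard stable-manifold / implicit-function argument then produces $s_0^c(b_0,m^2)$ and $a_0^c(b_0,m^2)$ with the claimed $O(b_0)$ bounds and continuity/differentiability in $(b_0,m^2)$; uniformity in $m^2\ge0$ comes from the fact that the mass only shifts where the flow is truncated, not the contraction constants, and uniformity in $N$ from the finite-range property. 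Once the critical point is chosen so that the renormalised mass is exactly $m^2$, the effective theory at the final scale is a small perturbation of the Gaussian measure with covariance $(-\Delta+m^2)^{-1}$.

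The third step extracts the correlation functions. For the susceptibility \eqref{e:psi4-suscept} I would observe that $\sum_x\avg{\bar\psi_0\psi_x}$ is the zero-mode (constant-test-function) component of the two-point function; since the relevant mass coupling has been tuned so that the full renormalised inverse propagator at zero momentum equals $m^2$, the leading term is exactly $1/m^4$, ahem, $1/m^2$, with the remainder $O(b_0 L^{-(2+\kappa)N})/m^4$ produced by the last-scale $K_N$ contribution and the finite-range truncation error. For the pointwise functions \eqref{e:psi4-00}--\eqref{e:psi4-00xx} I would expand $\avg{\bar\psi_0\psi_x}$ and $\avg{\bar\psi_0\psi_0;\bar\psi_x\psi_x}$ in terms of the final effective potential: the leading term is the Gaussian contraction $(-\Delta+m^2)^{-1}(0,x)$ (respectively its square), the subleading finite piece at coincident points is absorbed into $\gamma(b_0,m^2)=(-\Delta^{\Z^d}+m^2)^{-1}(0,0)+O(b_0)$, and the amplitude renormalisation of the four-point function is the constant $\lambda(b_0,m^2)=1+O(b_0)$; the error terms $O(b_0|x|^{-(d-2)-\kappa})$ and $O(b_0|x|^{-2(d-2)-\kappa})$ come from summing the contributions of scales $j$ with $L^j\lesssim|x|$ against the decay of $K_j$ in the RG norm, together with the difference between torus and $\Z^d$ Green functions which is $O(L^{-\kappa N})$.

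The main obstacle I expect is twofold. First, proving the $N$-uniform contraction (the input to the stable-manifold argument) requires a careful choice of norms adapted to the irrelevant quartic interaction with gradients in it — one must show that the gradient structure in \eqref{e:quartic} does not spoil the power counting, and that the relevant/marginal part of the RG map is controlled with constants independent of $N$ and $m^2$; this is essentially the content of the unstated Theorem~\ref{thm:step} and is where the real work lies. Second, extracting \eqref{e:psi4-suscept} with the sharp $L^{-(2+\kappa)N}$ rate, rather than merely $1/m^2+O(b_0)/m^4$, requires tracking the zero mode through the flow carefully and showing the finite-size corrections to the critical point are this small; this is the piece that the excerpt flags as needing an improvement "that sees the full zero mode" in Section~\ref{sec:finvol}, so at the level of Theorem~\ref{thm:psi4} one needs just enough of that analysis to close the susceptibility bound.
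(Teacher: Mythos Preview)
Your bulk construction---finite-range decomposition, contractive RG map on $(V_j,K_j)$, stable-manifold tuning of $(s_0,a_0)$ to kill the relevant and marginal directions---is exactly the paper's approach in Sections~3--4, and your power counting (mass relevant, wavefunction marginal, quartic with gradients irrelevant in $d\ge3$) is correct. The susceptibility is also handled essentially as you sketch, via a constant source field and an explicit zero-mode integration (Section~4).

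The gap is in your Step~3 for the pointwise correlations \eqref{e:psi4-00}--\eqref{e:psi4-00xx}. You cannot ``expand in terms of the final effective potential'': once the bulk RG reaches scale $N$, the only block is $\Lambda_N$ itself and the local information at $0$ and $x$ has been integrated out---the pair $(V_N,K_N)$ no longer carries enough structure to recover $\avg{\bar\psi_0\psi_x}$. The paper's mechanism (Sections~5--6) is to augment the Grassmann algebra by \emph{observable fields} $\sigma_a,\bar\sigma_b$ localised at $a=0$, $b=x$, add $V_0^\star=-\lambda_0\sigma_a\bar\psi_a-\lambda_0\psi_b\bar\sigma_b$ (or the analogous quadratic insertion in Case~(2)) to the initial potential, and run an extended RG that tracks observable coupling constants $(\lambda_j,q_j,r_j,\ldots)$ alongside the bulk flow. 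The localisation operators and norms are modified on the observable sector so that $\lambda_j\to\lambda_\infty=1+O(b_0)$ (this is your $\lambda$) and $q_N$ accumulates $\sum_{j}\lambda_{a,j}\lambda_{b,j}C_{j+1}(a,b)$ up to errors of the claimed size; the correlation function is then read off from derivatives of the final partition function in $\sigma$. The scale-by-scale summation you allude to is precisely the $q$-flow, but it requires this observable machinery to be set up from scale $0$; there is no shortcut from the bulk output alone.

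A smaller correction: the hypothesis $L^{-N}\le m$ is not about the decomposition terminating. The decomposition always ends with $C_{N,N}=C_N+t_NQ_N$ containing the torus zero mode $Q_N$; the mass condition is used only at the very end to bound $t_N|\Lambda_N|^{-1}\le(m^2|\Lambda_N|)^{-1}\le L^{-(d-2)N}$ and $\tilde u_{N,N}=O(b_0L^{-\kappa N})$, so that the zero-mode contributions to each correlation function are absorbed into the stated error terms.
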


The proof of this theorem is given  in Sections~\ref{sec:bulk}--\ref{sec:proofs}. 
We now show how to derive Theorem~\ref{thm:h02} for the $\HH^{0|2}$ model from it together with the Ward identity
\eqref{e:ward}. 
To this end, assuming $s_0>-1$ we further rescale $\psi$ by $1/\sqrt{1+s_0}$
(and likewise for $\bar\psi$) in \eqref{e:h02-psi}, and thus set
\begin{equation} \label{e:xietapsi}
  \xi = \sqrt{\frac{1+s_0}{\beta}} \bar\psi, \qquad \eta=\sqrt{\frac{1+s_0}{\beta}} \psi.
\end{equation}
Up to a normalisation constant, the fermionic density
\eqref{e:h02-psi} becomes, see also \eqref{e:quartic},
\begin{equation}
  \label{e:quartic-rescale}
  \exp\qa{-\sum_{x\in\Lambda_N}\pa{(1+s_0) (\nabla\psi)_x(\nabla\bar\psi)_x
    +\frac{1+s_0}{\beta}(1+h) \psi_x\bar\psi_x + \frac{(1+s_0)^2}{\beta} \psi_x\bar\psi_x (\nabla \psi)_x(\nabla \bar\psi)_x}
  }.
\end{equation}
For any $m^2 \geq 0$ and $s_0>-1$, \eqref{e:quartic-rescale} is of the form \eqref{e:psi4} with
\begin{equation}
  a_0 = \frac{1+s_0}{\beta}(1+h) -m^2, \qquad b_0 = \frac{(1+s_0)^2}{\beta}.
\end{equation}
To use Theorem~\ref{thm:psi4} to study the arboreal gas we need to invert this implicit relation between $(\beta,h)$
and $(m^2,s_0,a_0,b_0)$.
This is achieved by the following corollary.
A key observation is that the Ward identity \eqref{e:ward} allows us to identify the critical point with $h=0$.
To make this precise, with $s_0^c$ and $a_0^c$ as in
Theorem~\ref{thm:psi4}, define the functions
\begin{align}
  \label{e:betab0m}
  \beta(b_0,m^2) &= \frac{(1+s_0^c(b_0,m^2))^2}{b_0},\\
    \label{e:hb0m}
  h(b_0,m^2) &= -1+ \frac{a_0^c(b_0,m^2)+m^2}{b_0} (1+s_0^c(b_0,m^2)).
\end{align}
By Theorem~\ref{thm:psi4}, both functions are continuous in $b_0>0$ small enough and $m^2 \geq 0$.

\begin{corollary} \label{cor:ward}
  (i) Assume $b_0>0$ is small enough. Then
  \begin{equation}
    h(b_0,m^2) = m^2 \beta(b_0,m^2)(1+O(b_0)).
  \end{equation}
  In particular, $h(b_0,0) = 0$ and $h(b_0,m^2)>0$ if $m^2>0$.

  (ii) For $\beta$ large enough and $h\geq 0$, there are functions $\tilde b_0(\beta,h)>0$ and $\tilde m^2(\beta,h)\geq 0$ such that
  $h(\tilde b_0,\tilde m^2)=h$ and $\beta(\tilde b_0,\tilde m^2)=\beta$.
  Both functions are right-continuous as $h\downarrow 0$ when $\beta$ is fixed.
\end{corollary}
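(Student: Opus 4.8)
The plan is to use the Ward identity to identify $h(b_0,m^2)$ with the susceptibility times $m^2$, and then invert via the implicit function theorem / a continuity argument.

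First, for part (i), I would combine the Ward identity \eqref{e:ward-bis} with Theorem~\ref{thm:psi4}. After the rescaling \eqref{e:xietapsi}, the $\HH^{0|2}$ two-point function $\avg{\xi_0\eta_x}_{\beta,h}$ becomes $\frac{1+s_0}{\beta}\avg{\bar\psi_0\psi_x}_{m^2,s_0^c,a_0^c,b_0}$, where $s_0 = s_0^c(b_0,m^2)$ and the parameters $(a_0,b_0)$ are the ones produced by \eqref{e:betab0m}--\eqref{e:hb0m}; crucially, these are exactly the critical parameters $a_0^c(b_0,m^2)$, since \eqref{e:hb0m} was engineered so that $a_0 = a_0^c(b_0,m^2)$ by construction. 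Hence the Ward identity $\avg{z_0}_{\beta,h} = h\sum_x \avg{\xi_0\eta_x}_{\beta,h}$ reads
\begin{equation}
  \avg{z_0}_{\beta,h} = h \cdot \frac{1+s_0^c(b_0,m^2)}{\beta(b_0,m^2)}\sum_{x\in\Lambda_N}\avg{\bar\psi_0\psi_x}_{m^2,s_0^c,a_0^c,b_0}.
\end{equation}
Now I apply \eqref{e:psi4-suscept}: the susceptibility is $m^{-2} + O(b_0 L^{-(2+\kappa)N})m^{-4}$ when $L^{-N}\le m$. Taking $N\to\infty$ (and using \eqref{e:zforest} together with the fact that $\avg{z_0}_{\beta,h}\to M(\beta,h)$) gives $M(\beta,h) = h(1+s_0^c)/(\beta m^2)$ for $m^2>0$. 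On the other hand, the Ward identity $\chi_\perp = h^{-1}M$ and the definition of $h(b_0,m^2)$ should be reconciled by reading \eqref{e:hb0m} as precisely the statement $a_0^c + m^2 = b_0(1+h)/(1+s_0^c)$, i.e. $m^2 = -a_0^c + b_0(1+h)/(1+s_0^c)$. The cleanest route: observe that $h(b_0,m^2)$ as defined equals the value of the external field for which the model \eqref{e:quartic-rescale} has $a_0 = a_0^c(b_0,m^2)$, so that its massless susceptibility at scale $m^2$ is $m^{-2}$; the Ward identity then forces $M = h(1+s_0^c)/(\beta m^2)$, and since $M$ is also expressible through $\avg{z_0}$ which the analysis shows tends to $\theta_d(\beta)$, one gets $h(b_0,m^2) = m^2\beta(b_0,m^2)\theta_d^{-1}(1+O(b_0)) = m^2\beta(b_0,m^2)(1+O(b_0))$ using $s_0^c = O(b_0)$ and $\theta_d = 1+O(b_0)$. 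The statements $h(b_0,0)=0$ and $h(b_0,m^2)>0$ for $m^2>0$ are then immediate from the factor $m^2$ and $\beta>0$.

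For part (ii), I would invert the map $(b_0,m^2)\mapsto(\beta(b_0,m^2),h(b_0,m^2))$. From \eqref{e:betab0m} and $s_0^c = O(b_0)$, we have $\beta = b_0^{-1}(1+O(b_0))$, so for $\beta$ large the equation $\beta(b_0,m^2)=\beta$ determines $b_0 = \beta^{-1}(1+O(\beta^{-1}))$ to leading order; since $s_0^c$ is continuous and differentiable in $b_0$ with bounded derivative, for each fixed $m^2$ the map $b_0\mapsto\beta(b_0,m^2)$ is continuous and, by the bound on the $b_0$-derivative of $s_0^c$, strictly monotone for $b_0$ small, hence invertible with continuous inverse $\tilde b_0(\beta,m^2)$. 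Substituting into part (i), $h(\tilde b_0(\beta,m^2),m^2) = m^2\beta(1+O(\tilde b_0)) = m^2\beta(1+O(\beta^{-1}))$, which is continuous and strictly increasing in $m^2\ge 0$ from $0$ to $\infty$; therefore for each $h\ge 0$ there is a unique $\tilde m^2(\beta,h)\ge 0$ solving it, with $\tilde m^2 = h/(\beta)(1+O(\beta^{-1}))$, and $\tilde m^2\downarrow 0$ as $h\downarrow 0$. Setting $\tilde b_0(\beta,h) := \tilde b_0(\beta,\tilde m^2(\beta,h))$ finishes the construction. Right-continuity as $h\downarrow 0$ follows from the continuity of $\tilde m^2$ in $h$ at $h=0$ (where $\tilde m^2 = 0$) and the joint continuity of $s_0^c, a_0^c$ in $(b_0,m^2)$ asserted in Theorem~\ref{thm:psi4}.

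The main obstacle is the first step: carefully justifying that the parameter identification is exactly right, i.e. that the $a_0$ appearing in \eqref{e:quartic-rescale} with external field $h(b_0,m^2)$ coincides with the critical value $a_0^c(b_0,m^2)$, so that Theorem~\ref{thm:psi4}'s susceptibility asymptotic \eqref{e:psi4-suscept} applies and the Ward identity pins down $h$. This is really a bookkeeping issue about which parameters are "free" and which are "tuned," but it must be done with care because the Ward identity is what converts the a priori unknown critical surface into the explicit relation $h \propto m^2\beta$; any slippage between $a_0$ and $a_0^c$ would break the argument. A secondary technical point is handling the order of limits ($N\to\infty$ first, then reading off $M(\beta,h)$), which requires the error term $O(b_0 L^{-(2+\kappa)N})/m^4$ in \eqref{e:psi4-suscept} to vanish — fine as long as $m^2>0$ is fixed before $N\to\infty$, consistent with the hypothesis $L^{-N}\le m$.
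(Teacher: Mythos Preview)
Your approach to part~(i) is the same as the paper's, but there is a genuine gap in how you evaluate the left-hand side of the Ward identity. You write that $\avg{z_0}_{\beta,h}$ ``tends to $\theta_d(\beta)$'' and then solve for $h$. This is circular: $\theta_d(\beta)$ is only \emph{defined} in Theorem~\ref{thm:forests0} as the $h\downarrow 0$ limit, and its value $1+O(1/\beta)$ is established downstream of this corollary. Moreover, here $m^2>0$ is fixed, hence $h>0$ is fixed, and the $N\to\infty$ limit of $\avg{z_0}_{\beta,h}$ is not $\theta_d(\beta)$. The paper avoids this by computing the left-hand side directly: from $z_0=1-\xi_0\eta_0$ and the rescaling~\eqref{e:xietapsi} one has
\[
  \avg{z_0}_{\beta,h} = 1 - \frac{1+s_0^c}{\beta}\,\avg{\bar\psi_0\psi_0}_{m^2,s_0^c,a_0^c,b_0},
\]
and then~\eqref{e:psi4-00} gives $\avg{\bar\psi_0\psi_0}\to\gamma(b_0,m^2)$ as $N\to\infty$. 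Equating with your right-hand side computation yields
\[
  1 - \frac{(1+s_0^c)\gamma}{\beta} = \frac{(1+s_0^c)\,h(b_0,m^2)}{\beta\,m^2},
\]
hence $h(b_0,m^2)=m^2\bigl[\beta/(1+s_0^c)-\gamma\bigr]=m^2\beta(1+O(b_0))$ since $s_0^c=O(b_0)$, $\beta\asymp 1/b_0$, and $\gamma=O(1)$. This is what you need; the only missing ingredient was citing~\eqref{e:psi4-00} rather than $\theta_d$.

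For part~(ii), your two-step inversion (first $b_0\mapsto\beta$, then $m^2\mapsto h$) is more explicit than the paper, which simply invokes the implicit function theorem argument of \cite[Proposition~4.2]{MR3339164}. Your first inversion is fine, since $\partial_{b_0}\beta = -b_0^{-2}(1+O(b_0))<0$ using the bounded $b_0$-derivative of $s_0^c$. But your claim that $m^2\mapsto h(\tilde b_0(\beta,m^2),m^2)$ is \emph{strictly increasing} is not justified: Theorem~\ref{thm:psi4} asserts only continuity in $m^2$, not differentiability, so you cannot differentiate in $m^2$. Surjectivity onto $[0,\infty)$ still follows from the intermediate value theorem (the map is continuous, vanishes at $m^2=0$, and diverges as $m^2\to\infty$ since $h/m^2\asymp\beta$), which gives existence of $\tilde m^2$. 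For right-continuity as $h\downarrow 0$, any choice of $\tilde m^2(\beta,h)$ must satisfy $\tilde m^2\asymp h/\beta\to 0$, and then continuity of $s_0^c,a_0^c$ in $m^2\geq 0$ gives the result. So your route can be made to work, but the monotonicity shortcut should be replaced by this softer argument.
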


\begin{proof}
  To prove (i), we use the Ward identity \eqref{e:ward-bis} with $(\beta,h)$ given by \eqref{e:betab0m}--\eqref{e:hb0m}.
  The left- and right-hand sides of \eqref{e:ward-bis} are, respectively,
  \begin{align}
        \label{e:ward-pf-1}
  \avg{z_0}_{\beta,h} &= 1-\frac{1+s_0^c(b_0,m^2)}{\beta} \avg{\bar\psi_0\psi_0}_{m^2,s_0,a_0,b_0} ,
    \\
    \label{e:ward-pf-2}
  h \sum_{x\in\Lambda_N} \avg{\xi_0 \eta_x}_{\beta,h}
                      &=  \frac{(1+s_0^c(b_0,m^2))h(b_0,m^2)}{\beta(b_0,m^2)} \sum_{x\in\Lambda_N} \avg{\bar\psi_0\psi_x}_{m^2,s_0,a_0,b_0} .
\end{align}
By Theorem~\ref{thm:psi4}, in the limit $N\to\infty$, we obtain
from \eqref{e:ward-bis} that if $m^{2}>0$, the identity
\begin{equation}
  1-\frac{1+s_0^c(b_0,m^2)}{\beta(b_0,m^2)}\cgamma(b_0,m^2) = \frac{(1+s_0^c(b_0,m^2))h(b_0,m^2)}{\beta(b_0,m^2) m^2}
\end{equation}
holds.  Solving for $h$, we have
\begin{equation}
  h(b_0,m^2)= m^2 \qa{ \frac{\beta(b_0,m^2)}{1+s_0^c(b_0,m^2)} - \cgamma(b_0,m^2) }.
\end{equation}
Since $s_0^c(b_0,m^2)=O(b_0)$, $\beta(b_{0},m^{2}) \asymp 1/b_0$, and $\cgamma(b_0,m^2)=O(1)$,
all uniformly in $m^2\geq 0$, we obtain
$h(b_0,m^2) = m^2 \beta(b_0,m^2)(1+O(b_0))$. In particular, $h(b_0,0)=0$.

Claim (ii)
follows from an implicit function theorem argument that uses that $s_0^c$ and $a_0^c$ are continuous in $m^2\geq 0$ and differentiable in $b_0$ if $m^2>0$ with  $b_0$-derivatives uniformly bounded in $m^2>0$.
This argument is the same as the proof of \cite[Proposition~4.2]{MR3339164}
(with our notation $s_0$ instead of $z_0$, $a_0$ instead of $\nu_0$, $b_0$ instead of $g_0$,
and with $1/\beta$ instead of $g$ and $h$ instead of $\nu$)  and is omitted here.
\end{proof}

Assuming Theorem~\ref{thm:psi4}, the proof of Theorem~\ref{thm:h02} is
immediate from the last corollary. The main statements of
Theorems~\ref{thm:forests0} and~\ref{thm:forests} then follow
immediately, except for the identifications
$\theta_{d}(\beta)^{2}=\zeta_{d}(\beta)$, 
$(\constb(\beta)/\consta(\beta)^{2})\theta_{d}(\beta)^{2}=1$, and
$\consto(\beta)=2\constb(\beta)$ which
we will obtain in Section~\ref{sec:finvol}.

\begin{proof}[Proof of Theorem~\ref{thm:h02}]
  Given $\beta \geq \beta_0$ and $h>0$ we choose $b_0>0$ and $m^2>0$ as in Corollary~\ref{cor:ward}~(ii).
  Since $z_x = 1-\xi_x\eta_x$ and using \eqref{e:xietapsi}
  we then have
  \begin{align}
    \avg{z_0}_{\beta,h}
    &= 1-\avg{\xi_0\eta_0}_{\beta,h}
      =
      1- \frac{1+s_0}{\beta} \avg{\bar\psi_0\psi_0}_{m^2,s_0,a_0,b_0},
    \\
    \avg{\xi_0\eta_x}_{\beta,h}
    &= \frac{1+s_0}{\beta} \avg{\bar\psi_0\psi_x}_{m^2,s_0,a_0,b_0},
    \\
    \avg{z_0z_x}_{\beta,h}-\avg{z_0}_{\beta,h}^2
    &= \avg{\xi_0\eta_0\xi_x\eta_x}_{\beta,h} - \avg{\xi_0\eta_0}_{\beta,h}^2
    = \frac{(1+s_0)^2}{\beta^2} \avg{\bar\psi_0\psi_0;\bar\psi_x\psi_x}_{m^2,s_0,a_0,b_0} 
      .
  \end{align}
  Taking $N\to\infty$ and then $h\downarrow 0$, the results follow
  from Corollary~\ref{cor:ward} (i) and Theorem~\ref{thm:psi4} with
  \begin{equation} \label{e:constants}
    \theta_d(\beta) = 1- \frac{b_0\cgamma}{1+s_0^c},
    \qquad
    \consta(\beta) = (1+s_0^c)c_d,
    \qquad
    \constb(\beta) = \clambda^{2} (1+s_0^c)^2 c_d^2,
  \end{equation}
  where the functions $\lambda$ and $\cgamma$ are evaluated at $m^2=0$
  and $b_0$ given as above, $c_d$ is the constant in the
  asymptotics of the free Green's function on $\Z^{d}$,
  see~\eqref{eq:LapZd}, and we have used the simplification of the error
  terms
  $O(|x|^{-(d-2)-1})+O(b_0 |x|^{-(d-2+\kappa)}) = O(|x|^{-(d-2+\kappa)})$
  and
  $O(|x|^{-2(d-2)-1})+O(b_0 |x|^{-2(d-2)-\kappa)}) = O(|x|^{-2(d-2)-\kappa)})$.
\end{proof}

\subsection{Finite volume limit}
\label{sec:finvol}

The next theorem extends Theorem~\ref{thm:psi4} by more precise
estimates valid in the limit $m^2\downarrow 0$ with $\Lambda_N$ fixed.
In these estimates $t_N \in (0,1/m^{2})$ is a continuous function of $m^2>0$ that
satisfies
\begin{gather}
  \label{e:tn}
  t_{N}-\frac{1}{m^{2}} = O(L^{2N})
  \qquad \text{and}
  \\
  \label{e:greenfLambdaZd}
  \lim_{m\downarrow 0}\qa{ (-\Delta+m^2)^{-1}(0,x) - \frac{t_N}{|\Lambda_N|}}
  = (-\Delta^{\Z^d})^{-1}(0,x) + O(L^{-(d-2)N}),
\end{gather}
where on the right-hand side $\Delta^{\Z^d}$ is the Laplacian on
$\Z^d$, on the left-hand side $\Delta$ is the Laplacian on
$\Lambda_N$, and $|\Lambda_N| = L^{dN}$ denotes the volume of the
torus $\Lambda_N$.  We define
\begin{equation}
  \label{e:WN-def}
  W_N(x) = W_{N,m^2}(x) = (-\Delta+m^2)^{-1}(0,x) - \frac{t_N}{|\Lambda_N|},
\end{equation}
so that $W_N(x)$ is essentially the torus Green's function $(-\Delta+m^2)^{-1}$ with the zero mode omitted.

In the following theorem, and throughout this section, $\Lambda_{N}$ is fixed and the parameters
$(\beta,h)$ are related to $(m^2,s_0,a_0,b_0)$ as in
Corollary~\ref{cor:ward}.  We will write
$\avg{\cdot}_{m^2,b_0} =
\avg{\cdot}_{m^2,s_0^c(b_0,m^2),a_0^c(b_0,m^2),b_0}$ for the
corresponding expectation and similarly for the partition function
$Z_{m^2,b_0}$.

\begin{theorem} \label{thm:psi4-full}
  Under the conditions of Theorem~\ref{thm:psi4} except that we no longer restrict $L^{-N}\leq m$, in addition to the functions
  $a^{c}_{0}$, $s^{c}_{0}$, $\lambda$, and $\gamma$, there are functions 
  $\tilde a_{N,N}^c = \tilde a_{N,N}^c(b_0,m^2)$ and $u_N^c = u_N^c(b_0,m^2)$,
  both continuous in $b_0$ small and $m^2\geq 0$, as well as
  \begin{equation}
    \label{e:tildeu}
    \tilde u_{N,N}^c = \tilde u_{N,N}^c(b_0,m^2)= t_N\tilde a_{N,N}^c(b_0,m^2)  + O(b_0L^{-\kappa N}),
  \end{equation}
  continuous in $b_0$ small and $m^2> 0$, such that, for $x\in\Lambda_N$,
  \begin{align} \label{e:suscept-atilde-thm}
    \sum_{x\in\Lambda_N} \avg{\bar\psi_0\psi_x}_{m^2, b_0}
    &= \frac{1}{m^2}
    - \frac{1}{m^4} 
    \frac{\tilde a_{N,N}^c}{1+\tilde u_{N,N}^c}, \\
    \label{e:1pt-b-thm}
    \avg{\bar\psi_0\psi_0}_{m^2,b_0}
    &=
    \cgamma + \frac{\clambda t_N |\Lambda_N|^{-1}}{1+\tilde u_{N,N}^c}  +E_{00}
      ,
      \\
  \label{e:2pt-b-thm}
    \avg{\bar\psi_0\psi_0\bar\psi_x\psi_x}_{m^2,b_0}
    &=
      -\clambda^2 
      W_N(x)^2
      +\gamma^2
      +
      \frac{
        -2\lambda^2 
        W_N(x)
      + 2\clambda\gamma 
      }{1+\tilde u_{N,N}^c} t_N|\Lambda_N|^{-1}
    +E_{00xx}
      ,
\end{align}
and 
\begin{equation}
    \label{e:ZuNNaNN}
    Z_{m^{2},b_{0}} = e^{-u_{N}^{c}|\Lambda_N|}(1+\tilde u_{N,N}^{c}). 
\end{equation}
The remainder terms satisfy
\begin{align}
  E_{00}
  &=
    \frac{O(b_0L^{-(d-2+\kappa)N}+b_0L^{-\kappa N}(m^2|\Lambda_N|)^{-1})}{1+\tilde u_{N,N}^c},
  \\
  E_{00xx}
   &= O(b_0|x|^{-2(d-2)-\kappa}) + O(b_0L^{-(d-2+\kappa)N}) 
    \nnb
  &\qquad
    +
    (O(b_0|x|^{-(d-2+\kappa)})+ O(b_0L^{-\kappa N})) \frac{(m^2|\Lambda_N|)^{-1}}{1+\tilde u_{N,N}^c}.
\end{align}
\end{theorem}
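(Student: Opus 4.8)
The plan is to extend the renormalisation group analysis behind Theorem~\ref{thm:psi4} down to the final scale $N$ — now permitted, since dropping the restriction $L^{-N}\le m$ only removes the infrared cutoff that the mass previously supplied before the volume scale was reached — while keeping precise track of the constant (zero) mode of $-\Delta$ on $\Lambda_N$, whose propagator $\asymp (m^2|\Lambda_N|)^{-1}$ is no longer negligible. First I would split the fermionic Gaussian field with covariance $C=(-\Delta+m^2)^{-1}$ into a single zero-mode Grassmann pair $(\psi^0,\bar\psi^0)$ and an orthogonal part. This is exactly the decomposition encoded by \eqref{e:WN-def}: $C(0,x) = \frac{t_N}{|\Lambda_N|} + W_N(x)$, where $\frac{t_N}{|\Lambda_N|}$ is the spatially constant zero-mode covariance and $W_N$ is the kernel of the nonzero-mode part. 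By \eqref{e:greenfLambdaZd} and \eqref{e:tn}, $W_N$ agrees with the $\Z^d$ Green function up to $O(L^{-(d-2)N})$ while $\sum_{x\in\Lambda_N}W_N(x) = \frac{1}{m^2}-t_N = O(L^{2N})$, and the nonzero-mode covariance carries a finite-range decomposition of the same type used in Theorem~\ref{thm:psi4}.

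Second, I would run the bulk renormalisation group (Sections~\ref{sec:bulk}--\ref{sec:suscept}) for the integration over the orthogonal modes all the way to scale $N$, treating $(\psi^0,\bar\psi^0)$ as a fixed external field. Because the quartic interaction is irrelevant for $d>2$ the flow stays small and produces at scale $N$ an effective action for the zero mode of the form $u_N^c|\Lambda_N| + \tilde a_{N,N}^c\,\psi^0\bar\psi^0 + (\text{observable-dependent terms}) + (\text{scale-}N\text{ remainder})$, where $u_N^c$ is the vacuum energy density and $\tilde a_{N,N}^c$ the renormalised zero-mode mass. The inserted observables $\bar\psi_0\psi_0$ and $\bar\psi_0\psi_0\bar\psi_x\psi_x$ flow, under the observable renormalisation group of Section~\ref{sec:obs-flow}, into the sum of (i) a pure nonzero-mode contribution, which reproduces the $\gamma$ and $-\lambda^2 W_N(x)^2$ terms of \eqref{e:1pt-b-thm}--\eqref{e:2pt-b-thm} with $W_N$ replacing the $\Z^d$ Green function, exactly as in \eqref{e:psi4-00}--\eqref{e:psi4-00xx}; and (ii) terms linear or bilinear in $(\psi^0,\bar\psi^0)$ whose coefficients are built from $\lambda$, $\gamma$, $W_N(x)$ and $t_N/|\Lambda_N|$. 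The Ward identity \eqref{e:ward} is what fixes $\tilde a_{N,N}^c$ compatibly with the critical choice $a_0^c$ of Theorem~\ref{thm:psi4}, so that the quantity subtracted from $1/m^2$ in \eqref{e:suscept-atilde-thm} is the \emph{renormalised} zero-mode mass.

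Third, I would perform the remaining Grassmann integral over the single pair $(\psi^0,\bar\psi^0)$ explicitly — an algebraic step. The zero mode has Gaussian weight with covariance $t_N/|\Lambda_N|$ together with the mass insertion $\tilde a_{N,N}^c$, so its normalising integral contributes exactly the factor $1+\tilde u_{N,N}^c$ with $\tilde u_{N,N}^c = t_N\tilde a_{N,N}^c + O(b_0L^{-\kappa N})$ as in \eqref{e:tildeu}; with no observable inserted this yields the partition-function formula \eqref{e:ZuNNaNN}. Contracting the type-(ii) observable pieces against the zero-mode two-point function $t_N/|\Lambda_N|$ and dividing by $1+\tilde u_{N,N}^c$ produces the terms $\frac{\lambda t_N|\Lambda_N|^{-1}}{1+\tilde u_{N,N}^c}$ in \eqref{e:1pt-b-thm} and $\frac{(-2\lambda^2 W_N(x)+2\lambda\gamma)t_N|\Lambda_N|^{-1}}{1+\tilde u_{N,N}^c}$ in \eqref{e:2pt-b-thm} (the $-\lambda^2 W_N(x)^2+\gamma^2$ part being, as for free fermions, $G(0,0)G(x,x)-G(0,x)^2$ with $G$ the renormalised nonzero-mode two-point function). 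For the susceptibility the same analysis shows it equals the free value $1/m^2$ divided by $1+\tilde u_{N,N}^c$ up to controlled errors; expanding this and using $t_N = m^{-2}-O(L^{2N})$ rearranges it into $\frac{1}{m^2}-\frac{1}{m^4}\frac{\tilde a_{N,N}^c}{1+\tilde u_{N,N}^c}$. Finally, $E_{00}$ and $E_{00xx}$ are assembled from the scale-$N$ bulk and observable remainders — which supply the $O(b_0 L^{-(d-2+\kappa)N})$ and $O(b_0|x|^{-2(d-2)-\kappa})$ contributions just as in Theorem~\ref{thm:psi4} — with each type-(ii) remainder additionally dressed by a factor $t_N/|\Lambda_N| \asymp (m^2|\Lambda_N|)^{-1}$ and by $(1+\tilde u_{N,N}^c)^{-1}$, which gives the stated forms.

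The main obstacle is the second step: one must show that the bulk and observable renormalisation group flows still contract down to scale $N$ with no infrared regularisation from the mass, and that the zero-mode-dependent part of the effective action remains under control — in particular that the observables do not generate a coupling to a growing power of the zero mode and that $\tilde a_{N,N}^c$ is pinned down to the required precision. This is the precise analysis of a nontrivial zero mode advertised in the introduction, and it is exactly where the Ward identity \eqref{e:ward} is indispensable: it ties the renormalised zero-mode mass to $m^2$ and to $a_0^c$ in precisely the way needed for \eqref{e:suscept-atilde-thm} (and hence, back in the $\HH^{0|2}$ model, for the finite-volume density $\zeta_d(\beta)$ of Theorem~\ref{thm:forest-macro}).
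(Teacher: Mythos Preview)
Your overall architecture is right and coincides with the paper's: decompose $C$ so as to isolate the zero mode (this is exactly \eqref{e:CNN-def}), run the full bulk and observable renormalisation group flows to scale $N$, then perform the remaining one-pair Grassmann integral over the constant mode by hand. The paper implements the last step by restricting $Z_N$ to constant $\psi,\bar\psi$ (Proposition~\ref{prop:ZNN} and Lemma~\ref{lem:ZNN-obs}) and then applying $\E_{t_NQ_N}\theta$, which is algebraically the same as what you describe; the formulas \eqref{e:suscept-atilde-thm}, \eqref{e:1pt-b-thm}, \eqref{e:2pt-b-thm}, \eqref{e:ZuNNaNN} drop out of Propositions~\ref{prop:suscept}, \ref{prop:1pt} and~\ref{prop:2pt}.

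Where you go wrong is in the role you assign to the Ward identity. Theorem~\ref{thm:psi4-full} is a pure renormalisation-group statement and is proved in Sections~\ref{sec:bulk}--\ref{sec:proofs} \emph{without} ever invoking \eqref{e:ward}. The quantity $\tilde a_{N,N}^c$ is not ``fixed'' by any identity: it is simply \emph{defined} as the output of the flow, $\tilde a_{N,N}=a_N-k_N^2/|\Lambda_N|$ in Proposition~\ref{prop:ZNN}, and whatever value it takes goes into \eqref{e:suscept-atilde-thm}. Likewise the contraction of the flow to scale $N$ with $m^2\ge 0$ arbitrary requires no Ward input: it follows directly from Theorem~\ref{thm:step} and Theorem~\ref{thm:flow}/Corollary~\ref{cor:flow-finvol}, since the quartic is irrelevant for $d\ge 3$ and all estimates there are uniform in $m^2\ge 0$. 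The Ward identity enters only in Section~\ref{sec:ward}, \emph{after} Theorem~\ref{thm:psi4-full} is established, to relate $(m^2,b_0)$ to $(\beta,h)$ and to extract relations among the constants (Corollary~\ref{cor:ward}, Lemmas~\ref{cor:macro}--\ref{cor:aNlimit}). So your identification of the ``main obstacle'' and of the tool that resolves it is misplaced: the genuine work specific to this theorem is the explicit, careful bookkeeping of the zero-mode contributions in Proposition~\ref{prop:ZNN} and Lemma~\ref{lem:ZNN-obs}, not a stability problem for the flow.
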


The proof of this theorem is again given in Sections~\ref{sec:bulk}--\ref{sec:proofs}.
This proof also gives a bound on $\tilde a_{N,N}^c$ of order $b_0L^{-(2+\kappa)N}$ for a small $\kappa>0$.
We did not state this bound above because (by using the Ward identity  \eqref{e:ward-bis}) 
the existence of
$\tilde a_{N,N}^c$ with its relation to the correlation functions as stated in the theorem
is, in fact, sufficient to determine its precise asymptotic value of order $b_0/|\Lambda_N| = b_0L^{-dN} \ll b_0L^{-(2+\kappa)N}$,
see Lemmas~\ref{cor:macro}--\ref{cor:anm0} below.
Using this precise asymptotic information on $\tilde a_{N,N}^c$,
Theorem~\ref{thm:forest-macro} then follows from Theorem~\ref{thm:psi4-full}.
The key computation occurs in Lemma~\ref{cor:conn4pt}, where the asymptotic value of $\tilde a_{N,N}^c$
is used to exhibit important cancelations between the
terms on the right-hand side of \eqref{e:2pt-b-thm}.

\begin{lemma}
  \label{cor:macro}
  Under the conditions of Theorem~\ref{thm:psi4-full},
  \begin{equation}
  \label{E:t_0}
    \E^{\Lambda_{N}}_{\beta,0}|T_0|
    =
    \frac{b_0}{1+s_0^c(b_0,0)} \frac{1+O(b_0L^{-\kappa N})}{\tilde a_{N,N}^{c}(b_{0},0)} + O(b_0L^{2N})
    .
  \end{equation}
  In particular, if $b_0>0$ this implies $1/\tilde a_{N,N}^{c}(b_{0},0) = O(|\Lambda_{N}|/b_{0})$
  and $\tilde a_{N,N}^{c}(b_{0},0)>0$.
\end{lemma}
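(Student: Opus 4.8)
The plan is to relate the expected tree size $\E^{\Lambda_N}_{\beta,0}|T_0|$ to the arboreal-gas/$\HH^{0|2}$ dictionary, then invoke Theorem~\ref{thm:psi4-full} to read off the answer. The starting point is Proposition~\ref{prop:h02-forest}, which gives $\P_{\beta,0}[0\conn x] = \avg{\xi_0\eta_x}_{\beta,0}$, so that
\begin{equation}
  \E^{\Lambda_N}_{\beta,0}|T_0| = \sum_{x\in\Lambda_N} \P^{\Lambda_N}_{\beta,0}[0\conn x] = \sum_{x\in\Lambda_N} \avg{\xi_0\eta_x}_{\beta,0}.
\end{equation}
Next I would pass to the $\psi$-variables via the rescaling \eqref{e:xietapsi}, which gives $\avg{\xi_0\eta_x}_{\beta,0} = \tfrac{1+s_0^c}{\beta}\avg{\bar\psi_0\psi_x}_{m^2,b_0}$ at the critical parameters, with $(\beta,h)$ corresponding to $(m^2,s_0,a_0,b_0)$ as in Corollary~\ref{cor:ward}; since $h=0$ we have $m^2=0$ by Corollary~\ref{cor:ward}~(i), and $\beta = (1+s_0^c(b_0,0))^2/b_0$ by \eqref{e:betab0m}. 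Hence
\begin{equation}
  \E^{\Lambda_N}_{\beta,0}|T_0| = \frac{1+s_0^c(b_0,0)}{\beta}\sum_{x\in\Lambda_N}\avg{\bar\psi_0\psi_x}_{0,b_0} = \frac{b_0}{1+s_0^c(b_0,0)}\sum_{x\in\Lambda_N}\avg{\bar\psi_0\psi_x}_{0,b_0}.
\end{equation}

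It then remains to evaluate $\sum_x \avg{\bar\psi_0\psi_x}_{m^2,b_0}$ at $m^2=0$. This is exactly the content of the susceptibility formula \eqref{e:suscept-atilde-thm}, but that formula is singular as $m^2\downarrow 0$, so the real work is a careful limit. Multiplying through, \eqref{e:suscept-atilde-thm} reads $\sum_x\avg{\bar\psi_0\psi_x}_{m^2,b_0} = \tfrac{1}{m^2}\bigl(1 - \tfrac{1}{m^2}\cdot\tfrac{\tilde a_{N,N}^c}{1+\tilde u_{N,N}^c}\bigr)$. Using $\tilde u_{N,N}^c = t_N\tilde a_{N,N}^c + O(b_0 L^{-\kappa N})$ from \eqref{e:tildeu} and $t_N = \tfrac1{m^2} - O(L^{2N})$ from \eqref{e:tn}, one has $\tfrac{1}{m^2}\tilde a_{N,N}^c = \tilde u_{N,N}^c + O(L^{2N})\tilde a_{N,N}^c + O(b_0 L^{-\kappa N}/m^2)$, so
\begin{equation}
  1 - \frac{1}{m^2}\frac{\tilde a_{N,N}^c}{1+\tilde u_{N,N}^c} = \frac{1 - \tfrac{1}{m^2}\tilde a_{N,N}^c + \tilde u_{N,N}^c}{1+\tilde u_{N,N}^c} = \frac{O(L^{2N})\tilde a_{N,N}^c + O(b_0 L^{-\kappa N}/m^2)}{1+\tilde u_{N,N}^c}.
\end{equation}
Dividing by $m^2$ and again using $t_N \sim 1/m^2$ to rewrite $1/m^2 = t_N + O(L^{2N})$ converts the $1/m^4$-type prefactors into $t_N/\tilde u_{N,N}^c \sim 1/\tilde a_{N,N}^c$ plus controlled errors; taking $m^2\downarrow 0$ with $\Lambda_N$ fixed (which is legitimate by the continuity in $m^2\ge 0$ asserted in Theorems~\ref{thm:psi4} and~\ref{thm:psi4-full}) yields $\sum_x\avg{\bar\psi_0\psi_x}_{0,b_0} = \tfrac{1+O(b_0 L^{-\kappa N})}{\tilde a_{N,N}^c(b_0,0)} + O(L^{2N})$. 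Substituting back gives \eqref{E:t_0}.

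For the final assertions: since $\E^{\Lambda_N}_{\beta,0}|T_0| \geq 1$ (it contains $0$ itself) and in fact $\E^{\Lambda_N}_{\beta,0}|T_0| \le |\Lambda_N|$ trivially, comparing with \eqref{E:t_0} — whose leading term is $\tfrac{b_0}{1+s_0^c}\cdot\tfrac{1}{\tilde a_{N,N}^c}(1+o(1))$ up to an $O(b_0 L^{2N})$ correction that is negligible against $|\Lambda_N| = L^{dN}$ for $d\ge 3$ — forces $\tilde a_{N,N}^c(b_0,0) > 0$ and $1/\tilde a_{N,N}^c(b_0,0) = O(|\Lambda_N|/b_0)$. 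I expect the main obstacle to be bookkeeping the $m^2\downarrow 0$ limit cleanly: one must track that the several competing singular and exponentially-small-in-$N$ terms in \eqref{e:suscept-atilde-thm}, \eqref{e:tn}, \eqref{e:tildeu} combine to leave precisely the $1/\tilde a_{N,N}^c$ behaviour with the stated error, and in particular that the $O(b_0 L^{-\kappa N}/m^2)$ error in \eqref{e:tildeu}, once divided by another $m^2$, does not blow up — this works because it is multiplied by $t_N \sim 1/m^2$ only once, not twice, after the algebraic cancellation above, leaving an $O(b_0 L^{-\kappa N})\cdot\tfrac{1}{\tilde a_{N,N}^c}$ relative error rather than an uncontrolled one.
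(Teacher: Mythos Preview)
Your derivation of \eqref{E:t_0} matches the paper's: the forest-to-spin dictionary, the change to $\psi$-variables, and the $m^2\downarrow 0$ limit of \eqref{e:suscept-atilde-thm} via \eqref{e:tildeu} and \eqref{e:tn} are the same computation, organised only slightly differently. The upper bound $1/\tilde a_{N,N}^c = O(|\Lambda_N|/b_0)$ from $\E|T_0| \le |\Lambda_N|$ is also the paper's argument.

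The gap is in the \emph{positivity} $\tilde a_{N,N}^c(b_0,0) > 0$. You infer it from $1 \le \E|T_0| \le |\Lambda_N|$ together with \eqref{E:t_0}, noting that the $O(b_0 L^{2N})$ correction is small relative to $|\Lambda_N|$. But that comparison is exactly what is needed for the \emph{upper} bound on $1/\tilde a_{N,N}^c$; for the sign, the relevant lower bound $\E|T_0| \ge 1$ must be played off against the same $O(b_0 L^{2N})$ correction, and that correction is not small relative to $1$. Nothing in the inequalities alone rules out a negative $\tilde a_{N,N}^c$ with the correction term compensating to produce a positive expected tree size. (Finiteness of $\E|T_0|$ does give $\tilde a_{N,N}^c(b_0,0) \ne 0$, since otherwise the denominator in your limit vanishes while the numerator does not --- but this does not determine the sign.) The paper instead obtains positivity from the partition function identity \eqref{e:ZuNNaNN}: since $Z_{\beta,0} > 0$ while $u_N^c$ and $s_0^c$ stay bounded and $\det(-\Delta+m^2) \downarrow 0$ as $m^2\downarrow 0$, necessarily $1+\tilde u_{N,N}^c \to +\infty$; then \eqref{e:tildeu} with $t_N \to +\infty$ forces $\tilde a_{N,N}^c(b_0,0) > 0$.
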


\begin{proof}
From \eqref{e:P0x4pt}, we have that
\begin{equation}
 \E^{\Lambda_{N}}_{\beta,0}|T_0|=\sum_{x\in \Lambda_N}  \P_{\beta,0}[0\conn x] = \sum_{x\in \Lambda_N} \avg{\xi_0\eta_x}_{\beta,0}=\lim_{h\rightarrow 0}  \sum_{x\in \Lambda_N} \avg{\xi_0\eta_x}_{\beta,h}.
\end{equation}
Changing variables,
\begin{equation}
  \sum_{x\in\Lambda_N} \avg{\xi_0\eta_x}_{\beta,h}
  = \frac{b_0}{1+s_0^c(b_0,m^2)} \sum_{x\in\Lambda_N} \avg{\bar\psi_0\psi_x}_{m^2,b_0},
\end{equation}
where $(\beta, h)$ and $(b_0, m^2)$ are related as in \eqref{e:betab0m} and \eqref{e:hb0m}.
To evaluate the right-hand side we use \eqref{e:suscept-atilde-thm}. Note that
\begin{align}
  \frac{1}{m^2}
  - \frac{1}{m^4} 
  \frac{\tilde a_{N,N}^c}{1+\tilde u_{N,N}^c}
  &= \frac{1}{m^2}\frac{1+\tilde u_{N,N}^c - \tilde a_{N,N}^cm^{-2}}{1+\tilde u_{N,N}^c}
    \nnb
  &= \frac{1+\tilde a_{N,N}^c (t_N-m^{-2})+O(b_0L^{-\kappa N})}{m^2+\tilde a_{N,N}^ct_Nm^2 + O(b_0m^2L^{-\kappa N})}
    \nnb
  &= \frac{1+\tilde a_{N,N}^c O(L^{2N})+O(b_0L^{-\kappa N})}{m^2+\tilde a_{N,N}^c(1+O(m^2L^{2N})) + O(b_0m^2L^{-\kappa N})},
\end{align}
where the second equality is due to \eqref{e:tildeu} and the third follows from \eqref{e:tn}. As $m^2\downarrow 0$,
the right-hand side of the third equality behaves asymptotically as
\begin{align}
  \frac{1+\tilde a_{N,N}^c O(L^{2N})+O(b_0L^{-\kappa N})}{\tilde a_{N,N}^c}
  =
  \frac{1+O(b_0L^{-\kappa N})}{\tilde a_{N,N}^c}
  +
  O(L^{2N})
  .
\end{align}
Since $s_{0}^{c}(b_{0},0)=O(b_{0})$ by Theorem~\ref{thm:psi4} we therefore obtain the first claim:
\begin{equation}
  \E_{\beta,0}^{\Lambda_N}|T_0|=
  \frac{b_0}{1+s_0^c(b_0,0)}\lim_{m^2\downarrow 0} \sum_{x\in\Lambda_N} \avg{\bar\psi_0\psi_x}_{m^2,b_0}
  =
  \frac{b_0}{1+s_0^c(b_0,0)} \frac{1+O(b_0L^{-\kappa N})}{\tilde a_{N,N}^{c}(b_{0},0)} + O(b_0L^{2N})
  .
\end{equation}

For the second claim,
let us observe that, on the one hand,
\begin{equation} \label{e:ZbetahuNN}
  Z_{\beta,h}
  = \pa{\frac{\beta}{1+s_0^c}}^{|\Lambda_N|} 
    (\det (-\Delta+m^2)) Z_{m^2, b_0}
  = \pa{\frac{\beta e^{-u_{N}^{c}}}{1+s_0^c}}^{|\Lambda_N|} 
    (\det (-\Delta+m^2))(1+\tilde u_{N,N}^{c}),
\end{equation}
where the first equality is by Proposition~\ref{prop:h02-forest} and \eqref{e:psi4}, \eqref{e:xietapsi}, and \eqref{e:quartic-rescale},
and the second equality is \eqref{e:ZuNNaNN}. 
On the other hand, by \eqref{e:P-forest},
\begin{equation}
\lim_{h\rightarrow 0} Z_{\beta,h}= Z_{\beta,0}>0.
\end{equation}
Since, by Theorem \ref{thm:psi4-full}, $u_{N}^{c}$ and $s_0^c$ remain bounded as
$m^2\downarrow 0$ with $\Lambda_N$ fixed (and thus also $\beta$ which is given by \eqref{e:betab0m}),
from $\det (-\Delta+m^2) \downarrow 0$,
we conclude that $1+\tilde u_{N,N}^c$ diverges as $m^2\downarrow 0$.
By \eqref{e:tildeu},
this implies $\tilde a_{N,N}^{c}(b_{0},0)>0$. The upper bound on
$1/\tilde a_{N,N}^c(b_0,0)$ follows by re-arranging \eqref{E:t_0} and
using the trivial bound $|T_0| \leq |\Lambda_N|$.
\end{proof}

Using that $\tilde a_{N,N}^c$ is at least of order $b_0/|\Lambda_N|$ as established in the previous lemma,
the following lemma gives an asymptotic representation of $\tilde a_{N,N}^c$ of order $b_0/|\Lambda_N|$
in terms of $\gamma$ from Theorem~\ref{thm:psi4-full}.

\begin{lemma}
  \label{cor:anm0}
  Under the conditions of
  Theorem~\ref{thm:psi4-full} and if $b_{0}>0$,
  \begin{equation}
    \label{e:anm0}
  1
  =
  \frac{b_0}{1+s_0^c(b_0,0)}
  \qa{
    \cgamma(b_0,0) + \frac{\lambda(b_0,0)}{|\Lambda_N|\tilde
      a_{N,N}^c(b_0,0)}
    (1+O(b_0L^{-\kappa N}))
  }
  .
\end{equation}
\end{lemma}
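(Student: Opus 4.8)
The plan is to extract \eqref{e:anm0} from the Ward identity $\avg{z_0}_{\beta,0}=0$ (the $h=0$ instance of \eqref{e:ward-bis}) combined with the one-point formula \eqref{e:1pt-b-thm} of Theorem~\ref{thm:psi4-full}, by letting $m^2\downarrow 0$ with $\Lambda_N$ and $b_0>0$ fixed. Throughout, $(\beta,h)$ is tied to $(m^2,b_0)$ by \eqref{e:betab0m}--\eqref{e:hb0m}, so that $\beta(b_0,m^2)=(1+s_0^c(b_0,m^2))^2/b_0$, $h(b_0,0)=0$ by Corollary~\ref{cor:ward}~(i), and $m^2\mapsto(\beta(b_0,m^2),h(b_0,m^2))$ is continuous on $m^2\geq 0$ since $s_0^c$ and $a_0^c$ are.

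The first step is to turn the Ward identity at $h=0$ into a statement about $\avg{\bar\psi_0\psi_0}_{m^2,b_0}$. Since $h(b_0,0)=0$, \eqref{e:ward-bis} gives $\avg{z_0}_{\beta(b_0,0),0}=0$, equivalently $\avg{\xi_0\eta_0}_{\beta(b_0,0),0}=1$ because $z_0=1-\xi_0\eta_0$. On the fixed finite torus $\Lambda_N$ the $\HH^{0|2}$ expectation $\avg{\xi_0\eta_0}_{\beta,h}$ is a ratio of polynomials in $(\beta,h)$ with strictly positive denominator $Z_{\beta,h}$, hence jointly continuous in $(\beta,h)$; combined with the continuity just noted this yields $1=\lim_{m^2\downarrow 0}\avg{\xi_0\eta_0}_{\beta(b_0,m^2),h(b_0,m^2)}$. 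The change of variables \eqref{e:xietapsi} identifies this expectation with $\tfrac{b_0}{1+s_0^c(b_0,m^2)}\avg{\bar\psi_0\psi_0}_{m^2,b_0}$, so, letting $m^2\downarrow 0$ and using continuity of $s_0^c$ in $m^2$,
\begin{equation*}
1=\frac{b_0}{1+s_0^c(b_0,0)}\,\lim_{m^2\downarrow 0}\avg{\bar\psi_0\psi_0}_{m^2,b_0}.
\end{equation*}

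The second step evaluates this limit with \eqref{e:1pt-b-thm}, $\avg{\bar\psi_0\psi_0}_{m^2,b_0}=\gamma+\tfrac{\lambda\, t_N|\Lambda_N|^{-1}}{1+\tilde u_{N,N}^c}+E_{00}$. The term $\gamma=\gamma(b_0,m^2)$ tends to $\gamma(b_0,0)$. For the middle term, \eqref{e:tildeu} gives $1+\tilde u_{N,N}^c=1+t_N\tilde a_{N,N}^c+O(b_0L^{-\kappa N})$ and \eqref{e:tn} gives $t_N\sim 1/m^2\to\infty$; dividing numerator and denominator by $t_N$, and using that $\tilde a_{N,N}^c(b_0,0)>0$, the middle term converges to $\lambda(b_0,0)/(|\Lambda_N|\,\tilde a_{N,N}^c(b_0,0))$. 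The same inputs give $(m^2|\Lambda_N|)^{-1}/(1+\tilde u_{N,N}^c)\to 1/(|\Lambda_N|\,\tilde a_{N,N}^c(b_0,0))$ and $1/(1+\tilde u_{N,N}^c)\to 0$, so $E_{00}\to O(b_0L^{-\kappa N})/(|\Lambda_N|\,\tilde a_{N,N}^c(b_0,0))$. Adding the three limits and absorbing $\lambda(b_0,0)+O(b_0L^{-\kappa N})=\lambda(b_0,0)(1+O(b_0L^{-\kappa N}))$ (legitimate since $\lambda(b_0,0)=1+O(b_0)\asymp 1$) yields \eqref{e:anm0}.

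The main obstacle is that the right-hand side of \eqref{e:1pt-b-thm} contains pieces that blow up as $m^2\downarrow 0$ — the factor $t_N\sim 1/m^2$ in the middle term, and the factor $(m^2|\Lambda_N|)^{-1}$ inside $E_{00}$ — and one must know beforehand that these divergences are exactly cancelled by the growth $1+\tilde u_{N,N}^c\sim t_N\,\tilde a_{N,N}^c(b_0,0)$ of the denominator, which requires the strict positivity of $\tilde a_{N,N}^c(b_0,0)$. That positivity is exactly the conclusion of Lemma~\ref{cor:macro} (valid because $b_0>0$); so the logical order is to establish Lemma~\ref{cor:macro} first, after which the computation above is routine bookkeeping with \eqref{e:tn} and \eqref{e:tildeu}.
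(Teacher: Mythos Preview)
Your proof is correct and follows essentially the same approach as the paper: both start from the Ward identity $\avg{z_0}_{\beta,0}=0$, pass to $\avg{\bar\psi_0\psi_0}_{m^2,b_0}$ via the change of variables \eqref{e:xietapsi}, then take $m^2\downarrow 0$ in \eqref{e:1pt-b-thm} using \eqref{e:tn}, \eqref{e:tildeu}, and the positivity $\tilde a_{N,N}^c(b_0,0)>0$ from Lemma~\ref{cor:macro} to control the limits of the main term and of $E_{00}$. Your added continuity justification (ratio of polynomials in $(\beta,h)$) is a slight elaboration on what the paper leaves implicit, but the argument is otherwise identical.
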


\begin{proof}
  The Ward identity $\avg{z_0}_{\beta,0}=0$ implies
  \begin{align}
    0=\avg{z_0}_{\beta,0}
   = 1-\avg{\xi_0\eta_0}_{\beta,0}
    &=1-\lim_{m^2\downarrow 0} \frac{1+s_0^c(b_0,m^2)}{\beta(b_0,m^2)}\avg{\bar\psi_0\psi_0}_{m^2,b_0}
    \nnb
    & =1-\lim_{m^2\downarrow 0} \frac{b_0}{1+s_0^c(b_0,m^2)}\avg{\bar\psi_0\psi_0}_{m^2,b_0},
  \end{align}
  where we used \eqref{e:xietapsi} and
  that
  $\beta=\beta(b_0,m^2)$ is as in \eqref{e:betab0m}.
  To compute $\avg{\bar\psi_0\psi_0}_{m^2,b_0}$, we apply     \eqref{e:1pt-b-thm}.
  Since $\tilde u_{N,N}^c = \tilde a_{N,N}^ct_N +
  O(b_0L^{-\kappa N})$ and $t_N = m^{-2}+O(L^{2N})$, 
  \begin{align}
       \lim_{m^2\downarrow 0}\avg{\bar\psi_0\psi_0}_{m^2,b_0}
    &= \gamma(b_{0},0) + \lim_{m^{2}\downarrow 0} \frac{\lambda(b_{0},m^{2})
      t_{N}|\Lambda_{N}|^{-1}}{1+\tilde
      a_{N,N}^{c}(b_{0},m^{2})t_{N}+O(b_{0}L^{-\kappa N})} +
      \lim_{m^{2}\downarrow 0}E_{00}
      \nnb
    &= \gamma(b_{0},0) +
      \frac{\lambda(b_{0},0)|\Lambda_{N}|^{-1}}{\tilde
      a_{N,N}^c(b_{0},0)} + \lim_{m^{2}\downarrow 0}E_{00}.
  \end{align}
The limits in the second line exist by
Theorem~\ref{thm:psi4-full} and Lemma~\ref{cor:macro},
which in particular implies $\tilde a_{N,N}^c(b_0,0) > 0$ since $b_0>0$.
As $m^2\downarrow 0$, the error term $E_{00}$ is bounded by
$O(b_0L^{-\kappa N}/(|\Lambda_N|\tilde a_{N,N}^c))
= (\lambda(b_{0},0) |\Lambda_N|^{-1}/\tilde a_{N,N}^c)O(b_0L^{-\kappa N})$
since $\lambda(b_{0},0) =1-O(b_0)\geq 1/2$, finishing the proof.
\end{proof}

Given Theorem~\ref{thm:psi4-full}, the following lemma is the main step in the proof of Theorem~\ref{thm:forest-macro}.
It uses the asymptotic representation of $\tilde a_{N,N}^c$ to exhibit cancelations in
expressions in Theorem~\ref{thm:psi4-full}.

\begin{lemma} \label{cor:conn4pt}
  Under the conditions of
  Theorem~\ref{thm:psi4-full} and if $b_{0}>0$,
  \begin{multline}
  \label{E:tredens}
  \P_{\beta,0}^{\Lambda_N}[0\conn x]
    = \theta_d(\beta)^2
    + 
    2 \frac{b_0}{1+s_0^c}
    \clambda \theta_d(\beta)
    (-\Delta^{\Z^d})^{-1}(0,x)
    \\
    + O(b_0^{2}|x|^{-(d-2)-\kappa}) 
    + O(b_0L^{-(d-2)N})
    + O(b_0^2L^{-\kappa N}),
  \end{multline}
  where $\theta_d(\beta)$ is defined in \eqref{e:constants}.
\end{lemma}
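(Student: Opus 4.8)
The plan is to obtain \eqref{E:tredens} from the four-field correlation \eqref{e:2pt-b-thm} of Theorem~\ref{thm:psi4-full} by taking $m^2\downarrow 0$ at fixed $\Lambda_N$ and then reorganising the resulting expression with Lemmas~\ref{cor:macro} and~\ref{cor:anm0}. First I would translate the left-hand side into fermionic variables. By \eqref{e:P0x4pt}, $\P_{\beta,h}^{\Lambda_N}[0\conn x]=1-\avg{\xi_0\eta_0\xi_x\eta_x}_{\beta,h}$, and for fixed $\Lambda_N$ this is a rational, hence jointly continuous, function of $(\beta,h)$ on $[0,\infty)^2$. Combining this continuity with the change of variables \eqref{e:xietapsi}, the relation $\beta=\beta(b_0,m^2)$ of \eqref{e:betab0m} (so that $(1+s_0^c)^2/\beta^2=b_0^2/(1+s_0^c)^2$), the fact that $h(b_0,m^2)\downarrow 0$ as $m^2\downarrow 0$ (Corollary~\ref{cor:ward}~(i)), and the continuity of $s_0^c$, one gets, with $\beta=\beta(b_0,0)$,
\[
  \P_{\beta,0}^{\Lambda_N}[0\conn x]
  = 1 - \frac{b_0^2}{(1+s_0^c(b_0,0))^2}\,\lim_{m^2\downarrow 0}\avg{\bar\psi_0\psi_0\bar\psi_x\psi_x}_{m^2,b_0}.
\]
This is the same order-of-limits bookkeeping already used in the proofs of Lemmas~\ref{cor:macro} and~\ref{cor:anm0}.

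Next I would insert \eqref{e:2pt-b-thm} and pass to the limit term by term. By \eqref{e:greenfLambdaZd}, $W_N(x)\to(-\Delta^{\Z^d})^{-1}(0,x)+O(L^{-(d-2)N})$; by \eqref{e:tildeu}, \eqref{e:tn} and the positivity $\tilde a_{N,N}^c(b_0,0)>0$ from Lemma~\ref{cor:macro}, we have $\tilde u_{N,N}^c=\tilde a_{N,N}^c t_N+O(b_0L^{-\kappa N})$ with $t_N\asymp m^{-2}\to\infty$, so the zero-mode factor $t_N|\Lambda_N|^{-1}/(1+\tilde u_{N,N}^c)$ converges to the finite positive number $1/(|\Lambda_N|\tilde a_{N,N}^c(b_0,0))$, and the same inputs bound $\lim_{m^2\downarrow 0}E_{00xx}$ by $O(b_0|x|^{-2(d-2)-\kappa})+O(b_0L^{-(d-2+\kappa)N})+\big(O(b_0|x|^{-(d-2+\kappa)})+O(b_0L^{-\kappa N})\big)/(|\Lambda_N|\tilde a_{N,N}^c(b_0,0))$. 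Abbreviating $G=(-\Delta^{\Z^d})^{-1}(0,x)$, $\lambda=\clambda(b_0,0)$, $\gamma=\cgamma(b_0,0)$, $a=\tilde a_{N,N}^c(b_0,0)$, $s=s_0^c(b_0,0)$, this yields
\[
  \P_{\beta,0}^{\Lambda_N}[0\conn x]
  = 1 - \frac{b_0^2}{(1+s)^2}\Big(-\lambda^2 G^2 + \gamma^2 + \frac{-2\lambda^2 G + 2\lambda\gamma}{|\Lambda_N|a}\Big) + (\text{error}),
\]
where $(\text{error})$ collects $\tfrac{b_0^2}{(1+s)^2}\lim_{m^2\downarrow 0}E_{00xx}$ together with the $O(L^{-(d-2)N})$ incurred in replacing $W_N(x)$ by $G$.

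The final step is the reorganisation. Lemma~\ref{cor:anm0} reads $1=\tfrac{b_0}{1+s}\big(\gamma+\tfrac{\lambda}{|\Lambda_N|a}(1+O(b_0L^{-\kappa N}))\big)$; together with $\theta_d(\beta)=1-\tfrac{b_0\gamma}{1+s}$ from \eqref{e:constants} this gives $\tfrac{b_0\lambda}{(1+s)|\Lambda_N|a}=\theta_d(\beta)+O(b_0L^{-\kappa N})$, while the elementary identity $1-\tfrac{b_0^2\gamma^2}{(1+s)^2}=\theta_d(\beta)^2+\tfrac{2b_0\gamma}{1+s}\theta_d(\beta)$ rewrites the constant part. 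Substituting, the term $\tfrac{2\lambda^2b_0^2G}{(1+s)^2|\Lambda_N|a}$ becomes $2\tfrac{b_0}{1+s}\clambda\,\theta_d(\beta)\,G$ up to $O(b_0^2L^{-\kappa N})$, the term $-\tfrac{2\lambda\gamma b_0^2}{(1+s)^2|\Lambda_N|a}$ equals $-\tfrac{2b_0\gamma}{1+s}\theta_d(\beta)+O(b_0^2L^{-\kappa N})$ and cancels the $\gamma$-linear piece produced by the constant, and $\tfrac{b_0^2\lambda^2G^2}{(1+s)^2}=O(b_0^2|x|^{-2(d-2)})\subseteq O(b_0^2|x|^{-(d-2)-\kappa})$ after shrinking $\kappa$ so that $\kappa\le 1$. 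For the remaining errors one uses $1/(|\Lambda_N|a)=O(1/b_0)$ (the second conclusion of Lemma~\ref{cor:macro}): this converts the $O(L^{-(d-2)N})$ inside the zero-mode term into $O(b_0L^{-(d-2)N})$ and the $1/(|\Lambda_N|a)$-weighted pieces of $\lim_{m^2\downarrow 0}E_{00xx}$ into $O(b_0^2|x|^{-(d-2)-\kappa})+O(b_0^2L^{-\kappa N})$, while every other contribution is $O(b_0^3(\cdots))\subseteq O(b_0^2(\cdots))$. Collecting the surviving terms gives exactly \eqref{E:tredens}. The main, and essentially only nonroutine, obstacle is this reorganisation: each individual zero-mode contribution is of order $1/b_0$, not manifestly small, and it is only because the Ward identity, repackaged as Lemma~\ref{cor:anm0}, pins down $\tfrac{b_0\lambda}{(1+s)|\Lambda_N|a}$ as $\theta_d(\beta)+O(b_0L^{-\kappa N})$ that these pieces reassemble into the constant $\theta_d(\beta)^2$ and the massless main term, with the $\gamma$-dependence cancelling; justifying the interchange of the $m^2\downarrow 0$ limit with the fixed-$\Lambda_N$ structure in the first step is the only other point requiring care.
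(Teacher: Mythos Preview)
Your proposal is correct and follows essentially the same route as the paper: start from \eqref{e:P0x4pt} to write $\P_{\beta,0}^{\Lambda_N}[0\conn x]=1-\lim_{m^2\downarrow 0}\frac{b_0^2}{(1+s_0^c)^2}\avg{\bar\psi_0\psi_0\bar\psi_x\psi_x}_{m^2,b_0}$, insert \eqref{e:2pt-b-thm}, use Lemma~\ref{cor:macro} to control the zero-mode factor and the error $E_{00xx}$, and then use Lemma~\ref{cor:anm0} in the form $\frac{b_0\lambda}{(1+s)|\Lambda_N|\tilde a_{N,N}^c}=\theta_d(\beta)+O(b_0L^{-\kappa N})$ to reorganise the resulting expression into $\theta_d(\beta)^2$ plus the massless term. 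The only cosmetic difference is that the paper carries $W_{N,0}(x)$ through the algebra and substitutes $(-\Delta^{\Z^d})^{-1}(0,x)$ at the very end, whereas you substitute earlier; both give the same error budget.
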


\begin{proof}
By the last expression for
$\P_{\beta,0}[0\conn x]$ in \eqref{e:P0x4pt} and \eqref{e:xietapsi}, \eqref{e:betab0m}:
\begin{equation} \label{e:P0x4ptpf}
  \P_{\beta,0}^{\Lambda_N}[0\conn x]
  = 1-\lim_{h\downarrow 0}\avg{\xi_0\eta_0\xi_x\eta_x}_{\beta,h}
  = 1- \lim_{m^2\downarrow 0}\qa{ \frac{b_0^2}{(1+s_0^c)^2}
    \avg{\bar\psi_0\psi_0\bar\psi_x\psi_x}_{m^{2},b_{0}}}.
\end{equation}

To compute
$\lim_{m^2\downarrow
  0}\avg{\bar\psi_0\psi_0\bar\psi_x\psi_x}_{m^{2},b_{0}}$ we start
from \eqref{e:2pt-b-thm}.  By Lemma~\ref{cor:macro},
as $m^2\downarrow 0$ with $\Lambda_N$ fixed,
\begin{equation}
  \frac{1}{1+\tilde u_{N,N}^c} \sim
  \frac{1}{m^{-2}\tilde a_{N,N}^c(b_0,0)}
  =  
  O(\frac{m^2|\Lambda_N|}{b_0}).
\end{equation}
This implies the error term in \eqref{e:2pt-b-thm} is, 
as $m^2\downarrow 0$ with $\Lambda_N$ fixed,
\begin{equation}
  |E_{00xx}|
   \leq O(|x|^{-(d-2)-\kappa}) + O(L^{-\kappa N})
  .
\end{equation}

For the main term we have (recall $W_{N}(x)=W_{N,m^{2}}(x)$, see~\eqref{e:WN-def})
\begin{align}
  \lim_{m^{2}\downarrow 0}\avg{\bar\psi_0\psi_0\bar\psi_x\psi_x}_{m^2,b_0}
  - \lim_{m^2\downarrow 0} |E_{00xx}|
  &=
    -\clambda^2 W_{N,0}(x)^2 +\gamma^2
    +
    \lim_{m^{2}\downarrow 0}\frac{
    -2\clambda^2 W_{N}(x)
    + 2\clambda\gamma 
    }{1+\tilde u_{N,N}^c} {t_N|\Lambda_N|^{-1}}
    \nnb
  &=  -\clambda^2 W_{N,0}(x)^2 +\gamma^2
    + 2(-\clambda W_{N,0}(x)+\gamma)\frac{
    \clambda}{\tilde a_{N,N}^c|\Lambda_{N}|}
    ,
    \label{e:connmain}
\end{align}
where on the right-hand side the functions $\lambda$, $\gamma$, and $\tilde a_{N,N}^c$ are evaluated at $m^2=0$.
By Lemma~\ref{cor:anm0},
\begin{equation}
  \label{eq:1}
  \frac{b_{0}}{1+s_{0}^{c}}
  \frac{\clambda}{\tilde a_{N,N}^{c}|\Lambda_{N}| } =
  \pa{1-\frac{b_{0}\cgamma}{1+s_{0}^{c}}}
  (1+O(b_0L^{-\kappa N}))
\end{equation}
so that
\begin{align}
  \label{eq:5}
  -\pa{\frac{b_{0}}{1+s_{0}^{c}}}^{2}\frac{
  2\lambda^2 W_{N,0}(x)}{\tilde a_{N,N}^c|\Lambda_{N}|}
  { (1+O(b_0L^{-\kappa N}))}
  &= -\frac{2b_{0}}{1+s_{0}^{c}} 
    (1-\frac{b_{0}\cgamma}{1+s_{0}^{c}})\clambda W_{N,0}(x) 
  \\
  \pa{\frac{b_{0}}{1+s_{0}^{c}}}^{2}\frac{2\clambda\gamma 
  }{\tilde a_{N,N}^c|\Lambda_{N}|}
  { (1+O(b_0L^{-\kappa N}))}
  &= 2\cgamma
     \frac{b_{0}}{1+s_{0}} - 2\cgamma^{2}\pa{\frac{b_{0}}{1+s_{0}^{c}}}^{2}.
\end{align}
Substituting these bounds into \eqref{e:connmain} and then  \eqref{e:P0x4ptpf} we obtain
\begin{align}
  \label{eq:2}
  \P_{\beta,0}^{\Lambda_N}[0\conn x]
  &=
  1-\pa{\frac{b_{0}}{1+s_{0}^{c}}}^{2}\lim_{m^{2}\downarrow
    0}\avg{\bar\psi_0\psi_0\bar\psi_x\psi_x}_{m^2,b_0}
    \nnb
  &= (1-\frac{\cgamma b_{0}}{1+s_{0}})^{2}
    +\frac{2b_{0}\clambda}{1+s_{0}^{c}} (1-\frac{b_{0}\cgamma}{1+s_{0}^{c}}) W_{N,0}(x)
    + \pa{\frac{b_{0}\clambda
    W_{N,0}(x)}{1+s_{0}^{c}}}^{2}
    \nnb
  &\qquad  +O(b_0^2 L^{-\kappa N} W_{N,0}(x))  + O(b_0^2 L^{-\kappa N})
    + O(b_0^2|E_{00xx}|).
\end{align}
Using the definition \eqref{e:constants} of $\theta_d(\beta)$,
that $W_{N,0}(x) = (-\Delta^{\Z^d})^{-1}(0,x)+O(L^{-(d-2)N})$ by   \eqref{e:greenfLambdaZd},
and in particular $W_{N,0}(x) = O(|x|^{-(d-2)})$,
the claim follows.
\end{proof}

The next (and final) lemma 
is inessential for the main conclusions, but will allow us to identify
the constants from the infinite volume and the finite volume analyses.

\begin{lemma} \label{cor:aNlimit}
  Under the conditions of Theorem~\ref{thm:psi4-full} and if $b_{0}>0$, then $\lambda \theta_d(\beta) = 1$.
\end{lemma}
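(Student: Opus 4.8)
The plan is to evaluate the mean cluster size $\E_{\beta,0}^{\Lambda_N}|T_0|$ in the limit $N\to\infty$ in two independent ways and compare the leading terms: once through the susceptibility input of Lemma~\ref{cor:macro} (which ultimately comes from \eqref{e:suscept-atilde-thm}) and once through the pointwise connection probabilities of Lemma~\ref{cor:conn4pt} (which comes from the four-point function \eqref{e:2pt-b-thm}). Throughout, $b_0 = b_0(\beta) > 0$ is the small parameter and $s_0^c,\gamma,\lambda,\tilde a_{N,N}^c$ denote the corresponding quantities evaluated at $m^2 = 0$; recall from \eqref{e:constants} that $\theta_d(\beta) = 1 - b_0\gamma/(1+s_0^c) = 1 - O(b_0)$, so in particular $\theta_d(\beta)\neq 0$ and $\lambda = 1 - O(b_0) \geq \tfrac12$.

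First I would rewrite Lemma~\ref{cor:anm0}. Rearranging \eqref{e:anm0} and using $b_0\gamma/(1+s_0^c) = 1 - \theta_d(\beta)$ gives
\[
  \frac{1}{|\Lambda_N|\,\tilde a_{N,N}^c}
  = \frac{(1+s_0^c)\,\theta_d(\beta)}{b_0\,\lambda}\,\bigl(1 + O(b_0 L^{-\kappa N})\bigr).
\]
Substituting this into \eqref{E:t_0} of Lemma~\ref{cor:macro} and using $|\Lambda_N| = L^{dN}$ with $d \geq 3$, so that the additive error $O(b_0 L^{2N})$ is $o(|\Lambda_N|)$, I obtain
\[
  \frac{1}{|\Lambda_N|}\,\E_{\beta,0}^{\Lambda_N}|T_0|
  = \frac{\theta_d(\beta)}{\lambda}\,\bigl(1 + O(b_0 L^{-\kappa N})\bigr) + O(b_0 L^{(2-d)N}) \longrightarrow \frac{\theta_d(\beta)}{\lambda} \qquad (N\to\infty).
\]

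Second I would sum the identity \eqref{E:tredens} of Lemma~\ref{cor:conn4pt} over $x \in \Lambda_N$, using the elementary identity $\E_{\beta,0}^{\Lambda_N}|T_0| = \sum_{x\in\Lambda_N}\P_{\beta,0}^{\Lambda_N}[0\conn x]$ (as in the proof of Lemma~\ref{cor:macro}, cf.\ \eqref{e:P0x4pt}). The constant term contributes $\theta_d(\beta)^2 |\Lambda_N|$. The terms that are uniform in $x$ contribute, after dividing by $|\Lambda_N|$, only $O(b_0 L^{-(d-2)N}) + O(b_0^2 L^{-\kappa N}) \to 0$; for the $x$-dependent remainder I would use the crude bound $\sum_{x\in\Lambda_N\setminus\{0\}}|x|^{-(d-2)} = O(L^{2N})$, which by \eqref{e:greenfLambdaZd} also controls $\sum_x (-\Delta^{\Z^d})^{-1}(0,x)$ and trivially dominates $\sum_x |x|^{-(d-2)-\kappa}$, so that after dividing by $|\Lambda_N| = L^{dN}$ these contribute $O(b_0 L^{(2-d)N})\to 0$ as well. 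Hence $|\Lambda_N|^{-1}\E_{\beta,0}^{\Lambda_N}|T_0| \to \theta_d(\beta)^2$.

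Comparing the two limits yields $\theta_d(\beta)/\lambda = \theta_d(\beta)^2$; cancelling the nonzero factor $\theta_d(\beta)$ gives $\lambda\,\theta_d(\beta) = 1$. The argument is mostly bookkeeping once Lemmas~\ref{cor:macro}--\ref{cor:conn4pt} are in hand; the one place that needs a little care — and the only place where $d \geq 3$ and $\kappa > 0$ enter — is checking that each error term of \eqref{E:tredens}, summed over the $L^{dN}$ sites of $\Lambda_N$, is $o(L^{dN})$. Conceptually, the identity $\lambda\theta_d(\beta) = 1$ expresses the consistency of the two renormalisation group inputs of Theorem~\ref{thm:psi4-full}: the susceptibility/two-point data see the tree-size density as $\theta_d(\beta)/\lambda$, whereas the four-point function forces it to be $\theta_d(\beta)^2$, the square of the magnetisation.
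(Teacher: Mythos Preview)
Your proof is correct and follows essentially the same approach as the paper: both arguments compute the cluster density $|\Lambda_N|^{-1}\E_{\beta,0}^{\Lambda_N}|T_0|$ in two ways, once via Lemmas~\ref{cor:macro} and~\ref{cor:anm0} and once by summing Lemma~\ref{cor:conn4pt}, and equate the limits. The paper packages the first computation through the auxiliary quantity $w_N = \frac{b_0}{1+s_0^c}\frac{1}{\tilde a_{N,N}^c|\Lambda_N|}$, obtaining the pair of relations $\lambda w = \theta_d(\beta)$ and $w = \theta_d(\beta)^2$, which is algebraically equivalent to your $\theta_d(\beta)/\lambda = \theta_d(\beta)^2$.
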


\begin{proof}
  Let
  \begin{equation} \label{e:wNdef}
    w_{N} \bydef \frac{b_{0}}{1+s^{c}_{0}} \frac{1}{\tilde a_{N,N}^{c}|\Lambda_{N}|}
    = \E^{\Lambda_N}_{\beta,0} \frac{|T_0|}{|\Lambda_N|}  + O(b_0L^{-\kappa N}+b_0L^{-(d-2)N}),
  \end{equation}
  where the second equality is due to Lemma~\ref{cor:macro}.
  The density $\E^{\Lambda_N}_{\beta,0} |T_0|/|\Lambda_N|$ can also be
  computed by summing the estimate in Lemma~\ref{cor:conn4pt} and dividing by $|\Lambda_N|$.
  Subtracting this result from \eqref{e:wNdef} gives
  \begin{equation}
    w_N-\theta_d(\beta)^2 
    =O(b_0L^{-\kappa N}).
          \label{e:r1}
  \end{equation}
  On the other hand, \eqref{e:anm0} shows that
  \begin{equation}
   \label{e:sys0}
   \clambda w_N  
   -\theta_d(\beta)
   = O(b_0L^{-\kappa N}).
  \end{equation}
  The limit $w= \lim_{N\to\infty} w_N$ thus exists and satisfies
  $\clambda w = \theta_d(\beta)$ and $w=\theta_d(\beta)^2$.
  Since $\theta_d(\beta)=1- O(1/\beta)\neq 0$
  this implies $\lambda \theta_d(\beta) = 1$.
\end{proof}

\begin{proof}[Proof of Theorem~\ref{thm:forest-macro}]
  The proof follows by rewriting Lemma~\ref{cor:conn4pt}. 
  Let  $c_d$ be the constant in the Green function asymptotics
  of~\eqref{eq:LapZd},
  and recall the constants $\theta_d(\beta)$ and ${\sf c}_i(\beta)$ from \eqref{e:constants}.
  Theorem~\ref{thm:forest-macro} then follows from Lemma~\ref{cor:conn4pt} by setting
  \begin{equation}
    \zeta_d(\beta)
    = \theta_d(\beta)^2, \qquad
      \consto(\beta)
      =(1+s_{0}^{c})2\clambda \theta_{d}(\beta) c_d,
  \end{equation}
  and simplifying the error terms using
  $O(b_0|x|^{-(d-2)-1})+O(b_0^2 |x|^{-(d-2+\kappa)}) = O(\beta^{-1}|x|^{-(d-2+\kappa)})$ and
  $O(b_0L^{-(d-2)N})+O(b_0^2 L^{-\kappa N}) = O(\beta^{-1}L^{-\kappa N})$.
\end{proof}

\begin{proof}[Completion of proof of Theorems~\ref{thm:forests0} and~\ref{thm:forests}]
    For Theorem~\ref{thm:forests0}, $\zeta_d(\beta) =
    \theta_d(\beta)^2$ was established in the 
    previous proof. For Theorem~\ref{thm:forests}, the identity
    $(\constb(\beta)/\consta(\beta)^{2})\theta_{d}(\beta)^{2}=1$ is
    equivalent (by \eqref{e:constants}) to $\theta_{d}(\beta)\lambda=1$, i.e.,
    Lemma~\ref{cor:aNlimit}. Similarly,
    $\consto(\beta)=2\clambda\theta_{d}(\beta) \consta(\beta) = 2\consta(\beta)$.
\end{proof}
  
\begin{remark}
  To compute $\P_{\beta,0}[0\conn x]$ we started from the expression
  $1-\avg{\xi_0\eta_0\xi_x\eta_x}_{\beta,0}$ in \eqref{e:P0x4pt}.  An
  alternative route would have been to start from
  $\avg{\xi_0\eta_x}_{\beta,0}$.  For technical reasons arising in
  Section~\ref{sec:obs-flow} it is, however, easier to obtain sufficient
  precision when working with $\avg{\xi_0\eta_0\xi_x\eta_x}_{\beta,0}$.
\end{remark}

\section{The bulk renormalisation group flow}
\label{sec:bulk}

We will prove Theorems~\ref{thm:psi4} and~\ref{thm:psi4-full} by a renormalisation group
analysis that is set up following \cite{MR2523458,MR3332942} and
\cite{MR3345374,MR3339164}; see also \cite{MR3969983} for a conceptual
introduction.  Our proof is largely self-contained.
The exceptions to self-containment
concern general properties about finite range decomposition, norms, and approximation
by local polynomials that were developed systematically in
\cite{MR3129804,MR3332938,MR3332939}. The properties we need
are all reviewed in this section. The first six subsections set up
the framework of the analysis, and the remaining three define and
analyse the renormalisation group flow.

Throughout $\Lambda\bydef\Lambda_N$ is the discrete torus of side length $L^N$.
We leave $L$ implicit; it will eventually be chosen large. We
sometimes omit the $N$ when it does not play a role.

\subsection{Finite range decomposition}
\label{sec:frd}

Let $\Delta$ denote the lattice Laplacian on $\Lambda_N$, and let $m^2>0$.
Our starting point for the analysis is the decomposition
\begin{equation} \label{e:C-def}
  C = (-\Delta+m^2)^{-1} = C_1 + \cdots +C_{N-1}+C_{N,N} 
\end{equation}
where the $C_j$ (with $j<N$) and $C_{N,N}$ are positive semidefinite $m^2$-dependent matrices indexed by $\Lambda_N$.
These covariances can be chosen with the following properties, see
\cite[Proposition~3.3.1 and Section~3.4]{MR3969983} and Appendix~\ref{app:decomp}.
The notation $C_{N,N}$ for the last covariance is explained below.

\subsubsection*{Finite range property}

For $j<N$, the covariances $C_j$ satisfy the finite range property
\begin{equation} \label{e:C-range}
  C_j(x,y) = 0 \qquad \text{ if } |x-y|_\infty \geq \frac12 L^{j}.
\end{equation}
Moreover, they are invariant under lattice symmetries and independent of $\Lambda_N$
in the sense that $C_j(x,y)$ can be identified as a function of $x-y$ that is independent of the torus $\Lambda_N$.
They are defined and continuous for $m^2\geq 0$ including the endpoint $m^2=0$ (and in fact smooth).

\subsubsection*{Scaling estimates}
The covariances satisfy estimates consistent with the decay of the Green function:
\begin{equation} \label{e:C-bd}
  |\nabla^\alpha C_{j+1}(x,y)| \leq O_{\alpha,s}(\vartheta_j(m^2)) L^{-(d-2+|\alpha|)j},
\end{equation}
where for an arbitrary fixed constant $s$,
\begin{equation}
  \vartheta_j(m^2)=\frac{1}{2d+m^2}\pa{1+\frac{m^2L^{2j}}{2d+m^2}}^{-s}.
\end{equation}
The discrete gradient in \eqref{e:C-bd} can act on either the $x$ or
the $y$ variable, and is defined as follows.  Recalling that
$e_1, \dots, e_d$ denote the standard unit vectors generating $\Z^d$,
that $e_{d+j}=-e_j$, and that $\cE_d =\{e_1,\dots,e_{2d}\}$,
for any multiindex $\alpha\in \N_0^{\cE_{d}}$, we define the
discrete derivative in directions $\alpha$ with order
$|\alpha|=\sum_{i=1}^{2d} \alpha(e_i)$ by:
\begin{equation}
  \nabla^\alpha=\prod_{i=1}^{2d} \nabla^{\alpha(e_i)}_{e_i}, \qquad \nabla_e f= f(x+e)-f(x),
\end{equation}
with $\nabla_{e_i}^{k} = \nabla_{e_i}\cdots \nabla_{e_i}$, where there are $k$ terms on the right-hand side.

\subsubsection*{Zero mode}
By the above independence of the covariances $C_j$ with $j<N$ from $\Lambda_N$,
all finite volume torus effects are concentrated in the last covariance
$C_{N,N}$. We further separate this covariance into a bounded part and the zero mode:
\begin{equation} \label{e:CNN-def}
  C_{N,N} = C_{N} + t_NQ_N,
\end{equation}
where $t_N$ is an $m^2$-dependent constant
and $Q_N$ is the projection onto the zero mode,
i.e., the matrix with all entries equal to $1/|\Lambda_N|$.
The bounded contribution $C_{N}$
(which does depend on $\Lambda_N$)
satisfies the estimates \eqref{e:C-bd} with $j=N$
and also extends continuously to $m^2=0$.
The constant $t_N$ satisfies 
\begin{equation}
  \label{e:tN}
  t_{N}>0, \qquad t_{N}-\frac{1}{m^{2}} = O(L^{2N}).
\end{equation}
In this section, we only consider the effect of $C_N$
(which is parallel to that of the $C_j$ with $j<N$)
while the nontrivial finite volume effect of $t_N$ will be analysed in Sections~\ref{sec:suscept}--\ref{sec:obs}.

\bigskip
The above properties imply \eqref{e:greenfLambdaZd} and $W_N(x)$
in \eqref{e:WN-def} is given by $W_N(x) = C_1(x) + \cdots + C_N(x)$.

\subsection{Grassmann Gaussian integration}
\label{sec:grassm-gauss-integr}

For $X \subset \Lambda=\Lambda_N$, we denote by $\cN(X)$ the Grassmann algebra generated by $\psi_x,\bar\psi_x$, $x\in X$
with the natural inclusions $\cN(X) \subset \cN(X')$ for $X \subset X'$.
Moreover, we denote by $\cN(X \sqcup X)$ the doubled algebra with generators $\psi_x,\bar\psi_x,\zeta_x,\bar\zeta_x$
and by $\theta\colon \cN(X) \to \cN(X \sqcup X)$ the doubling homomorphism acting on the generators of $\cN(X)$ by
\begin{equation}\label{e:theta-def}
  \theta \psi_x = \psi_x+\zeta_x, \qquad \theta \bar\psi_x = \bar\psi_x + \bar\zeta_x.
\end{equation}
For a covariance matrix $C$ the associated Gaussian expectation
$\E_{C}$ acts on $\cN(X\sqcup X)$ on the $\zeta,\bar\zeta$
variables. Explicitly, when $C$ is positive definite, $F\in
\cN(X\sqcup X)$ maps to $\E_{C}F \in \cN(X)$ given by 
\begin{equation}
  \E_CF
  = \E_C[F]
  = (\det C)\int \partial_{\zeta}\partial_{\bar\zeta}\, e^{-(\zeta,C^{-1}\bar\zeta)} F.
\end{equation}
Thus $\E_{C}\theta \colon \cN(\Lambda) \to \cN(\Lambda)$ is the
fermionic convolution of $F \in \cN(\Lambda)$ with the fermionic
Gaussian measure with covariance $C$. Recall the following
well-known facts about $\E_{C}\theta$; elementary proofs can be
found in, e.g., \cite{MR3332938}. First, this convolution operator
can be written as
\begin{equation}
  \label{e:fWick}
  \E_{C}\theta F
  = \E_C[\theta F]
  = e^{\cL_{C}}F
\end{equation}
where  $\cL_C = \sum_{x,y\in\Lambda} C_{xy} \partial_{\psi_y} \partial_{\bar\psi_x}$.
In particular, it follows that $\E_{C}\theta$ has the semigroup property
\begin{equation} \label{e:E-semigroup}
  \E_{C_2}\theta \circ \E_{C_1}\theta = \E_{C_1+C_2}\theta.
\end{equation}
This formula also holds for $C$ positive \emph{semi}definite if we
take \eqref{e:fWick} as the definition of $\E_{C}\theta F$, which we
will in the sequel.  The identity \eqref{e:fWick} is a
fermionic version of the relation between Gaussian convolution and
the heat equation, and \eqref{e:E-semigroup}, which follows
from~\eqref{e:fWick}, is the analogue of the fact that the sum of
two independent Gaussian processes is Gaussian with covariance given
by the sum of the covariances. The identity \eqref{e:fWick} allows
for the evaluation of moments, e.g.,
$\E_{C}\theta \bar\psi_{x}\psi_{y}=\bar\psi_x\psi_y+C_{xy}$. An
important consequence of the finite range property \eqref{e:C-range}
of $C_j$ is that if $F_i \in \cN(X_i)$ with
$\dist_{\infty}(X_1,X_2)> \frac12 L^j$ then, by \eqref{e:fWick},
\begin{equation}
  \E_{C_j}{\qB{\theta \p{F_1F_2}}} = \pB{\E_{C_j}[\theta F_1]}\pB{\E_{C_j}[\theta F_2]}.
\end{equation}

\subsection{Symmetries}
\label{sec:symmetries}

We briefly discuss symmetries, which are important in extracting the
relevant and marginal contributions in each renormalisation group step
(see Section~\ref{sec:Loc} below).  We call an element
$F \in \cN(\Lambda)$ \emph{symplectically invariant} or
\emph{$U(1)$ invariant} if every monomial in its representation has
the same number of factors of $\bar\psi$ and $\psi$.  We remark that
in \cite{MR3332938,MR3332939}, to which we will sometimes refer,
this property is called (global) gauge invariance. Similarly,
$F \in \cN(\Lambda\sqcup\Lambda)$ is $U(1)$ invariant if the
combined number of factors of $\bar\psi$ and $\bar\zeta$ is the same
as the combined number of factors of $\psi$ and $\zeta$.  We denote by
$\cN_{\rm sym}(X)$ the subalgebra of $\cN(X)$ of $U(1)$
invariant elements and likewise for
$\cN_{\rm sym}(\Lambda\sqcup\Lambda)$.  The maps $\theta$ and $\E_{C}$
preserve $U(1)$ symmetry.

A bijection $E\colon \Lambda\to\Lambda$ is an \emph{automorphism}
of the torus $\Lambda$ if it
maps nearest neighbours to nearest neighbours. Bijections act as homomorphisms
on the algebra $\cN(\Lambda)$ by $E\psi_x = \psi_{Ex}$ and
$E\bar\psi_x = \bar\psi_{Ex}$ and similarly for
$\cN(\Lambda\sqcup\Lambda)$. If $C$ is invariant under lattice
symmetries, i.e., $C(Ex,Ey)=C(x,y)$ for all automorphisms
$E$, then the convolution $\E_C\theta$ commutes
with automorphisms of $\Lambda$, i.e.,
$E \E_{C}\theta F = \E_{C}\theta E F$. In particular $E\E_{C_{j}}\theta
F = \E_{C_{j}}\theta E F$ for the covariances of the finite range decomposition
\eqref{e:C-def}.
An important consequence of this discussion is that if
$X\subset \Lambda$, $F\in \cN_{\rm sym}(X)$ and $F$ is
invariant under lattice symmetries that fix $X$, then
$\E_{C_{j}}\theta F\in \cN_{\rm sym}(X)$ is also invariant
under such lattice symmetries.

\subsection{Polymer coordinates}
\label{sec:polymers}

We will use \eqref{e:E-semigroup} and the
decomposition \eqref{e:C-def} to study the progressive integration
\begin{equation} \label{e:EZj}
  Z_{j+1} = \E_{C_{j+1}} \qB{\theta Z_j }, 
\end{equation}
for a given $Z_{0}\in \cN(\Lambda)$. To be concrete here, the reader
may keep $Z_0=e^{-V_0(\Lambda)}$ with $V_{0}(\Lambda)$
from~\eqref{e:V0} in mind, but to compute correlation functions we
will consider generalisations of this choice of $Z_{0}$ in
Section~\ref{sec:obs}.  The analysis is performed by defining suitable
coordinates \emph{(polymer coordinates)} and norms (on
polymer coordinates) that enable the progressive integration to be
treated as a dynamical system: this is the \emph{renormalisation
group}. Towards this end, this section defines local polymer coordinates
as in \cite{MR2523458,MR3332942}. Section~\ref{sec:norms}
then defines relevant norms, and norms on polymer coordinates are
introduced in Section~\ref{sec:step} after other preliminary
material is introduced.

\begin{figure}
  \input{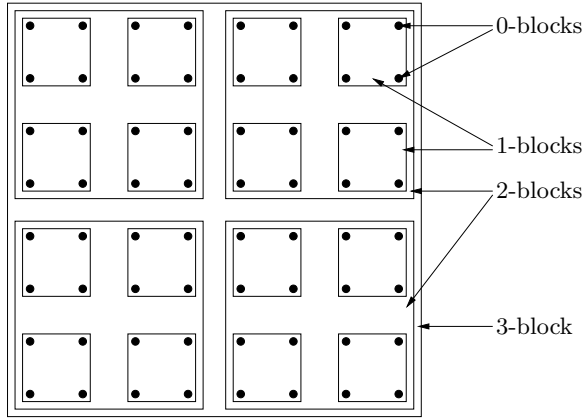}
  \caption{Illustration of $j$-blocks when $L=2$.\label{fig:blocks}}
\end{figure}

\subsubsection{Blocks and Polymers}
Recall $\Lambda\bydef\Lambda_{N}$ denotes a torus of side length
$L^{N}$. Partition $\Lambda_N$ into nested scale-$j$ blocks $\cB_j$ of
side lengths $L^j$ where $j=0,\dots, N$.  Thus scale-$0$ blocks are
simply the points in $\Lambda$, while 
the only scale-$N$ block is $\Lambda$ itself, see
Figure~\ref{fig:blocks}.  The set of \emph{$j$-polymers}
$\cP_j=\cP_j(\Lambda)$ consists of finite unions of blocks in
$\cB_j$. To define a notion of connectedness, say $X,Y \in \cP_j$
\emph{do not touch} if $\inf_{x\in X,y\in Y} |x-y|_\infty > 1$.  A
polymer is \emph{connected} if it is not empty and there is a path of
touching blocks between any two blocks of the polymer.  The subset of
connected $j$-polymers is denoted $\cC_j$.  We will drop $j$- prefixes
when the scale is clear.

For a fixed $j$-polymer $X$, let $\cB_j(X)$ denote the set of $j$-blocks
contained in $X$ and let $|\cB_j(X)|$ be the number of such blocks.
Connected polymers $X$ with $|\cB_j(X)| \leq 2^d$ are called
\emph{small sets} and the collection of all small sets is denoted
$\cS_j$. Polymers which are not small will be called \emph{large}.
Finally, for $X \in \cP_j$ we define its \emph{small set
neighbourhood} $X^\square \in\cP_j$ as the union of all small sets
containing a block in $\cB_j(X)$, and its \emph{closure}
$\bar{X}$ as the smallest $Y\in\cP_{j+1}$ such that $X\subset Y$.

\subsubsection{Coordinates}

We will write $Z_j$ in the form
\begin{equation} \label{e:ZVK-bis}
  Z_{j}
  = e^{-u_{j}|\Lambda_{N}|} \sum_{X\in\cP_{j}} e^{-V_{j}(\Lambda_{N}\setminus X)} K_{j}(X),
\end{equation}
where the $u_j$ are constants (essentially the free energy), the
$V_j(X)$ are functions of the fields $\bar\psi_x,\psi_x$ for $x$ in a
neighbourhood of $X$, parametrised by finitely many \emph{coupling
constants} which require special attention (and are independent of
$X$), and everything else is organised into the functions $K_j(X)$,
which will be called \emph{polymer activities}.  Unlike the $V_{j}$,
the polymer activities track quantities whose precise value is not
important.  Explicit (somewhat complicated) formulas for the evolution
of $K_{j}$ will be given below.  An essential point will be that they
can be tracked in terms of estimates.  The tuple $(V_j,K_j)$ together
with the representation \eqref{e:ZVK-bis} will be referred to as
\emph{polymer coordinates}.  In the remainder of this subsection, we
will discuss some structural properties of these coordinates.

\medskip\noindent\emph{Coupling constants.}
We will always identify $V_j$ with the coupling constants which parametrise it.
Explicitly,
for coupling constants $V_j=(z_j,y_j,a_j,b_j) \in \C^4$
and a set  $X \subset \Lambda_{N}$, let
\begin{multline} \label{e:VX-bis}
  V_j(X)
  =
  \\
  \sum_{x\in X} \qa{ y_j (\nabla\psi)_x (\nabla\bar\psi)_x + \frac{z_j}{2}((-\Delta\psi)_x\bar\psi_x + \psi_x(-\Delta\bar\psi)_x) + a_j \psi_x\bar\psi_x + b_j \psi_x\bar\psi_x (\nabla \psi)_x(\nabla\bar\psi)_x}
  .
\end{multline}
For the scale $j=0$, if we set $ Z_0 = e^{-V_0(\Lambda_{N})}$, then the
polymer coordinates take the simple form
\begin{equation} \label{e:Z0}
  Z_0 = e^{-V_0(\Lambda_{N})}
  = e^{-u_{0}|\Lambda_{N}|}\sum_{X \subset \Lambda_{N}} e^{-V_0(\Lambda_{N} \setminus X)} K_0(X),
\end{equation}
with
\begin{equation}
  K_0(X) = 1_{X=\varnothing}, \qquad u_{0}=0.
\end{equation}
To study the recursion $ Z_{j+1}=\E_{C_{j+1}}\theta Z_{j}$ at a general scale $j=1,\dots,N$, we will make a choice of coupling constants $V_{j}$
and of polymer activities $K_j=(K_j(X))_{X\in \cP_j(X)}$ such that
\begin{equation} \label{e:ZVK}
  Z_{j}
  = e^{-u_{j}|\Lambda_{N}|} \sum_{X\in\cP_{j}} e^{-V_{j}(\Lambda_{N}\setminus X)} K_{j}(X).
\end{equation}

\smallskip\noindent\emph{Polymer activities.}
The $K_j$ will be defined in such a way that they satisfy the locality and symmetry property
$K_j(X) \in \cN_{\rm sym}(X^\square)$ and the following important
component factorisation property: for $X,Y\in \cP_{j}$ that do not
touch,
\begin{equation} \label{e:K-factor}
  K_j(X \cup Y) = K_j(X)K_j(Y).
\end{equation}
Note that since they are $U(1)$ symmetric, the $K_j(X)$ are even elements of $\cN$, so they commute and the
product on the right-hand side is unambiguous.
Using the previous identity,
\begin{equation}
  \label{e:K-prod}
  K_j(X) = \prod_{Y \in \Comp(X)} K_j(Y),
\end{equation}
where $\Comp(X)$ denotes the set of connected components of the polymer $X$.
In particular, 
each $K_j= (K_{j}(X))_{X\in \cP_{j}(\Lambda_{N})}$ satisfying \eqref{e:K-factor} can be identified
with its restriction $K_j= (K_{j}(X))_{X\in \cC_{j}(\Lambda_{N})}$.
We say that $K_{j}$ is automorphism invariant if
$EK_{j}(X) = K_{j}(E(X))$ for all $X\in \cP_{j}(\Lambda_{N})$ and all
torus automorphisms $E \in {\rm Aut}(\Lambda_N)$ that map blocks in
$\cB_j$ to blocks in $\cB_j$.

\begin{definition} \label{def:cKbulk}
Let $\cKbulk_j(\Lambda_N)$ be the linear space of
automorphism invariant $K_j = (K_j(X))_{X\in\cC_j(\Lambda_N)}$
with $K_j(X) \in \cN_{\rm sym}(X^\square)$ for every $X \in \cC_j$. 
\end{definition}

Polymer coordinates at scale $j$ are thus a choice of (the coupling constants) $V_j$ together with a choice of (polymer activities) $K_j$
from the space $\cKbulk_{j}$.
The renormalisation group map is a particular choice of a map
$(V_j,K_j)\mapsto (V_{j+1},K_{j+1})$. 

For a given $Z_j$,
the above conditions do \emph{not} determine $K_j$ uniquely given
$V_j$ (see the proof of Proposition~\ref{prop:ZVK}, where the
non-uniqueness is apparent). We will state our specific choice
  of such a map in Section~\ref{sec:step-def} below. 
The goal is to choose $V_{j}$ such that the size of the $K_{j}$
decrease rapidly as $j$ increases when the sizes of $V_{j}$ and
$K_{j}$ are measured in appropriate norms.
Thus $K_j$ will capture the \emph{irrelevant} (or contracting) directions of the
renormalisation group dynamics, while the \emph{relevant} (or expanding) and \emph{marginal}
directions will be captured by the $V_j$ coordinates.
The next section defines the norms we will use.

\subsection{Norms}
\label{sec:norms}

We now define the $T_j(\ell)$ norms we will use on the Grassmann algebras $\cN(\Lambda)$.
General properties of these norms were systematically developed in \cite{MR3332938}, to which we will refer for some proofs.
To help the reader, in places where we specialise the definitions
of~\cite{MR3332938} we indicate the more general notation that is
used in~\cite{MR3332938}.

We start with some notation.
For any set $S$, we write $S^*$ for the set of finite sequences in $S$.
We write $\Lambda_f = \Lambda \times \{\pm 1\}$
and for $(x,\sigma) \in \Lambda_f$ we write $\psi_{x,\sigma} = \psi_x$ if $\sigma=+1$ and $\psi_{x,\sigma}=\bar\psi_x$ if $\sigma=-1$.
Then every element $F \in \cN(\Lambda)$ can be written in the form
\begin{equation} \label{e:Fzsum}
  F = \sum_{z\in\Lambda_f^*} \frac{1}{z!}F_z \psi^z
\end{equation}
where $\psi^z = \psi_{z_1}\cdots \psi_{z_n}$ if $z=(z_1,\dots, z_n)$.
We are using the notation that $z!=n!$ if the sequence $z$ has length
$n$.  The representation in~\eqref{e:Fzsum} is in general not
unique. To obtain a unique representation we require that the $F_{z}$
are antisymmetric with respect to permutations of the components of
$z$ (this is possible due to the antisymmetry of the Grassmann variables).
Antisymmetry implies that $F_{z}=0$ if $z$ has length
exceeding $2|\Lambda|$ or if $z$ has any repeated entries.

\begin{definition} \label{def:Phi}
  Let $p_{\Phi}=2d$.
  The space of test functions
  $\Phi_j(\ell)$ is defined as the set of functions
  $g\colon \Lambda_f^* \to \R$, $z\mapsto g_{z}$ together with
  norm
  \begin{equation} \label{e:Phi-def} \|g\|_{\Phi_j(\ell)} =
    \sup_{n\geq 0}\sup_{z\in\Lambda_f^n} \sup_{|\alpha_i| \leq  p_\Phi}
    \ell^{-n}L^{j(|\alpha_1|+\cdots|\alpha_n|)}|\nabla^{\alpha_1}_{z_1}
    \cdots \nabla^{\alpha_n}_{z_n} g_z|.
  \end{equation}
\end{definition}
In this definition, $\nabla_{z_i}^{\alpha_i}$ denotes the discrete
derivative $\nabla^{\alpha_i}$ with multiindex $\alpha_i$ acting on
the spatial part of the $i$th component of the finite sequence $z$.

The $\Phi_j(\ell)$ norm measures spatial smoothness of test functions,
which act as substitutes for fields.  Restricted to sequences of fixed
length, it is a lattice $C^{p_\Phi}$ norm at spatial scale~$L^j$ and
field scale~$\ell$.  We will mainly use the following choice of $\ell$
when using the $\Phi_j(\ell)$ norm:
\begin{equation}
  \label{e:ellj}
  \ell_j = \ell_0 L^{-\frac12(d-2)j}
\end{equation}
for a large constant $\ell_0$, and $\ell_{j}$ will always be as in~\eqref{e:ellj}.
This choice captures the size of the covariances in the decomposition \eqref{e:C-def}.
Indeed, regarding the covariances $C_{j}$ as functions of sequences of length $2$
(i.e., as the coefficient in \eqref{e:Fzsum} of $F= \sum_{x,y} \bar\psi_{x}\psi_{y}C_{j}(x,y)$),
the bounds \eqref{e:C-bd} imply
\begin{equation} \label{e:Cleq1}
  \|C_{j}\|_{\Phi_j(\ell_j)} \leq 1,
\end{equation}
when $\ell_0$ is chosen as a large ($L$-dependent, due to the
index $j+1$ on the left-hand side of~\eqref{e:C-bd}) constant relative
to the constants in  \eqref{e:C-bd} with $|\alpha|\leq 2p_\Phi$.
From now on, we will always assume that $\ell_0$ is fixed in this way.

\begin{definition} \label{def:Tphi}
 We define $T_j(\ell)$ to be the algebra $ \cN(\Lambda)$ together with the dual norm
 \begin{equation} \label{e:Tphi-def}
   \|F\|_{T_j(\ell)} = \sup_{\|g\|_{\Phi_j(\ell)} \leq 1}| \avg{F,g}|
   ,
   \qquad \text{where }
   \avg{F,g} = \sum_{z \in \Lambda_f^*}  \frac{1}{z!} F_z g_z
 \end{equation}
 when $F\in \cN(\Lambda)$ is expressed as in \eqref{e:Fzsum}.
 
 An analogous definition applies to $\cN(\Lambda \sqcup \Lambda)$, and we
 then write $T_j(\ell \sqcup \ell) \bydef T_j(\ell)$ for this norm
 (with the first notation to emphasise the doubled algebra),
 where we recall that $\cN(\Lambda\sqcup\Lambda)$ is defined above \eqref{e:theta-def}.
\end{definition}

The $T_j(\ell)$ norm measures smoothness of field functionals $F\in \cN(\Lambda)$ 
with respect to fields whose size is measured by $\Phi_j(\ell)$.
They therefore implement the power counting on which renormalisation relies.
Important, but relatively straightforwardly verified, properties of these norms are systematically developed in \cite{MR3332938}; we summarise the ones we need now.

\smallskip\noindent\emph{Product property.}
First, the $T_j(\ell)$ norm defines a Banach algebra,
i.e., the following product property holds  (see \cite[Proposition~3.7]{MR3332938}):
for $F_{1},F_{2} \in \cN(\Lambda)$,
\begin{equation} \label{e:T-product}
  \|F_1F_2\|_{T_j(\ell)} \leq \|F_1\|_{T_j(\ell)}\|F_2\|_{T_j(\ell)}.
\end{equation}
Using the product property, we may gain some intuition regarding these norms by  considering the following simple examples:
\begin{gather}
  \|\psi_x\bar\psi_x\|_{T_j(\ell)} \leq \|\psi_x\|_{T_j(\ell)} \|\bar\psi_x\|_{T_j(\ell)}= \ell^2, \\
  \|(\nabla_e\psi)_x\bar\psi_x\|_{T_j(\ell)}
  \leq \|\nabla_e\psi_x\|_{T_j(\ell)} \|\bar\psi_x\|_{T_j(\ell)} = \ell^2 L^{-j}.
\end{gather}
The following more subtle example relies on $\psi_{x}^{2}=\bar\psi_{x}^{2}=0$
and plays an important role for our model:
\begin{equation}
  \|\psi_x\bar\psi_x\psi_{x+e}\bar\psi_{x+e}\|_{T_j(\ell)}
  = \|\psi_x\bar\psi_x(\nabla_e\psi)_{x}(\nabla_e\bar\psi)_{x}\|_{T_j(\ell)}
  \asymp \ell^4L^{-2j}.
\end{equation}
In general each factor of the fields contributes a factor $\ell$ and
each derivative a factor $L^{-j}$.

\smallskip\noindent\emph{Monotonicity.} 
Second, as follows immediately from the definition, the following monotonicity properties hold:
for $\ell\leq \ell'$ and $F \in \cN(\Lambda)$,
\begin{equation} \label{e:T-monotone}
  \|F\|_{T_j(\ell)} \leq  \|F\|_{T_j(\ell')}, \qquad
  \|F\|_{T_{j+1}(\ell)} \leq  \|F\|_{T_j(\ell')}.
\end{equation}

\smallskip\noindent\emph{Doubling map.} 
Third, the doubling map satisfies (see \cite[Proposition~3.12]{MR3332938}): for $F \in \cN(\Lambda)$,
\begin{equation} \label{e:theta-bd}
  \|\theta F\|_{T_j(\ell)} \leq \|F\|_{T_j(2\ell)}
\end{equation}
where the norm on the left-hand side is the
$T_j(\ell) \bydef T_j(\ell\sqcup \ell)$
 norm on $\cN(\Lambda\sqcup\Lambda)$.

 \smallskip\noindent\emph{Gram inequality.} 
Finally, the following contraction bound for the fermionic Gaussian
expectation is an application of the Gram inequality whose importance
is well-known in fermionic renormalisation.  It is proved in
\cite[Proposition~3.19]{MR3332938}.

\begin{proposition}
  Assume $C$ is a covariance matrix with
  $\|C\|_{\Phi_{j}(\ell)} \leq 1$.  For
  $F \in \cN(\Lambda\sqcup\Lambda)$, then
  \begin{equation} \label{e:E-contract2}
    \|\E_{C}F\|_{T_j(\ell)} \leq \|F\|_{T_j(\ell)}.
  \end{equation}
  In particular, for $F\in\cN(\Lambda)$, by \eqref{e:theta-bd} the fermionic Gaussian
  convolution satisfies
  \begin{equation} \label{e:E-contract}
    \|\E_{C} \theta F\|_{T_{j}(\ell)}
    \leq \|F\|_{T_{j}(2\ell)}.
  \end{equation}
\end{proposition}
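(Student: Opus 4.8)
The plan is to prove the main inequality \eqref{e:E-contract2}, from which \eqref{e:E-contract} follows at once: since $\E_C\theta F=\E_C(\theta F)$, applying \eqref{e:E-contract2} to $\theta F\in\cN(\Lambda\sqcup\Lambda)$ and then the doubling bound \eqref{e:theta-bd} gives $\|\E_C\theta F\|_{T_j(\ell)}\le\|\theta F\|_{T_j(\ell\sqcup\ell)}\le\|F\|_{T_j(2\ell)}$. For \eqref{e:E-contract2} I would use the dual description of the norm from Definition~\ref{def:Tphi}: it suffices to show $|\avg{\E_C F,g}|\le\|F\|_{T_j(\ell)}$ for every test function $g$ on $\cN(\Lambda)$ with $\|g\|_{\Phi_j(\ell)}\le1$.

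The first step is to re-express this pairing on the doubled algebra. Splitting the generators of $\cN(\Lambda\sqcup\Lambda)$ into the surviving variables $\psi,\bar\psi$ and the integrated variables $\zeta,\bar\zeta$, every monomial of $F$ is a product of a monomial in $\psi,\bar\psi$ with a monomial $\zeta^{y}$, and $\E_C$ replaces $\zeta^y$ by its fermionic Gaussian moment. By the Wick formula this moment is zero unless $y$ has equally many $\zeta$- and $\bar\zeta$-slots, at positions $a_1,\dots,a_p$ and $b_1,\dots,b_p$ say, in which case it equals $\pm\det\bigl(C(a_i,b_j)\bigr)_{i,j=1}^{p}$. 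Unwinding the definition of $\E_C$ (coefficient extraction), of $\avg{\cdot,\cdot}$, and tracking the Grassmann reordering signs and the interleaving combinatorics of the factor $1/z!$, one arrives at an identity $\avg{\E_C F,g}=\avg{F,g\otimes\gamma_C}$, in which $g\otimes\gamma_C$ is the test function on $\cN(\Lambda\sqcup\Lambda)$ whose value on a mixed sequence $w$ is $\pm g_{w'}\,\E_C\zeta^{w''}$, with $w',w''$ the $\psi$- and $\zeta$-subsequences of $w$.

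By the definition of the $T$-norm it then remains to prove $\|g\otimes\gamma_C\|_{\Phi_j(\ell\sqcup\ell)}\le\|g\|_{\Phi_j(\ell)}\le1$. Since the $\Phi$-norm is a supremum over the sequence length and over multiindices attached to each slot, and $g\otimes\gamma_C$ factorises through its $\psi$-part (which is $g$) and its $\zeta$-part, the weights $\ell^{-n}$ and $L^{j|\alpha|}$ distribute over the two parts and it is enough to show that the $\zeta$-part $y\mapsto\E_C\zeta^y$ has $\Phi$-norm at most $1$. Fix $y$ with slots $a_1,\dots,a_p$ and $b_1,\dots,b_p$, so $n=2p$, and apply one discrete derivative $\nabla^{\alpha_k}$ with $|\alpha_k|\le p_\Phi$ to each slot; by multilinearity of the determinant these act rowwise and columnwise, producing $\det\bigl(\nabla^{\alpha^a_i}_{a_i}\nabla^{\alpha^b_j}_{b_j}C(a_i,b_j)\bigr)_{i,j=1}^{p}$. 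I would then use a Gram factorisation $C(a,b)=(\varphi_a,\varphi_b)_{\cc{H}}$ in a Hilbert space — for instance $\cc{H}=\ell^2(\Lambda_N)$ and $\varphi_a$ the $a$-th column of $C^{1/2}$, which exists because $C$ is positive semidefinite — under which $\|\nabla^\alpha\varphi_a\|_{\cc{H}}^{2}=\nabla^\alpha_a\nabla^\alpha_bC(a,b)\big|_{b=a}$. Read through the identification of $C$ with the test function $(x,y)\mapsto C(x,y)$ as in \eqref{e:Cleq1}, the hypothesis $\|C\|_{T_j(\ell)}\le1$ amounts to the bound $|\nabla^\alpha_x\nabla^\beta_yC(x,y)|\le\ell^{2}L^{-j(|\alpha|+|\beta|)}$ for $|\alpha|,|\beta|\le p_\Phi$, hence $\|\nabla^\alpha\varphi_a\|_{\cc{H}}\le\ell L^{-j|\alpha|}$. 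The Gram--Hadamard inequality $|\det((u_i,v_j))|\le\prod_i\|u_i\|\prod_j\|v_j\|$ then gives $\bigl|\nabla^{\alpha_1}\!\cdots\nabla^{\alpha_{2p}}\E_C\zeta^y\bigr|\le\ell^{2p}L^{-j\sum_k|\alpha_k|}$, and multiplying by the $\Phi$-weight $\ell^{-2p}L^{j\sum_k|\alpha_k|}$ returns exactly $\le1$, as required.

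This argument is not hard; the fiddly points are the sign and interleaving bookkeeping behind $\avg{\E_C F,g}=\avg{F,g\otimes\gamma_C}$, and checking that the hypothesis on $C$ really yields a single Hilbert-space representation compatible with all the discrete derivatives needed — here immediate from the columns of $C^{1/2}$ and the identity $\|\nabla^\alpha\varphi_a\|^2=\nabla^\alpha_a\nabla^\alpha_bC(a,b)|_{b=a}$. The one structural ingredient, and the reason the estimate closes with constant exactly $1$, is the use of the \emph{determinant} formula for fermionic Gaussian moments together with Hadamard's inequality: this is exactly the point at which the fermionic nature of the field circumvents the large-field difficulties inherent to bosonic Gaussian convolution.
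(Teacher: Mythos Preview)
The paper does not prove this proposition itself but cites \cite[Proposition~3.19]{MR3332938}, noting only that it is ``an application of the Gram inequality''; your sketch is exactly that argument---dualise to test functions, express the fermionic Gaussian moments as Wick determinants, and bound them via Gram--Hadamard using a square-root factorisation of $C$. Your reading of the hypothesis as the pointwise bound $|\nabla^\alpha_x\nabla^\beta_y C(x,y)|\le\ell^{2}L^{-j(|\alpha|+|\beta|)}$ is the right one: this is $\|C\|_{\Phi_j(\ell)}\le1$ for $C$ viewed as a length-two test function (cf.\ \eqref{e:Cleq1} and the remark following it), which is precisely the input the Gram argument needs.
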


For our choices of $\ell_j$ and of the finite range covariance matrices $C_j$,
the inequalities  \eqref{e:T-monotone} and \eqref{e:E-contract} in particular imply
\begin{equation} \label{e:normcontract}
  \|F\|_{T_{j+1}(\ell_{j+1})} \leq
  \|F\|_{T_{j+1}(2\ell_{j+1})} \leq \|F\|_{T_j(\ell_j)},
  \qquad
  \|\E_{C_{j+1}}\theta F\|_{T_{j+1}(\ell_{j+1})} \leq \|F\|_{T_j(\ell_j)}.
\end{equation}
We remark that the existence of this contraction estimate for the
expectation combined with \eqref{e:Loc-contract} below is what makes
renormalisation of fermionic fields much simpler than that of bosonic
ones.

\subsection{Localisation}
\label{sec:Loc}

To define the renormalisation group map we need one more
important ingredient: the \emph{localisation operators}
$\Loc_X$ and $\Loc_{X,Y}$ that will be used to extract the relevant and
marginal terms from the $K_j$ coordinate to incorporate them in the
renormalisation from $V_j$ into $V_{j+1}$. These operators are
generalised Taylor approximations which take as inputs $F\in \cN(X)$
and produce best approximations of $F$ in a finite dimensional space
of \emph{local field polynomials}.

\subsubsection*{Local field polynomials}

By \emph{formal local field polynomials} we refer to formal polynomials in the symbols
$\psi,\bar\psi,\nabla\psi,\nabla\bar\psi,\Delta\psi,\Delta\bar\psi,\nabla^2\psi,\dots$ (without spatial index).
The \emph{dimension} of a formal local field monomial is given by $(d-2)/2$ times the number of factors of $\psi$ or $\bar\psi$
plus the number of discrete derivatives $\nabla$ in its
representation, where $\Delta$ is treated as two discrete derivatives.
The classification of local monomials according to dimension is known as power counting in the renormalisation group literature.
Relevant monomials are those with dimension strictly greater than $d$, marginal ones those with dimension equal to $d$,
and irrelevant those with dimension strictly less than $d$.
Concretely, we consider the following space of formal local field polynomials,
consisting of the relevant and marginal monomials consistent with symmetry constraints.

\begin{definition} \label{def:cV}
Let $\cVbulk \cong \C^4$ be the linear space of formal local field
polynomials of the form
\begin{equation} \label{e:V-def}
  V= y (\nabla\psi) (\nabla\bar\psi) + \frac{z}{2}((-\Delta\psi)\bar\psi + \psi(-\Delta\bar\psi))
  + a \psi\bar\psi + b \psi\bar\psi (\nabla \psi)(\nabla\bar\psi)
  .
\end{equation}
\end{definition}

We will identify elements $V\in\cVbulk$ with their coupling constants
$(z,y,a,b) \in \C^4$.  Sometimes we include a constant term $u$ and write
$u+V \in \C \oplus \cVbulk$ with $u+V\cong (u,z,y,a,b)\in\C^5$.

Given a set $X\subset \Lambda$, a formal local field polynomial $P$
can be specialised to an element of $\cN(\Lambda)$ by replacing formal
monomials by evaluations. For example, if $P=\bar\psi\psi$,
$P(X) = \sum_{x\in X}\bar\psi_{x}\psi_{x}$. We call polynomials
arising in this way \emph{local polynomials}.  The most important case
is $V \mapsto V(X)$, with
\begin{equation} \label{e:VX}
  V(X) =
  \sum_{x\in X} \qa{ y (\nabla\psi)_x (\nabla\bar\psi)_x + \frac{z}{2}((-\Delta\psi)_x\bar\psi_x + \psi_x(-\Delta\bar\psi)_x) + a \psi_x\bar\psi_x + b \psi_x\bar\psi_x (\nabla \psi)_x(\nabla\bar\psi)_x}
  ,
\end{equation}
where $\Delta \bydef -\frac12 \sum_{e\in \cE_d} \nabla_{-e}\nabla_{e}$
and
$(\nabla\psi)_{x}(\nabla\bar\psi)_{x} \bydef \frac12
\sum_{e\in\cE_d}\nabla_{e}\psi_{x}\nabla_{e}\bar\psi_{x}$ are the
lattice Laplacian and the square of the lattice gradient; recall that
$\cE_d = \{e_1, \dots, e_{2d}\}$.  For a constant $u\in \C$ we write
$u(X) = u|X|$, where $|X|$ is the number of points in $X \subset
\Lambda$.  Thus $(u+V)(X)= u(X)+V(X)=u|X|+V(X)$.

\begin{definition}
  For $X\subset \Lambda$,
  define $\cVbulk(X) = \{V(X): V\in \cVbulk\} \subset \cN(\Lambda)$ 
  and analogously $(\C \oplus \cVbulk)(X) = \{ u|X|+V(X): u\in \C, \,
  V \in \cVbulk\} \subset \cN(\Lambda)$.
\end{definition}

The space $\cVbulk$ contains all formal local field polynomials
whose constituent monomials have dimension at most $d$ that are (i)
$U(1)$ invariant, (ii) respect lattice symmetries (if $EX=X$
for an automorphism $E$, then $EV(X)=V(X)$), (iii)
$V(X)\neq 0$, and (iv) have no constant terms.  Note that
  $\E_C\theta$ preserves $\cVbulk(X)$ by the discussion in
  Section~\ref{sec:symmetries}.  We emphasise that there is
no $(\bar\psi\psi)^2$ term, which would be consistent with
  having dimension as most $d$ (if $d=3,4$) and symmetries, because
it vanishes upon specialisation by anticommutativity of the fermionic
variables.

Two further remarks are in order.
First, the monomial $\psi\bar\psi (\nabla\psi)(\nabla\bar\psi)$ has dimension
$2d-2>d$ for $d\geq 3$; we include it in $\cVbulk$ 
since it occurs in the initial potential. Second,
the monomials multiplying $z$ and $y$ are equivalent upon
specialisation when $X=\Lambda$
by summation by parts, and differ
only by boundary terms for general $X \subset \Lambda$.
This would
allow us to keep only one of them, but it will be simpler to keep both.

\subsubsection*{Localisation}
The localisation operators $\Loc_X$ and $\Loc_{X,Y}$ associate local field
monomials to elements of $\cN(X)$.  In renormalisation group
terminology, the image of $\Loc$ projects onto the space of all
relevant and marginal local polynomials.
The precise definitions of the localisation operators do not play a
direct role in this paper.  Rather, only their abstract properties,
summarised in the following Proposition~\ref{prop:Loc}, will be required.
Nonetheless, to give some intuition for the action of $\Loc_X$ and
$\Loc_{X,Y}$, we include the following typical examples (see also
\cite[Section~1.5]{MR3332939}).  The examples indicate (as
stated at the beginning of this section) that the localisation
operators are generalised Taylor approximations.

\begin{example}
  (i) Let $F$ be a monomial in $\{\bar\psi_x, \psi_x\}$ of degree greater than four. Then $\Loc_X F = 0$.

  \smallskip
  \noindent
  (ii) Consider $F= \sum_{x\in X}\sum_{y\in\Lambda}
  q(x-y)\bar\psi_y\psi_y$ where the kernel $q\colon \Z^d \to \R$ has
  finite support and is invariant under lattice rotations. Then 
  provided $\Lambda$ is large enough,
  \begin{equation} \label{e:ex-Loc}
    \Loc_X F = \sum_{x\in X} \qa{ q^{(1)} \bar\psi_x\psi_x +
      q^{(**)}\pa{\frac12 \bar\psi_x(\Delta\psi)_x + \frac12
        (\Delta\bar\psi)_x\psi_x + (\nabla\bar\psi)_{x}(\nabla\psi)_{x}} } = \sum_{x\in X} P_x,
  \end{equation}
  where $q^{(1)}= \sum_{y \in\Z^d} q(y)$ and
  $q^{(**)} = \sum_{y \in\Z^d} y_1^2 q(y)$ (and $y_1$ denotes the
  first component of $y\in\Z^d$), and with the same $P_x$ as in
  \eqref{e:ex-Loc},
  \begin{equation}
    \Loc_{X,Y} F = \sum_{y\in Y} P_y.
  \end{equation}
  Thus $\sum_{i=1}^n \Loc_{X,X_i} F =\Loc_X F$ if $X$ is the disjoint union of $X_1,\dots, X_n$.
\end{example}

For the definition of $\Loc_X$ and $\Loc_{X,Y}$, we use the
general framework developed in \cite{MR3332939}.  In short, the
definitions of $\Loc_X$ and $\Loc_{X,Y}$ are those of
\cite[Definition~1.6 and~1.15]{MR3332939}.  These definitions require
a choice of field dimensions, which we choose as
$[\psi]=[\bar\psi] = (d-2)/2$, a choice of maximal field dimension
$d_+$, which we choose as $d_{+}=d$, and a choice of a space $\hat P$
of test polynomials, which we define exactly as in
\cite[(1.19)]{MR3332939} with the substitution
$\nabla_e\nabla_e\to -\nabla_e\nabla_{-e}$ explained in
\cite[Example~1.3]{MR3332939}.  The following properties are then
almost immediate from \cite{MR3332939}.

\begin{proposition} \label{prop:Loc} For $L=L(d)$ sufficiently large
  there is a universal $\bar C>0$ such that: for $j<N$ and any small
  sets $Y \subset X \in \cS_j$, the linear maps
  $\Loc_{X,Y}\colon \cN(X^\square) \to \cN(Y^\square)$ have the following properties:
  \smallskip
  
  \noindent
  (i) They are bounded:
  \begin{equation} \label{e:Loc-bd}
    \|\Loc_{X,Y}F\|_{T_{j}(\ell_{j})} \leq \bar C \|F\|_{T_j(\ell_{j})}.
  \end{equation}
\smallskip
  \noindent
  (ii) The maps $\Loc_X \bydef \Loc_{X,X}\colon \cN(X^\square) \to \cN(X^\square)$ satisfy the contraction bound
  \begin{equation} \label{e:Loc-contract}
    \|(1-\Loc_X)F\|_{T_{j+1}(2\ell_{j+1})} \leq \bar C L^{-d}
    {L^{-\dplus}}
\|F\|_{T_j(\ell_j)}.
  \end{equation}
\smallskip
  \noindent
  (iii) If $X$ is the disjoint union of $X_1, \dots, X_n$ then
  $\Loc_X = \sum_{i=1}^n \Loc_{X,X_i}$.
  
  \smallskip
  \noindent
  (iv) The maps are Euclidean invariant:
  if $E\in {\rm Aut}(\Lambda_{N})$ then
  $E \Loc_{X,Y} F = \Loc_{EX,EY} EF$.

  \smallskip
  \noindent
  (v) For a block $B$, small polymers $X_1,\dots, X_n$, and any $F_i \in \cN_{\rm sym}(X_i^\square)$ such that $\sum_{i=1}^n\Loc_{X_i,B} F_i$ is invariant
  under automorphisms of $\Lambda_N$ that fix $B$,
  \begin{equation} \label{e:Loc-image}
    \sum_{i=1}^n \Loc_{X_i,B}F_i \in (\C \oplus \cVbulk)(B).
  \end{equation}
\end{proposition}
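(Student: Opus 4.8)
The plan is to deduce all five properties from the general $\Loc$ machinery of \cite{MR3332939}; the work lies in checking that our concrete setup meets its hypotheses and in reading off the correct constants. First I would recall the data specialising the general framework: field dimensions $[\psi]=[\bar\psi]=(d-2)/2$, maximal dimension $d_+=d$, and the test-polynomial space $\hat P$ of \cite[(1.19)]{MR3332939} with the substitution $\nabla_e\nabla_e\to-\nabla_e\nabla_{-e}$ of \cite[Example~1.3]{MR3332939}. I would then observe that, because $L\geq 2^{d+2}$ and $j<N$, every small set $X\in\cS_j$ and its neighbourhood $X^\square$ lie inside a ball of diameter $<L^N$, so $\Lambda_N$ may be treated as $\Z^d$ for these local estimates (the covariances $C_j$, $j<N$, being the translation-invariant ones), and that the norms $\|\cdot\|_{T_j(\ell_j)}$ of Definition~\ref{def:Tphi} are instances of the norms for which the $\Loc$ estimates of \cite{MR3332938,MR3332939} are established; here \eqref{e:Cleq1} fixes the normalisation of the field scale.

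With this in place, (i) is the stability bound for $\Loc_{X,Y}$ from \cite{MR3332939}, and (iii) is immediate from the construction, since $\Loc_{X,Y}F$ extracts a single local field polynomial from $F$ and then sums it over $Y$, and the sum over a disjoint union $X=\bigsqcup_i X_i$ splits accordingly. Property (iv) is the Euclidean covariance of the construction, available because our choice of $\hat P$ is invariant under lattice automorphisms---this being the purpose of the $\nabla_e\nabla_e\to-\nabla_e\nabla_{-e}$ substitution. For (v) I would argue that the image of $\Loc_{X_i,B}$ consists of gauge-invariant local field polynomials on $B$ of dimension at most $d_+=d$, since $\Loc$ preserves gauge invariance; in particular the quartic monomial $\psi\bar\psi(\nabla\psi)(\nabla\bar\psi)$, of dimension $2d-2>d$, never occurs. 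Imposing in addition invariance under the automorphisms of $\Lambda_N$ that fix $B$ leaves only the span of $1$, $\psi\bar\psi$, $(\nabla\psi)(\nabla\bar\psi)$ and $(-\Delta\psi)\bar\psi+\psi(-\Delta\bar\psi)$, by the classification recorded after Definition~\ref{def:cV} (the monomial $(\bar\psi\psi)^2$ vanishing by anticommutativity); this span is contained in $(\C\oplus\cVbulk)(B)$, giving \eqref{e:Loc-image}.

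The main obstacle is (ii), the contraction bound, which also drives the entire renormalisation group. I would obtain it from the general contraction estimate of \cite{MR3332939}: for $X\in\cS_j$ it gives $\|(1-\Loc_X)F\|_{T_{j+1}(2\ell_{j+1})}\leq C\,L^{-d_+-\bar\gamma}\,\|F\|_{T_j(\ell_j)}$, where $\bar\gamma>0$ is the gap between $d_+$ and the next dimension generated by derivatives (dimension $1$ each) and fields (dimension $(d-2)/2$ each). Since $d_+=d$ is an integer, that next value is $d+\dplus$, so one may take $\bar\gamma=\dplus=(\tfrac{d-2}{2}\wedge 1)$, which is precisely the asserted factor $\bar C\,L^{-d}L^{-\dplus}$. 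The delicate points are, first, matching our $T_j(\ell_j)$ norms and small-set geometry to the hypotheses of the general estimate---routine given \cite{MR3332938} and the standing assumptions $L\geq 2^{d+2}$, $j<N$---and, second, verifying that $\bar C$ is universal: the ancillary factors, namely the power of $2$ per field coming from the doubling step \eqref{e:theta-bd} (input measured at $2\ell_{j+1}$) and the per-field factor $L^{-(d-2)/2}$ from $\ell_{j+1}/\ell_j$, must combine to a quantity bounded uniformly in the number of fields once $L$ is large, which uses $[\psi]=(d-2)/2>0$, i.e., $d\geq 3$.
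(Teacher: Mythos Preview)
Your proposal is correct and follows essentially the same route as the paper: both derive all five properties by specialising the general $\Loc$ framework of \cite{MR3332939} with the stated field dimensions and $d_+=d$, citing the boundedness, contraction, decomposition, and Euclidean-invariance results there, and obtaining (v) from the classification of gauge- and lattice-symmetry-invariant local monomials of dimension at most $d$. The paper is terser---it simply points to \cite[Propositions~1.9, 1.12, 1.16]{MR3332939} and notes that $p_\Phi=2d$ and the restriction to small sets are what make \cite[Proposition~1.12]{MR3332939} yield the exponent $L^{-d-\dplus}$---whereas you give a dimensional-gap heuristic for that exponent; but the substance is the same.
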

We remark that the image of $\Loc_{X,Y}$ is in general a
larger space of local field monomials than $\cVbulk(Y)$, often denoted
$\cV$ in \cite{MR3332939} --- for example first gradients of
the field can arise which only need cancel upon the symmetrisation
in \eqref{e:Loc-image}. Since we will not use this larger space directly
we have not assigned a symbol for it.

\begin{proof}[Proof of Proposition~\ref{prop:Loc}]
The bound (i) is \cite[Proposition~1.16]{MR3332939},
the contraction bound (ii) is \cite[Proposition~1.12]{MR3332939},
the decomposition property (iii) holds by the definition of $\Loc_{X,Y}$ in \cite[Definition~1.15]{MR3332939},
and the Euclidean invariance (iv) is \cite[Proposition~1.9]{MR3332939}.
Note that the parameter $A'$ in \cite[Proposition~1.12]{MR3332939} does
not appear here as it applies to the boson field $\phi$; our fermionic context corresponds to $\phi=0$.
For the application of \cite[Proposition~1.12]{MR3332939} we have used
that $p_\Phi$ was fixed to be $2d$ in Definition~\ref{def:Phi},
and that we have only considered the action of $\Loc$ on small sets.

Finally, property (v) follows from
\cite[Proposition~1.10]{MR3332939} and the fact that the space
$\cVbulk$ defined in Definition~\ref{def:cV} contains all local
polynomials of dimension at most $d$ invariant under lattice
automorphisms that fix a point. 
\end{proof}

\subsection{Definition of the renormalisation group map}
\label{sec:step-def}

The renormalisation group map $\Phi_{j+1} = \Phi_{j+1,N,m^{2}}$ is a map
\begin{equation}
  \Phi_{j+1}\colon (V_j,K_j) \mapsto (u_{j+1},V_{j+1},K_{j+1})
\end{equation}
acting on
\begin{equation}
  V_j  \in \cVbulk, 
  \qquad K_j \in \cKbulk_j(\Lambda_N), 
\end{equation}
with $u_{j+1}\in\C$, the space of coupling constants $\cVbulk$ as in Definition~\ref{def:cV}, and
the space of polymer activities
$\cKbulk_j(\Lambda_N)$ as in Definition~\ref{def:cKbulk}.
The map will have mild dependence on $m^2$ and $\Lambda_N$ as a
consequence of this dependence of the covariance matrices. As
indicated above the $u$-coordinate does not influence the dynamics of
the remaining coordinates. Thus we can always explicitly assume that
the incoming $u$-component of $\Phi_{j+1}$ is $0$ and separate it from
$V_{j+1}$ in the output.  This means that we will often regard
$\Phi_{j+1}$ as a map $(V_j,K_j) \mapsto (V_{j+1},K_{j+1})$
where $u_j=u_{j+1}=0$.

The explicit definition of the map $\Phi_{j+1}$ is given in
\eqref{e:V+-def} and \eqref{e:K+-def} below. The essential
consequences of the definition are
Proposition~\ref{prop:ZVK}, which enables the iterative application
of the renormalisation group maps, and the estimates of
Theorem~\ref{thm:step}.

At first sight, the definition of $\Phi_{j+1}$ may appear
somewhat complicated, but it follows from simple principles
that are outlined in the proof of Proposition~\ref{prop:ZVK} below.
Compared to other implementations of the
fermionic renormalisation group, the finite range property of the
covariances in our implementation means we do not require infinite
expansions, nor do we require norms which control the spatial
complexity of polymers beyond simple volume estimates.  As such,
establishing useful norm estimates becomes an essentially
combinatorial problem.  This feature is especially useful in models
with bosonic fields, see \cite{MR2070102,MR2523458} and
\cite[Appendix~A]{MR3969983} for introductory discussions, but it
also provides appealing features in the present fermionic
context. For example, the flows on tori
with two distinct side lengths $L^{N_1}<L^{N_2}$ \emph{coincide} up
to the final length scale $L^{N_1-1}$ 
for polymers which do not wrap around either torus, making the
definition of the infinite volume flow and its relation to the
finite volume one particularly transparent (see
Proposition~\ref{prop:consistency} below).

For the definition of the renormalisation group map $\Phi_{j+1}$,
we identify $V_j \in \cVbulk$ with the tuple $(V_j(B))_{B \in \cB_j(\Lambda_N)}$,
i.e., the field monomials corresponding to the coupling constants $V_j$ evaluated over a block $B$,
and the tuple $(K_j(X))_{X\in \cC_j(\Lambda_N)}$ with its extension $(K_j(X))_{X\in\cP_j(\Lambda_N)}$
determined by the component factorisation property \eqref{e:K-factor}.
We also introduce, assuming $j+1<N$,
\begin{align} \label{e:Q-def}
  Q(B) &= \sum_{X\in \cS_j: X \supset B} \Loc_{X,B} 
         K_j(X), &\qquad& (B \in \cB_j),  \\
  \label{e:J-defB}
  J(B,B) &= - \sum_{X\in \cS_j \setminus \cB_j: X \supset B} \Loc_{X,B} 
           K_j(X), &\qquad& (B \in \cB_j),\\
  \label{e:J-def}
  J(B,X) &= \Loc_{X,B} 
           K_j(X), &\qquad& (X \in \cS_j \setminus \cB_j, B \in \cB_j(X)),
\end{align}
and $J(B,X)=0$ otherwise. If $j+1=N$ we simply set $Q=J=0$.

As a consequence of the properties of $\Loc$ from
Proposition~\ref{prop:Loc} (c.f.~in particular property (v)), $Q(B)$
arises from an element of $\cVbulk$ and represents the marginal and
relevant contributions from $K_j$ associated with the block $B$.
These contributions (which are marginal or relevant in the sense of
power counting) only come
from $K(X)$ for small sets $X$, as large
sets $X$ will yield contracting contributions for entropic reasons
(as opposed to power counting reasons) considered later. The
$J(B,X)$ are a technical device for removing the $Q$ contribution
from $K_j$.  An important property is that
\begin{equation} \label{e:J-zero}
  \sum_X J(B,X) = 0, \qquad (B \in \cB_j).
\end{equation}
The application of this property occurs in \eqref{e:K-crucial}.
We indicate the motivation for the form of $J$ below. For a fuller
discussion we refer to \cite[Lectures~4--5]{MR2523458} and~\cite[Appendix~A, 12.3.2]{rg-brief}.

\medskip
We will specify the $V$- and $K$-components of the renormalisation group map separately.

\medskip
\noindent
\emph{$V$-component.}
The first definition defines the $V$-component of the renormalisation group map.
This map is given by first-order perturbation theory, i.e., $\E_{C_{j+1}}[\theta V_j(B)]$,
plus the higher-order contribution $\E_{C_{j+1}}[\theta Q(B)]$ representing the marginal and relevant
contributions from $K_j$ as discussed above.
Here recall the definition of the doubling map $\theta$ from \eqref{e:theta-def}, i.e.,
$\theta F$ is obtained from $F$ by replacing $\psi$ by $\psi+\zeta$ and $\bar\psi$ by $\bar\psi+\bar\zeta$,
and that the expectation only acts on $(\zeta,\bar\zeta)$.

\begin{definition} \label{def:V+}
The map $(V_j,K_j) \mapsto (u_{j+1},V_{j+1})$ is defined by
\begin{equation} \label{e:V+-def}
  u_{j+1}|B| + V_{j+1}(B) = 
  \E_{C_{j+1}}\qB{\theta \p{V_j(B) - Q(B)}}, \qquad (B\in\cB_{j}).
\end{equation}
\end{definition}

We emphasise that $V_{j+1}$ is evaluated on $B\in\cB_j$
here; $V_{j+1}$ can then be
extended to $\cB_{j+1}$ by additivity.  When $K_{j}$ is automorphism
invariant, which is the case if $K_{j}\in\cKbulk_{j}(\Lambda_{N})$,
the right-hand side of \eqref{e:V+-def} is in $(\C\oplus\cVbulk)(B)$
and can thus be identified with an element of
$\C\oplus\cVbulk \cong \C^5$.  This can be checked by using
Proposition~\ref{prop:Loc}~(iv) and (v) and the properties of
progressive integration discussed in
Section~\ref{sec:grassm-gauss-integr}.  Recall that we sometimes write
the left-hand side as $(u+V)_{j+1}(B)$. Since $V_{j+1}(B)$ has no
constant term by definition, the constant $u_{j+1}$ is unambiguously defined.

\medskip
\noindent
\emph{K-component.}  The following formula for the
  $K$-component of the renormalisation group map is more involved.  It
  is engineered to achieve the desired factorisation and contraction
  properties of the renormalisation group map.  The explicit formula
  will enable a relatively straightforward verification of the
  estimates which follow from it; the formula itself is the result of
  relatively simple manipulations explained in the proof
  Proposition~\ref{prop:ZVK} below.

\begin{definition} \label{def:K+}
For $U \in \cP_{j+1}$,
the map $(V_j,K_j) \mapsto K_{j+1}(U)$  is defined by
\begin{equation} \label{e:K+-def}
  K_{j+1}(U)=
  e^{u_{j+1}|U|}
  \sum_{(\cX,\check{X}) \in \cG(U)}  e^{-(u+V)_{j+1}(U \setminus
    \check{X} \cup X_{\cX})} \E_{C_{j+1}} \qa{ \check{K}_j(\check{X})
  \prod_{(B,X) \in \cX}\theta J(B,X)}
\end{equation}
where 
\begin{alignat}{2} \label{e:K+-def2}
  \check{K}_{j}(X) &= \prod_{W \in \Comp(X)}
  \check{K}_{j}(W),
  &\qquad
  \check{K}_{j}(W)
  &= 
    \sum_{Y \in \cP_j(W)} (\theta K_j(W\setminus Y)) (\delta I)^Y
    - \sum_{B\in \cB_j(W)} \theta J(B, W),
    \\
    \label{e:deltaI}
  (\delta I)^X &= \prod_{B \in \cB_j(X)} \delta I(B), &\qquad
  \delta I(B) &= \theta e^{-V_j(B)}-e^{-(u+V)_{j+1}(B)}.
\end{alignat}
Following \cite[Section~5.1.2]{MR2523458}, we define the set $\cG(U)$ (and the corresponding notation $\cX$ and $X_\cX$) as follows:
$\check{X}\in\cP_j$ and $\cX$ is a set of pairs $(B,X)$ with
$X \in \cS_j$ and $B \in \cB_j(X)$ with the following properties:
each $X$ appears in at most one pair $(B,X)\in \cX$,
the different $X$ do not touch, $X_{\cX}= \cup_{(B,X)\in\cX} X$
does not touch $\check{X}$,
and the closure of the union of $\check{X}$ with $\cup_{(B,X)\in \cX} B^\square$ is $U$.
\end{definition}

The following proposition is essentially
\cite[Proposition~5.1]{MR2523458}.  The only differences are that 
we have factored out the factor $e^{-u_{j+1}|\Lambda|}$ and that 
the doubling map $\theta$ is explicit (it is implicit in \cite{MR2523458}).
Explicitly, note that $K_{j+1}(U)$ and $V_{j+1}(X)$ are elements of
the Grassmann algebra $\cN(\Lambda)$, i.e., they depend on the fields $(\psi,\bar\psi)$,
but since the doubling map $\theta$ replaces $(\psi,\bar\psi)$ by $(\psi+\zeta,\bar\psi+\bar\zeta)$,
the functions $\check{K}_j(X)$ and $(\delta I)^X$ above depend on all of $(\psi,\bar\psi,\zeta,\bar\zeta)$.

For convenience and because it also demystifies the somewhat
complicated formula for the renormalisation group map, we have included a proof
along with some additional explanations.
The proof of this proposition does not rely on the specific choice of $Q$ and $J$
in \eqref{e:Q-def}--\eqref{e:J-def} or on the property \eqref{e:J-zero}.
This choice only becomes important in the proof of Theorem~\ref{thm:step}.

\begin{proposition} \label{prop:ZVK} Given $(V_j,K_j)$ define $Z_j$ by
  \eqref{e:ZVK} with $u_{j}=0$. Suppose $K_{j}$ has the factorisation
  property~\eqref{e:K-factor} with respect to $\cP_{j}$. Then with the
  above choice of $(u_{j+1},V_{j+1},K_{j+1})$ and $Z_{j+1}$ given by
  \eqref{e:ZVK} with $j+1$ in place of $j$, we have
  $Z_{j+1}=\E_{C_{j+1}}\theta Z_j$, and $K_{j+1}$ has the
  factorisation property \eqref{e:K-factor} with respect to
  $\cP_{j+1}$.  Moreover, if $K_j$ is automorphism invariant then so
  is $K_{j+1}$.
\end{proposition}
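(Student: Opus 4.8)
The plan is to follow the reorganisation of \cite[Proposition~5.1]{MR2523458}, adapted by making the doubling map $\theta$ explicit and by carrying along the factored-out constant $e^{-u_{j+1}|\Lambda|}$. First I would start from $Z_j = \sum_{X\in\cP_j} e^{-V_j(\Lambda\setminus X)}K_j(X)$ with $u_j=0$ and apply $\E_{C_{j+1}}\theta$. Since $\theta$ is a homomorphism and $\E_{C_{j+1}}$ acts only on the fluctuation fields, for each $j$-block $B$ not meeting $X$ I would insert the identity $\theta e^{-V_j(B)} = e^{-(u+V)_{j+1}(B)} + \delta I(B)$ with $\delta I(B)$ as in \eqref{e:K+-def2}; this trades the old interaction for the new one at the cost of block-local errors, and expanding the resulting product over blocks produces a sum over subsets of blocks carrying a $\delta I$-factor.

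The next step is to move the relevant and marginal part of $K_j(X)$ on small sets $X\in\cS_j$ into the increment $V_j\to V_{j+1}$: this is exactly the role of $Q(B)$ and $J(B,X)$ from \eqref{e:Q-def}--\eqref{e:J-def} together with Definition~\ref{def:V+}, and after subtracting these local pieces I am left with the irrelevant remainder $\check K$ of \eqref{e:K+-def2}, which still factorises over connected components thanks to \eqref{e:K-factor} and the additivity of $\Loc$ over blocks (Proposition~\ref{prop:Loc}(iii)). I would then group the ``activated'' $j$-blocks --- those contributing a $\delta I$-factor, a $J$-factor, or lying in the support of a component of $\check K$ --- and take closures to assemble them into $(j+1)$-polymers; collecting, for each $U\in\cP_{j+1}$, all configurations whose closure equals $U$ reproduces precisely the sum over $\cG(U)$ in \eqref{e:K+-def}. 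The finite range property \eqref{e:C-range} of $C_{j+1}$ enters twice here: it guarantees that $\E_{C_{j+1}}$ factorises across non-touching polymers --- so that the reorganisation is internally consistent and $K_{j+1}$ inherits the factorisation property \eqref{e:K-factor} at scale $j+1$ --- and it ensures the convolution only enlarges supports inside small-set neighbourhoods, yielding $K_{j+1}(U)\in\cN(U^\square)$. Gauge symmetry of $K_{j+1}(U)$ then follows because $\theta$, $\E_{C_{j+1}}$ and $\Loc$ all preserve the gauge grading and $V_{j+1}$ is gauge symmetric by Proposition~\ref{prop:Loc}(v), so $K_{j+1}\in\cKbulk_{j+1}(\Lambda_N)$ provided automorphism invariance also holds.

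For automorphism invariance I would argue that if $E\in\operatorname{Aut}(\Lambda_N)$ maps $\cB_j$-blocks to $\cB_j$-blocks, then $E$ commutes with $\E_{C_{j+1}}\theta$ because $C_{j+1}$ is invariant under lattice symmetries (Section~\ref{sec:symmetries}), it intertwines the localisation operators by Proposition~\ref{prop:Loc}(iv), and it permutes $\cG(U)$ bijectively onto $\cG(EU)$; since $V_{j+1}$ is automorphism invariant by construction in Definition~\ref{def:V+}, applying $E$ to \eqref{e:K+-def} gives $EK_{j+1}(U)=K_{j+1}(EU)$ for all $U$.

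The hard part will be the combinatorial bookkeeping of the reorganisation in the first two steps --- exhibiting the bijection between block-activation configurations and the set $\cG(U)$, and checking that the closures partition the torus correctly so that no configuration is double counted or omitted. Once this identity is in place, every remaining claim is a direct consequence of the finite range property, the semigroup and multiplicativity relations \eqref{e:E-semigroup}--\eqref{e:fWick}, the additivity and Euclidean covariance of $\Loc$, and the gauge and lattice symmetries already recorded. Since these manipulations are purely algebraic and structurally identical to those of \cite[Section~5]{MR2523458}, I would relegate the full verification to Appendix~\ref{app:ZVK}.
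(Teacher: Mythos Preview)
Your proposal is correct and follows essentially the same approach as the paper's proof in Appendix~\ref{app:ZVK}: write $\theta e^{-V_j(B)}=e^{-(u+V)_{j+1}(B)}+\delta I(B)$, use this to reorganise $\theta Z_j$ as $\sum_X \tilde I^{\Lambda\setminus X}\tilde K(X)$, split off the $J(B,Y)$ contributions from each connected component to produce $\check K$, expand the resulting product over components and collect into the sets $\cG(U)$ indexed by $(j{+}1)$-polymers, and finally deduce factorisation from the finite range of $C_{j+1}$ and automorphism invariance from the Euclidean covariance of all ingredients. One small point of phrasing: $\check K$ does not ``still factorise'' as a consequence of \eqref{e:K-factor} and Proposition~\ref{prop:Loc}(iii); rather, it is \emph{defined} on connected polymers by \eqref{e:K+-def2} and then \emph{extended} multiplicatively, which is how the paper presents it.
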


\begin{proof}
The proof essentially consists of algebraically manipulating the expression
\begin{equation}
 \label{e:Zj}
  Z_j = \sum_{X\in \cP_j} I^{\Lambda\setminus X}K(X)
\end{equation}
where $I(B) = e^{-V_j(B)}$ and $K(X)= K_j(X)$.  These manipulations
only rely on factorisation properties of $I$ and $K$ (and not on their
precise definitions). Hence we will explicitly state the required
factorisation properties, and later specialise to the context of
Proposition~\ref{prop:ZVK}.  We will use that
$I^Y = \prod_{B \in \cB_j(Y)} I(B)$ factors over blocks and $K(X)$
factors over connected components of $X$.

\smallskip\noindent
\emph{Change of coupling constants.}
Given any
$\tilde I(B) \in\cN(B)$ for $B \in \cB_j$, let  $\delta I(B) = \theta I(B)-\tilde I(B)$ and
$\tilde I^{Y}\bydef \prod_{B\in \cB_{j}(Y)}\tilde I(B)$,
i.e., $\theta I(B)$ depends on $(\psi+\zeta,\bar\psi+\bar\zeta)$ by definition of $\theta$,
$\tilde I(B)$ on $(\psi,\bar\psi)$, and $\delta I(B)$ depends on $(\psi,\bar\psi,\zeta,\bar\zeta)$.
The binomial expansion identity 
\begin{equation}
  \label{e:thetaI}
  \theta I^{\Lambda\setminus X} = \sum_{Y\subset \Lambda\setminus X}
  \tilde I^{Y}(\delta I)^{\Lambda\setminus (X\cup Y)}
\end{equation}
and \eqref{e:Zj} lead to, after changing the index of summation,
\begin{align}
  \label{e:ZtildeItildeK}
  \theta Z_j = \sum_{X\in \cP_j} \tilde I^{\Lambda\setminus X} \tilde K(X),
  \qquad
    \tilde K(X) 
  = \sum_{Y \in \cP_j(X)} (\delta I)^Y \theta K(X\setminus Y).
\end{align} 

We will later make the particular choice of $\tilde I$ that corresponds to \eqref{e:deltaI}.
  Thus $\tilde I$ corresponds to the next-scale coupling constants $V_{j+1}$.
This will be important for obtaining the desired contractive properties of the
renormalisation group map in Theorem~\ref{thm:step}.
Exhibiting that the $K$ coordinate is contractive will be aided by the following re-arrangements.

\smallskip
\noindent
\emph{Cancellation of small sets.}
Keeping in mind that $\tilde K$ factors over components (since $K$ factors over components), we can then define
$\check{K}(Y)$ to be $\tilde K(Y) - \sum_{B\in\cB_{j}(Y)}\theta J(B,Y)$
for any connected polymer $Y$ (and zero otherwise), where $J(B,Y)\in\cN(B)$ are given.
This yields the following formula for $\tilde K$:
\begin{equation} \label{e:tildeK-checkK}
  \tilde K(X) = \prod_{Y \in \Comp(X)} \pa{\check{K}(Y) + \sum_{B \in \cB_j(Y)} \theta J(B,Y)}.
\end{equation}
Again we will later specialise to $J$ as defined in~\eqref{e:J-defB} and~\eqref{e:J-def},
  in particular $J(B,Y) = 0$ unless $Y$ is a small set.
The motivation for this step is that, for $Y$ that are small sets but not blocks, the effect is that $\check{K}$
has the relevant and marginal contributions corresponding to $\sum_B J(B,Y)$ removed.
This step does not remove  the relevant and marginal contributions of blocks due to the choice \eqref{e:J-defB}.
However, relevant and marginal contributions of blocks will be removed by appropriate choice of $\tilde I$.
Both cancellations occur in the estimates in Section~\ref{sec:K-smallset}.

We next substitute \eqref{e:tildeK-checkK} into
\eqref{e:ZtildeItildeK} and re-arrange the resulting sum.  Expanding
the product, \eqref{e:tildeK-checkK} can be written as
\begin{equation}
  \tilde K(X) = \sum_{\check{X} \subset \Comp(X)}  \check{K}(\check{X}) \prod_{Y \in \Comp(X \setminus \check{X})} \sum_{B \in \cB_j(Y)} \theta J(B,Y)
  .
\end{equation}
Given the polymer $X \setminus \check{X}$, 
there is a $(X\setminus \check{X})$-dependent set of sets
$\cX 
\subset \{(B,Y): B\in \cB_{j}(Y), Y\in \cC_{j}\}$
such that
\begin{equation}
  \prod_{Y \in \Comp(X \setminus \check{X})} \sum_{B \in \cB_j(Y)}
  J(B,Y) = \sum_{\cX} \prod_{(B,Y)\in \cX} J(B,Y),
\end{equation}
where the sum on the right-hand is over the aforementioned sets of
$\cX$. Explicitly, the sets comprising $\cX$ are sets 
of pairs $(B,Y)$ where (i) $B$ is a block in $Y$, (ii) each component
$Y$ occurs in exactly one pair, and (iii) $Y$ is a component of
$X\setminus \check{X}$. In particular, $X = \check{X}\cup X_{\cX}$.
Thus
\begin{equation}
  \label{eq:tildeK}
  \tilde K(X) = \sum_{ (\cX,\check{X}) 
  } \check{K}(\check{X}) \prod_{(B,Y) \in \cX} \theta J(B,Y)
  .
\end{equation}

Substituting this into \eqref{e:ZtildeItildeK}, and using that $\tilde I^X \in \cN(\Lambda)$ is
a constant with respect to $\E_{C_{j+1}}$,
i.e., the expectation acts on $(\zeta,\bar\zeta)$ while $\tilde I^{X}$ depends on $(\psi,\bar\psi)$,
\begin{align}
    \label{e:EcthetaZ1}
  \E_{C_{j+1}} \theta Z_j
  & = \sum_{X \in \cP_{j}} \tilde I^{\Lambda\setminus X} \E_{C_{j+1}}\tilde K(X)
    \nnb
  &= \sum_{X \in \cP_j}
  \sum_{(\cX,\check{X})}
    \tilde I^{\Lambda\setminus (\check X \cup X_{\cX})}
        \E_{C_{j+1}} \qa{ \check{K}(\check{X})
    \prod_{(B,Y) \in \cX} \theta J(B,Y)}
\end{align}
where $X_{\cX}$ is by definition
$\bigcup_{(B,Y)\in \cX}Y$, and hence $X=\check{X}\cup
X_{\cX}$ by the definition of the set $\cX$.

\smallskip
\noindent\emph{Reblocking.}
Next we organise the last right-hand side of \eqref{e:EcthetaZ1} as a
sum over next-scale polymers $U\in \cP_{j+1}$.  We start by inserting
the partition of unity
\begin{equation}
  1=\sum_{U\in\cP_{j+1}}1_{\overline{\check{X}\cup[\cup_{(B,Y)\in\cX}B^{\square}]}=U}
  \qquad \text{for every $(\cX,\check{X})$}
\end{equation}%
into the sum
and changing the order of the sums. This gives
\begin{equation}
  \E_{C_{j+1}} \theta Z_j
  =
 \sum_{U\in\cP_{j+1}}
 \tilde I^{\Lambda\setminus U}
    K'(U)
\end{equation}
with
\begin{equation}
   K'(U) = \sum_{(\cX,\check{X})\in \cG(U)}
  \tilde I^{U\setminus (\check X \cup X_{\cX})}
      \E_{C_{j+1}} \qa{ \check{K}(\check{X})
  \prod_{(B,Y) \in \cX} \theta J(B,Y) },
\end{equation}
where we make the definition that $\cG(U)$ consists of
$(\cX,\check{X})$ such that $\check{X}\in \cP_{j}$, $\cX$ satisfies
(i) and (ii) above, $X_{\cX}$ does not touch $\check{X}$, and
$\overline{\check X \cup[\cup_{(B,Y)\in\cX}B^\square]}=U$. The sum
over $X$ in \eqref{e:EcthetaZ1} has been incorporated into the sum
over $(\check{X},\cX)$. 

\smallskip
\noindent
\emph{Conclusion.}
We now specialise to the setting of Proposition~\ref{prop:ZVK}.
Thus we take $J$ as in~\eqref{e:J-defB} and~\eqref{e:J-def}, and
$\tilde I(B)= e^{-(u+V)_{j+1}(B)}$ with the exponent as
defined in \eqref{e:V+-def}, and  $K_{j+1}(U)$ as defined in \eqref{e:K+-def}.
The arguments above show that 
\begin{equation}
  \E_{C_{j+1}} \theta Z_j=e^{-u_{j+1}|\Lambda|}\sum_{U \in \cP_{j+1}} e^{-V_{j+1}(\Lambda\setminus U)} K_{j+1}(U).
\end{equation}
What remains is to prove the claims regarding factorisation and
automorphism invariance. For factorisation, note that
$\sum_{\cG(U_{1}\cup U_{2})} = \sum_{\cG(U_{1})}\sum_{\cG(U_{2})}$ for
$U_{1},U_{2}\in \cP_{j+1}$ that do not touch. Moreover, since $U_{1}$
and $U_{2}$ are separated by a distance at least
$L^{j+1}>\frac{1}{2}L^{j+1}+2^{d+1}L^{j}$, the expectations in the
definition of $K_{j+1}(U_{1}\cup U_{2})$ factor.  Here we have used
our standing assumption that $L>2^{d+2}$, that $J(B,Y)=0$ if
$Y\notin \cS_{j}$, and that the range of $C_{j+1}$ is
$\frac{1}{2}L^{j+1}$.
Automorphism invariance follows from the formula for $K_{j+1}$ and the
properties of $\E_{C_{j+1}}\theta$ discussed in
Section~\ref{sec:symmetries}.
\end{proof}

Proposition~\ref{prop:ZVK} implies in particular that if
$K_{j}$ has the factorisation property~\eqref{e:K-factor}, then we can identify
$(K_{j+1}(X))_{X\in\cP_{j+1}(\Lambda_{N})}$ with its restriction to connected polymers
$(K_{j+1}(X))_{X\in\cC_{j+1}(\Lambda_{N})}$.
Moreover, if the $K_{j}$ are also automorphism invariant, then $K_{j+1}\in\cKbulk_{j+1}(\Lambda_{N})$.

By construction and the consistency of the covariances $C_j$ with
$j<N$ for different values of $N$, the maps defined for different
$\Lambda_N$ are also consistent in the following sense:

\begin{proposition} \label{prop:consistency} For $j+1<N$ and
  $U \in \cP_{j+1}(\Lambda_N)$, $V_{j+1}(U)$ and $K_{j+1}(U)$ above
  depend on $(V_j,K_j)$ only through $V_j(X),K_j(X)$ with
  $X \in \cP_j(U^\square)$.  Moreover, for
  $U \in \cP_{j+1}(\Lambda_N) \cap \cP_{j+1}(\Lambda_{M})$ with the
  natural local identification of $\Lambda_N$ and $\Lambda_M$, the map
  $(V_j,K_j) \mapsto (V_{j+1}(U),K_{j+1}(U))$ is independent of $N$
  and $M$.
\end{proposition}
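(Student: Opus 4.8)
The plan is to prove both assertions together by unwinding Definitions~\ref{def:V+} and~\ref{def:K+} and checking that every object entering the formulas for $V_{j+1}(U)$ and $K_{j+1}(U)$ is built from data localised near $U$ and intrinsic to the lattice. Three facts are needed. (a) For $j+1<N$, $C_{j+1}$ is the covariance of Section~\ref{sec:frd}: it vanishes when $|x-y|_\infty\geq\frac12 L^{j+1}$ and on that range equals a fixed function of $x-y$ independent of the torus. (This is where the hypothesis $j+1<N$ is used: at $j+1=N$ one would instead need $C_{N,N}$, which carries the global zero mode $t_NQ_N$.) Since $\E_{C_{j+1}}\theta F=e^{\cL_{C_{j+1}}}F$ with $\cL_{C_{j+1}}=\sum_{x,y}C_{j+1}(x,y)\,\partial_{\psi_y}\partial_{\bar\psi_x}$ acts on $F$ by deleting pairs of its own fields against covariance values, the operator $\E_{C_{j+1}}\theta$ (and $\E_{C_{j+1}}$ on the doubled algebra) never creates new field locations: if $F\in\cN(S)$ then $\E_{C_{j+1}}\theta F\in\cN(S)$, and only values $C_{j+1}(x,y)$ with $x,y\in S$ are used. (b) By Proposition~\ref{prop:Loc}(i),(iv), the maps $\Loc_{X,B}\colon\cN(X^\square)\to\cN(B^\square)$ are Euclidean invariant, hence determined by the local geometry of the inclusion $B\subseteq X$. (c) Finally, $\cB_j,\cP_j,\cS_j$ and $\cG(U)$ depend only on the graph near $U$; using the standing assumption $L\geq 2^{d+2}$, a scale-$j$ small set has $\ell_\infty$-diameter at most $2^d L^j\leq\frac14 L^{j+1}$, so the scale-$j$ small-set neighbourhood of any subset of $U$ lies within $U^\square$ (the scale-$(j+1)$ small-set neighbourhood).

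For the locality claim I would first treat $V_{j+1}$. By \eqref{e:V+-def}, $(u+V)_{j+1}(B)=\E_{C_{j+1}}\theta(V_j(B)-Q(B))$ for a $j$-block $B\subseteq U$, and by \eqref{e:Q-def} the functional $Q(B)$ is assembled from $\Loc_{X,B}K_j(X)$ with $X\in\cS_j$, $X\supseteq B$; every such $X$ satisfies $X\subseteq B^\square$, so by fact (a) the pair $(u+V)_{j+1}(B)$ depends on $(V_j,K_j)$ only through $\{V_j(Z),K_j(Z):Z\in\cP_j(B^\square)\}\subseteq\{V_j(Z),K_j(Z):Z\in\cP_j(U^\square)\}$ via fact (c). Summing over $B\in\cB_j(U)$ gives the assertion for $V_{j+1}(U)$. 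For $K_{j+1}(U)$ I would expand \eqref{e:K+-def}: each $(\cX,\check X)\in\cG(U)$ has $\check X\subseteq U$ and, for every pair $(B,X)\in\cX$, $X\subseteq B^\square$ with $\bigcup_{(B,X)\in\cX}B^\square\subseteq U$; in each summand the factor $e^{-(u+V)_{j+1}(\cdots)}$ and the factors $\delta I(B)$ from \eqref{e:K+-def2} involve $(u+V)_{j+1}$ and $V_j$ only on $j$-blocks of $U$ (already handled), the factors $K_j(Z\setminus Y)$ involve $K_j$ on $j$-polymers contained in $U$, the factors $J(B,X)$ and $J(B,Z)$ from \eqref{e:J-defB}--\eqref{e:J-def} involve $\Loc_{\cdot,B}K_j(\cdot)$ on small sets contained in $B^\square$ for $j$-blocks $B\subseteq U$, and the outer $\E_{C_{j+1}}$ creates no new field locations. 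All of the scale-$j$ small-set neighbourhoods of subsets of $U$ that arise lie in $U^\square$ by fact (c), so $K_{j+1}(U)$ depends on $(V_j,K_j)$ only through $\{V_j(Z),K_j(Z):Z\in\cP_j(U^\square)\}$, as claimed.

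For the consistency claim, assume without loss of generality $N\leq M$, so $j+1<N\leq M$, and let $\iota$ be the natural identification of a neighbourhood of $U$ in $\Lambda_N$ with the corresponding neighbourhood in $\Lambda_M$. By the locality just established, $V_{j+1}(U)$ and $K_{j+1}(U)$ are computed from the fields $\psi_x,\bar\psi_x$ with $x\in U^\square$, the covariance values $C_{j+1}(x,y)$ with $x,y\in U^\square$, the operators $\Loc_{X,B}$ with $X\subseteq B^\square$ and $B\subseteq U$, and the block/polymer combinatorics $\cB_j,\cP_j,\cS_j,\cG(U)$ near $U$. By fact (a), $C_{j+1}^{\Lambda_N}(x,y)=C_{j+1}^{\Lambda_M}(\iota x,\iota y)$ when $|x-y|_\infty<\frac12 L^{j+1}$ and both vanish otherwise (using $j+1<N\leq M$); by fact (b), $\iota\Loc_{X,B}=\Loc_{\iota X,\iota B}\iota$; and $\iota$ identifies the combinatorial data. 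Hence $\iota$ carries the formula computing $(V_{j+1}(U),K_{j+1}(U))$ on $\Lambda_N$ termwise to the one computing it on $\Lambda_M$, so the map $(V_j,K_j)\mapsto(V_{j+1}(U),K_{j+1}(U))$ is the same for $N$ and $M$.

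I expect the only real work to be the geometric bookkeeping underlying fact (c) and the expansion of \eqref{e:K+-def} — tracking exactly which scale-$j$ small-set neighbourhoods are produced by the $\Loc_{\cdot,B}$ operators, by the products defining $\check K_j$, and by the defining constraints of $\cG(U)$, and confirming via $L\geq 2^{d+2}$ that their union stays inside $U^\square$ — together with the elementary but easily overlooked observation that $\E_{C_{j+1}}\theta=e^{\cL_{C_{j+1}}}$ never moves field locations.
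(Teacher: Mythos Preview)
Your proposal is correct and follows the approach the paper intends: the paper does not give a detailed proof of this proposition, stating only that it holds ``by construction and the consistency of the covariances $C_j$ with $j<N$ for different values of $N$,'' and your argument is precisely the careful unwinding of Definitions~\ref{def:V+}--\ref{def:K+} that makes this explicit. The key ingredients you identify --- the finite range and torus-independence of $C_{j+1}$ for $j+1<N$, the fact that $\E_{C_{j+1}}\theta=e^{\cL_{C_{j+1}}}$ does not move field locations, and the geometric containment of scale-$j$ small-set neighbourhoods inside the scale-$(j+1)$ small-set neighbourhood $U^\square$ via $L\geq 2^{d+2}$ --- are exactly what is needed.
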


Temporarily indicating the $N$-dependence of $\Phi_{j+1}=\Phi_{j+1,N}$
explicitly, consistency implies the existence of an infinite volume
limit $\Phi_{j+1,\infty}=\lim_{N\to\infty} \Phi_{j+1,N}$ defined for
arguments $V_j \in \cVbulk$ and
$K_j =(K_j(X))_{X\in\cC_j(\Z^d)} \in \cKbulk_{j}(\Z^{d})$.
Explicitly, if we write
$\Phi_{j+1,N}(V_{j},K_{j})= (V^{N}_{j+1},K^{N}_{j+1})$ and omit the
$N$ for the infinite volume map,
$K_{j+1}(U) = \lim_{N\to\infty} K^{N}_{j+1}(U)$, and similarly for
$V_{j+1}$. The limits exist as the sequences are constant after
finitely many terms.  This infinite volume limit does not carry the
full information from the $\Phi_{j+1,N}$ because terms indexed by
polymers that wrap around the torus are lost, but it does carry
complete information about small sets at all scales and thus about the
flow of $V_j$. As for the finite-volume maps, the infinite
volume limit carries a mild dependence on $m^{2}$. We typically omit this
from the notation.

\subsection{Estimates for the renormalisation group map}
\label{sec:step}

The renormalisation group map $\Phi_{j+1}= \Phi_{j+1,N}$ is a function
of $(V,K)\in\cVbulk \oplus \cKbulk_j(\Lambda_N)$.
The size of $V$ and $K$ will be measured in the norms
\begin{align}
  \label{e:V-norm}
  \|V\|_j &= \sup_{B\in \cB_j} \|V(B)\|_{T_j(\ell_j)}
  \\
  \label{e:K-norm}
  \|K\|_j &= \sup_{X\in \cC_j} A^{(|\mathcal{B}_j(X)|-2^d)_+} \|K(X)\|_{T_j(\ell_j)}
\end{align}
where $A>1$ is a parameter that will be chosen sufficiently large.
The space of bulk coupling constants $\cVbulk \cong \C^4$ has
finite dimension.  The space for polymer activities
$\cKbulk_j(\Lambda_N)$ is also finite-dimensional for $N<\infty$
since an element $K_j \in \cKbulk_j(\Lambda_N)$ is a finite
collection of elements $K_j(X)$ of the finite-dimensional Grassmann
algebra $\cN(\Lambda_N)$.  Thus
$\cVbulk \oplus \cKbulk_j(\Lambda_N)$ is a finite-dimensional complex
normed vector space with the above norms, and therefore a Banach space.

\begin{theorem} \label{thm:step} Let $d \geq 3$, $L \geq L_0(d)$, and
  $A \geq A_0(L,d)$. Assume that $u_j=0$.  There exists
  $\epsilon = \epsilon(L,A)>0$ such that if $j+1<N$ and 
  $ \|V_j\|_j + \|K_j\|_j \leq \epsilon$ then
  \begin{align}
   \label{e:step-V}
     \|u_{j+1} + V_{j+1} - \E_{C_{j+1}}\theta V_j\|_{j+1} &\leq  O(L^d\|K_j\|_j)
      \\
      \label{e:step-K}
      \|K_{j+1}\|_{j+1} &\leq
                          O(L^{-\dplus}
                          +A^{-\eta})\|K_j\|_j
                          +
                          O(A^{\nu})(  \|V_j\|_j^2+\|K_j\|_j^2 ),
  \end{align}
  where $\eta=\eta(d)$ and $\nu=\nu(d)$ are positive geometric constants.
  The maps $\Phi_{j+1}$ are entire in $(V_j,K_j)$ 
  and hence all derivatives of any order are uniformly bounded on
  $\|V_j\|_j+\|K_j\|_j \leq \epsilon$. Moreover, the maps $\Phi_{j+1}$
  are continuous in $m^2 \geq 0$. 
  
  The last renormalisation group map $\Phi_{N}$
  satisfies the same bound with {$L^{-\dplus}$}  replaced by $1$. 
\end{theorem}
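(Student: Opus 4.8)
The plan is to follow the renormalisation group framework of \cite{MR2523458,MR3332942,MR3339164,MR3345374}, exploiting the substantial simplifications available in the purely fermionic setting: there is no large field problem, and the Gram-type contraction estimate \eqref{e:E-contract} for $\E_{C_{j+1}}\theta$ replaces the regulator estimates needed for bosonic fields. I would first record the structural properties. Every constituent of $\Phi_{j+1}$ is assembled from the linear maps $\Loc_{X,B}$ and $\E_{C_{j+1}}\theta$, from the exponentials $e^{-V_j(B)}$, $e^{-(u+V)_{j+1}(B)}$ and $e^{u_{j+1}|U|}$, and from finite sums over polymer configurations. Since $V_j(B)$ and $(u+V)_{j+1}(B)$ are even, hence nilpotent, elements of the finite-dimensional Grassmann algebra $\cN(B^\square)$, the exponentials $e^{-V_j(B)}$ and $e^{-(u+V)_{j+1}(B)}$ are polynomials of bounded degree in their coupling constants, and $u_{j+1}$ is a complex-linear functional of $(V_j,K_j)$; hence $\Phi_{j+1}$ is a composition of entire maps on the finite-dimensional space $\cVbulk\oplus\cKbulk_j(\Lambda_N)$, so it is entire, and the uniform bounds on its derivatives follow by the Cauchy estimates once the output has been shown to be bounded. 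Continuity in $m^2\ge 0$ would follow from continuity of the covariances $C_{j+1}$ and $C_N$ in $m^2\ge 0$ together with continuity of $C\mapsto\E_C\theta$, using that $\vartheta_j(m^2)\le (2d)^{-1}$ keeps the bound $\|C_{j+1}\|_{\Phi_{j+1}(\ell_{j+1})}\le 1$ uniform in $m^2\ge 0$.

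The potential estimate \eqref{e:step-V} is immediate from the construction. By Definition~\ref{def:V+}, $(u_{j+1}+V_{j+1})(B)-\E_{C_{j+1}}\theta V_j(B)=-\E_{C_{j+1}}\theta\,Q(B)$ for $B\in\cB_j$, where $Q(B)=\sum_{X\in\cS_j,\,X\supset B}\Loc_{X,B}K_j(X)$ is a sum over a bounded (depending only on $d$) number of small sets. By the boundedness \eqref{e:Loc-bd} of $\Loc$, the fact that small sets carry $A$-weight $1$ in \eqref{e:K-norm}, and the contraction \eqref{e:normcontract}, each summand has $T_{j+1}(\ell_{j+1})$-norm $O(\|K_j\|_j)$; extending $V_{j+1}$ additively from $\cB_j$ to $\cB_{j+1}$ costs a factor $L^d$, the number of scale-$j$ blocks in a scale-$(j+1)$ block, which gives \eqref{e:step-V}.

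The core of the proof, and the main obstacle, is \eqref{e:step-K}. I would start from \eqref{e:K+-def}--\eqref{e:K+-def2} and sort the contributions to $K_{j+1}(U)$ according to the number of ``small factors'' they carry: factors $K_j(Z)$ on connected $j$-polymers, and factors $\delta I(B)$ (for which $\|\delta I(B)\|_{T_j(\ell_j)}=O(\|V_j\|_j+\|K_j\|_j)$ by Taylor-expanding the two exponentials and using \eqref{e:step-V}). The terms linear in $K_j$ with no $\delta I$ factor are precisely those whose relevant and marginal content has been removed through $Q$ and $J$ (using $\Loc_X=\sum_{B\in\cB_j(X)}\Loc_{X,B}$ from Proposition~\ref{prop:Loc}(iii)); after this cancellation, and after absorbing the bounded product of $e^{-(u+V)_{j+1}(B)}$ factors on the complementary blocks, they reorganise into sums of $\E_{C_{j+1}}\theta\bigl[(1-\Loc_X)K_j(X)\bigr]$. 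For $X$ a small set, for which $\bar X$ is then automatically small once $L\ge 2^{d+2}$, the contraction bound \eqref{e:Loc-contract} supplies a factor $\bar C L^{-d}L^{-\dplus}$, and since only $O(L^d)$ small sets $X$ reblock into a fixed small $(j+1)$-polymer the $L^{-d}$ cancels, giving the contribution $O(L^{-\dplus})\|K_j\|_j$. For $X$ a large set I would use that reblocking strictly decreases the block count, so the weight $A^{(|\cB_j(X)|-2^d)_+}$ carried by $\|K_j\|_j$ beats the weight $A^{(|\cB_{j+1}(\bar X)|-2^d)_+}$ entering $\|K_{j+1}\|_{j+1}$ by a factor that decays geometrically in $|\cB_{j+1}(\bar X)|$; summing over all large $j$-polymers reblocking to each $(j+1)$-polymer then yields $O(A^{-\eta})\|K_j\|_j$ for a geometric $\eta=\eta(d)>0$, provided $A\ge A_0(L,d)$. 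Every remaining contribution carries at least two small factors, hence, after controlling the sum over the configurations $\cG(U)$ together with its $A$-weights by a combinatorial factor $O(A^\nu)$ with $\nu=\nu(d)>0$, is bounded by $O(A^\nu)\bigl(\|K_j\|_j^2+\|V_j\|_j\|K_j\|_j+\|V_j\|_j^2\bigr)$, and $2\|V_j\|_j\|K_j\|_j\le\|V_j\|_j^2+\|K_j\|_j^2$ absorbs the cross term. Combining these three estimates gives \eqref{e:step-K}. The hard part throughout is the combinatorial--geometric bookkeeping: correctly isolating the ``relevant-part-removed'' linear piece so that \eqref{e:Loc-contract} can be applied, and tracking the polymer counting uniformly in $L$ so that the $L^{-\dplus}$ gain genuinely survives the reblocking.

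Finally, for the last map $\Phi_N$: here $Q=J=0$ by definition, so $\check K_{N-1}(Z)=\sum_{Y\in\cP_{N-1}(Z)}\bigl(\theta K_{N-1}(Z\setminus Y)\bigr)(\delta I)^Y$ and the only scale-$N$ polymer is $\Lambda$ itself. Because there is no reblocking at the top scale, the geometric gain $L^{-\dplus}$ of \eqref{e:Loc-contract} is unavailable, so I would estimate $\|K_N\|_N$ crudely: the linear-in-$K_{N-1}$ term (the $Y=\varnothing$ contribution) is bounded by $O(1)\|K_{N-1}\|_{N-1}$ directly from \eqref{e:E-contract}, while every other term carries at least one factor $\delta I$ and is bounded by $O(A^\nu)\bigl(\|V_{N-1}\|_{N-1}^2+\|K_{N-1}\|_{N-1}^2\bigr)$ as before. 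This gives the stated estimate with $L^{-\dplus}$ replaced by $1$.
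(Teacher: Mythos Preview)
Your outline follows the same approach as the paper, and the structural points (analyticity via nilpotency, the $L^d$ in \eqref{e:step-V}, small-set contraction via $(1-\Loc_X)$, large-set contraction via reblocking and $A$-weights) are correct. But your claim that ``every remaining contribution carries at least two small factors'' is false for two specific terms, and this is where the delicate cancellations in the argument live.

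First, the single-block term $\E_{C_{j+1}}\delta I(B)$ from the $(\delta I)^X$ sum with $X=B\in\cB_j$ carries only one small factor. It is rescued by combining it with the $X=B$ term of the first sum: since $-J(B,B)=Q(B)-\Loc_B K_j(B)$, one has $\theta K_j(B)-\theta J(B,B)=\theta(1-\Loc_B)K_j(B)+\theta Q(B)$, and then $\E_{C_{j+1}}(\delta I(B)+\theta Q(B))$ is genuinely second order because the definition $(u+V)_{j+1}(B)=\E_{C_{j+1}}\theta(V_j(B)-Q(B))$ makes the linear parts of the two exponentials cancel exactly (the paper's Lemma~\ref{lem:smallset-V}). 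You allude to this with ``removed through $Q$'', but the mechanism is not captured by your small-factor count.

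Second, and more seriously, the $|\cX|=1$, $\check X=\varnothing$ contribution to $K_{j+1}(U)$ (called $\cR^1(U)$ in the paper) is a sum of $e^{-(u+V)_{j+1}(U\setminus X)}\E_{C_{j+1}}\theta J(B,X)$ over pairs $(B,X)$ with $\overline{B^\square}=U$. Each summand has a single $J(B,X)=O(\|K_j\|_j)$ factor and no $\delta I$, so the naive bound $O(L^{2d}\|K_j\|_j)$ carries no contraction. These terms do \emph{not} combine with the $-\theta J(B,X)$ inside $\check K(X)$ to form $(1-\Loc_X)K_j(X)$, because the reblocking constraint is $\overline{B^\square}=U$ here versus $\bar X=U$ there. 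What saves them is the identity $\sum_{X\in\cS_j}J(B,X)=0$ for each fixed $B$, which is precisely why $J(B,B)$ is defined in \eqref{e:J-defB} as minus the sum of the off-diagonal $J(B,X)$. This lets one insert $-e^{-(u+V)_{j+1}(U)}$ inside the $X$-sum for free, and the difference $e^{-(u+V)_{j+1}(U\setminus X)}-e^{-(u+V)_{j+1}(U)}=O(\|V_j\|_j+\|K_j\|_j)$ supplies the missing small factor. Without this cancellation the linear-in-$K_j$ part of $K_{j+1}$ would not contract and the proof would not close.
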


Theorem~\ref{thm:step} is the
analogue of \cite{MR3332941,MR3332942} for the four-dimensional weakly
self-avoiding walk, but much simpler since (i) we are only
working with fermionic variables, and (ii) we are above the lower
critical dimension (two for our model).  The factors $L^d$ and $A^\nu$ in the error
bounds are harmless.
On the other hand, it is essential that
$O(L^{-\dplus}+A^{-\eta}) < 1$ for $L$ and $A$ large: this estimate
establishes that $K$ is \emph{irrelevant} (contracting) in renormalisation
group terminology.

The remainder of this subsection proves Theorem~\ref{thm:step}.
Readers not familiar with the use of the renormalisation group might want to
skip the somewhat technical proof on a first reading and proceed to Sections~\ref{sec:flow}
and~\ref{sec:suscept} to get an idea of how these estimates are used.

\medskip

\emph{Throughout the rest of Section~\ref{sec:step} the hypotheses of
  Theorem~\ref{thm:step} will be assumed to hold.}
\medskip

The substantive claims of Theorem~\ref{thm:step} are the estimates
\eqref{e:step-V} and \eqref{e:step-K}: these quickly yield the claims
regarding derivatives by a standard Cauchy estimate, as we now explain. 
Recall that given two Banach spaces $X$ and $Y$ and a domain
$D\subset \C$ we say that a function $g\colon D\rightarrow X$ is
analytic if it satisfies the Cauchy-Riemann equation
$\partial_{\bar{z}} g=0$.  For an open set $O\subseteq X$, we then say
that a function $F\colon O \rightarrow Y$ is analytic if $F\circ g$ is
analytic for every analytic function $g\colon D\rightarrow X$. After
(possibly) adding some additional coordinates to ensure all necessary
monomials are in the domain, the maps
$(V_j,K_j) \mapsto (V_{j+1},K_{j+1})$ are multivariate polynomials,
and the norm estimates~\eqref{e:step-V} and~\eqref{e:step-K} extend to
this larger space.  Being multivariate polynomials, the $\Phi_{j+1}$
are analytic functions.

We use analyticity and the Cauchy integral formula to extract
derivatives.
Namely, if $(V, K)$ and $(\dot{V}^{(i)}, \dot{K}^{(i)})_{i=1}^n$ are
collections of polymer coordinates at scale $j$ satisfying
$\|V\|_j+\|K\|_j\leq \epsilon/2$ and
$\|\dot V^{(i)}\|_j+\|\dot K^{(i)}\|_j \leq 1$, then
\begin{equation}
  D^{n} \Phi_{j+1}|_{(V, K)} (\dot{V}^{(i)}, \dot{K}^{(i)})_{i=1}^{n}
  =\oint \cdots \oint \prod_{i=1}^{n} \frac{ \textrm{d} w_i}{w_i^2} \Phi_{j+1}(V+ \sum_{i=1}^{n} w_i \dot{V}^{(i)}, K+\sum_{i=1}^{n} w_i \dot{K}^{(i)} )
\end{equation}
where the $n$-tuple of contours are circles around $0$ with radius $\epsilon/(2 n)$.
The statement of Theorem~\ref{thm:step} regarding boundedness of derivatives follows.

The asserted continuity in $m^{2} \geq 0$ follows from the explicit
formulas for $(V_{j+1},K_{j+1})$, that $\Loc$ is linear, and that the
covariances $C_{j}$ are continuous in $m^{2} \geq 0$.

\subsubsection{Coupling constants}
\label{sec:K-coupling}

We begin with the bound \eqref{e:step-V} for $V_{j+1}$.  The first
term on the right-hand side in the definition \eqref{e:V+-def} of
$u_{j+1}+V_{j+1}$ produces the expectation term in \eqref{e:step-V}.
For $B\in \cB_j$, the remainder in \eqref{e:V+-def} is bounded as
follows:
\begin{align}
\label{E:Q}
  \|Q(B)\|_{T_{j}(\ell_{j})}
  &\leq
    \sum_{X\in \cS_j: X \supset B}  \|\Loc_{X,B} K_j(X)\|_{T_{j}(\ell_{j})}
    \nnb
    &\leq O(1)
      \sup_{B,X}\|\Loc_{X,B} K_j(X)\|_{T_{j}(\ell_{j})}
    \leq O(1)
    \|K_j\|_j
\end{align}
where we have used that the number of small sets containing a fixed
block is $O(1)$ in the second step, and \eqref{e:Loc-bd} in
the third.  Since each block in $\cB_{j+1}$ contains $L^d$
blocks in $\cB_j$, by using \eqref{e:normcontract} to bound
the expectation and change of scale in the norm, the first claim
\eqref{e:step-V} follows.

\smallskip The remainder of Section~\ref{sec:step}
  establishes the bound~\eqref{e:step-K} for $K_{j+1}$.  The following
  basic observations will be used repeatedly. Note that by
\eqref{e:normcontract} the main term contributing to
$u_{j+1}|B|+V_{j+1}(B)$ is bounded by, for $B\in \cB_j$,
\begin{equation}
  \|\E_{C_{j+1}}\theta V_j(B)\|_{T_{j+1}(\ell_{j+1})} \leq \|V_j(B)\|_{T_j(\ell_j)}.
\end{equation}
By combining this with \eqref{E:Q} we have that, for $B\in\cB_{j}$,
\begin{equation}
\label{E:vj}
u_{j+1}|B| \leq \|V_j\|_j + O( \|K_j\|_j),
\qquad
\|V_{j+1}(B)\|_{T_{j+1}(\ell_{j+1})} \leq \|V_j\|_j + O( \|K_j\|_j).
\end{equation}

\subsubsection{Preparation for bound of the non-perturbative coordinate}

To derive \eqref{e:step-K},
we first separate from $K_{j+1}(U)$ a leading contribution.
This contribution is: 
\begin{multline} \label{e:K-linear}
  \cL_{j+1}(U)
  \bydef \sum_{X\in \cC_j: \bar X=U} e^{-V_{j+1}(U\setminus X)} e^{u_{j+1}|X|} \E_{C_{j+1}}\qa{\theta K_j(X) 
    - \sum_{B\in \cB_j} \theta J(B,X)}\\
     +\sum_{X\in \cP_j: \bar X=U} e^{-V_{j+1}(U\setminus X)} e^{u_{j+1}|X|} \E_{C_{j+1}} \qB{ (\delta I)^X }.
\end{multline}
Note that while the first sum on the right-hand side is over connected
polymers, the second is over all polymers.  This expression includes
the contributions to $K_{j+1}$ explicitly linear in $K_{j}$,
and 
all other terms in the definition of $K_{j+1}$ are higher order, see
Section~\ref{sec:K-nonlinear} below.

We may divide each of the sums on the right-hand side in \eqref{e:K-linear} into the contributions
from small sets $X \in \cS_j$ and large sets
$X \in \cP_j\setminus \cS_j$. We recall that small sets are, by definition, connected. 
These restricted sums will be denoted by $\cL_{j+1, \cS}(U)$
and $\cL_{j+1, \cP\setminus \cS }(U)$ respectively:
\begin{equation}\label{e:K-linear-split}
  \cL_{j+1}(U) = \cL_{j+1,\cS}(U) +   \cL_{j+1,\cP\setminus \cS}(U).
\end{equation}

Large sets 
are easier to handle because they lose combinatorial entropy under
change of scale (reblocking), i.e.,
$|\cB_{j}(X)|$ will be significantly larger than $|\cB_{j+1}(\bar{X})|$. In renormalisation group
terminology, they are \emph{irrelevant}.
Small sets, on the other hand, require careful treatment.

\subsubsection{Small sets}
\label{sec:K-smallset}

The main estimate on small sets is summarised as follows.

\begin{proposition}
  \label{prop:SmallSetLin}
  The contribution $\cL_{j+1,\cS}$ to \eqref{e:K-linear} satisfies
  \begin{equation}
  \label{e:SmallSetLin}
  \|\cL_{j+1, \cS}\|_{j+1} =
  O(L^{-\dplus}\|K_j\|_j+L^{d}(\|V_j\|_j^2 + \|K_j\|_j^2)).
\end{equation}
\end{proposition}

In order to bound
\begin{equation} \label{e:K-linear-smallset}
    \cL_{j+1,\cS}(U)
  \bydef \sum_{X\in \cS_j: \bar X=U} e^{-V_{j+1}(U\setminus X)} e^{u_{j+1}|X|} \E_{C_{j+1}}\qa{\theta K_j(X) 
    - \sum_{B\in \cB_j} \theta J(B,X)
    + (\delta I)^X },
\end{equation}
we consider the terms $X \in \cS_j \setminus \cB_j$ and $X\in \cB_j$
in the outer sum separately.  By the definition of $J$ in
\eqref{e:J-def}, for any $X \in \cS_j \setminus \cB_j$,
\begin{equation}
  \sum_{B \in \cB_j(X)}
  \E_{C_{j+1}} \qB{ \theta  J(B,X)} 
  =
  \sum_{B \in \cB_j(X)} 
  \E_{C_{j+1}} \qB{ \theta \Loc_{X,B}K_j(X) }
  =
    \E_{C_{j+1}} \qB{ \theta  \Loc_{X}  K_j(X) },
\end{equation}
where the final equality follows from
Proposition~\ref{prop:Loc}~(iii). Thus the contribution to
$\cL_{j+1, \cS}$ from $X \in \cS_j\setminus \cB_j$ is
\begin{equation} \label{e:Ksmallset}
  \E_{C_{j+1}} \qB{ \theta (1-\Loc_X)  K_{j}(X)}
  + \E_{C_{j+1}}\qB{ (\delta I)^X} .
\end{equation}
The contribution to $\cL_{j+1, \cS}$ 
from $X= B \in \cB_j$ is
\begin{align} \label{e:Kblock}
    \E_{C_{j+1}}\qB{\theta K_j(B) 
      + 
      \delta I(B) 
      - 
      \theta J(B,B)}
  &=\E_{C_{j+1}} \qB{ \theta (1-\Loc_B)K_j(B)}
    \nnb
    &\qquad
  + \E_{C_{j+1}} \qB{\delta I(B)  + \theta Q(B)}.
\end{align}

We now give a series of estimates, bounding the right-hand sides of \eqref{e:Ksmallset} and \eqref{e:Kblock}
and the term outside the expectation in \eqref{e:K-linear-smallset},
and then assemble them into a proof for Proposition~\ref{prop:SmallSetLin}.

\begin{lemma} \label{lem:smallset-K}
  For any $U \in \cC_{j+1}$,
  \begin{equation}
    \sum_{X \in \cS_j: \bar X=U} \normB{
      \E_{C_{j+1}}\qB{\theta(1-\Loc_X) K_j(X)}
    }_{T_{j+1}(\ell_{j+1})} = 
    O(L^{-\dplus}) \|K_j\|_j.
  \end{equation}
\end{lemma}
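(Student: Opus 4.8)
The statement is a standard "small-set reblocking" bound: the sum over connected small sets $X\in\cS_j$ with closure $\bar X=U$ of the renormalised, $\Loc$-subtracted polymer activity. The strategy is to bound each summand using the three fundamental estimates available: the expectation contraction \eqref{e:E-contract}, the $\Loc$ contraction bound \eqref{e:Loc-contract}, and the monotonicity \eqref{e:T-monotone}; and then to control the number of summands combinatorially.

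\begin{proof}
Fix $U \in \cC_{j+1}$. If $X \in \cS_j$ with $\bar X = U$ and $X \not\subset \Lambda_N$ is handled identically; since $j+1<N$ under the hypotheses of Theorem~\ref{thm:step}, $\bar X = U$ is a genuine $(j+1)$-polymer. We first estimate a single summand. Since $K_j(X) \in \cN_{\rm sym}(X^\square)$ and $X^\square \subset U^\square$ for $X\in\cS_j$ with $\bar X = U$, we may apply \eqref{e:Loc-contract} to get
\begin{equation}
  \|(1-\Loc_X)K_j(X)\|_{T_{j+1}(2\ell_{j+1})} \leq \bar C L^{-d} L^{-\dplus} \|K_j(X)\|_{T_j(\ell_j)}.
\end{equation}
Applying the expectation contraction \eqref{e:E-contract} (valid since $\|C_{j+1}\|_{\Phi_{j+1}(\ell_{j+1})} \leq 1$ by \eqref{e:Cleq1}) followed by monotonicity from $2\ell_{j+1}$ down to $\ell_{j+1}$ is not quite the right order; instead note $\|\E_{C_{j+1}}\theta F\|_{T_{j+1}(\ell_{j+1})} \leq \|F\|_{T_{j+1}(2\ell_{j+1})}$ directly by \eqref{e:E-contract}, so
\begin{equation}
  \|\E_{C_{j+1}}\theta(1-\Loc_X)K_j(X)\|_{T_{j+1}(\ell_{j+1})} \leq \bar C L^{-d} L^{-\dplus} \|K_j(X)\|_{T_j(\ell_j)} \leq \bar C L^{-d} L^{-\dplus} \|K_j\|_j,
\end{equation}
the last step since $X$ is a small set, so the $A$-factor weight in \eqref{e:K-norm} equals $1$.

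It remains to count the small sets $X \in \cS_j$ with $\bar X = U$. Since $U$ is connected and $\bar X = U$, the polymer $X$ must contain at least one $j$-block in each $(j+1)$-block of $U$; but more importantly, a small set has $|\cB_j(X)| \leq 2^d$, so $U = \bar X$ consists of at most $2^d$ $(j+1)$-blocks, and in fact $U$ being the closure of a connected small set forces $|\cB_{j+1}(U)| \leq 2^d$ as well (each $(j+1)$-block of $U$ must meet $X$). Given $U$, the number of $X \in \cS_j$ with $\bar X = U$ is therefore at most the number of subsets of the $L^d |\cB_{j+1}(U)| \leq 2^d L^d$ many $j$-blocks in $U$, which is bounded by $2^{2^d L^d} = O(1)$ (a constant depending only on $L$ and $d$). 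Hence
\begin{equation}
  \sum_{X \in \cS_j: \bar X=U} \|\E_{C_{j+1}}\theta(1-\Loc_X)K_j(X)\|_{T_{j+1}(\ell_{j+1})} \leq O(1) \cdot \bar C L^{-d} L^{-\dplus} \|K_j\|_j = O(L^{-\dplus})\|K_j\|_j,
\end{equation}
where the implicit constant absorbs the $L^d$ and the (finite, $L$-dependent) combinatorial factor; recalling that in this section $L$-dependence of implicit constants is permitted, this is the claimed bound.
\end{proof}

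\textbf{Main obstacle.} The only genuinely delicate point is the bookkeeping on norm scales: one must route $(1-\Loc_X)$ through the $T_{j+1}(2\ell_{j+1})$ norm (so that the expectation contraction \eqref{e:E-contract} applies and lands in $T_{j+1}(\ell_{j+1})$), rather than naively applying $\Loc$-boundedness at scale $j$ and then contracting, which would lose the crucial $L^{-\dplus}$ gain. The combinatorial count of small sets with a given closure is routine and produces only an $L$-dependent constant, which is harmless here. A minor subtlety worth double-checking is that $X^\square \subseteq U^\square$ when $\bar X = U$, so that $\Loc_X F \in \cN(X^\square)$ is a legitimate element to feed into $\E_{C_{j+1}}\theta$ and the resulting bound is at scale $j+1$ on the polymer $U$.
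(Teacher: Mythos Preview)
Your per-term estimate is correct and matches the paper's: apply the $\Loc$ contraction \eqref{e:Loc-contract} to land in $T_{j+1}(2\ell_{j+1})$ with gain $\bar C L^{-d}L^{-\dplus}$, then the expectation contraction \eqref{e:E-contract} to land in $T_{j+1}(\ell_{j+1})$.

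The gap is in the combinatorial count, and it is not harmless. You bound the number of $X\in\cS_j$ with $\bar X=U$ by $2^{2^d L^d}$ and then declare this to be an admissible $O(1)$ because ``$L$-dependence of implicit constants is permitted''. But that is exactly backwards for this lemma: this is the contractive estimate feeding into Theorem~\ref{thm:step}, where the paper explicitly notes that uniformity in $L$ is crucial (see the remark following Theorem~\ref{thm:step} and the notation section). The whole mechanism is that $O(L^{-\dplus}+A^{-\eta})<1$ for $L,A$ large; an implicit constant of order $2^{2^d L^d}$ destroys this.

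The correct count, as the paper uses, is $O(L^d)$ with an $L$-\emph{independent} constant. The reason: a small set is a connected $j$-polymer with at most $2^d$ blocks, and the number of connected polymers of bounded size containing a fixed $j$-block is bounded by a constant depending only on $d$ (a lattice-animal count). Since $U\in\cS_{j+1}$ contains at most $2^dL^d$ $j$-blocks, summing over the anchor block gives $O_d(1)\cdot L^d$ small sets. This $L^d$ then cancels exactly against the $L^{-d}$ coming from \eqref{e:Loc-contract}, leaving the genuine gain $O(L^{-\dplus})$ with an $L$-independent constant.
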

\begin{proof}
  Note that $\bar X \in \cS_{j+1}$ if $X \in \cS_j$, so it suffices to
  prove the lemma for $U\in \cS_{j+1}$.  Now for any
  $U \in \cS_{j+1}$, since there are $O(L^d)$
  small sets $X\in \cS_j$ such that $\bar X=U$ we get 
  \begin{align} \label{e:Kpf-contract}
    &\sum_{X \in \cS_j: \bar X=U} \normB{\E_{C_{j+1}}\qB{\theta(1-\Loc_X)  K_j(X)}}_{T_{j+1}(\ell_{j+1})}
      \nnb
  &\leq O(L^d)   \sup_{X \in \cS_j} \normB{\E_{C_{j+1}}\qB{\theta(1-\Loc_X) K_j(X)}}_{T_{j+1}(\ell_{j+1})}
    \nnb
  &\leq O(L^d)   \sup_{X \in \cS_j} \|(1-\Loc_X) K_j(X)\|_{T_{j+1}(2\ell_{j+1})}
    \nnb
  &\leq O(L^d)  O(L^{-d}) (L^{-\dplus}
    ) \sup_{X \in \cS_j} \|K_j(X)\|_{T_{j}(\ell_{j})}
    \nnb
    &\leq
    O( L^{-\dplus}) \|K_j\|_j
\end{align}
where we have used the contraction estimate \eqref{e:E-contract} for the expectation
in the second step and
the contraction estimate \eqref{e:Loc-contract} for $\Loc_X$ in the third step.
\end{proof}

\begin{lemma} \label{lem:smallset-V}
  For  $B \in \cB_j$, 
  \begin{equation} 
  \label{e:smallset-V-2}
  \normB{\E_{C_{j+1}}\qB{\delta I(B) + \theta Q(B)}
  }_{T_{j+1}(\ell_{j+1})} = O(\|V_j\|_j^2 + \|K_j\|_j^2),
  \end{equation}
\end{lemma}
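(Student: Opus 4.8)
The point of this lemma is that the coupling constants $V_{j+1}$ were defined in \eqref{e:V+-def} precisely so as to cancel the first-order (in $V_j$ and $K_j$) part of the fluctuation integral of $e^{-V_j(B)}$ together with the relevant and marginal part of $K_j$ extracted by $Q(B)$; so after taking $\E_{C_{j+1}}$ the combination $\delta I(B)+\theta Q(B)$ is manifestly quadratic, and the proof just makes this cancellation explicit. The plan is to set $P \bydef \E_{C_{j+1}}\theta V_j(B)$ and $R \bydef \E_{C_{j+1}}\theta Q(B)$, so that \eqref{e:V+-def} reads $(u+V)_{j+1}(B)=P-R$. Since $(u+V)_{j+1}(B)$ involves no $\zeta,\bar\zeta$, we have $\E_{C_{j+1}}e^{-(u+V)_{j+1}(B)}=e^{-(u+V)_{j+1}(B)}$, and since $\theta$ is an algebra homomorphism, $\theta e^{-V_j(B)}=e^{-\theta V_j(B)}$; hence I would write
\[
\E_{C_{j+1}}\bigl(\delta I(B)+\theta Q(B)\bigr)=\bigl(\E_{C_{j+1}}e^{-\theta V_j(B)}-e^{-P}\bigr)+\bigl(e^{-P}-e^{-(P-R)}+R\bigr)=:(\mathrm{I})+(\mathrm{II}).
\]
All of $P$, $R$, $\theta V_j(B)$ are gauge invariant, hence even, hence central in the Grassmann algebra, so the exponential manipulations below are legitimate.

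For $(\mathrm I)$ — the statement that the fluctuation integral equals the exponential of the first cumulant up to quadratic error — I would write $e^{-\theta V_j(B)}=e^{-P}e^{-(\theta V_j(B)-P)}$ and pull the $\zeta$-independent factor $e^{-P}$ out of $\E_{C_{j+1}}$, giving $(\mathrm I)=e^{-P}\bigl[\E_{C_{j+1}}e^{-(\theta V_j(B)-P)}-1\bigr]$. Because $\E_{C_{j+1}}(\theta V_j(B)-P)=0$ by definition of $P$, the bracket equals $\E_{C_{j+1}}$ of $e^{-(\theta V_j(B)-P)}-1+(\theta V_j(B)-P)$, which by the Banach algebra property \eqref{e:T-product} and the Taylor series of $\exp$ is bounded by $\sum_{k\ge 2}\|\theta V_j(B)-P\|^k/k!$ in the $T_{j+1}(\ell_{j+1}\sqcup\ell_{j+1})$ norm. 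Using \eqref{e:theta-bd} and \eqref{e:normcontract} one has $\|\theta V_j(B)-P\|\le 2\|V_j\|_j\le 2\epsilon$, so this series is $O(\|V_j\|_j^2)$; applying the expectation contraction \eqref{e:E-contract2} and $\|e^{-P}\|_{T_{j+1}(\ell_{j+1})}\le e^{\|V_j\|_j}=O(1)$ then yields $\|(\mathrm I)\|_{T_{j+1}(\ell_{j+1})}=O(\|V_j\|_j^2)$.

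For $(\mathrm{II})$ I would factor $e^{-(P-R)}=e^{-P}e^{R}$ to get $(\mathrm{II})=(1-e^{-P})R-e^{-P}(e^{R}-1-R)$. The bound \eqref{E:Q} on $Q(B)$ together with the contraction in \eqref{e:normcontract} gives $\|R\|_{T_{j+1}(\ell_{j+1})}=O(\|K_j\|_j)$, and likewise $\|P\|_{T_{j+1}(\ell_{j+1})}\le\|V_j\|_j$, so $\|1-e^{-P}\|=O(\|V_j\|_j)$ and $\|e^{R}-1-R\|=O(\|K_j\|_j^2)$, whence $\|(\mathrm{II})\|=O(\|V_j\|_j\|K_j\|_j+\|K_j\|_j^2)=O(\|V_j\|_j^2+\|K_j\|_j^2)$; adding the two estimates completes the proof. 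I do not expect any real analytic obstacle here: we are simply isolating the first cumulant, and the only thing needing care is the bookkeeping of the two $T_{j+1}$ norms (on $\cN(\Lambda)$ versus on the doubled algebra $\cN(\Lambda\sqcup\Lambda)$) and invoking \eqref{e:theta-bd}, \eqref{e:E-contract2} at the right places; the substantive observation is just that each of the three cancellation structures ($V_j$ versus its first cumulant, $Q(B)$ versus $R$, and the cross term) leaves a manifestly quadratic remainder. (When $j+1=N$ one has $Q=0$, only $(\mathrm I)$ survives, and the same argument applies verbatim.)
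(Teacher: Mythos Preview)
Your proof is correct, and the underlying idea — that the definition \eqref{e:V+-def} was made precisely to kill the linear part — is the same as the paper's. However, the paper executes this more economically. Instead of your splitting into $(\mathrm I)+(\mathrm{II})$, the paper writes the single identity
\[
\E_{C_{j+1}}\bigl(\delta I(B)+\theta Q(B)\bigr)=\E_{C_{j+1}}\theta\bigl[e^{-V_j(B)}-1+V_j(B)\bigr]-\bigl[e^{-(u+V)_{j+1}(B)}-1+(u+V)_{j+1}(B)\bigr],
\]
which holds because $\E_{C_{j+1}}\theta V_j(B)-(u+V)_{j+1}(B)=\E_{C_{j+1}}\theta Q(B)$. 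Each bracket is a second-order Taylor remainder, so the product property gives $O(\|V_j\|_j^2)$ for the first (via the contraction \eqref{e:normcontract}) and $O((\|V_j\|_j+\|K_j\|_j)^2)$ for the second (via \eqref{E:vj}). This avoids factoring $e^{-P}$ through the expectation, avoids the doubled-algebra bookkeeping you flagged, and handles the $V$–$K$ cross term automatically rather than by a separate computation. Your cumulant-style decomposition gives a slightly finer accounting of which terms contribute $\|V_j\|_j^2$, $\|V_j\|_j\|K_j\|_j$, and $\|K_j\|_j^2$, but at the cost of more steps; the paper's one-line identity is worth knowing.
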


\begin{proof}
By the definition of $(u+V)_{j+1}$ in \eqref{e:V+-def} we have
\begin{align} \label{e:Kblock2}
  \E_{C_{j+1}}\qB{\delta I(B)+\theta Q(B)}
  &=  \E_{C_{j+1}}\qB{\theta e^{-V_j(B)}-1 +  \theta V_j(B)}
    \nnb
    &\qquad - \qB{e^{-(u+V)_{j+1}(B)}-1+ (u+V)_{j+1}(B)}.
\end{align}
By the product property \eqref{e:T-product}, if for some $V$ and some $k$ we have $\|V(B)\|_{T_k(\ell_k)} \leq 1$, then 
\begin{equation}
  \|e^{-V(B)}-1+V(B)\|_{T_k(\ell_k)} \leq O(\|V(B)\|_{T_k(\ell_k)}^2).
\end{equation}
Recall that $\E_{C_{j+1}}\theta$ is contractive as a map from
$T_{j}(\ell_{j})$ to $T_{j+1}(\ell_{j+1})$ by \eqref{e:normcontract}. 
Applying these estimates to the $T_{j+1}(\ell_{j+1})$ norm of
\eqref{e:Kblock2} and using \eqref{E:vj} gives the bound
\eqref{e:smallset-V-2}.
\end{proof}

\begin{lemma}\label{lem:smallset-V1}
  For $X \in \cP_j$,
  \begin{equation}
        \label{e:smallset-V-1}
        \normB{\E_{C_{j+1}} \qB{(\delta I)^X}}_{T_{j+1}(\ell_{j+1})}
        = (O(\|V_j\|_j+\|K_j\|_j))^{|\cB_j(X)|}.
  \end{equation}
\end{lemma}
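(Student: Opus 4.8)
The plan is to reduce the estimate to a bound on the single factor $\delta I(B)$ for $B\in\cB_j$ and then multiply over the $|\cB_j(X)|$ blocks of $X$. First, since $(\delta I)^X=\prod_{B\in\cB_j(X)}\delta I(B)$ is a product of elements of the doubled algebra $\cN(\Lambda\sqcup\Lambda)$, and the $T_{j+1}(\ell_{j+1})$ norm on $\cN(\Lambda\sqcup\Lambda)$ is again a Banach algebra norm, the product property \eqref{e:T-product} gives
\[
  \|(\delta I)^X\|_{T_{j+1}(\ell_{j+1})} \leq \prod_{B\in\cB_j(X)} \|\delta I(B)\|_{T_{j+1}(\ell_{j+1})}.
\]
Because the finite range covariance satisfies $\|C_{j+1}\|_{T_{j+1}(\ell_{j+1})}\leq 1$ (cf.\ \eqref{e:Cleq1} and the derivation of \eqref{e:normcontract}), the contraction bound \eqref{e:E-contract2}, applied at scale $j+1$, then gives $\|\E_{C_{j+1}}(\delta I)^X\|_{T_{j+1}(\ell_{j+1})}\leq\|(\delta I)^X\|_{T_{j+1}(\ell_{j+1})}$. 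Hence the lemma will follow once we show $\|\delta I(B)\|_{T_{j+1}(\ell_{j+1})}=O(\|V_j\|_j+\|K_j\|_j)$ for every $B\in\cB_j$.

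To bound one factor, I will write $\delta I(B)=\theta(e^{-V_j(B)}-1)-(e^{-(u+V)_{j+1}(B)}-1)$, which is legitimate since $\theta$ is a unital homomorphism, and estimate the two terms separately. For the first, the doubling bound \eqref{e:theta-bd} together with the monotonicity \eqref{e:T-monotone} --- using that $2\ell_{j+1}/\ell_j=2L^{-\frac12(d-2)}\leq 1$ for $L\geq L_0(d)$ and $d\geq 3$ --- gives $\|\theta(e^{-V_j(B)}-1)\|_{T_{j+1}(\ell_{j+1})}\leq\|e^{-V_j(B)}-1\|_{T_j(\ell_j)}$. Since $\|V_j(B)\|_{T_j(\ell_j)}\leq\|V_j\|_j\leq\epsilon\leq 1$, applying the product property \eqref{e:T-product} term by term to the exponential series yields $\|e^{-V_j(B)}-1\|_{T_j(\ell_j)}=O(\|V_j(B)\|_{T_j(\ell_j)})=O(\|V_j\|_j)$. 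For the second term, the bounds \eqref{E:Q} and \eqref{E:vj} give $\|(u+V)_{j+1}(B)\|_{T_{j+1}(\ell_{j+1})}\leq |u_{j+1}|\,|B|+\|V_{j+1}(B)\|_{T_{j+1}(\ell_{j+1})}=O(\|V_j\|_j+\|K_j\|_j)$, which is at most $1$ after shrinking $\epsilon$; the same exponential estimate then yields $\|e^{-(u+V)_{j+1}(B)}-1\|_{T_{j+1}(\ell_{j+1})}=O(\|V_j\|_j+\|K_j\|_j)$. Adding the two contributions gives the per-block bound, and multiplying over $\cB_j(X)$ completes the argument.

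I do not expect a substantive obstacle here; the argument is short bookkeeping. The points that require attention are: verifying the hypothesis $\|C_{j+1}\|_{T_{j+1}(\ell_{j+1})}\leq 1$ of \eqref{e:E-contract2} at the shifted scale $j+1$ (so that the Gram-inequality contraction applies); recalling that the $T_{j+1}(\ell_{j+1})$ norm on the doubled algebra is a Banach algebra norm compatible with $\theta$; and checking that the change of field scale $2\ell_{j+1}\leq\ell_j$ is in the direction required by \eqref{e:T-monotone}. The estimate simply loses one factor $O(\|V_j\|_j+\|K_j\|_j)$ per block of $X$, which is exactly the claimed bound; in particular, no cancellation between blocks is used.
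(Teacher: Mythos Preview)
Your proof is correct and essentially identical to the paper's: both apply the contraction \eqref{e:E-contract2} to remove $\E_{C_{j+1}}$, reduce to a single block via the product property \eqref{e:T-product}, split $\delta I(B)=\theta(e^{-V_j(B)}-1)-(e^{-(u+V)_{j+1}(B)}-1)$, and bound the two pieces using \eqref{e:theta-bd} plus the monotonicity $2\ell_{j+1}\leq\ell_j$ for the first and \eqref{E:vj} for the second. The additional checks you flag (that $\|C_{j+1}\|_{T_{j+1}(\ell_{j+1})}\leq 1$ so \eqref{e:E-contract2} applies at scale $j+1$, and that the doubled-algebra norm is a Banach algebra norm) are indeed the only points requiring attention and are all satisfied.
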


\begin{proof}
  Using that $\E_{C_{j+1}}$ satisfies the contraction estimate \eqref{e:E-contract2},
  it suffices to show
  \begin{equation} \label{e:deltaIbd}
    \|(\delta I)^X\|_{T_{j+1}(\ell_{j+1})} 
    = (O(\|V_j\|_j+\|K_j\|_j))^{|\cB_j(X)|}.
  \end{equation}
  By the product property \eqref{e:T-product} it suffices to prove this estimate for a
  single block. In this case,
  \begin{align}
    \|\delta I(B)\|_{T_{j+1}(\ell_{j+1})} 
    &\leq
    \|\theta(e^{-V_j(B)}-1)\|_{T_{j+1}(\ell_{j+1})} 
    +
      \|e^{-(u+V)_{j+1}(B)}-1\|_{T_{j+1}(\ell_{j+1})}
      \nnb
    &\leq
            O(\|V_j(B)\|_{T_{j+1}(2\ell_{j+1})})
    +
    O(\|(u+V)_{j+1}(B)\|_{T_{j+1}(\ell_{j+1})})
  \end{align}
  by the product property \eqref{e:T-product} of the norms 
  and \eqref{e:theta-bd}.
  Using $2\ell_{j+1} \leq \ell_j$ and \eqref{e:T-monotone} for the first term
  and \eqref{E:vj} for the second term bounds 
  the right-hand side by $O(\|V_j\|_j + \|K_j\|_j)$ as needed.
\end{proof}

We need one further general estimate.
\begin{lemma}
    \label{lem:pert12}
    If $\|K_j\|_j + \|V_j\|_j \leq\epsilon=\epsilon(d,L)$ is
    sufficiently small, then if $\bar X = U \in \cP_{j+1}$,
    \begin{equation}
      \label{e:pert12}
      \|e^{-V_{j+1}(U\setminus X)+u_{j+1}|X|}\|_{T_{j+1}(\ell_{j+1})}
      \leq 2^{|\cB_{j}(X)|}.
    \end{equation}
  \end{lemma}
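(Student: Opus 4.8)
The plan is to factorise the exponential over individual $j$-blocks, bound each factor using the Banach-algebra property \eqref{e:T-product}, and then feed in the coupling-constant estimates \eqref{E:vj} already established earlier in this section.

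First I would observe that, since $U=\bar X\supseteq X$ and both sets are unions of $j$-blocks, the difference $U\setminus X$ is itself a $j$-polymer with $\cB_j(U\setminus X)=\cB_j(U)\setminus\cB_j(X)$; moreover $V_{j+1}$ is additive over $j$-blocks and $u_{j+1}|X|=\sum_{B\in\cB_j(X)}u_{j+1}|B|$ is a scalar. Consequently the exponent is the sum of this scalar and the mutually commuting even elements $-V_{j+1}(B)$, $B\in\cB_j(U\setminus X)$, so the exponential factorises as
\[
  e^{-V_{j+1}(U\setminus X)+u_{j+1}|X|}
  = e^{u_{j+1}|X|}\prod_{B\in\cB_j(U\setminus X)}e^{-V_{j+1}(B)} .
\]
Applying the product property \eqref{e:T-product}, together with its elementary consequence $\|e^{F}\|_{T_{j+1}(\ell_{j+1})}\le e^{\|F\|_{T_{j+1}(\ell_{j+1})}}$ and $\|1\|_{T_{j+1}(\ell_{j+1})}=1$, this is bounded by
\[
  e^{|u_{j+1}|\,|X|}\prod_{B\in\cB_j(U\setminus X)}e^{\|V_{j+1}(B)\|_{T_{j+1}(\ell_{j+1})}} .
\]

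Next I would insert \eqref{E:vj}: for each $B\in\cB_j$ one has $\|V_{j+1}(B)\|_{T_{j+1}(\ell_{j+1})}\le \|V_j\|_j+O(\|K_j\|_j)\le C\epsilon$ and $|u_{j+1}|\,|B|\le \|V_j\|_j+O(\|K_j\|_j)\le C\epsilon$, for a universal constant $C$, using $\|V_j\|_j+\|K_j\|_j\le\epsilon$. Since $|X|=\sum_{B\in\cB_j(X)}|B|$ this gives $|u_{j+1}|\,|X|\le C\epsilon\,|\cB_j(X)|$, while the product over $B\in\cB_j(U\setminus X)$ contributes a factor $e^{C\epsilon\,|\cB_j(U\setminus X)|}$. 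Finally I would count blocks in the closure: $\bar X$ is the union of the $(j+1)$-blocks meeting $X$, so $|\cB_{j+1}(\bar X)|\le|\cB_j(X)|$ and hence $|\cB_j(U\setminus X)|\le|\cB_j(U)|=L^{d}|\cB_{j+1}(U)|\le L^{d}|\cB_j(X)|$. Combining these estimates,
\[
  \|e^{-V_{j+1}(U\setminus X)+u_{j+1}|X|}\|_{T_{j+1}(\ell_{j+1})}
  \le e^{C(1+L^{d})\epsilon\,|\cB_j(X)|},
\]
and choosing $\epsilon=\epsilon(d,L)$ small enough that $C(1+L^{d})\epsilon\le\log 2$ gives the claimed bound $2^{|\cB_j(X)|}$.

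There is no serious obstacle here; the one quantity that must be tracked carefully — and the reason the smallness threshold $\epsilon$ is allowed to depend on $L$ in the statement — is the factor $L^{d}$ lost when passing from the number of $j$-blocks of $X$ to the number of $j$-blocks of its closure $\bar X=U$. Everything else is an immediate consequence of the product property of the $T_{j+1}(\ell_{j+1})$ norm and of the bounds \eqref{E:vj} on the outgoing coupling constants obtained above.
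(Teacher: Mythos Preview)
Your proof is correct and follows essentially the same approach as the paper: factor over $j$-blocks, apply the product property \eqref{e:T-product} together with the coupling-constant bounds \eqref{E:vj}, and then use $|\cB_j(U)|\le L^d|\cB_{j+1}(\bar X)|\le L^d|\cB_j(X)|$ to absorb the $L^d$-loss into the choice of $\epsilon$. The paper compresses this into the single line $\|e^{-V_{j+1}(U\setminus X)+u_{j+1}|X|}\|_{T_{j+1}(\ell_{j+1})}\le(1+O(\epsilon))^{|\cB_j(U)|}$ and then requires $(1+O(\epsilon))^{L^d}\le 2$, which is exactly your argument with the $u$- and $V$-contributions combined rather than tracked separately.
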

  \begin{proof}
    By the product property \eqref{e:T-product}  and \eqref{E:vj} to bound $V_{j+1}$ and $u_{j+1}$,
  \begin{equation}
     \|e^{-V_{j+1}(U\setminus X)+u_{j+1}|X|}\|_{T_{j+1}(\ell_{j+1})} \leq (1+{ O(}\epsilon))^{|\cB_{j}(U)|}, 
  \end{equation}
  and $|\cB_{j}(U)|$ is at most $L^{d}|\cB_{j+1}(U)|\leq
  L^{d}|\cB_{j}(X)|$. The claim follows provided $(1+{ O(}\epsilon))^{L^{d}}\leq 2$.
\end{proof}

\begin{proof}[Proof of Proposition~\ref{prop:SmallSetLin}] To
    estimate the summands of $\cL_{j+1, \cS}(U)$, we use the product
    property of the $\|\cdot\|_{T_{j+1}(\ell)}$ norm to combine
    Lemma~\ref{lem:pert12} with Lemma~\ref{lem:smallset-K},
    Lemma~\ref{lem:smallset-V} for $X\in \cB_j$, and with
    Lemma~\ref{lem:smallset-V1} for $X\in \cS_{j}\backslash \cB_j$.
    For the sum of the terms $(\delta I)^{X}$ we use that
    $(1+\|V_{j}\|_{j}^{2} + \|K_{j}\|_{j}^{2})^{L^{d}}\leq 2$ provided
    $\epsilon=\epsilon(L)$ is small enough.  Altogether, we obtain
\begin{equation}
\| \cL_{j+1, \cS}(U)\|_{T_{j+1}(\ell_{j+1})}= O(L^{-\dplus} \|K_j\|_j+L^{d}(\|V_j\|_j^2 + \|K_j\|_j^2)),
\end{equation}
which proves the lemma.
\end{proof}

\subsubsection{Large sets}
\label{sec:K-largeset}

Next we consider the contribution to \eqref{e:K-linear} from the terms
$X \not\in \cS_j$ in the sums, i.e., $\cL_{j+1,\cP\setminus\cS}$.
The main estimate of this section is summarised in the following proposition.

\begin{proposition}
  \label{prop:LargeSetLin}
  The contribution $\cL_{j+1,\cP\setminus\cS}$ to \eqref{e:K-linear} satisfies
  \begin{equation}
  \label{e:LargeSetLin}
  \|\cL_{j+1, \cP \setminus \cS}\|_{j+1} =
  O(  A^{-\eta}\|K_j\|_j+A^\nu[\|V_j\|_j + \|K_j\|_j]^{2})
\end{equation}
\end{proposition}

We begin by recording a combinatorial fact, see \cite[Lemmas~6.15 and 6.16]{MR2523458}
or \cite[Lemma~C.3]{MR3332942} for details on its
proof. For the statement, recall that if $X\in\cP_{j}$, then
its closure $\bar X\in \cP_{j+1}$ denotes the smallest 
next-scale polymer containing $X$.

\begin{lemma}
  \label{lem:comb1}
  Let $L \geq 2^d+1$. 
  There is a geometric constant $\eta=\eta(d)>0$ depending only on $d$ such  that
  for all $X \in \cC_j \setminus \cS_j$,
  \begin{equation} \label{e:largeset-contract} |\cB_{j}(X)| \geq
    (1+2\eta) |\cB_{j+1}(\bar X)|.
  \end{equation}
  Moreover, for all $X \in \cP_j$, $|\cB_j(X)| \geq |\cB_{j+1}(X)|$ and
  \begin{equation} \label{e:largeset-contract-comp}
    |\cB_j(X)| \geq (1+\eta)|\cB_{j+1}(\bar X)|-(1+\eta)2^{d+1}|\Comp(X)|.
  \end{equation}
\end{lemma}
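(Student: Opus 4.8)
The plan is to derive all three assertions from the single geometric input \eqref{e:largeset-contract}, which is the only substantive part; the other two statements are then bookkeeping. Throughout, for a $j$-polymer $X$ write $n := |\cB_{j+1}(\bar X)|$ for the number of $(j+1)$-blocks meeting $X$ (equivalently, the number of $(j+1)$-blocks making up $\bar X$), and partition $\cB_j(X)$ according to which $(j+1)$-block each $j$-block lies in. Since $j$-blocks lying in distinct $(j+1)$-blocks are disjoint, and every $(j+1)$-block meeting $X$ contains at least one $j$-block of $X$, we get the elementary inequality $|\cB_j(X)| \geq n$ for all $X \in \cP_j$; reading $\cB_{j+1}(X)$ as the set of $(j+1)$-blocks meeting $X$, this is the assertion $|\cB_j(X)| \geq |\cB_{j+1}(X)|$.

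For \eqref{e:largeset-contract}, fix $X \in \cC_j \setminus \cS_j$, so that $|\cB_j(X)| \geq 2^d+1$. If $n=1$ this already gives $|\cB_j(X)| \geq 2^d+1 \geq (1+2\eta)n$ for $\eta$ small, so we may assume $n \geq 2$. The geometric heart is the following local fact, which uses $L \geq 2^d+1$: a $j$-block $B \in \cB_j(X)$ contained in the $(j+1)$-block $B'$ can touch $j$-blocks lying outside $B'$ only if $B$ abuts the boundary of $B'$, and then all such neighbouring $(j+1)$-blocks share a common face with $B'$, so there are at most $2^d-1$ of them and they lie, together with $B'$, in a single $2\times\cdots\times 2$ block of $(j+1)$-blocks (a "corner cluster"). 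Consequently a connected polymer whose $j$-blocks within a given $(j+1)$-block are too few is trapped inside one corner cluster, and leaving the cluster forces it to "cross" some $(j+1)$-block by a touching chain of $j$-blocks, which costs many extra $j$-blocks. Since $2^d$ is precisely the number of $(j+1)$-blocks in a corner cluster, carrying out this counting over a spanning tree of the reduced graph on $\cB_{j+1}(\bar X)$ (vertices the $(j+1)$-blocks meeting $X$, edges the pairs joined through touching $j$-blocks of $X$) produces $|\cB_j(X)| \geq (1+2\eta)n$ for a suitable $\eta = \eta(d) > 0$. The quantitative combinatorics is exactly that of \cite[Lemmas~6.15 and~6.16]{MR2523458} (see also \cite[Lemma~C.3]{MR3332942}), which we follow; I expect this step — converting the corner-cluster picture into a clean density bound, with the reduced-graph/spanning-tree bookkeeping — to be the only real work.

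Finally, for \eqref{e:largeset-contract-comp} let $X_1,\dots,X_r$ be the connected components of $X$, so $r = |\Comp(X)|$, and sort them into large components ($X_i \in \cC_j\setminus\cS_j$) and small components ($X_i \in \cS_j$). For a large component, \eqref{e:largeset-contract} gives $|\cB_j(X_i)| \geq (1+2\eta)|\cB_{j+1}(\bar X_i)| \geq (1+\eta)|\cB_{j+1}(\bar X_i)|$, while for a small component the elementary bound together with $|\cB_{j+1}(\bar X_i)| \leq |\cB_j(X_i)| \leq 2^d$ gives $|\cB_j(X_i)| \geq |\cB_{j+1}(\bar X_i)| \geq (1+\eta)|\cB_{j+1}(\bar X_i)| - \eta 2^d \geq (1+\eta)|\cB_{j+1}(\bar X_i)| - (1+\eta)2^{d+1}$. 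Summing over $i$, using that every $(j+1)$-block meeting $X$ meets some $X_i$ (so $\sum_i |\cB_{j+1}(\bar X_i)| \geq |\cB_{j+1}(\bar X)|$) and that the number of small components is at most $r$, yields $|\cB_j(X)| \geq (1+\eta)|\cB_{j+1}(\bar X)| - (1+\eta)2^{d+1}|\Comp(X)|$, which is \eqref{e:largeset-contract-comp}.
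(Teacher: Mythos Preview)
Your proposal is correct and follows the same approach as the paper: the paper does not prove this lemma at all but simply cites \cite[Lemmas~6.15 and~6.16]{MR2523458} and \cite[Lemma~C.3]{MR3332942}, exactly the references you invoke for the key inequality \eqref{e:largeset-contract}. Your explicit derivations of the elementary bound $|\cB_j(X)| \geq |\cB_{j+1}(\bar X)|$ and of \eqref{e:largeset-contract-comp} from \eqref{e:largeset-contract} by splitting into large and small components are correct and in fact more detailed than what the paper provides.

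One minor remark on your heuristic for \eqref{e:largeset-contract}: the statement that a boundary $j$-block touches at most $2^d-1$ other $(j+1)$-blocks, all lying in a single $2\times\cdots\times2$ corner cluster, is correct only when the $j$-block sits at a corner of its $(j+1)$-block; a $j$-block on a lower-dimensional face touches fewer. This does not affect your argument since you defer the details to the cited references anyway, but the phrasing ``all such neighbouring $(j+1)$-blocks share a common face with $B'$'' is imprecise (they may share only a lower-dimensional face or corner).
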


We also record an application of this estimate to sums indexed by large polymers which will be used in this and in the next section.
By \eqref{e:largeset-contract}, if $A=A(L)$ is large enough,
\begin{equation}
  \label{e:Achoice}
  A^{|\cB_{j+1}(U)|} \sum_{X \in \cC_j\setminus \cS_{j}: \bar X=U} (A/2)^{-|\cB_j(X)|}
  \leq (2^{L^d} 2^{1+2\eta}
  A^{-2\eta})^{|\cB_{j+1}(U)|} \leq A^{-\eta|\cB_{j+1}(U)|}, 
\end{equation}
as the set of $X\in \cP_j$ 
with $\bar{X}=U$ has
size at most $2^{L^{d}|\cB_{j+1}(U)|}$.
Similarly, by \eqref{e:largeset-contract-comp}, if $\alpha \geq
A^{(1+\eta)2^{d +1}}$,
\begin{equation}
  \label{e:Achoice-conn}
  A^{|\cB_{j+1}(U)|} \sum_{X \in \cP_j: \bar X=U} (A/2)^{-|\cB_j(X)|} \alpha^{-|\Comp(X)|}
  \leq A^{-(\eta/2)|\cB_{j+1}(U)|}
  .
\end{equation}

\begin{proof}[Proof of Proposition~\ref{prop:LargeSetLin}]
From \eqref{e:K-linear} and \eqref{e:K-linear-split},
recall that (as $J(B,X)=0$ for large $X$)
\begin{align}\label{e:K-linear-large}
  \cL_{j+1, \cP\setminus \cS}(U)
  &= \sum_{X\in \cC_j \setminus \cS_j: \bar X=U} e^{-V_{j+1}(U\setminus X)+u_{j+1}|X|} \E_{C_{j+1}}\qB{ \theta K_j(X)}
  \\
  &\qquad + \sum_{X\in \cP_j \setminus \cS_j: \bar X=U} e^{-V_{j+1}(U\setminus X)+u_{j+1}|X|} \E_{C_{j+1}}\qB{ (\delta I)^X }.\nonumber
\end{align}

We first consider the case $U\in \cC_{j+1}\setminus \cS_{j+1}$, and proceed as follows:
for $\|K_j\|_j + \|V_j\|_j \leq\epsilon$ with $\epsilon$
sufficiently small, by Lemma~\ref{lem:pert12} the $j+1$
norm of the $K$ contribution to \eqref{e:K-linear-large} is bounded by
\begin{equation} \label{e:largeset2}
  A^{|\cB_{j+1}(U)|-2^d}
  \sum_{X\in \cC_j \setminus \cS_j: \bar X=U}  2^{|\cB_j(X)|} \normB{\E_{C_{j+1}}\qB{\theta K_j(X)}}_{T_{j+1}(\ell_{j+1})}.
\end{equation}
By the definition of $\|K_{j}\|_{j}$ and
noting that $(|\cB_{j}(X)|-2^{d})_{+}=|\cB_{j}(X)|-2^{d}$ since $X\not\in\cS_{j}$,
\begin{equation}
  \label{E:largeset-inter}
  \normB{ \E_{C_{j+1}}\qB{\theta K_j(X)}}_{T_{j+1}(\ell_{j+1})}\leq A^{-(|\cB_{j}(X)|-2^{d})}\|K_j\|_j,
\end{equation}
where we have also used the contraction estimates
\eqref{e:E-contract} and \eqref{e:T-monotone}.  Inserting this bound into
\eqref{e:largeset2} and using \eqref{e:Achoice} gives that the $K$
contribution to \eqref{e:K-linear-large} is bounded by
\begin{equation} \label{e:largeset3}
   A^{|\cB_{j+1}(U)|}
    \sum_{X\in \cC_j \setminus \cS_j: \bar X=U} (A/2)^{-|\cB_j(X)|}\|K_j\|_{j} \leq A^{-\eta} \|K_j\|_{j}.
\end{equation} 
This is the desired bound for the first term in \eqref{e:K-linear-large}.

To bound the $j+1$ norm of the $\delta I$ contribution to \eqref{e:K-linear-large},
Lemmas~\ref{lem:smallset-V1} and~\ref{lem:pert12} and the product
property yield (provided $\epsilon$ is sufficiently small depending on $L$)
\begin{align}
 \label{e:largeset4}
& A^{|\cB_{j+1}(U)|-2^d}  \normbb{  \sum_{X\in {\cP_j} \setminus \cS_j: \bar X=U} e^{-V_{j+1}(U\setminus X)+u_{j+1}|X|}  \E_{C_{j+1}}\qB{ (\delta I)^X} }_{T_{j+1}(\ell_{j+1})}\nnb
&\leq  A^{|\cB_{j+1}(U)|-2^d}
    \sum_{X\in {\cP_j }\setminus \cS_j: \bar X=U} \qB{ 2O(\|V_j\|_j + \|K_j\|_j) }^{|\cB_j(X)|}   .
\end{align}
Since $U \in \cC_{j+1} \setminus \cS_{j+1}$ and $\bar X=U$, each $X$
in the last sum must have $|\cB_{j}(X)|\geq 2^{d}+1$.  If
$\|V_j\|_j + \|K_j\|_j<\epsilon$ and $\epsilon$ is sufficiently small
(depending on $A$), then the quantity in brackets is less than
$1/A^{2+2(1+\eta)2^{d+1}}$.  By the elementary inequality
$(c^{2})^{n-2}\leq c^{n}$ for $c\in (0,1)$, $n>4$ and using that
$|\cB_{j}(X)|\geq 2^{d}+1>4$ for each summand, we obtain the upper
bound
\begin{equation}
  [O(\|V_j\|_j + \|K_j\|_j)]^{2} A^{|\cB_{j+1}(U)|} \sum_{X\in {\cP_j}
    \setminus \cS_j: \bar X=U} A^{-|\cB_{j}(X)|} A^{-(1+\eta)2^{d+1}|\cB_j(X)|}. 
\end{equation}
Using \eqref{e:Achoice-conn}, it follows that the
$\delta I$ contribution to~\eqref{e:K-linear-large} is bounded by
\begin{equation}
  O(A^{-\eta/2}[\|V_j\|_j + \|K_j\|_j]^{2})
  =  O([\|V_j\|_j + \|K_j\|_j]^{2})
  ,
\end{equation}
for $A$ sufficiently large.
We have now completed the bound on \eqref{e:K-linear} provided $U\in \mathcal C_{j+1}\backslash \mathcal S_{j+1}$.

The argument is similar if $U \in \cS_{j+1}$.  In this case the
prefactor $A^{|\cB_{j+1}(U)|-2^d}$ gets replaced by $1$ in
\eqref{e:largeset2} and \eqref{e:largeset4}.  For the $K$
contribution, in place of \eqref{e:largeset3} we obtain, since
$1+ 2^d\leq |\cB_j(X)|\leq L^d |\cB_{j+1}(U)|\leq (2L)^d$ and the
number of summands in this case is at most $2^{(2L)^d}$,
\begin{align}
  \normbb{\sum_{X\in \cC_j \setminus \cS_j: \bar X=U}
   e^{-V_{j+1}(U\setminus X)+u_{j+1}|X|} \E_{C_{j+1}}\qB{\theta K_j(X)}
  }_{T_{j+1}(\ell_{j+1})}
  &\leq A^{-1}2^{2(2L)^d}\|K_j\|_j
    \nnb
  &=O(A^{-\eta} \|K_j\|_j)
\end{align}
for $A$ large enough depending on $L$ and $d$.  For the $\delta I$
contribution, in place of \eqref{e:largeset4} we have
\begin{equation}
   \normbb{ \sum_{X\in \cP_j \setminus \cS_j: \bar X=U}   e^{-V_{j+1}(U\setminus X)+u_{j+1}|X|} \E_{C_{j+1}}\qB{(\delta I)^X} }_{T_{j+1}(\ell_{j+1})}
  \leq  O\left(2^{2 {(2L)^{d}}} 
    [\|V_j\|_j + \|K_j\|_j]^{2}\right)
\end{equation}
since each summand on the left-hand side has $|\cB_j(X)|\geq  2$.

Thus for $A=A(L,d)$ sufficiently large and $\epsilon=\epsilon(A,L)$
sufficiently small, the expression \eqref{e:K-linear-large} is bounded
in the $T_{j+1}(\ell_{j+1})$ norm by $
O(A^{-\eta}\|K_j\|_j+A^\nu[\|V_j\|_j + \|K_j\|_j]^{2})$ in all cases.
\end{proof}
\subsubsection{Non-linear part}
\label{sec:K-nonlinear}

Finally, we consider the non-linear contribution $K_{j+1}-\cL_{j+1}$.
To conclude the proof of \eqref{e:step-K}, and hence the proof of
Theorem~\ref{thm:step}, we prove the following estimate.

\begin{proposition} \label{prop:K-linear-nonlinear}
\begin{equation}
\|K_{j+1}-\cL_{j+1}\|_{j+1}\leq A^\nu O(\|K_j\|(\|K_j\|_j+\|V_j\|_j)).
\end{equation}
\end{proposition}

Before diving into the proof, let us review the terms which remain to
be estimated. Recall the definition of $K_{j+1}(U)$ from
\eqref{e:K+-def} and the leading part $\cL_{j+1}(U)$ from
\eqref{e:K-linear}. Write $|\cX|$ for the number of pairs in $\cX$.
With respect to the indexing of summands for $K_{j+1}(U)$, the leading
part $\cL_{j+1}(U)$ results from the terms with $|\cX|=0$ and
$\check X=X$ by only keeping the terms in the formula for
$\check{K}(X)$ with either a single factor $\theta K_j(X)$ when
$X\in\cC_j$, a single factor $(\delta I)^X$ when $X\in \cP_j$, or a
single factor $\sum_B \theta J(B,X)$.  It follows that we can write
\begin{equation} \label{e:KminuscL}
  K_{j+1}(U)-\mathcal{L}_{j+1}(U)
  = \cR^1(U) + \cR^2(U) + \cR^3(U)
    ,
\end{equation}
where
\begin{gather}
  \cR^1(U)
  = e^{u_{j+1}|U|} \sum_{\cG_1(U)} e^{-(u+V)_{j+1}(U\setminus X_{\cX})} \E_{C_{j+1}}
    \qbb{
    \prod_{(B,X) \in \cX} \theta J(B,X) },
  \\
  \cR^2(U)
  =e^{u_{j+1}|U|} \sum_{\cG_2(U)}
  e^{-(u+V)_{j+1}(U \setminus \check X \cup X_{\cX})} \E_{C_{j+1}}
  \qbb{
  (\check{K}(\check{X})-(\delta I)^{\check X}\1_{|\cX|=0})
  \prod_{(B,X)\in\cX}\theta J(B,X)},
\end{gather}
and
\begin{equation}
  \cR^3(U)
  =e^{u_{j+1}|U|} \sum_{\cG_3(U)}
  e^{-(u+V)_{j+1}(U \setminus \check X)} \E_{C_{j+1}}
    \qbb{\check{K}(\check{X}) - \theta K(\check{X}) -(\delta I)^{\check X} + \sum_{B \in \cB_j} \theta J(B,\check{X})} ,
\end{equation}
when the subsets $\cG_i(U)\subset \cG(U)$ are defined as follows.  The
set $\cG_1(U)$ is defined by imposing the conditions $|\cX|=1$ and
$\check X = \varnothing$.  The set $\cG_2(U)$ is defined to
  consist of $(\cX, \check{X})$ such that $X_\cX \cup \check{X}$ has
  at least two components.  In particular, if $\cX=\varnothing$,
  $\check{X}$ has least two components and if $\check{X}=\varnothing$
  then $|\cX|\geq 2$.  Finally, $\cG_3(U)$ is defined by the
  conditions $|\cX|=0$ and $\check{X}\in \cC_j$.

The next lemma clearly implies Proposition~\ref{prop:K-linear-nonlinear}.
\begin{lemma} \label{lem:Ri-bd}
For $i\in \{1, 2, 3\}$, 
\begin{equation}
  \|\cR^i\|_{{j+1}}   \leq 
  A^\nu O(\|K_j\|(\|K_j\|_j+\|V_j\|_j)).
\end{equation}
\end{lemma}

\begin{proof}[Proof of Lemma~\ref{lem:Ri-bd} for $i=1$]
We begin by bounding $\cR^1$.   This bound exploits that $\sum_{X} J(B,X)=0$ for every $B\in\cB_j$,
  see \eqref{e:J-defB}--\eqref{e:J-def} and \eqref{e:J-zero}.
  As $\cX$ is a single pair $\{ (B,X)\}$, $X_{\cX}=X$. Since
  $\check{X}=\varnothing$  we can write  
  \begin{equation}
    \cR^1(U) = e^{u_{j+1}|U|}\sum_{B\in \cB_j}\sum_{X_\cX \in \cS_j}e^{-(u+V)_{j+1}(U\setminus X_{\cX})} \E_{C_{j+1}}\qB{\theta J(B,X_\cX)} \1_{\overline{B^\square}=U }  .
  \end{equation}
  Since $\sum_{X_{\cX}} J(B,X_{\cX})=0$ for $B\in\cB_j$, we can
  rewrite
\begin{equation} \label{e:K-crucial}
  \cR^1(U) = e^{u_{j+1}|U|}
  \sum_{B\in \cB_j}\sum_{X_\cX \in \cS_j}
  e^{-(u+V)_{j+1}(U\setminus  X_{\cX})}
  (1-e^{(u+V)_{j+1}(X_{\cX})}) \E_{C_{j+1}}\qB{\theta J(B,X_\cX)} \1_{\overline{B^\square}=U }.
\end{equation}
Since $X_{\cX} \in \cS_j$ we have
$\|1-e^{(u+V)_{j+1}(X_{\cX})}\|_{T_{j+1}(\ell_{j+1})} =
O(L^d(\|V_j\|_j+\|K_j\|_j))$ by \eqref{E:vj}.  Moreover,
\eqref{e:Loc-bd} implies
$\|J(B, X)\|_{T_{j}(\ell_{j})} =O(\|K_j\|_{j})$, so
$\|\E_{C_{j+1}}\theta J(B, X)\|_{T_{j+1}(\ell_{j+1})} =O(\|K_j\|_{j})$
since $\E_{C_{j+1}}\theta$ is a contraction.  Finally, exactly as in
the proof of Lemma~\ref{lem:pert12},
$\|e^{u_{j+1}(U)}e^{-(u+V)_{j+1}(U\setminus
  X_{\cX})}\|_{T_{j+1}(\ell_{j+1})}\leq
(1+O(\epsilon))^{|\cB_j(U)|}\leq 2$ for $\epsilon=\epsilon(L)$
  small enough, since $U$ is the closure of $B^\square$.  As there
are $O(L^{d})$ summands in~\eqref{e:K-crucial} we have shown
\begin{equation}
  \|\cR^1(U)\|_{T_{j+1}(\ell_{j+1})} \leq O(L^{2d}(\|V_j\|_j+\|K_j\|_j)\|K_j\|_j)
  \leq O(A^{\nu}(\|V_j\|_j+\|K_j\|_j)\|K_j\|_j).
\end{equation}
Since $A^{(|\cB_{j+1}(U)|-2^{d})_{+}}=1$ for any contributing $U$
(as $U$ is the closure of $B^{\square}$ for some block $B$), this concludes the
desired bound on $\cR^{1}(U)$.
\end{proof}

Before proceeding to the proof of Lemma~\ref{lem:Ri-bd} for $i=2$,
we first provide estimates on the norms of
$\check{K}(\check{X})$ and $\check{K}(\check{X})-(\delta I)^{\check{X}}$.

\begin{lemma} \label{lem:NonlinearCheckK}
  If $\|V_{j}\|_{j}+\|K_{j}\|_{j}\leq \epsilon$ and $\epsilon=\epsilon(A, L)$ is
  sufficiently small, then
  \begin{align}
    \label{e:NonlinearCheckK}
    \|\check{K}(\check X)\|_{T_{j+1}(\ell_{j+1})}
    &\leq [ A^{2^d}O( \|V_j\|_j +
  \|K_j\|_j)]^{|\Comp(\check X)|}
      (\frac{A}{2})^{-|\cB_j(\check{X})|},
  \\
    \label{e:NonlinearCheckK2}
    \|\check{K}(\check X)-(\delta I)^{\check X}\|_{T_{j+1}(\ell_{j+1})}
    &\leq
      [A^{2^d}O( \|V_j\|_j + \|K_j\|_j)]^{|\Comp(\check X)|-1}O(A^{2^d}\|K_j\|_j)
      (\frac{A}{2})^{-|\cB_j(\check{X})|}
      .
\end{align}
\end{lemma}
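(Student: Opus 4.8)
The plan is to reduce both bounds to a single connected polymer. Since $\check K(\check X)=\prod_{Z\in\Comp(\check X)}\check K(Z)$ and $|\cB_j(\check X)|=\sum_{Z\in\Comp(\check X)}|\cB_j(Z)|$, the product property \eqref{e:T-product} reduces \eqref{e:NonlinearCheckK} to the estimate $\|\check K(Z)\|_{T_{j+1}(\ell_{j+1})}\leq A^{2^d}O(\|V_j\|_j+\|K_j\|_j)(A/2)^{-|\cB_j(Z)|}$ for each $Z\in\cC_j$. Writing $\rho=O(\|V_j\|_j+\|K_j\|_j)$, I will use throughout that $\epsilon=\epsilon(A,L)$ may be taken small enough that $A\rho\leq 1$ and $A^{2^d}\|K_j\|_j\leq 1$. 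Three inputs are needed: (a) for connected $Z'\in\cC_j$, combining \eqref{e:theta-bd}, \eqref{e:normcontract}, and the definition \eqref{e:K-norm} of $\|\cdot\|_j$ gives $\|\theta K_j(Z')\|_{T_{j+1}(\ell_{j+1})}\leq\|K_j(Z')\|_{T_j(\ell_j)}\leq A^{-(|\cB_j(Z')|-2^d)_+}\|K_j\|_j\leq A^{2^d}\|K_j\|_j A^{-|\cB_j(Z')|}$, the last step treating $|\cB_j(Z')|\leq 2^d$ and $|\cB_j(Z')|>2^d$ separately; (b) as in the proof of Lemma~\ref{lem:smallset-V1}, $\|\delta I(B)\|_{T_{j+1}(\ell_{j+1})}=O(\rho)$ for each $B\in\cB_j$, hence $\|(\delta I)^Y\|_{T_{j+1}(\ell_{j+1})}=(O(\rho))^{|\cB_j(Y)|}$ by the product property; (c) by \eqref{e:Loc-bd} and \eqref{e:theta-bd}, $\|\theta J(B,Z)\|_{T_{j+1}(\ell_{j+1})}=O(\|K_j\|_j)$, which vanishes unless $Z\in\cS_j$, so $\sum_{B\in\cB_j(Z)}\theta J(B,Z)$ has at most $2^d$ nonzero summands.

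Then I would estimate $\check K(Z)$ term by term from \eqref{e:K+-def2}. For $Y\subsetneq Z$, factorising $K_j$ over the components of $Z\setminus Y$ and using (a) together with $A^{2^d}\|K_j\|_j\leq 1$ gives $\|\theta K_j(Z\setminus Y)\|_{T_{j+1}(\ell_{j+1})}\leq(A^{2^d}\|K_j\|_j)^{|\Comp(Z\setminus Y)|}A^{-|\cB_j(Z\setminus Y)|}\leq A^{2^d}\|K_j\|_j A^{-|\cB_j(Z\setminus Y)|}$, the exponent being $\geq 1$ since $Z\setminus Y\neq\varnothing$. Combining with (b) and the product property and grouping the terms $Y\subsetneq Z$ by $\ell=|\cB_j(Y)|$ (there are $\binom{|\cB_j(Z)|}{\ell}$ of them, with $|\cB_j(Z\setminus Y)|=|\cB_j(Z)|-\ell$), the contribution of $\sum_{Y\subsetneq Z}(\theta K_j(Z\setminus Y))(\delta I)^Y$ is bounded by $A^{2^d}\|K_j\|_j\sum_{\ell\geq 0}\binom{|\cB_j(Z)|}{\ell}A^{-(|\cB_j(Z)|-\ell)}\rho^\ell=A^{2^d}\|K_j\|_j(A^{-1}+\rho)^{|\cB_j(Z)|}\leq A^{2^d}\|K_j\|_j(A/2)^{-|\cB_j(Z)|}$, using $A^{-1}+\rho=A^{-1}(1+A\rho)\leq 2/A$. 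The remaining pieces of $\check K(Z)$ — the term $Y=Z$, equal to $(\delta I)^Z$ and of norm $\leq(O(\rho))^{|\cB_j(Z)|}$, and $\sum_B\theta J(B,Z)$, of norm $O(\|K_j\|_j)$ and supported on $\{Z\in\cS_j\}$ — are each $\leq A^{2^d}O(\rho)(A/2)^{-|\cB_j(Z)|}$: for the first because $\rho^m\leq\rho(A/2)(A/2)^{-m}\leq A^{2^d}\rho(A/2)^{-m}$ for $m\geq 1$ (since $\rho\leq 2/A$); for the second because $|\cB_j(Z)|\leq 2^d$ forces $A^{2^d}(A/2)^{-|\cB_j(Z)|}\geq 2^{2^d}\geq 2^d$. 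Summing the three contributions yields the single-component bound, hence \eqref{e:NonlinearCheckK}.

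For \eqref{e:NonlinearCheckK2}, where $\check X\neq\varnothing$ so $|\Comp(\check X)|-1\geq 0$, I would telescope over $\Comp(\check X)=\{Z_1,\dots,Z_k\}$: $\prod_i\check K(Z_i)-\prod_i(\delta I)^{Z_i}=\sum_{i=1}^k\bigl(\prod_{l<i}\check K(Z_l)\bigr)\bigl(\check K(Z_i)-(\delta I)^{Z_i}\bigr)\bigl(\prod_{l>i}(\delta I)^{Z_l}\bigr)$. Since $\check K(Z_i)-(\delta I)^{Z_i}$ is exactly $\check K(Z_i)$ with its $Y=Z_i$ term removed, rerunning the estimate of the previous paragraph — now every surviving summand contains a factor $\theta K_j$ or $\theta J$ and hence an extra power of $\|K_j\|_j$ — gives $\|\check K(Z_i)-(\delta I)^{Z_i}\|_{T_{j+1}(\ell_{j+1})}=O(A^{2^d}\|K_j\|_j)(A/2)^{-|\cB_j(Z_i)|}$. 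Bounding the factors $\check K(Z_l)$ by the single-component case of \eqref{e:NonlinearCheckK} and the factors $(\delta I)^{Z_l}$ by $(O(\rho))^{|\cB_j(Z_l)|}\leq A^{2^d}O(\rho)(A/2)^{-|\cB_j(Z_l)|}$, multiplying out, and summing over $i$ (absorbing the resulting factor $k\leq 2^{k-1}$ into the base of the $(k-1)$-st power) gives \eqref{e:NonlinearCheckK2}.

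The main obstacle is purely the bookkeeping of $A$-powers. One must ensure that the factor $A^{2^d}$ produced per connected component does not accumulate; this is the role of the smallness condition $A^{2^d}\|K_j\|_j\leq 1$, which makes $(A^{2^d}\|K_j\|_j)^{|\Comp|}\leq A^{2^d}\|K_j\|_j$. One must also ensure the geometric sum over sub-polymers $Y$ decays at the full scale $(A/2)^{-|\cB_j(Z)|}$ rather than losing the scale factor altogether, which is exactly what $A\rho\leq 1$ provides once $A=A(L,d)$ has been fixed. No input beyond the product property and the single-block estimates of Section~\ref{sec:step} is required.
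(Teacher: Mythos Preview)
Your proof is correct and follows essentially the same approach as the paper. Both reduce to single-component estimates (the paper's \eqref{e:ky}--\eqref{e:kyi} are exactly your single-component bounds for $\check K(Z)$ and $\check K(Z)-(\delta I)^Z$), both sum over sub-polymers $Y$ to obtain $(A^{-1}+O(\rho))^{|\cB_j(Z)|}\leq (A/2)^{-|\cB_j(Z)|}$, and both recover the multi-component bound \eqref{e:NonlinearCheckK2} by telescoping the product (the paper writes this as $(a+b)^n-a^n=\sum_{k}a^kb(a+b)^{n-k-1}$, which is your component-wise telescope in the case of distinct factors). Your binomial bookkeeping is slightly cleaner, but there is no substantive difference.
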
 The proof of these estimates is postponed until the conclusion of the main argument.

\begin{proof}[Proof of Lemma~\ref{lem:Ri-bd} for $i=2$]
If $\|V_j\|_j + \|K_j\|_j$ is small enough, arguing as in \eqref{e:pert12} implies
$\|e^{u_{j+1}(U)}e^{-(u+V)_{j+1}(U\setminus \check X \cup
  X_{\cX})}\|_{T_{j+1}(\ell_{j+1})}\leq 2^{|\cB_{j}(U)|}$, and by
\eqref{e:Loc-bd}, $\|J(B, X)\|_{T_{j}(\ell_{j})} =O(\|K_j\|_{j})$.
Thus using that $\E_{C_{j+1}}$ contracts from
$T_{j+1}(\ell_{j+1}\sqcup \ell_{j+1})$ into $T_{j+1}(\ell_{j+1})$
we obtain
\begin{equation}
\label{e:nonper}
\|\cR^2(U)\|_{T_{j+1}(\ell_{j+1})}
\leq 2^{|\cB_j(U)|}  \sum_{\cG_2(U)}
[O(\|K_j\|_j)]^{|\cX|} \|
\check{K}(\check{X})
-(\delta I)^{\check X}\1_{\cX=\varnothing}
\|_{T_{j+1}(\ell_{j+1})}.
\end{equation}
For brevity let us write $b$ for the factors
$O(\|V_j\|_{j}+\|K_j\|_{j})$ above.  By \eqref{e:nonper} and
Lemma~\ref{lem:NonlinearCheckK}, it suffices to show
\begin{equation} \label{e:nonper-red}
  A^{|\cB_{j+1}(U)|}2^{|\cB_{j}(U)|}
  \sum_{\cG_{2}(U)}
  (b A^{2^{d}}
  )^{|\cX|+|\Comp(\check X)|}
  (\frac{A}{2})^{-|\cB_{j}(\check{X})|} \leq A^{\nu} O(b^2)
  .
\end{equation}
Indeed, \eqref{e:nonper} is bounded by
$O(A^{-|\cB_{j+1}(U)|}\|K_j\|_j/b)$ times this quantity. The small
$\|K_j\|_j/b$ is due to the fact that if $|\cX|\geq 1$ there is a
factor $\|K_j\|_j$ in \eqref{e:nonper} and if $|\cX|=0$ then
\eqref{e:NonlinearCheckK2} provides such a factor in place of
$b$. Hence~\eqref{e:nonper-red} gives
\begin{equation}
  \|\cR^2\|_{j+1}
  \leq O(A^\nu)(\|V_j\|_j+\|K_j\|_j)\|K_j\|_j.
\end{equation}

To verify \eqref{e:nonper-red}, first note that
since $|\cB_{j}(U)|\leq L^{d}|\cB_{j+1}(U)|$, for any $c>0$  the prefactor can be bounded by
\begin{equation}
  A^{|\cB_{j+1}(U)|}2^{|\cB_{j}(U)|} \leq
  (\frac{A}{2})^{(1-c)|\cB_{j+1}(U)|}{2^{(L^{d}+1)|\cB_{j+1}(U)|}} 
  (\frac{A}{2})^{c|\cB_{j+1}(U)|}.
\end{equation}
Taking $c>1$, the product of the first two terms on the last
right-hand side is less than $1$ for $A$ sufficiently large.  It thus
suffices to prove that for some $c>1$
\begin{equation}
  (\frac{A}{2})^{c|\cB_{j+1}(U)|}\sum_{\cG_{2}(U)} (
  bA^{2^d} 
  )^{|\cX|+|\Comp(\check X)|}
  (\frac{A}{2})^{-|\cB_{j}(\check{X})|} \leq   A^{\nu}O(b^2).
\end{equation}
At this point we appeal to \cite[proof of Lemma~6.17]{MR2523458}; this
result estimates the same sum but over $\cG(U)$ instead of $\cG_{2}(U)$.
However, following exactly the same proof as
in~\cite{{MR2523458}} but using that the sum is over $\cG_{2}(U)$,
the supremum over $n\geq 1$ in \cite[(6.85)]{MR2523458} becomes a
supremum over $n\geq 2$ since $|\cX| + |\Comp(\check X)|\geq 2$.
This yields that there exists $c>1$ such that if
$A=A(L,d)$ is large enough, then there is an $m$ such that for all
$U\in\cP_{j+1}$,
\begin{equation}
  (\frac{A}{2})^{c|\cB_{j+1}(U)|}\sum_{\cG_{2}(U)}(
  bA^{2^d} 
  )^{|\cX|+|\Comp(\check X)|}
  (\frac{A}{2})^{-|\cB_{j}(\check{X})|} = O( (bA^{m})^{2}),
\end{equation}
which is $A^{\nu}O(b^{2})$ as needed.
\end{proof}

\begin{proof}[Proof of Lemma \ref{lem:NonlinearCheckK}]
  For notational convenience, for $Y\in \mathcal C_j$ let
  \begin{equation}
    \tilde K(Y)= \sum_{W \in \cP_j(Y)} \theta K_j(Y\setminus W) (\delta I)^W.
  \end{equation}
  We first establish that the claimed bounds follow from the definition of $\check{K}(X)$ in \eqref{e:K+-def2} if we show, for $Y\in \cC_j$,
  \begin{align}
    \label{e:ky}
    \normbb{\tilde K(Y)-\sum_{B \in \cB_j(Y)} {\theta} J(B,Y)}_{T_{j+1}(\ell_{j+1})}
    &\leq A^{2^d}
    O (\|V_j\|_j + \|K_j\|_j)
      (\frac{A}{2})^{-|\cB_j(Y)|}
    \\ \label{e:kyi}
    \normbb{\tilde K(Y)-(\delta I)^Y-\sum_{B \in \cB_j(Y)} {\theta} J(B,Y)}_{T_{j+1}(\ell_{j+1})}
    &\leq A^{2^d}
    O (\|K_j\|_j)
      (\frac{A}{2})^{-|\cB_j(Y)|}.
  \end{align}
  Indeed, though $\check{K}(\check{X}) - (\delta I)^{\check{X}}$ does
  not factor over components $X$ of $\check{X}$, it can be written as a
  sum of $|\Comp(\check{X})|$ terms, each of which is a product over
  the components $X$ of $\check{X}$. That is, we use the
    formula $(a+b)^{n}-a^{n}=\sum_{k=0}^{n-1}a^{k}b(a+b)^{n-k-1}$ with
    $a=(\delta I)^{X}$ and $b=\check{K}(X)-(\delta I)^{X}$. Thus each
    summand
  contains one factor $\check{K}(X)-(\delta I)^{X}$ and the rest of
  the factors are either $\check{K}(X)$ or $(\delta
    I)^{X}$. The estimates
  \eqref{e:NonlinearCheckK}-\eqref{e:NonlinearCheckK2} then follow by
  using \eqref{e:ky}-\eqref{e:kyi} and
  Lemma~\ref{lem:smallset-V1}.
  
  To establish \eqref{e:ky}--\eqref{e:kyi} we apply the triangle inequality. Since $J(B,Y)=0$ if $Y\notin \cS_{j}$, 
  \begin{equation}
    \label{eq:Jterm}
    \normbb{\sum_{B \in \cB_j(Y)}\theta  J(B,Y)
    }_{T_{j+1}(\ell_{j+1})} \leq O(\|K_{j}\|_{j}) 
  \end{equation}
  where we have used
  $\|J(B, X)\|_{T_{j}(\ell_{j})}=O(\|K_j\|_{j})$,
  that $\theta$ contracts from $T_j(\ell_j)$ into $T_{j+1}(\ell_{j+1})$
  and that $|\cB_j(Y)|\leq 2^d$. This shows the $J$
  contributions to~\eqref{e:ky} and~\eqref{e:kyi} satisfy the
  requisite bounds. For the other contributions, note that
  by \eqref{e:deltaIbd}, component factorisation of $K_j$,
  and the contraction property of the norms and $\theta$,
  for $B\in \cB_j$ and $Z\in \cP_j$ we have
  \begin{align}
    \|\delta I(B)\|_{T_{j+1}(\ell_{j+1})}
    &\leq O(\|V_j\|_j+\|K_j\|_j), \\
    \|\theta K_j(Z)\|_{T_{j+1}(\ell_{j+1})} &\leq A^{-\sum_{W\in
        \Comp(Z)} (|\cB_j(W)|-2^d)_+}\|K_j\|_j^{|\Comp(Z)|}. 
  \end{align}

  We now impose the condition that $\epsilon \leq A^{-2^d}$ and that
  $O(\epsilon) \leq A^{-1}$ in the implicit bound below.  Plugging the
  previous bounds into the expression for $\tilde{K}(Y)$ we have
    \begin{align}
      &\|\tilde K(Y)-(\delta I)^Y\|_{T_{j+1}(\ell_{j+1})}
        \nnb
  & \leq  \sum_{Z \in \cP_j(Y):Y \neq Z}
    \|(\delta I)^Z \theta K_j(Y \setminus Z)\|_{T_{j+1}(\ell_{j+1})}
   \nnb
   &\leq  \sum_{Z \in \cP_j(Y):Y \neq Z} 
     (O(\|V_j\|_{j}+ \|K_j\|_{j}))^{|\cB_j(Z)|}{\|K_j\|_j^{|\Comp(Y\backslash Z)|}}
    A^{-\sum_{W\in \Comp(Y\backslash Z)}  (|\cB_j(W)|-2^d)_+}
     \nnb
         &\leq \sum_{Z \in \cP_j(Y):Y \neq Z}
           {\left(A^{2^{d}}\|K_j\|_{j}\right)^{|\Comp(Y\backslash  Z)|}}
           (O(\|V_j\|_{j}+\|K_j\|_{j}))^{|\cB_j(Z)|}A^{-|\cB_{j}(Y\backslash Z)|}
           \nnb
&\leq A^{2^{d}} \|K_j\|_{j}
  (O(\|V_j\|_{j}+ \|K_j\|_{j}) + A^{-1})^{|\cB_{j}(Y)|}
                  \nnb
                  \label{e:NonlinearCheckK2a}
      &\leq A^{2^{d}}\|K_j\|_{j}\left(\frac{A}{2}\right)^{-|\cB_{j}(Y)|} .
\end{align}
Since $\|(\delta I)^Y\|_{T_{j+1}(\ell_{j+1})} \leq [O(\|V_j\|_j+\|K_j\|_j)]^{|\cB_j(Y)|} \leq A O(\|V_j\|_j+\|K_j\|_j) A^{-|\cB_j(Y)|}$ if $O(\epsilon)\leq A^{-1}$,
by the triangle inequality we also have
\begin{equation}
  \|\tilde K(Y)\|_{T_{j+1}(\ell_{j+1})}
  \leq A^{2^{d}}O(\|V_j\|_j+\|K_j\|_{j})\left(\frac{A}{2}\right)^{-|\cB_{j}(Y)|} .
\end{equation}
Together with \eqref{eq:Jterm} this proves the lemma.
\end{proof}

\begin{proof}[Proof of Lemma~\ref{lem:Ri-bd} for $i=3$] 
  The bound on $\cR^3(U)$ is similar to that of $\cR^2(U)$ but simpler
  since only connected $\check{X}$ are involved. In particular,
  the analogue of Lemma~\ref{lem:NonlinearCheckK} only involves the
  reasoning leading to~\eqref{e:NonlinearCheckK2a}, with the key
  difference being that now also $Z\neq\emptyset$. We omit the
  details.
\end{proof} 
\subsection{Flow of the renormalisation group}
\label{sec:flow}

Recall the infinite volume limit of the renormalisation
group maps $\Phi_{j+1,\infty}$ discussed below
Proposition~\ref{prop:consistency}. We equip $\cKbulk_{j}(\Z^{d})$ with the norm $\|K\|_{j}$ defined by \eqref{e:K-norm}.
This space is now infinite dimensional, but it is clear that it is still complete as a normed space, i.e., a Banach space.
Moreover, by the consistency of the finite volume renormalisation group maps (Proposition~\ref{prop:consistency}), the
estimates given in Theorem~\ref{thm:step} also hold for the infinite volume limit.
Next we study the iteration of the renormalisation group maps as a dynamical system.
In what follows $K_0=0$ means $K_0(X)=1_{X=\varnothing}$ for $X\in\cP_j$.

In the next theorem (and later in the paper) we write $\OL(\cdot)$ to indicate
a bound with a constant that may depend on $L$, but where the constant
is uniform in $j$, i.e., $f_{j} = \OL (L^{-j})$ if there is a
$C=C(L)$ such that $f_{j}\leq C L^{-j}$ for all $j$.

\begin{theorem} \label{thm:flow} Let $d\geq 3$, $L\geq L_0$, and
  $A \geq A_0(L)$.  For $m^2 \geq 0$ arbitrary and $b_0$ small, there
  exist $V_0^c(m^{2},b_{0})$ 
  and $\kappa>0$ such that if
  $(V_0,K_0)=(V_0^c(m^2,b_0),0)$ and
  $(V_{j+1},K_{j+1})= \Phi_{j+1,\infty,m^{2}}(V_j,K_j)$ is the flow of
  the infinite volume renormalisation group map then
  \begin{equation} \label{e:flowVK}
    \|V_j\|_j = \OL(b_0L^{-\kappa j}),
    \qquad 
    \|K_j\|_j = \OL(b_0^2 L^{-\kappa j}).
  \end{equation}
  The components of
  $V_0^c(m^2,b_0)$ are continuous and uniformly bounded in $m^2 \geq 0$
  and differentiable in $b_0$ with uniformly bounded derivative.
\end{theorem}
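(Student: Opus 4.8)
The plan is to construct the global renormalisation group trajectory on the stable manifold by a fixed-point argument, following the construction of the critical trajectory for the four-dimensional weakly self-avoiding walk and $|\varphi|^4$ model in \cite{MR3339164,MR3345374,MR3459163}. In the present fermionic setting this is substantially simpler, because the quartic coupling is \emph{irrelevant} in $d\geq 3$ and there is no large-field problem; the main analytic input is the contraction estimate of Theorem~\ref{thm:step}, and I would work throughout with the infinite-volume maps $\Phi_{j+1,\infty}$ introduced after Proposition~\ref{prop:consistency}. First I would fix the parameters in the order forced by Theorem~\ref{thm:step}: take $L=L(d)$ large enough that the constant in $O(L^{-\dplus})$ in \eqref{e:step-K} is small, then $A=A(L,d)$ large enough that $\rho:=O(L^{-\dplus}+A^{-\eta})<1$, and finally let $b_0$ be small (playing the role of the $\epsilon$ of Theorem~\ref{thm:step}).

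Next I would decompose $V_j=(z_j,y_j,a_j,b_j)$ according to the linearised renormalisation group into: the \emph{irrelevant} quartic coupling $b_j$, whose contribution to $\|V_j\|_j$ decays geometrically since the quartic monomial has dimension $2d-2>d$ (and which is linearly closed — no quartic is generated from quadratic terms at linear order, so it can be forward-iterated, staying $\asymp b_0$ as a number); the \emph{marginal} gradient couplings $(z_j,y_j)$, unchanged at the linearised level but fed a one-loop source of size $O(b_0 L^{-(d-2)j})$ (in norm) produced from $b_j$ by a single Wick contraction inside $\E_{C_{j+1}}\theta$; and the \emph{relevant} mass coupling $a_j$, whose $T_j(\ell_j)$-norm contribution is amplified by $\asymp L^2$ under one step and which also receives a quartic-generated source. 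The core is then a fixed-point problem on the Banach space of sequences $(V_j,K_j)_{j\geq 0}$ with the weighted norm $\sup_j b_0^{-1}L^{\kappa j}\|V_j\|_j + \sup_j b_0^{-2}L^{\kappa j}\|K_j\|_j$, for a small geometric $\kappa=\kappa(d)>0$ with $\rho L^{\kappa}<1$ and $\kappa<d-2$.

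On the unit ball in this norm I would define a map $\mathcal{T}$ as follows. The non-perturbative coordinate is produced by forward iteration, so that \eqref{e:step-K} with $K_0=0$ and $\rho<1$ yields $\|K_j\|_j=O(b_0^2L^{-\kappa j})$ once $\|V_j\|_j=O(b_0L^{-\kappa j})$; the irrelevant coupling $b_{j+1}$ is produced by forward iteration of \eqref{e:step-V}. The marginal and relevant couplings cannot be forward-iterated so as to both solve the recursion and stay in the ball, and I would instead determine them by the standard ``future formula''. Writing the recursion for these sectors as $x_{j+1}=\mathbf{L}_j x_j + f_j$, where $\mathbf{L}_j$ preserves the $(z,y)$- and $a$-sectors separately and $f_j$ collects the one-loop source generated from $b_j$ together with the genuinely higher-order remainder of size $O(L^d\|K_j\|_j)$, I would set
\[
  (z_j,y_j) = -\sum_{k\geq j}\big(f_k\big)_{(z,y)},
  \qquad
  a_j = -\sum_{k\geq j}\ssf{A}_{j,k}\,\big(f_k\big)_{a},
\]
where $\ssf{A}_{j,k}$ is the inverse of the linearised mass evolution from scale $j$ to scale $k$, a contraction by $\asymp L^{-2(k-j)}$ in the $T_j(\ell_j)$-norm. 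The mass series converges and decays like $b_0 L^{-\kappa j}$ in norm; the marginal series converges because its summands decay geometrically, and the requirement that it converge to $0$ (so that $\|V_j\|_j\to 0$) is exactly what pins the marginal initial data to the value $s_0^c(b_0,m^2)$, compatibly with the model's choice $z_0=0$ in view of the summation-by-parts identification of the $z$- and $y$-monomials recalled in Section~\ref{sec:Loc}. Using the bounds of Theorem~\ref{thm:step} one then checks that $\mathcal{T}$ maps the ball into itself and is a contraction there, since every source term is either quadratically small in $(\|V_k\|_k,\|K_k\|_k)$ or carries an explicit geometric factor; the unique fixed point is the claimed flow, and $V_0^c(b_0,m^2)$ is read off as its initial datum (with $b$-component equal to $b_0$). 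Continuity in $m^2\geq 0$ (including $m^2=0$) and differentiability in $b_0$ with $m^2$-uniform bounds follow because each $\Phi_{j+1,\infty}$ has these properties with $m^2$-uniform constants by Theorem~\ref{thm:step}; hence $\mathcal{T}$ does too, and the fixed point inherits them via a uniform implicit function theorem.

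I expect the main obstacle to be the relevant mass direction: since forward iteration of $a_j$ is unstable, it must be reconstructed from the tail of the flow, and making this rigorous requires isolating the expanding part of the linearised renormalisation group, establishing a spectral gap — an $\asymp L^{-2}$ contraction for its inverse — uniform in the scale $j$ and in $m^2\geq 0$, and verifying that the resulting future sum depends Lipschitz-contractively on the candidate sequence. A secondary, model-specific subtlety is that, unlike the analogous wave-function coupling in the four-dimensional $\varphi^4$ analysis which may flow to a nonzero limit, here the marginal gradient coupling must be driven all the way to $0$ so that $\|V_j\|_j\to 0$; this produces the extra tuning $s_0^c$, and one must check that the Jacobian of $(a_0,s_0)\mapsto$ (relevant and marginal limits of the flow) is invertible uniformly in $m^2\geq 0$ and $b_0$ small, which is the source of the uniform continuity and differentiability claims.
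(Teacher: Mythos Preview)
Your approach is correct and is essentially the same as the paper's: both construct the stable manifold of the renormalisation group flow, treating $(z,y,a)$ as the neutral/expanding directions to be tuned and $(b,K)$ as the contracting directions to be forward-iterated. The paper's proof is much terser because it simply computes the linearised flow \eqref{eq:flow1}--\eqref{eq:flow2}, rescales to $\hat z_j,\hat y_j,\hat a_j,\hat b_j$, and then invokes the abstract stable manifold theorem \cite[Theorem~2.16]{MR2523458} for the block-triangular system $v_j=(\hat y_j,\hat z_j,\hat a_j)$, $w_j=(\hat b_j,K_j)$ with $\|E^{-1}\|=1$ and $\|D_j\|\leq L^{-\kappa}$; your explicit ``future formula'' fixed-point construction is precisely what underlies that theorem.

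One small confusion: your aside that the tuning of the marginals is ``compatibly with the model's choice $z_0=0$'' is not right. The theorem constructs all three components $(z_0^c,y_0^c,a_0^c)$ of $V_0^c$ on the stable manifold; the summation-by-parts identification of the $z$- and $y$-monomials holds only on the full torus, not block-by-block, so the renormalisation group map tracks them separately and both must be tuned. It is only later, in Section~\ref{sec:proofs}, that one sets $s_0^c=y_0^c+z_0^c$ to match the model's single parameter $s_0$. Your actual formula $(z_j,y_j)=-\sum_{k\geq j}(f_k)_{(z,y)}$ already does the right thing, so this does not affect the argument.
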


\begin{proof}[Proof of Theorem~\ref{thm:flow}]
  The proof is by a version of the stable manifold theorem for
  smooth dynamical systems. Specifically, we use \cite[Theorem~2.16]{MR2523458}.

  To start, we write down the dynamical system corresponding
  to the renormalisation group map. The definition of $V_{j+1}$ is
  \eqref{e:V+-def}.  We start with the contribution to $V_{j+1}$
  arising from the first term
  \begin{equation}
  \label{eq:E}
    \E_{C_{j+1}}\qB{\theta V_{j}(B)} \bydef \tilde u_{j+1}|B|+\tilde V_{j+1}(B),
  \end{equation}
  where $\tilde u_{j+1}$ and $\tilde V_{j+1}$ are defined by the right-hand side. These
  can be computed by the Wick formula \eqref{e:fWick}, which gives
  \begin{equation}
    \label{eq:tildeflow}
    \tilde z_{j+1} = z_{j} + \kappa_j^{zb}b_j, 
    \qquad \tilde y_{j+1} = y_{j} + \kappa_{j}^{yb}b_{j},
    \qquad \tilde a_{j+1} = a_{j} + \kappa_{j}^{ab}b_{j},
    \qquad \tilde b_{j+1} = b_{j},
  \end{equation}
  with $\kappa_{j}^{yb} = -C_{j+1}(0)$,
  $\kappa_{j}^{ab}=\Delta C_{j+1}(0)$,
  and $\kappa_j^{zb} = \frac{1}{2d}\Delta C_{j+1}(0)$. Indeed,
  non-constant contributions only arise from quartic terms, and Wick's formula gives
  \begin{align}
    \E_C\qB{\theta(\psi_x\bar\psi_x(\nabla_e\psi)_x(\nabla_e\bar\psi)_x)}
    &=
      \psi_x\bar\psi_x(\nabla_e\psi)_x(\nabla_e\bar\psi)_x
      -C(0) (\nabla_e\psi)_x(\nabla_e\bar\psi)_x - \nabla_e\nabla_{-e} C(0) \psi_x\bar\psi_x
    \nnb
    &\qquad\qquad 
    + \nabla_{-e}C(0)\psi_x(\nabla_e\bar\psi)_x
    - \nabla_eC(0)\bar\psi_x(\nabla_e\psi)_x + \text{(constant)}.
  \end{align}
  Since $\nabla_e C(0) = C(e)-C(0) = \frac{1}{2d}\sum_{e\in \cE_d} (C(e)-C(0))= \frac{1}{2d}\Delta C(0)$
  for all $e\in \cE_d$ by symmetry,
  and since the lattice Laplacian has the representations $\Delta = -\frac12 \sum_{e\in\cE_d}\nabla_e\nabla_{-e} = \sum_{e\in\cE_d} \nabla_e$,
  therefore
  \begin{align}
    \E_C\qB{\theta(\psi_x\bar\psi_x(\nabla\psi)_x(\nabla\bar\psi)_x)}
    &= \frac12 \sum_{e\in\cE_d}       \E_C \qB{\theta(\psi_x\bar\psi_x(\nabla_e\psi)_x(\nabla_e\bar\psi)_x)}
      \nnb
    &=
      \psi_x\bar\psi_x(\nabla\psi)_x(\nabla\bar\psi)_x
      -C(0)(\nabla\psi)_x(\nabla\bar\psi)_x + \Delta C(0)\psi_x\bar\psi_x
      \nnb
      &\qquad\qquad
      +\frac{\Delta C(0)}{2d}(\frac12 \psi_x(\Delta\bar\psi)_x+\frac12 (\Delta\psi)_x\bar\psi_x) + \text{(constant)}.
  \end{align}
  Since $\|V_j(B)\|_{T_j(\ell_j)}$ is comparable with
  $|z_j| + |y_j| + L^{2j}|a_j| + L^{-(d-2)j} |b_j|$, i.e.,
  $\|V_j(B)\|_{T_j(\ell_j)} = \OL(|z_j| + |y_j| + L^{2j}|a_j| + L^{-(d-2)j} |b_j|) = \OL(\|V_j(B)\|_{T_j(\ell_j)})$,
  it is natural to define the rescaled variables and coefficients
  $\hat z_{j}=z_{j}$, $\hat y_{j}=y_{j}$,
  $\hat a_{j}=L^{2j}a_{j}$, $\hat b_{j}=L^{-(d-2)j}b_{j}$,
  $\hat \kappa_{j}^{ab}=L^{2+dj}\kappa_{j}^{ab}$,
  $\hat \kappa_{j}^{yb}=L^{(d-2)j}\kappa_{j}^{yb}$,
  and $\hat\kappa_j^{zb}=L^{(d-2)j}\kappa_j^{zb}$.
  The definition \eqref{e:V+-def} of $V_{j+1}$ then becomes
  \begin{alignat}{2}
    \label{eq:flow1}
    \hat z_{j+1} &= \hat z_{j} + \hat\kappa_j^{zb}\hat b_j
    +   \hat r^{z}_{j},
    &\qquad
    \hat y_{j+1}&= \hat y_{j} + \hat\kappa_{j}^{yb}\hat b_{j} + \hat
    r^{y}_{j}, \\
    \label{eq:flow2}
    \hat a_{j+1}&= L^{2}\hat a_{j} + \hat\kappa_{j}^{ab}\hat b_{j} + \hat r^{a}_{j}, &\qquad
    \hat b_{j+1}&= L^{-(d-2)}\hat b_{j} + \hat r^{b}_{j}.
  \end{alignat}
  Here $\hat r_{j}$ is the $4$-component vector of real numbers determined by the $\Loc$ step of the renormalisation group map.
  Each component is thus a linear function of $K_j$, and by
  \eqref{e:step-V} of Theorem~\ref{thm:step} has size
  $O(L^{d}\norm{K_{j}}_{j})$.  The $\hat \kappa_{j}$ 
  are uniformly bounded in $j$ by the covariance estimates~\eqref{e:C-bd}.

  We now reorganize variables.  Set $v_j=(\hat y_j,\hat z_j,\hat a_j)$
  and $w_j=(\hat b_j,K_j)$ and use $\|\cdot\|_{j}$ for the norm
  given by maximum of the (norm of the) respective components.
  The index $j$ does not play a role for $\|v_{j}\|_{j}$, but it
  does for $\|w_{j}\|_{j}$. By the computation above and
  Theorem~\ref{thm:step} the infinite volume renormalisation group map
  can be written in the block diagonal form
  \begin{equation}\label{e:flowblock}
    \begin{pmatrix}v_{j+1} \\ w_{j+1}\end{pmatrix}
    = \begin{pmatrix} E & B_j \\ 0 & D_j \end{pmatrix}
    \begin{pmatrix}v_{j} \\ w_{j}\end{pmatrix}
    + \begin{pmatrix} 0 \\ g_{j+1}(v_j,w_j)  \end{pmatrix}.
  \end{equation}
  In this formula, $E$ comes from the first terms on the right-hand sides of the $\hat z$, $\hat y$, and $\hat a$ equations in  \eqref{eq:flow1} and \eqref{eq:flow2},
  $B_j$ represents the $\hat\kappa^{xb}_j$ and the $\hat r^x_{j}$ terms with $x=z,y,a$, and $D_j$ represents the first term in the $\hat b$ equation in \eqref{eq:flow2} and the linearisation of $(0, K_j)\mapsto K_{j+1}$.
  Finally, $g_{j+1}(v_j, w_j)$ is then the non-linear remainder of $K_{j+1}$ after the linearisation is removed.  It follows from these identifications that
  $g_j(0,0)=0$ and $Dg_j(0,0)=0$.  Moreover $g_j$ is analytic  in its arguments, so that all structural hypotheses required to apply  \cite[Theorem~2.16]{MR2523458} hold.
  
  As for the requisite norm estimates,  since it is a $3\times 3$ 
  triangular matrix with non-zero diagonal entries, $E$ is
  invertible with bounded inverse $E^{-1}$. As
  indicated above, $\|\hat{r}_j\|_{j+1}=O(L^{d}\|K_j\|_j)$,
  so $\|B_j\|_{j\to j+1}$ is bounded.   Finally,
  the derivative estimates on the renormalisation group map  from Theorem~\ref{thm:step} imply the following norm bounds on $D_j$:
  \begin{equation}
   \|D_j\|_{j\to j+1} \leq b_0 \max\{L^{-(d-2)}, O(L^{-3}+A^{-\eta})\} \leq
   O(b_0 L^{-\kappa}),
  \end{equation}
  with the latter inequality holding provided $A$ is large enough.

  For every $m^2\geq 0$, $b_0$ sufficiently small, and $K_0=0$,  \cite[Theorem 2.16]{MR2523458} now implies that we can find 
  an initial $3$-tuple of coupling constants $v_0^c(m^{2},b_{0})$, or equivalently an initial local potential $V_{0}^{c}(m^{2},b_{0})$, so that
  for some $\kappa>0$,
  \begin{equation}
    \|v_j\|_{j}+\|w_j\|_{j}=\OL(b_0L^{-\kappa j}).
  \end{equation}
  These bounds are not explicit in the statement
  of \cite[Theorem 2.16]{MR2523458}, but are immediate from its proof
  (because the proof constructs a solution in a correspondingly weighted sequence space).
  In particular this bound implies that $\|K_j\|_j+\|V_j\|_j=
  \OL(\|v_j\|_{j}+ \|w_j\|_{j})=\OL(b_0L^{-\kappa j})$.

  Smoothness of the renormalisation group map implies that $V_{0}^{c}(m^{2},b_{0})$ is smooth in $b_{0}$.
  To see that $V_0^c(m^2,b_0)$ is also continuous in $m^2 \geq 0$, one can for example regard $v_j$ and $w_j$
  as bounded continuous functions of $m^2$, i.e., consider
  $v_j \in C_b([0,\infty),\R^3)$ and
  $w_j\in C_b([0,\infty), \R \times \cKbulk_j(\Z^d))$. Since all the
  estimates above are uniform in $m^2 \geq 0$, the previous argument
  applies in these spaces and
  shows that the solution is continuous in $m^{2}$.
\end{proof}

\begin{remark}
Note that while Theorem~\ref{thm:step} assumes that $u_j=0$
and produces $u_{j+1}$, it is trivial to extend the statement to $u_j\neq 0$
by simply adding $u_j$ to the $u_{j+1}$ produced for $u_j=0$.
\end{remark}

\medskip

By consistency, the finite volume renormalisation group flow for $V_j$ agrees
with the infinite volume renormalisation group flow up to scale $j<N$
provided both have the same initial condition. As a result we obtain
the following corollary by iterating the recursion \eqref{e:step-K}
for the $K$-coordinate using the \emph{a priori} knowledge that
$\|V_j\|_j=\OL(b_0L^{-\kappa j})$ due to Theorem~\ref{thm:flow}.

\begin{corollary} \label{cor:flow-finvol}
  Under the same assumptions as in Theorem~\ref{thm:flow}, the same
  estimates hold for the finite volume renormalisation group flow
  for all $j \leq N$,
  and the $V_j$ and $u_j$ produced by the finite volume  renormalisation group flow
  are the same as those for the infinite volume flow when $j<N$.
\end{corollary}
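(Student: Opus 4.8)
The plan is to prove the two assertions in turn: first the coincidence of the coupling constants $V_j$, $u_j$ in finite and infinite volume for $j<N$, and then the estimates \eqref{e:flowVK} for the finite volume flow, the latter by iterating \eqref{e:step-K} of Theorem~\ref{thm:step}.

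For the coincidence I would run an induction on $j$ whose hypothesis is that, at scale $j<N$, the finite volume data $(V_j^N,K_j^N)$ produced by $\Phi_{1,N},\dots,\Phi_{j,N}$ and the infinite volume data $(V_j^\infty,K_j^\infty)$ produced by $\Phi_{1,\infty},\dots,\Phi_{j,\infty}$ — both started from the common initial condition $(V_0^c(b_0,m^2),0)$ — agree on every $j$-polymer that, together with the small-set neighbourhoods relevant to the renormalisation group map, embeds in $\Z^d$ under the natural local identification; in particular $V_j^N=V_j^\infty$ and $u_j^N=u_j^\infty$, these being honest elements of $\cVbulk$ and $\C$ determined purely locally. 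The base case is trivial. For the step, recall from \eqref{e:V+-def}--\eqref{e:Q-def} that $u_{j+1}|B|+V_{j+1}(B)=\E_{C_{j+1}}\theta(V_j(B)-Q(B))$ with $Q(B)$ a sum over small sets $X\in\cS_j$, $X\supset B$, of $\Loc_{X,B}K_j(X)$, and more generally that $V_{j+1}(U)$ and $K_{j+1}(U)$ depend on $(V_j,K_j)$ only through data on $\cP_j(U^\square)$ (Proposition~\ref{prop:consistency}); moreover for $j+1<N$ the covariance $C_{j+1}$ is exactly the $\Z^d$ covariance by the finite range property. Since $U^\square$ is larger than $U$ by only an $L$-independent number of $(j{+}1)$-blocks, it still embeds in $\Z^d$ whenever $U$ does and $j+1<N$ (here $L\geq 2^{d+2}$ gives ample room), so Proposition~\ref{prop:consistency} yields $(V_{j+1}^N,u_{j+1}^N)=(V_{j+1}^\infty,u_{j+1}^\infty)$ and $K_{j+1}^N(U)=K_{j+1}^\infty(U)$ for all non-wrapping $U$. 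This closes the induction and proves the last sentence of the corollary; note that at scale $N$ the argument genuinely breaks down, since $\Phi_N$ has $Q=J=0$ and differs from $\Phi_{N,\infty}$, which is exactly why agreement is only claimed for $j<N$.

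It remains to verify \eqref{e:flowVK} for the finite volume flow at all scales $j\leq N$. For $j<N$ the bound on $V_j$ follows from the coincidence just proved together with Theorem~\ref{thm:flow}, giving $\|V_j\|_j=O(b_0L^{-\kappa j})$. Feeding this \emph{a priori} bound into \eqref{e:step-K}, using $O(L^{-\dplus}+A^{-\eta})\leq\tfrac12 L^{-\kappa}$ for $L,A$ large with $\kappa$ as in Theorem~\ref{thm:flow}, a routine induction on $j$ — identical in spirit to the dynamical-system estimate in the proof of Theorem~\ref{thm:flow}, with $b_0$ taken small enough to keep $\|V_j\|_j+\|K_j\|_j\leq\epsilon$ and to dominate the quadratic term $O(A^\nu)(\|V_j\|_j^2+\|K_j\|_j^2)$ by the contractive term — yields $\|K_j\|_j=O(b_0^2L^{-\kappa j})$ for $j\leq N-1$. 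For the final step $j=N$, Theorem~\ref{thm:step} still applies with $L^{-\dplus}$ replaced by $1$; inserting $\|V_{N-1}\|_{N-1}+\|K_{N-1}\|_{N-1}=O(b_0L^{-\kappa(N-1)})$ into \eqref{e:step-K} gives $\|K_N\|_N=O(b_0^2 L^{-\kappa(N-1)})=O(b_0^2 L^{-\kappa N})$, the loss of a single factor $L^{-\kappa}$ being harmless since implicit constants may depend on $L$; likewise $\|V_N\|_N=O(b_0L^{-\kappa N})$ follows from $u_N|B|+V_N(B)=\E_{C_N}\theta V_{N-1}(B)$ and the contraction \eqref{e:normcontract}.

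The only step requiring real care is the non-wrapping bookkeeping in the first induction, i.e.\ checking that the locality of the renormalisation group map encoded in Proposition~\ref{prop:consistency}, combined with the finite range property of the $C_j$ for $j<N$, genuinely forces the coupling constants to be torus-independent below the top scale. Everything else is the standard contractive iteration already carried out for Theorem~\ref{thm:flow}, together with the observation that the one degenerate step at scale $N$ costs only an $L$-dependent constant.
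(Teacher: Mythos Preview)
Your proposal is correct and follows the same approach as the paper: the paper simply states that consistency (Proposition~\ref{prop:consistency}) forces the finite and infinite volume $V_j$, $u_j$ to agree for $j<N$, and that the $K$-bound then follows by iterating \eqref{e:step-K} using the \emph{a priori} control $\|V_j\|_j=O(b_0L^{-\kappa j})$ from Theorem~\ref{thm:flow}. Your write-up is more explicit about the non-wrapping induction and about the harmless loss of one $L^{-\kappa}$ factor at the degenerate final step, but this is elaboration of exactly the argument the paper sketches in the paragraph preceding the corollary.
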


From this it follows that if $e^{-u_{N}|\Lambda_N|}$ denotes the total
prefactor accumulated in the renormalisation group flow up to scale $N$,
then $u_{N}$ is uniformly bounded in $N$ and $m^2$ as $m^2\downarrow 0$ if
we begin with $V_0$ as in Theorem~\ref{thm:flow}.  Indeed, up to scale
$N-1$ this follows from the bounds \eqref{e:flowVK} and \eqref{E:vj}.
In passing from the scale $N-1$ to $N$, the renormalisation group step
is $\Lambda_N$-dependent, but is nevertheless uniformly bounded by the
last statement of Theorem~\ref{thm:step}.

\section{Computation of the susceptibility}
\label{sec:suscept}

In the remainder of the paper, we will use the notation (with$\Lambda = \Lambda_{N}$)
\begin{equation} \label{e:avgV0}
  \avg{F} = \avg{F}_{V_0} = \frac{\E_C\qB{e^{-V_0(\Lambda)}F}}{\E_C\qB{e^{-V_0(\Lambda)}}}
\end{equation}
and assume that $(V_j,K_j)_{j=0,\dots,N}$ is a renormalisation group flow, i.e., $(V_{j+1},K_{j+1})=\Phi_{j+1}(V_j,K_j)$.

In this section we express the susceptibility in terms of the
dynamical system generated by the (bulk) renormalisation group flow.
First recall that $Z_0=e^{-V_0(\Lambda)}$ and that
\begin{equation}
  \label{e:S4C}
  C = (-\Delta+m^2)^{-1} = C_1 + \cdots + C_{N-1} + C_{N,N}, \qquad C_{N,N} = C_N + t_N Q_N,
\end{equation}
where $\Delta$ is the Laplacian on $\Lambda_N$, $t_N = m^{-2}- O(L^{2N})$ is a constant,
  and the matrix $Q_N$ is the orthogonal projection onto constants (i.e., all entries equal $1/|\Lambda_N|$).
Using \eqref{e:E-semigroup} and \eqref{e:EZj}, with $u_N$ as in \eqref{e:ZVK},
we then set
\begin{equation} \label{e:ZNN-def}
  Z_{N,N} = \E_{t_NQ_N}\qB{\theta Z_N} =  \E_C\qB{\theta Z_0},
  \qquad \tilde Z_{N,N} = e^{u_N|\Lambda_N|} Z_{N,N} 
\end{equation}
where $\E_{t_NQ_N}\theta$ is the fermionic Gaussian convolution with
covariance $t_NQ_N$ defined in Section~\ref{sec:frd}.  Thus
$\tilde Z_{N,N}$ is still a function of $\psi,\bar\psi$, i.e., an
element of $\cN(\Lambda)$. Note that $\E_{C}[Z_{0}]$ is the constant term of $Z_{N,N}$, i.e.,
obtained from $Z_{N,N}$ by formally setting $\bar\psi$ and $\psi$ to $0$.
The following technical device of restricting to
constant fields $\psi,\bar\psi$ will be useful for extracting
information.  By restriction to constant $\psi,\bar\psi$ we mean
applying the homomorphism from $\cN(\Lambda)$ onto itself that acts on
the generators by
$\psi_x \mapsto \frac{1}{|\Lambda|}\sum_{x\in\Lambda} \psi_x$ and
likewise for the $\bar\psi_x$.  The result is an element in the
subalgebra of $\cN(\Lambda)$ generated by
$\frac{1}{|\Lambda|}\sum_{x\in\Lambda} \psi_x$ and
$\frac{1}{|\Lambda|}\sum_{x\in\Lambda} \bar\psi_x$; we will simply
denote these generators by $\psi$ and $\bar\psi$ when no confusion can
arise. To explain the notation in the next proposition, note
that a general even element of this subalgebra can be written as
$F^{0}+F^{2}\psi\bar\psi$ for some constants $F^{0},F^{2}$, c.f.\ \eqref{e:Fzsum}.

\begin{proposition} \label{prop:ZNN}
  Restricted to constant $\psi,\bar\psi$,
  \begin{equation} \label{e:ZNN}
    \tilde Z_{N,N}
    =
    1+\tilde u_{N,N} - |\Lambda_N| \tilde a_{N,N}\psi\bar\psi,
    \qquad
    \tilde u_{N,N} =k_N^0+\tilde a_{N,N}t_N,
    \qquad \tilde a_{N,N} = a_N- \frac{k_N^2}{|\Lambda_N|}
  \end{equation}
  where 
  \begin{equation} \label{e:ZNN-bd}
    k_N^0 = O(\|K_N\|_N), \qquad k_N^2=O(\ell_N^{-2}\|K_N\|_N)
    .
  \end{equation}
  If $V_{0}, K_{0}$ are continuous in $m^{2}\geq 0$ and
  $b_{0}$ small enough, then so are $k_{N}^{0}$ and $k^{2}_{N}$.
\end{proposition}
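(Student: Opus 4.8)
The plan is to combine the polymer representation \eqref{e:ZVK} at the top scale $j=N$ with the elementary fact that restriction to constant fields annihilates all discrete gradients and all gauge-invariant monomials of field degree at least four. First I would note that at scale $N$ the only $N$-block, hence the only nonempty element of $\cP_N$, is $\Lambda_N$ itself, so \eqref{e:ZVK} together with $K_N(\varnothing)=1$ gives $e^{u_N|\Lambda_N|}Z_N = e^{-V_N(\Lambda_N)} + K_N(\Lambda_N)$; by the definition \eqref{e:ZNN-def} this yields $\tilde Z_{N,N} = \E_{t_NQ_N}\theta\bigl(e^{-V_N(\Lambda_N)} + K_N(\Lambda_N)\bigr)$. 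Write $\pi$ for the homomorphism of $\cN(\Lambda_N)$ sending each $\psi_x$ to $\psi := \tfrac{1}{|\Lambda_N|}\sum_x\psi_x$ and each $\bar\psi_x$ to $\bar\psi$ (the ``restriction to constant fields'' of the statement). The key structural point is that $\pi$ intertwines $\E_{t_NQ_N}\theta$ with the two-generator fermionic Gaussian convolution $\E'\theta$ of scalar covariance $t_N/|\Lambda_N|$: since $t_NQ_N$ is rank one with range the constant mode, the Grassmann Gaussian of covariance $t_NQ_N$ is realised by a single constant pair $(\zeta,\bar\zeta)$ with $\E[\bar\zeta\zeta]=t_N/|\Lambda_N|$, and because $\pi$ fixes $(\zeta,\bar\zeta)$ and is an algebra homomorphism, $\pi\bigl(\E_{t_NQ_N}\theta F\bigr) = \E'\theta(\pi F)$ for every $F\in\cN(\Lambda_N)$. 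Hence $\pi\tilde Z_{N,N} = \E'\theta\bigl(\pi e^{-V_N(\Lambda_N)} + \pi K_N(\Lambda_N)\bigr)$.

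Next I would evaluate the two pieces. Every term of $V_N(\Lambda_N)$ other than $a_N\sum_x\psi_x\bar\psi_x$ carries a discrete gradient or Laplacian, which vanishes on constant fields, so $\pi e^{-V_N(\Lambda_N)} = e^{-|\Lambda_N|a_N\psi\bar\psi} = 1 - |\Lambda_N|a_N\psi\bar\psi$, using $(\psi\bar\psi)^2=0$ in the two-generator subalgebra. Since $K_N(\Lambda_N)\in\cN_{\rm sym}(\Lambda_N)$ is gauge invariant and $\psi^2=\bar\psi^2=0$ after applying $\pi$, every monomial of field degree at least four is annihilated by $\pi$; therefore $\pi K_N(\Lambda_N) = k_N^0 + k_N^2\,\psi\bar\psi$, where $k_N^0$ is the constant term of $K_N(\Lambda_N)$ and $k_N^2$ is the sum over all $x,y$ of the coefficients of $\psi_x\bar\psi_y$. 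Using $\E'\theta 1 = 1$ and $\E'\theta(\psi\bar\psi) = \psi\bar\psi - t_N/|\Lambda_N|$ (from the convention $\E[\bar\zeta\zeta]=t_N/|\Lambda_N|$ and Wick's rule), one obtains $\pi\tilde Z_{N,N} = 1 + k_N^0 + \tilde a_{N,N}t_N - |\Lambda_N|\tilde a_{N,N}\psi\bar\psi$ with $\tilde a_{N,N} := a_N - k_N^2/|\Lambda_N|$, which is exactly \eqref{e:ZNN} with $\tilde u_{N,N} = k_N^0 + \tilde a_{N,N}t_N$.

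The bounds \eqref{e:ZNN-bd} then follow from the dual description \eqref{e:Tphi-def} of the $T_N(\ell_N)$ norm. The constant $k_N^0$ is the pairing of $K_N(\Lambda_N)$ with the test function equal to $1$ on the empty sequence and $0$ otherwise, which has $\Phi_N(\ell_N)$-norm $1$; likewise $\ell_N^2 k_N^2$ is the pairing of $K_N(\Lambda_N)$ with the spatially constant (hence derivative-free), suitably antisymmetrised test function equal to $\ell_N^2$ on the length-two sequences, whose $\Phi_N(\ell_N)$-norm is also $1$. Since $|\cB_N(\Lambda_N)| = 1\leq 2^d$, the weight $A^{(|\cB_N(\Lambda_N)|-2^d)_+}$ in \eqref{e:K-norm} equals $1$, so $\|K_N(\Lambda_N)\|_{T_N(\ell_N)} = \|K_N\|_N$, giving $|k_N^0| \leq \|K_N\|_N$ and $|k_N^2|\leq \ell_N^{-2}\|K_N\|_N$. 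Continuity of $k_N^0,k_N^2$ in $m^2\geq 0$ and in small $b_0$ is inherited from that of $K_N$: the latter is a composition of finitely many renormalisation group maps applied to $(V_0,K_0)$, and these are continuous in $m^2\geq 0$ and entire in the coordinates by Theorem~\ref{thm:step}, while $k_N^0,k_N^2$ are bounded linear functionals of $K_N(\Lambda_N)$.

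The step that I expect to require genuine care is the intertwining of $\pi$ with $\E_{t_NQ_N}\theta$: justifying that a Grassmann Gaussian with the degenerate rank-one covariance $t_NQ_N$ is represented by a single constant mode of covariance $t_N/|\Lambda_N|$, and correctly tracking the sign in $\E'\theta(\psi\bar\psi) = \psi\bar\psi - t_N/|\Lambda_N|$ against the normalisation of fermionic Gaussian integration fixed in Section~\ref{sec:grassm-gauss-integr}. Everything else is routine bookkeeping with the Grassmann algebra and the $T_N(\ell_N)$ norm.
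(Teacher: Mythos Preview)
Your proposal is correct and follows essentially the same route as the paper: reduce to $\tilde Z_{N,N}=\E_{t_NQ_N}\theta(e^{-V_N(\Lambda_N)}+K_N(\Lambda_N))$, restrict to constant fields so that only the $a_N$-term of $V_N$ survives and $K_N$ collapses to $k_N^0+k_N^2\psi\bar\psi$, then apply the Wick rule $\E_{t_NQ_N}\theta(\psi\bar\psi)=-t_N|\Lambda_N|^{-1}+\psi\bar\psi$ and read off the norm bounds from the $T_N(\ell_N)$ pairing. The only difference is cosmetic: you spell out the intertwining $\pi\circ\E_{t_NQ_N}\theta=\E'\theta\circ\pi$ explicitly, whereas the paper simply restricts first and then integrates, taking this commutation for granted.
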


\begin{proof}
  Since the set of $N$-polymers $\cP_N(\Lambda_N)$ is $\{\varnothing,
  \Lambda_{N}\}$ and $e^{u_{N}|\Lambda_N|}$ is a constant,  
  \eqref{e:ZVK} and \eqref{e:ZNN-def} simplify to
  \begin{equation}
    \label{e:P41integrand}
    \tilde Z_{N,N} = \E_{t_NQ_N}\qB{\theta(e^{-V_N(\Lambda_{N})}+K_N(\Lambda_{N}))}.
  \end{equation}
  We now evaluate the integral over the zero mode with covariance
  $t_NQ_N$.  To this end, we restrict $V_N(\Lambda_{N})$ and
  $K_N(\Lambda_{N})$ to spatially constant $\psi,\bar\psi$ and denote
  these restrictions by $\widetilde{V}_{N} (\Lambda_{N})$ and
  $\widetilde{K}_{N}(\Lambda_{N})$.  Since
  $\widetilde{V}_{N}$ and $\widetilde{K}_{N}$ are
  even, they are of the form
  \begin{align}
    \widetilde{V}_{N} (\Lambda_{N},\psi,\bar\psi) &= |\Lambda_{N}|a_N \psi\bar\psi
    \\
    \widetilde{K}_{N} (\Lambda_{N},\psi,\bar\psi) &= k_{N}^{0} + k_N^{2} \psi\bar\psi,
  \end{align}
  where the form of $\widetilde{V}_{N}$ follows from the representation \eqref{e:VX}.
  Thus restricted to constant $\psi$ and $\bar\psi$ the integrand in \eqref{e:P41integrand} is
  \begin{equation}
    e^{- {\widetilde{V}_{N}} (\Lambda_{N})} + \widetilde{K}_{N} (\Lambda_{N})
    = 1 +  k_N^0  - (|\Lambda_{N}|a_N - k_N^2)\psi\bar\psi
    .
  \end{equation}
  Therefore applying the fermionic Wick formula
  $\E_{t_NQ_N}\theta \psi\bar\psi = -t_N|\Lambda_{N}|^{-1}
  +\psi\bar\psi$, we obtain \eqref{e:ZNN}.  The continuity claims for
  $k_{N}^{0}$ and $k_{N}^{2}$ follow as 
  $(V_j,K_j)$ is a renormalisation group flow (see below
  \eqref{e:avgV0}) and since the
  renormalisation group map has this   continuity. 

  The bounds \eqref{e:ZNN-bd} follow from the definition of the
  $T_N(\ell_N)$ norm.  Indeed, since $k_0$ is the constant coefficient of
  $K_N(\Lambda_{N})$, clearly $k_N^0=O(\|K_N\|_N)$.  Similarly, setting
  $g_{(x,1),(y,-1)} = 1$ for all $x,y\in\Lambda_{N}$ and $g_z=0$ for all
  other sequences, we have $\|g\|_{\Phi_N(\ell_N)} = \ell_N^{-2}$ and
  \begin{equation} \label{e:kN2normbd}
    |k_N^2| = |\avg{K_N(\Lambda_{N}),g}|
    \leq \|g\|_{\Phi_N(\ell_N)}\|K_N\|_N
    =
    \ell_N^{-2}\|K_N\|_N
  \end{equation}
  where $\avg{\cdot,\cdot}$ is the pairing from Definition~\ref{def:Tphi}.
\end{proof}

\begin{proposition} \label{prop:suscept}
  Using the notation of Proposition~\ref{prop:ZNN},
  \begin{equation} \label{e:suscept-atilde}
    \sum_{x\in\Lambda_N} \avg{\bar\psi_0\psi_x} = \frac{1}{m^2} 
    - \frac{1}{m^4} 
    \frac{\tilde a_{N,N}}{1+\tilde u_{N,N}}
    .
  \end{equation}
\end{proposition}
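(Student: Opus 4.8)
The plan is to realise the left-hand side of \eqref{e:suscept-atilde} as a derivative of a perturbed partition function, and then to evaluate that partition function using the zero-mode integration of Proposition~\ref{prop:ZNN}. For $\epsilon$ in a neighbourhood of $0$, let $C_\epsilon$ be the covariance with $C_\epsilon^{-1} = -\Delta + m^2 - \epsilon Q_N$, where $Q_N$ is the zero-mode projection of Section~\ref{sec:frd}. Since $-\Delta$ annihilates constants, $C_\epsilon = C + \delta(\epsilon) Q_N$ with $C = (-\Delta+m^2)^{-1}$ and $\delta(\epsilon) = (m^2-\epsilon)^{-1} - m^{-2}$, so $\delta(0) = 0$ and $\delta'(0) = m^{-4}$. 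First I would differentiate $\log \E_{C_\epsilon}(e^{-V_0(\Lambda_N)})$ at $\epsilon = 0$: the determinant contributes $\frac{d}{d\epsilon}\big|_0 \log\det C_\epsilon = \operatorname{tr}(C Q_N) = |\Lambda_N|^{-1}\sum_{x,y\in\Lambda_N} C(x,y) = m^{-2}$ (using $\sum_y C(x,y) = m^{-2}$), while the quadratic form contributes $\avg{(\psi, Q_N\bar\psi)}$, where $\avg{\cdot} = \avg{\cdot}_{V_0}$ is the expectation in \eqref{e:avgV0}. Because $(\psi, Q_N\bar\psi) = |\Lambda_N|^{-1}\sum_{x,y}\psi_x\bar\psi_y = -|\Lambda_N|^{-1}\sum_{x,y}\bar\psi_y\psi_x$ and $\avg{\cdot}$ is translation invariant, $\avg{(\psi, Q_N\bar\psi)} = -\sum_{x\in\Lambda_N}\avg{\bar\psi_0\psi_x}$. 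Hence
\begin{equation} \label{e:plan-susc-1}
  \sum_{x\in\Lambda_N}\avg{\bar\psi_0\psi_x} = \frac{1}{m^2} - \frac{d}{d\epsilon}\Big|_{\epsilon = 0}\log\E_{C_\epsilon}(e^{-V_0(\Lambda_N)}).
\end{equation}

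Next I would evaluate $\E_{C_\epsilon}(e^{-V_0(\Lambda_N)})$ using the renormalisation group flow. By the semigroup property \eqref{e:E-semigroup}, $\E_{C_\epsilon}\theta = \E_{\delta(\epsilon) Q_N}\theta \circ \E_C\theta$. Recalling from \eqref{e:ZNN-def} that $Z_{N,N} = \E_C\theta Z_0$ with $Z_0 = e^{-V_0(\Lambda_N)}$, and that $\E_{C_\epsilon}(e^{-V_0(\Lambda_N)})$ is the constant (field-free) term of $\E_{C_\epsilon}\theta Z_0$, this gives $\E_{C_\epsilon}(e^{-V_0(\Lambda_N)}) = \big[\E_{\delta(\epsilon) Q_N}\theta Z_{N,N}\big]_{\psi = \bar\psi = 0}$. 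Since $\delta(\epsilon) Q_N$ is a multiple of the rank-one projection onto the constants, $\E_{\delta(\epsilon) Q_N}\theta$ is the zero-mode Gaussian convolution and acts only on the spatially constant part of its argument; therefore, exactly as in the computation of $\tilde Z_{N,N}$ in the proof of Proposition~\ref{prop:ZNN} with $t_N$ replaced by $\delta(\epsilon)$, the right-hand side equals $\E_{\delta(\epsilon) Q_N}\theta$ applied to the restriction of $Z_{N,N} = e^{-u_N|\Lambda_N|}\tilde Z_{N,N}$ to constant fields, evaluated at $\psi = \bar\psi = 0$. Feeding in the formula for $\tilde Z_{N,N}$ restricted to constant fields from Proposition~\ref{prop:ZNN} together with the Wick identity $\E_{\delta(\epsilon) Q_N}\theta(\psi\bar\psi) = \psi\bar\psi - \delta(\epsilon)/|\Lambda_N|$ yields
\begin{equation} \label{e:plan-susc-2}
  \E_{C_\epsilon}(e^{-V_0(\Lambda_N)}) = e^{-u_N|\Lambda_N|}\big(1 + \tilde u_{N,N} + \tilde a_{N,N}\,\delta(\epsilon)\big),
\end{equation}
which at $\epsilon = 0$ reduces to $\E_C(e^{-V_0(\Lambda_N)}) = e^{-u_N|\Lambda_N|}(1+\tilde u_{N,N})$ (cf.\ \eqref{e:ZuNNaNN}). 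Crucially, $u_N$, $\tilde u_{N,N}$ and $\tilde a_{N,N}$ are independent of $\epsilon$.

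Combining \eqref{e:plan-susc-2} with $\delta(0) = 0$ and $\delta'(0) = m^{-4}$, I get $\frac{d}{d\epsilon}\big|_{0}\log\E_{C_\epsilon}(e^{-V_0(\Lambda_N)}) = \tilde a_{N,N}\delta'(0)/(1+\tilde u_{N,N}) = m^{-4}\tilde a_{N,N}/(1+\tilde u_{N,N})$, and substituting into \eqref{e:plan-susc-1} gives \eqref{e:suscept-atilde}. The one step that needs care is the identification in the second paragraph --- that applying the rank-one convolution $\E_{\delta(\epsilon) Q_N}\theta$ to $Z_{N,N}$ and then extracting the constant term depends on $Z_{N,N}$ only through its restriction to constant fields --- but this is exactly the zero-mode integration already performed in Proposition~\ref{prop:ZNN} (with the parameter $t_N$ there replaced by $\delta(\epsilon)$), so essentially nothing new has to be checked; the remaining ingredients are the elementary differentiation above and the determinant identity $\operatorname{tr}(C Q_N) = m^{-2}$.
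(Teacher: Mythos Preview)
Your proof is correct. The route differs from the paper's, though both are short and rest on Proposition~\ref{prop:ZNN}.

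The paper introduces two auxiliary Grassmann source variables $\sigma,\bar\sigma$ (constant in space), defines the cumulant generating function $\Gamma(\sigma,\bar\sigma)=\log\E_C\big(Z_0\,e^{(\sigma,\bar\psi)+(\psi,\bar\sigma)}\big)$, and completes the square via the shift $\psi\mapsto\psi+C\sigma$, $\bar\psi\mapsto\bar\psi+C\bar\sigma$. Since $C\sigma=m^{-2}\sigma$ for constant $\sigma$, this gives $\Gamma=m^{-2}|\Lambda_N|\sigma\bar\sigma+\log\tilde Z_{N,N}(m^{-2}\sigma,m^{-2}\bar\sigma)-u_N|\Lambda_N|$, and a single $\partial_{\bar\sigma}\partial_\sigma$ using \eqref{e:ZNN} yields \eqref{e:suscept-atilde}. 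Your argument instead perturbs the covariance in the zero-mode direction, $C_\epsilon^{-1}=C^{-1}-\epsilon Q_N$, and differentiates in $\epsilon$; the semigroup identity converts this into an \emph{additional} zero-mode convolution $\E_{\delta(\epsilon)Q_N}\theta$ applied to $Z_{N,N}$, which you evaluate from the constant-field formula of Proposition~\ref{prop:ZNN}. The two are dual ways of probing the zero mode: the paper inserts a linear source $\sigma\sum_x\bar\psi_x$ and reads off the quadratic term in $\sigma,\bar\sigma$, whereas you insert a quadratic zero-mode term $\epsilon(\psi,Q_N\bar\psi)$ and read off the linear term in $\epsilon$. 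The paper's approach avoids the need to argue that the rank-one convolution sees only the constant-field restriction (it instead evaluates $\tilde Z_{N,N}$ at a literally constant argument $m^{-2}\sigma$); your approach avoids introducing extra Grassmann generators and the completing-the-square step. One small imprecision: the sentence ``this is exactly the zero-mode integration already performed in Proposition~\ref{prop:ZNN} (with the parameter $t_N$ there replaced by $\delta(\epsilon)$)'' is not literally that proof --- there the convolution is applied to $Z_N$, here to $Z_{N,N}$ --- but what you actually use is just the Wick rule $\E_{\delta Q_N}\theta(\psi\bar\psi)=\psi\bar\psi-\delta/|\Lambda_N|$ applied to the explicit formula \eqref{e:ZNN}, together with the (correct) fact that $\cL_{\delta Q_N}=\tfrac{\delta}{|\Lambda_N|}(\sum_y\partial_{\psi_y})(\sum_x\partial_{\bar\psi_x})$ acts only through the zero Fourier mode, so setting $\psi=\bar\psi=0$ afterwards depends only on the constant-field restriction.
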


\begin{proof} 
  We amend the algebra $\cN(\Lambda_{N})$ by two Grassmann variables $\sigma$ and $\bar\sigma$ which we view as constant fields
  $\sigma_x=\sigma$ and $\bar\sigma_x=\bar\sigma$. We then consider
  the fermionic cumulant generating function (an element of the
  Grassmann algebra generated by $\sigma$ and $\bar\sigma$) 
  \begin{equation} \label{e:Gammadef}
    \Gamma(\sigma,\bar\sigma)= \log \E_{C}\qB{Z_0(\psi,\bar\psi)e^{(\sigma,\bar\psi)+(\psi,\bar\sigma)}},
  \end{equation}
  where $C$ is as in \eqref{e:S4C}. By translation invariance
  \begin{equation}
    \sum_{x\in\Lambda_{N}}\avg{\bar\psi_0\psi_x}
    = \frac{1}{|\Lambda_{N}|}\sum_{x,y\in\Lambda_N}\avg{\bar\psi_x\psi_y}
    = \frac{1}{|\Lambda_{N}|} \partial_{\bar\sigma}\partial_{\sigma}
    \Gamma(\sigma,\bar\sigma).
  \end{equation}
  The linear change of generators $\psi\mapsto \psi+C\sigma$, $\bar\psi\mapsto
  \bar\psi + C\bar\sigma$ yields
  \begin{equation} 
    \Gamma(\sigma,\bar\sigma)= (\sigma,C\bar\sigma) +
    \log \E_{C}\qB{\theta Z_{0}(C\sigma,C\bar\sigma)},
  \end{equation}
  where the right-hand side is to be interpreted as applying
  the doubling homomorphism and then restricting to constant $\sigma,\bar\sigma$.
  Since $\sigma$ is constant in $x\in\Lambda_N$, we have $C\sigma = m^{-2} \sigma$.
  With \eqref{e:ZNN-def} thus
  \begin{equation} \label{e:Gammasquare}
    \Gamma(\sigma,\bar\sigma)= m^{-2}|\Lambda_{N}|\sigma\bar\sigma
    + \log \tilde Z_{N,N}(m^{-2}\sigma,m^{-2}\bar\sigma) - u_N|\Lambda_N|.
  \end{equation}
  As a result, by \eqref{e:ZNN}--\eqref{e:ZNN-bd}, 
  \begin{equation}
    \frac{1}{|\Lambda_{N}|}\partial_{\bar\sigma} \partial_{\sigma}\Gamma(\sigma,\bar\sigma)
    =  
    \frac{1}{m^2}
    - \frac{1}{m^4} 
    \frac{\tilde a_{N,N}}{1+\tilde u_{N,N}} .\qedhere
  \end{equation}
\end{proof}

\section{The observable renormalisation group flow}
\label{sec:obs-flow}

Recall that $\avg{\cdot}$ denotes the expectation \eqref{e:avgV0}, in
which we will ultimately choose $V_0=V_0^c(b_0,m^2)$ as in
Theorem~\ref{thm:flow}. This section sets up and analyses the
renormalisation group flow associated to source fields. This will
enable the computation of correlation functions (observables) like
$\avg{\bar\psi_{a}\psi_{b}}$ in Section~\ref{sec:obs}. Our strategy is
inspired by that used in \cite{MR3345374,MR3459163}, but with
important differences arising due to the presence of a non-trivial
zero mode in our setting.

\subsection{Observable coupling constants}
\label{sec:obsfields}

As in the proofs in Section~\ref{sec:suscept}, we amend the Grassmann
algebra by two source fields.  Now, however, the additional fields
are not constant in space but rather are localised at two points
$a,b\in\Lambda = \Lambda_{N}$.
We distinguish between two cases:

\medskip \noindent
\emph{Case~(1).}
For the two point function $\avg{\bar\psi_a\psi_b}$
(which we call `Case~(1)'), the additional source fields $\sigma_a$ and
$\bar\sigma_b$ are two additional Grassmann variables that anticommute
with each other and the $\psi,\bar\psi$.

\medskip \noindent
\emph{Case~(2).}
For the quartic correlation
function $\avg{\bar\psi_a\psi_a\bar\psi_b\psi_b}$ (called `Case~(2)'),
we introduce additional Grassmann variables
$\bar\vartheta_{x},\vartheta_{x}$ for $x\in \{a,b\}$ and the
additional source fields $\sigma_a$ and $\sigma_b$ are the commuting
variables $\sigma_{x}=\bar\vartheta_{x}\vartheta_{x}$ for $x\in \{a,b\}$.
This explicit $U(1)$ invariant choice will be convenient when discussing symmetries.

\medskip

In both cases we relabel the initial potential $V_0$ from Section $3$
as $\Vbulk_0$ and set $V_0 = \Vbulk_0+\Vobs_0$ where $\Vobs_0$ is an
observable part to be defined.

\medskip \noindent
\emph{Case~(1).}
In this case, 
\begin{equation} \label{e:Vobs0-a}
  \Vobs_0 = - \lambda_{a,0} \sigma_a \bar\psi_a 
  - \lambda_{b,0}\psi_b \bar\sigma_b  .
\end{equation}
The spatial index of $\Vobs_{0}$ signals the local nature of
the source fields. More precisely, the evaluation of
$\Vobs_{0}$ on a set $X$ is defined to be spatially localised:
$\Vobs_{0}(X) = - \lambda_{a,0} \sigma_a \bar\psi_a 1_{a\in X} -
\lambda_{b,0}\psi_b \bar\sigma_b 1_{b\in X}$. Recalling that
  $C=(-\Delta + m^{2})^{-1}$, see \eqref{e:C-def}, it follows that
\begin{equation}
  \label{e:s5cora}
  \avg{\bar\psi_a\psi_b} = \frac{1}{\lambda_{a,0}\lambda_{b,0}}
  \partial_{\bar\sigma_b}\partial_{\sigma_a}
  \log \E_{C} \qB{e^{-V_0(\Lambda)}}.
\end{equation}
Obtaining \eqref{e:s5cora} is
just a matter of expanding $e^{-V_{0}^{\obs}(\Lambda)}$, using
$\avg{\bar\psi_{a}}=\avg{\psi_{b}}=0$, and applying the rules of
Grassmann calculus.  Note the order of $\partial_{\bar\sigma_b}$ and
$\partial_{\sigma_a}$, which is important to obtain the correct sign.
Although \eqref{e:s5cora} holds for any constants
$\lambda_{a,0}, \lambda_{b,0}$, it is convenient for us to leave these
as variables to be tracked with respect to the renormalisation group flow.

\medskip \noindent
\emph{Case~(2).}
Similarly to the previous case, we choose
\begin{equation} \label{e:Vobs0-b}
  \Vobs_0 = - \lambda_{a,0} \sigma_a \bar\psi_a\psi_a 
  -
  \lambda_{b,0} \sigma_b\bar\psi_b\psi_b, 
\end{equation}
so that
\begin{align}
  \label{e:s5corb1}
  \avg{\bar\psi_a\psi_a}
  &= \frac{1}{\lambda_{a,0}} \partial_{\sigma_a} \log \E_{C} \qB{e^{-V_0(\Lambda)}} \Big|_{\lambda_{b,0}=0}
  \\
  \label{e:s5corb2}
  \avg{\bar\psi_a\psi_a\bar\psi_b\psi_b}-\avg{\bar\psi_a\psi_a}\avg{\bar\psi_b\psi_b}
  &=
    \frac{1}{\lambda_{a,0}\lambda_{b,0}} \partial_{\sigma_b}\partial_{\sigma_a} \log \E_{C} \qB{e^{-V_0(\Lambda)}}
    .
\end{align}
To distinguish the coupling constants in the two cases, we will
sometimes write $\lambda_{a,0}^{(p)}$ with $p=1$ or $p=2$ instead of
$\lambda_{a,0}$, and analogously for the other coupling constants.

\subsection{The free observable flow}
\label{sec:obs-free}

To orient the reader and motivate the discussion which follows, let us
first consider the noninteracting case $\Vbulk_0=0$, in which the
microscopic model is explicitly fermionic Gaussian.  In this case, one
may compute all correlations explicitly by applying the fermionic Wick
rule.  The same computation can be carried out inductively using the
finite range decomposition of the covariance $C$, and we review this now
as it will be the starting point for our analysis of the interacting case.

To begin the discussion, observe that all source fields square
to zero, i.e., $\sigma_a^2 = \bar\sigma_{b}^{2} = \sigma_b^2 = 0$.
This implies that $\Vobs_0(\Lambda)^3=0$  since $\Vobs_{0}(\Lambda)$ has no constant
term and has at least one least source field in each of its two summands.
Given $\Vobs_{0}$, we inductively
define renormalised interaction potentials that share this property:
\begin{equation} \label{e:Vobspt}
u_{j+1}^{\obs}{(\Lambda)}+  \Vobs_{j+1}(\Lambda) \bydef
  \E_{C_{j+1}}\qB{\theta \Vobs_j(\Lambda)} - \frac12 \E_{C_{j+1}}\qB{\theta \Vobs_j(\Lambda); \theta \Vobs_j(\Lambda)}
\end{equation}
where
\begin{equation}
  \E_{C_{j+1}}\qB{\theta \Vobs_j(\Lambda); \theta \Vobs_j(\Lambda)}
  \bydef
  \E_{C_{j+1}}\qB{\theta \Vobs_j(\Lambda)^{2}}
  -
  \pB{\E_{C_{j+1}}\qB{ \Vobs_j(\Lambda)}}^{2}
\end{equation}
and $u_{j+1}^{\obs}{(\Lambda)}$ collects the terms that do not contain $\psi$ or $\bar\psi$. 
Consequently, one can check that
\begin{equation}
  \E_{C_{j+1}}\qB{\theta e^{-\Vobs_j(\Lambda)}}
  =
  \E_{C_{j+1}}\qB{\theta (1-\Vobs_j(\Lambda) + \frac12 \Vobs_j(\Lambda)^2)}
  = e^{-u_{j+1}^{\obs}{(\Lambda)}-\Vobs_{j+1}(\Lambda)}.
\end{equation}

For convenience, in the last step when $j=N$, we set $C_{N+1}=t_NQ_N$. 
This separation of the zero mode is not essential here but will be useful for our
analysis in the interacting case.

For $j>0$, the $\Vobs_j$ have 
terms not present in $\Vobs_{0}$, for example the terms involving
$q$ in the next definition.
The nilpotency of the source fields $\sigma_{a}, \sigma_{b}$ limits the possibilities.

\begin{definition} \label{def:cVobs}
Let  $\cV^\obs$ be the space of formal field polynomials $u^\obs+\Vobs$ of the form:
  \begin{alignat}{1}
 &\left.\begin{aligned}
     \Vobs &=
    -  \lambda_a \sigma_a \bar\psi_a - \lambda_b \psi_b\bar\sigma_b
    + \sigma_a \bar\sigma_b\frac{r}{2}(\bar\psi_a\psi_a+\bar\psi_b\psi_b),
    \;\,\quad
    \\
   \nonumber u^\obs&= - { \sigma_a\bar\sigma_b }  q,
       \end{aligned}
 \right\} \qquad \text{in Case (1),}\\
 &  \left.\begin{aligned}
     \Vobs &=
    { -}\sigma_a \lambda_a\bar\psi_a\psi_a - \sigma_b\lambda_b\bar\psi_b\psi_b
    { -}\sigma_a\sigma_b\frac{\eta}{2} (\bar\psi_a\psi_b+\bar\psi_b\psi_a)
    \\
    & \nonumber \quad \quad  \quad \quad \quad
    + \sigma_a \sigma_b \frac{r}{2}(\bar\psi_a\psi_a+\bar\psi_b\psi_b)),\\
      \nonumber u^\obs&=- \sigma_a  \gamma_a- \sigma_b  \gamma_b-  { \sigma_a\sigma_b }  q ,
       \end{aligned}
 \right\} \qquad \text{in Case (2),}
  \end{alignat}
for observable coupling constants $(\lambda_{a},\lambda_{b},q, r)\in\C^{4}$
respectively $(\lambda_{a},\lambda_{b},\gamma_{a},\gamma_b,q,\eta, r)\in\C^{7}$.
 For $X \subset \Lambda$, we define $(u^{\obs}+\Vobs)(X) \in
 \cN^\obs(X \cap \{a,b\})$ by 
  \begin{equation} \label{e:VobsX}
    (u^\obs+\Vobs)(X) =
    -\lambda_a \sigma_a \bar\psi_a 1_{a\in X} -\lambda_b\psi_b
    \bar\sigma_b 1_{b \in X} 
    - \sigma_a  \bar\sigma_b q 1_{a\in X,b \in X}
    +\sigma_a  \bar\sigma_b \frac{r}{2}(\bar\psi_a\psi_a+\bar\psi_b\psi_b)1_{a\in X,b\in X}
  \end{equation}
  in Case (1), and analogously in Case (2).
\end{definition}

\begin{remark} The terms corresponding to $r$ do not appear at any
  step of the free observable flow \eqref{e:Vobspt} if we
  start with them equal to $0$.  We include them here in preparation
  for the interacting model.
\end{remark}
The evolutions of $\uobs_j+\Vobs_j \to u^{\obs}_{j+1}$ and $\Vobs_j \to \Vobs_{j+1}$ are equivalent to the evolution
of the coupling constants $(\lambda_a,\lambda_b,q, r)$ respectively
$(\lambda_a,\lambda_b,\gamma_a,\gamma_b,q,\eta,r)$.
By computation of the fermionic Gaussian moments in \eqref{e:Vobspt},
the flow of the observable coupling constants according to
\eqref{e:Vobspt} is then given as follows. Note 
that the evolution of coupling constants in $\Vobs$ is independent
  of the coupling constants in $u^\obs$.

\begin{lemma} \label{lem:Vobs-flow-free}
  Let $\Vbulk_0=0$, and let $u^\obs_{j}$ and $\Vobs_j$ be of the form as in Definition~\ref{def:cVobs}.
  The map \eqref{e:Vobspt} is then given as follows.
  In Case (1), for $x\in \{a,b\}$,
\begin{align}
  \lambda_{x,j+1} &=\lambda_{x,j} 
  \\
  q_{j+1} &= q_j + \lambda_{a,j}\lambda_{b,j} C_{j+1}(a,b) + r_jC_{j+1}(0,0)
            \\
  r_{j+1} &= r_j
                  ,
            \intertext{whereas in Case (2), for $x\in\{a,b\}$,}
            \label{e:c2freelam}
  \lambda_{x,j+1} &= \lambda_{x,j} 
  \\
  \gamma_{x,j+1} &= \gamma_{x,j} + \lambda_{x,j}C_{j+1}(0,0) 
  \\
  \label{e:c2freeq}
  q_{j+1} &= q_j + \eta_j C_{j+1}(a,b)
            + r_{j} C_{j+1}(0,0)
            - \lambda_{a,j}\lambda_{b,j} C_{j+1}(a,b)^2 
  \\
  \label{e:c2freeeta}
  \eta_{j+1} &= \eta_j - 2\lambda_{a,j}\lambda_{b,j} C_{j+1}(a,b)
  \\
  r_{j+1} & =r_{j}.
\end{align}
\end{lemma}

\begin{proof}
  This follows from straightforward evaluation of \eqref{e:Vobspt} using~\eqref{e:fWick}.
\end{proof}

To continue the warm-up for the interacting case, we illustrate how
these equations reproduce the direct computations of correlation
functions (and explain the terminology of observable coupling constants).
When $r_{0}=0$, by a computation using
Definition~\ref{def:cVobs}, the formulas~\eqref{e:s5cora}
and~\eqref{e:s5corb1}--\eqref{e:s5corb2} imply 
the correlation
functions in Cases (1) and (2) are given by
\begin{equation}
  \label{e:warmupcor}
  \avg{\bar\psi_a\psi_b} = \frac{ q_{N+1}}{\lambda_{a,0}\lambda_{b,0}},
  \qquad 
  \avg{\bar\psi_a\psi_a} = \frac{\gamma_{a,N+1}}{\lambda_{a,0}},
  \qquad
  \avg{\bar\psi_a\psi_a;\bar\psi_b\psi_b} = \frac{q_{N+1}}{\lambda_{a,0}\lambda_{b,0}},
\end{equation}
with $q\bydef q^{(1)}$ and $\lambda \bydef \lambda^{(1)}$ for the first equation
and $q\bydef q^{(2)}$, $\gamma \bydef \gamma^{(2)}$, and $\lambda \bydef \lambda^{(2)}$ for the last two.
Recalling the convention $C_{N+1}=t_{N}Q_{N}$,
for Case~(2) with $r_{0}=0$ a computation using
\eqref{e:c2freelam}, \eqref{e:c2freeq} and~\eqref{e:c2freeeta} shows
\begin{equation}
  q_{N+1}
  = - \lambda_{0,a}\lambda_{0,b}\pa{ \sum_{k \leq N} C_k(a,b) + t_NQ_N(a,b)}^2 = -\lambda_{0,a}\lambda_{0,b}(-\Delta+m^2)^{-1}(a,b)^2
  ,
\end{equation}
with the final equality by \eqref{e:C-def}. Combined with \eqref{e:warmupcor},  this gives
\begin{equation}
  \label{e:warmupcor-conclusion}
  \avg{\bar\psi_a\psi_a;\bar\psi_b\psi_b} =-(-\Delta+m^2)^{-1}(a,b)^2,
\end{equation}
as expected for free fermions.

\begin{remark}
  In the preceding computation we kept the potential in the
  exponential for the entire computation, whereas in
  Sections~\ref{sec:suscept} and~\ref{sec:obs} the zero mode is
  integrated out directly without rewriting the integrand in this form
  (see, e.g., \eqref{e:ZNN-def}).  We distinguish these two approaches
  by using $N+1$ subscripts for the former and $(N,N)$ for the latter,
  and by putting tildes on quantities associated with the $(N,N)$\textsuperscript{th}
  step as was done in Section~\ref{sec:suscept}.
\end{remark}

Before moving to the interacting model, we introduce the
\emph{coalescence scale $j_{ab}$} as the largest integer $j$ such that
$C_{k+1}(a,b)=0$ for all $k < j$, i.e.,
\begin{equation} 
\label{E:coal}
  j_{ab} = \floor{\log_L (2|a-b|_{{\infty}})}.
\end{equation}
In the degenerate cases $\lambda_a=0$ or $\lambda_b=0$ when only one
of the source fields is present we use the convention
$j_{ab}=+\infty$.  Note that the finite range property
\eqref{e:C-range} implies that $q_j=\eta_j=r_{j}=0$ for $j<j_{ab}$
provided they are all $0$ when $j=0$.  This will also be true in the
interacting case.

In connection with the coalescence scale, we also make a convenient
choice of the block decomposition of $\Lambda_{N}$ based on the
relative positions of $a$ and $b$.  Namely, we center the block
decomposition such that point $a$ is in the center (up to rounding if
$L$ is even) of the blocks at all scales $1 \leq j \leq N$.  This
implies that if $|a-b|_\infty < \frac12 L^{j+1}$ the scale-$j$ blocks
containing $a$ and $b$ are contained in a common scale-$(j+1)$ block.

\subsection{Norms with observables}

To extend the above computation for $\Vbulk=0$ to the interacting
case, we will extend the renormalisation group map to the Grassmann
algebra amended by the source fields.
In Case~(2), recall that the source fields $\sigma_a$ and
$\sigma_b$ are even elements
rather than Grassmann generators themselves,
i.e., they are commuting elements also satisfying
$\sigma_a^2=\sigma_b^2=0$.
In both Cases~(1) and~(2), this algebra has the decomposition
\begin{equation} \label{e:Nabdecomp}
  \cN(X) = \cN^\varnothing(X) \oplus \cN^a(X) \oplus \cN^{b}(X) \oplus \cN^{ab}(X)
  = \cN^\varnothing(X) \oplus \cN^\obs(X)
\end{equation}
where $\cN^\varnothing(X)$ is spanned by monomials with no factors of $\sigma$,
$\cN^a(X)$ is spanned by monomials containing a factor $\sigma_a$ but
no factor $\bar\sigma_b$ (respectively $\sigma_b$), analogously for $\cN^b(X)$,
and $\cN^{ab}(X)$ is spanned by monomials containing $\sigma_a\bar\sigma_b$ respectively $\sigma_a\sigma_b$.
Thus any $F \in \cN(X)$ can be written as
\begin{equation} \label{e:Fdecomp}
  F
  = F^\varnothing + F^\obs
  = 
  \begin{cases}
    F_\varnothing+\sigma_aF_a+\bar\sigma_bF_b+\sigma_a\bar\sigma_bF_{ab}, & \text{Case (1)}
    \\
    F_\varnothing+\sigma_aF_a+\sigma_bF_b+\sigma_a\sigma_bF_{ab}, & \text{Case (2)},
  \end{cases}
\end{equation}
with $F_\varnothing,F_a,F_b,F_{ab} \in \cN^\varnothing(X)$.  We denote
by $\pi_{\varnothing},\pi_{a},\pi_b$ and $\pi_{ab}$ the projections on
the respective components, e.g., $\pi_aF =\sigma_aF_a$, and
$\pi_\obs = \pi_a+\pi_b+\pi_{ab}$.  We will use superscripts instead
of subscripts in the decomposition when the factors of $\sigma$ are
included, e.g., $F^a= \sigma_aF_a$ and $F^\varnothing=F_\varnothing$.

We say that $F$ is $U(1)$ invariant if the number of generators with
a bar is equal to the number without a bar.  Explicitly, in Case~(1) this means
$F_{\varnothing} $ and $F_{ab}$ are $U(1)$ invariant, $F_{a}$ has one
more factor with a bar than without, and similarly for $F_{b}$. In
Case~(2) this means all of $F_{\varnothing},F_{a},F_{b}$ and $F_{ab}$
are $U(1)$ invariant (recall that
$\sigma_{a}=\bar\vartheta_{a}\vartheta_{a}$ and $\sigma_{b}=\bar\vartheta_{b}\vartheta_{b}$).
Denote by $\cN_{\rm sym}(X)$ the subalgebra of
$U(1)$ invariant elements.

For $F$ decomposed according to \eqref{e:Fdecomp} we define
\begin{equation} \label{e:Tphi-obs}
  \|F\|_{T_j(\ell_j)}
  = \|F_\varnothing\|_{T_j(\ell_j)}
  + \ellaj\|F_a\|_{T_j(\ell_j)}
  + \ellbj\|F_b\|_{T_j(\ell_j)}
  + \ellabj\|F_{ab}\|_{T_j(\ell_j)}
\end{equation}
where
\begin{equation} \label{e:ellsigma}
  \ellaj = \ellbj
  = \begin{cases}
    \ell_{j}^{-1}, & \text{Case (1)}\\
    \ell_j^{-2}, 
    & \text{Case (2)},
  \end{cases}
  \qquad
  \ellabj
  = \begin{cases}
    \ell_{j}^{-2}, & \text{Case (1)}\\
    \ell_j^{-2} \ell^{-2}_{j\wedge j_{ab}}, & \text{Case (2)}.
  \end{cases}
\end{equation}
In particular,
$\|\sigma_a\|_{T_j(\ell_j)} = \ellaj$ and $\|\sigma_a\sigma_b\|_{T_j(\ell_j)} = \ellabj$ and, in Cases (1) and (2), respectively,
\begin{equation} \label{e:a-marginal}
  \|\sigma_a\bar\psi_a\|_{T_j(\ell_j)}
  = \ellaj\ell_j = 1,
  \qquad
  \|\sigma_a\bar\psi_a\psi_a\|_{T_j(\ell_j)} = \ellaj\ell_j^2 =1 
\end{equation}
and, again in the two cases respectively, 
\begin{equation} \label{e:ab-marginal}
  \|\sigma_a{\bar \sigma_b}\bar\psi_x\psi_x\|_{T_j(\ell_j)} =
  \ellabj \ell_j^2 = 1, \qquad
  \|\sigma_a\sigma_b\bar\psi_x\psi_x\|_{T_j(\ell_j)} =
  \ellabj \ell_j^2 = \ell^{-2}_{j\wedge j_{ab}}.
\end{equation}
In both cases these terms do not change size under change of
scale, provided that $j\geq j_{ab}$ for the last term.
Thus they are \emph{marginal}.
As will be seen in Section~\ref{sec:obs}, see the paragraph
  following Lemma~\ref{lem:ZNN-obs}, the choices of $\ellaj$ and $\ellabj$
are appropriate to capture the leading behaviour of 
correlation functions.

The extended definition~\eqref{e:Tphi-obs} of the
$T_{j}(\ell_{j})$ norm satisfies the properties discussed in
Section~\ref{sec:norms}, with the exception of the generalisation of the monotonicity
estimate
$\|F_\varnothing\|_{T_{j+1}(2\ell_{j+1})} \leq
\|F_\varnothing\|_{T_{j}(\ell_{j})}$. Checking these properties
is straightforward by using the properties of the bulk norm, and, in
the case of the product property, using that
$\ell_{ab,j}\leq\ell_{a,j}\ell_{b,j}$ (recall \eqref{e:ellj}). Similar reasoning also
yields a weaker monotonicity-type estimate: by
\eqref{e:Tphi-obs}, \eqref{e:ellj}, and
monotonicity in the bulk algebra,
\begin{equation}
  \label{e:Tphi-obs-ub}
  \|F\|_{T_{j+1}(\ell_{j+1})}
  \leq \|F\|_{T_{j+1}(2\ell_{j+1})}
  \leq 16 L^{2(d-2)} \|F\|_{T_{j}(\ell_{j})} .
\end{equation}

\subsection{Localisation with observables}
\label{sec:locobs}

We combine the space $\cVbulk$ of bulk coupling constants
from Definition~\ref{def:cV} 
with the space $\cV^\obs$ of observable coupling constants from
Definition~\ref{def:cVobs} 
into
\begin{equation}
  \cV = \cV^\varnothing \oplus \cV^\obs.
\end{equation}
We extend the localisation operators $\Loc_{X,Y}$ from
Section~\ref{sec:Loc} to the amended Grassmann algebra
\eqref{e:Nabdecomp} as follows. As in the bulk setting, we will
focus on the key properties of the extended localisation operators.
The extension of $\Loc_{X,Y}$ is 
linear and block diagonal with respect to the decomposition
\eqref{e:Nabdecomp}, and so can be defined 
separately on each summand. 
On $\cN^\varnothing(X)$, the restriction $\Loc_{X,Y}$ is defined to
coincide with the operators from 
Proposition~\ref{prop:Loc}.
From now on we denote this restriction by $\Loc_X^\varnothing$
or $\loc^{\varnothing}_{X}$ if
we want to distinguish it from the extended version.  To define the
restriction $\Loc^{\obs}_{X,Y}$ of $\Loc_{X,Y}$ to $\cN^\obs(X)$, we
continue to employ the systematic framework from \cite[Section~1.7]{MR3332939},
as follows. Let $\loc^a$, $\loc^b$, and $\loc^{ab}$ be the localisation
operators from \cite[Definition~1.17]{MR3332939}
with maximal dimensions (for the Cases $(p=1)$ and $(p=2)$)
\begin{equation} \label{e:d+obs}
  d_+^{a} = d_+^b = \frac{p}{2}(d-2)
  ,
  \qquad
  d_+^{ab} = d-2.
\end{equation}
\noindent\emph{Case (1).}
For $\sigma_a F_a\in \cN^a(X)$ we set $\Loc_{X, Y} (\sigma_a F_a) = \sigma_a \loc^a_{X\cap\{a\}, Y\cap\{a\}} F_a$, and likewise for point~$b$.
For $\sigma_a\bar\sigma_b F_{ab} \in \cN^{ab}(X)$ we set $\Loc_{X, Y} (\sigma_a\bar\sigma_b F_{ab}) = \sigma_a \bar\sigma_b \loc^{ab}_{X\cap\{a,b\}, Y\cap\{a,b\}} F_{ab}$.

\smallskip\noindent\emph{Case (2).}
The definitions in Case~(2) are analogous,
but with $\bar\sigma_{b}$ replaced by $\sigma_{b}$.
\smallskip

The superscripts $\varnothing,a,b,ab$ are present to indicate that we have assigned
different maximal dimensions to the summands in~\eqref{e:Nabdecomp}.
We use the same choice of field dimensions $[\psi]=[\bar\psi]=(d-2)/2$
as in Section~\ref{sec:Loc}.
We note that $\loc^a_{X, \varnothing}=\loc^b_{X,\varnothing}=\loc^{ab}_{X,\varnothing}=0$.  
The main difference between these operators
and $\Loc^\varnothing$ is that
the expressions produced by $\loc^a$, $\loc^b$, $\loc^{ab}$ are local, i.e.,
supported near $a$ and $b$. A second difference is that
the maximal dimensions vary.

Before giving the general properties of the extended $\Loc$ we include an example in Case~(1).
  
\begin{example}
  Consider Case (1). Then:
    
    \smallskip
    \noindent
    (i) If  $F\in \cN(\Lambda)$ is a field monomial of degree greater
    than one or if $F$ has gradients in it,  then $\Loc^a_X F =
    \Loc^b_X F=0$. Here (and in the rest of this example) we do not
    count factors of $\sigma_{a}$ and $\sigma_{b}$ in the degree.
    If $F$ has degree greater than $2$ or is degree two and has gradients $\Loc^{ab}_X F =0$. 

  \smallskip
  \noindent
  (ii) If $F=  \sigma_a \bar\psi_x+ \bar{\sigma}_b \psi_y+  \sigma_a \bar{\sigma}_b  \bar\psi_x\psi_y$, then 
  \begin{align} \label{e:ex-Locobs}
    \Loc_X F &= \sigma_a \bar\psi_a 1_{a\in X}+ \bar{\sigma}_b \psi_b  1_{b\in X}\nnb
    &+  \sigma_a \bar{\sigma}_b  \left(\bar\psi_a\psi_a1_{a \in X, b \notin X} + \bar\psi_b\psi_b 1_{ b\in X, a\notin X} +\frac 12 [ \bar\psi_a\psi_a+ \bar\psi_b\psi_b] 1_{a, b\in X} \right).
  \end{align}
\end{example}

The next proposition summarises the key properties of
the operators $\Loc_{X,Y}$. As with Proposition~\ref{prop:Loc},
these properties follow from \cite{MR3332939}.
That the choice of maximal dimensions \eqref{e:d+obs} produce contractive estimates can intuitively be understood by
considering the marginal monomials. By \eqref{e:a-marginal} and \eqref{e:ab-marginal}, these are exactly the monomials
with dimensions $d_+^a=d_+^b$ respectively $d_+^{ab}$.

\begin{proposition} \label{prop:Loc-obs}
  For $L=L(d)$ sufficiently large there is a universal $\bar
  C>0$ such that: for $j<N$ and any small sets $Y \subset X \in \cS_j$,
  the linear maps
  $\Loc_{X,Y}^\obs \colon \cN^\obs(X^\square) \to \cN^\obs(Y^\square)$ have the following properties:
  \smallskip
  
  \noindent
  (i) They are bounded:
  \begin{equation} \label{e:Loc-obs-bd}
    \|\Loc_{X,Y}^\obs F\|_{T_{j}(\ell_{j})} \leq \bar C \|F\|_{T_j(\ell_{j})}.
  \end{equation}
\smallskip
\noindent
  (ii) For $j\geq j_{ab}$, the maps $\Loc_X^{\obs} \bydef \Loc_{X,X}^{\obs}\colon \cN^{\obs}(X^\square) \to \cN^{\obs}(X^\square)$ satisfy the contraction bound:
  \begin{equation} \label{e:Loc-obs-contract}
    \|(1-\Loc_X^\obs)F\|_{T_{j+1}(2\ell_{j+1})} \leq \bar C L^{-\dplus} 
    \|F\|_{T_j(\ell_j)}.
  \end{equation}
  Moreover, the bound~\eqref{e:Loc-obs-contract} holds also for $j<j_{ab}$ if $F^{ab}=0$.

  \smallskip
  \noindent
  (iii) If $X$ is the disjoint union of $X_1, \dots, X_n$ then
  $\Loc_X^{\obs} = \sum_{i=1}^n \Loc_{X,X_i}^{\obs}$.

  \smallskip
  \noindent
  (iv) For a block $B$ and polymers $X \supset B$,
  $\Loc_{X,B}^\obs F \in \cV^\obs(B)$ if $F \in \cN^{\obs}_{\rm sym}(X)$.
\end{proposition}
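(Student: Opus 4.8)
The plan is to reduce all four assertions to the corresponding properties of the component operators $\loc^{a}, \loc^{b}, \loc^{ab}$ of \cite{MR3332939}, using that $\Loc^{\obs}_{X,Y}$ is block diagonal with respect to the decomposition $\cN^{\obs}=\cN^{a}\oplus\cN^{b}\oplus\cN^{ab}$ from \eqref{e:Nabdecomp} and that the observable norm \eqref{e:Tphi-obs} is the weighted sum of the bulk $T_{j}(\ell_{j})$ norms of the components $F_{a},F_{b},F_{ab}$. Because each weight $\ellaj,\ellbj,\ellabj$ is a positive scalar, it cancels between the two sides of the norm inequality in (i), so \eqref{e:Loc-obs-bd} follows termwise from the boundedness of $\loc^{a},\loc^{b},\loc^{ab}$ in the bulk norm, which is \cite[Proposition~1.16]{MR3332939}, exactly as in Proposition~\ref{prop:Loc}(i). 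Statement (iii) is immediate as well: $\Loc^{\obs}_{X,Y}$ acts componentwise and each of $\loc^{a},\loc^{b},\loc^{ab}$ carries the additive decomposition built into its definition in \cite[Section~1.7]{MR3332939}, as in Proposition~\ref{prop:Loc}(iii).

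For (iv) I would identify, for a gauge-invariant and lattice-symmetric $F\in\cN^{\obs}_{\rm sym}(X)$, the image of $\loc^{a}_{X,B}$ (and of $\loc^{b}_{X,B}$ and $\loc^{ab}_{X,B}$) with the span of the formal monomials of dimension at most $d_{+}^{a}$ (resp.\ $d_{+}^{b}$, $d_{+}^{ab}$) that are compatible with gauge invariance, with the lattice symmetries, and with the locality structure of the construction; by Definition~\ref{def:cVobs} this span is exactly $\cV^{\obs}(B)$. This is the observable analogue of Proposition~\ref{prop:Loc}(v) and amounts to a short enumeration of admissible monomials: in the $a$- and $b$-sectors the only candidates are $\sigma_{a}\bar\psi_{a}$ (and, in Case (2), the constant $\sigma_{a}$ and $\sigma_{a}\bar\psi_{a}\psi_{a}$) and their reflections at $b$; in the $ab$-sector the constant and the $r$-monomials $\sigma_{a}\bar\sigma_{b}\bar\psi_{x}\psi_{x}$, $x\in\{a,b\}$, always appear, as do the cross-term $\eta$-monomials in Case (2), while in Case (1) no cross term occurs because a $\sigma_{a}\bar\sigma_{b}$-monomial there carries at most one $\psi$-field.

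The substantive statement is (ii), and I expect the main obstacle to be the bookkeeping of the non-standard weights in \eqref{e:Tphi-obs} under the change of scale $j\to j+1$, rather than anything conceptually new. I would apply \cite[Proposition~1.12]{MR3332939} to $\loc^{a},\loc^{b},\loc^{ab}$ with field dimensions $[\psi]=[\bar\psi]=(d-2)/2$ and the maximal dimensions \eqref{e:d+obs}; this yields, for each component, a bulk-norm contraction bound whose gain is governed by the gap between $d_{+}^{\bullet}$ and the dimension of the lowest monomial not removed by the localisation. That gap is at least $\dplus$ (an extra derivative costs $1\geq\dplus$, two extra fields cost $d-2\geq\dplus$), and the weights $\ellaj,\ellbj$ are designed so that the monomials sitting exactly at $d_{+}^{a}=d_{+}^{b}$ have scale-invariant norm---see \eqref{e:a-marginal}---so that reinstating the weights converts the bulk bounds into \eqref{e:Loc-obs-contract} for the $a$- and $b$-components. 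For the $ab$-component one uses that when $j\geq j_{ab}$ the factor $\ell_{j\wedge j_{ab}}^{-2}$ in $\ellabj$ is constant in $j$, so the $ab$-norm rescales as $\ell_{j}^{-2}$ and, by \eqref{e:ab-marginal}, the monomials at $d_{+}^{ab}$ are marginal, making $(1-\loc^{ab})$ contractive by the same reasoning; for $j<j_{ab}$ this rescaling property fails---those monomials are then relevant---which is precisely why the hypothesis $j\geq j_{ab}$ is imposed, or else $F^{ab}=0$, so that only the already-controlled $a$- and $b$-components contribute. The step from $\ell_{j}$ to $2\ell_{j+1}$ is absorbed into the constant $\bar C$ using the weak monotonicity \eqref{e:Tphi-obs-ub}, and the last-scale variant (with $L^{-\dplus}$ replaced by $1$) is obtained as in Theorem~\ref{thm:step}.
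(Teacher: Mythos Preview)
Your approach is essentially the same as the paper's: reduce (i)--(iii) componentwise to \cite{MR3332939}, with the key bookkeeping that the scaling of the observable weights $\ellaj,\ellbj,\ellabj$ exactly absorbs the factors $L^{d_+^a},L^{d_+^b},L^{d_+^{ab}}$ that would otherwise be missing relative to the bulk contraction; and handle (iv) by enumerating the gauge-invariant monomials of dimension at most $d_+^\bullet$.

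One small correction to your enumeration in (iv): the cross-term monomials $\sigma_a\sigma_b\bar\psi_a\psi_b$ and $\sigma_a\sigma_b\bar\psi_b\psi_a$ are \emph{not} in the image of $\loc^{ab}$ --- they are nonlocal in the sense of \cite{MR3332939}, as the field factors sit at distinct points. This does not affect (iv), since the image is then a strict subspace of $\cV^\obs(B)$, which still contains the $\eta$-term. But the paper uses this fact later (see the remark after Lemma~\ref{lem:Vobs-recursion}): the absence of $\eta$ from the image of $\Loc$ is why the $\eta$-recursion carries no $K$-correction. Also, you need not (and should not) invoke lattice symmetry in (iv); unlike the bulk case, only gauge invariance is assumed, and with the smaller $d_+^\bullet$ that suffices to land in $\cV^\obs(B)$.
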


Properties (i)--(iii) follow from \cite{MR3332939} in the same way as
the corresponding properties in Proposition~\ref{prop:Loc} by making use
of the observation that
\begin{align}
  \|\sigma_a\|_{T_{j+1}(2\ell_{j+1})}
  &\leq
    2L^{d_+^a}\|\sigma_a\|_{T_{j}(\ell_{j})},  \qquad  \|\sigma_b\|_{T_{j+1}(2\ell_{j+1})} \leq
    2L^{d_+^b}\|\sigma_b\|_{T_{j}(\ell_{j})}, \\
  \|\sigma_{a}\sigma_b\|_{T_{j+1}(2\ell_{j+1})}
  &\leq 4L^{d_+^{ab}}
    \|\sigma_{a}\sigma_b\|_{T_{j}(\ell_{j})}, \qquad \text{if $j\geq j_{ab}$},
\end{align}
in Case (2) and analogously in Case (1).  These factors of $L^{d_+}$
correspond to the missing $L^{-d_+}$ factors in \eqref{e:Loc-obs-contract} as compared to  Proposition~\ref{prop:Loc}.
It only remains to verify (iv), i.e., to identify the image of
$\Loc_{X,B}^\obs$ when acting on $F \in \cN^{\obs}_{\rm
  sym}(X)$.

\smallskip
\noindent\emph{Case (1).}
By the choice of dimensions in its specification, the image of
$\sigma_{a}\loc^{a}$ is spanned by the local monomials
$\sigma_{a}, \sigma_{a}\bar\psi_{a}$, $\sigma_a\psi_{a}$.  The
condition of $U(1)$ invariance then implies that if
$\sigma_a F_a\in \cN^{a}_{\rm sym}(X)$ only the monomial
$\sigma_{a}\bar\psi_{a}$ is admissible.  The situation is analogous
for $\loc^b$.  Similarly,
$\sigma_{a}\bar\sigma_{b}\loc^{ab}$ 
has image spanned by 
$\sigma_{a}\bar\sigma_{b}$ and
$\sigma_{a}\bar\sigma_{b}\bar\psi_x\psi_x$ for $x\in \{a,b\}$ 
as well as further first order monomials with at most $(d-2)/2$
gradients, e.g., $\sigma_a\bar\sigma_b\nabla_{e_{1}}\psi_x$. Only
the even monomials $\sigma_{a}\bar\sigma_{b}$,
$\sigma_{a}\bar\sigma_{b}\bar \psi_{a}\psi_{a}$, and $\sigma_{a}\bar\sigma_{b}\bar\psi_{b}\psi_{b}$
are compatible with $U(1)$ symmetry.
In summary, $\Loc^\obs_{X,Y} F$ is contained in $\cV^\obs$ if $F\in
\cN^{\obs}_{\rm sym}(X)$. 

\smallskip
\noindent\emph{Case (2).}
By the choice of dimensions, in this case
$\sigma_{a}\loc^a$ has image spanned by the local monomials
$\sigma_{a},\sigma_{a}\bar\psi_a\psi_a$ 
as well as further first order monomials
with at most $d-2$ gradients,
and $U(1)$ symmetry implies that only the even terms $\sigma_{a}$ and
$\sigma_{a}\bar\psi_a\psi_a$ 
arise in the image if $F \in \cN_{\rm sym}(X)$. The analysis for
$\sigma_{b}\loc^b$ is analogous. Lastly, $\sigma_{a}\sigma_{b}\loc^{ab}$
has image spanned by $\sigma_{a}\sigma_{b}$ and the monomials
$\sigma_{a}\sigma_{b}\bar\psi_x\psi_x$ for $x\in\{a,b\}$ 
and first order monomials with at most $(d-2)/2$ gradients.
Again only the even monomials are compatible with $U(1)$ symmetry.

\subsection{Definition of the renormalisation group map with observables}
\label{sec:rgmap-ext}

In this section the renormalisation group map $\Phi_{j+1}
=\Phi_{j+1,N,m^{2}}$ is extended to
include the observable components (as in Section~\ref{sec:bulk}, we
omit the $N$ and $m^{2}$-dependence when there is no risk of
confusion). To this end, we now call the
renormalisation group map from Section~\ref{sec:step} the \emph{bulk
  component} and denote it by $\Phi^\varnothing_{j+1}$, and 
$\Phi_{j+1}=(\Phi^\varnothing_{j+1},\Phi^\obs_{j+1})$ will now refer to
the renormalisation group map extended to the algebra with
observables.  The map $\Phi^\obs_{j+1}$ is the \emph{observable component} of
the renormalisation group map.  This extension will be defined so that the
bulk components of $K_{j+1}$ and $V_{j+1}$ only depend on the
bulk components of $K_j$ and $V_j$.  In other words,
\begin{equation}
  \pi_\varnothing \Phi_{j+1}(V_j,K_j) = \Phi_{j+1}^\varnothing(\pi_\varnothing V_j,\pi_\varnothing K_j).
\end{equation}
On the other hand, the observable components $\Vobs_{j+1}$ and
$\Kobs_{j+1}$ will depend on both the observable and the bulk
components of $(V_j,K_j)$.  The observable component $\Phi^{\obs}_{j+1}$ is
upper-triangular in the sense
that the $a$ component $\pi_{a}\Phi^{\obs}_{j+1}(V_{j},K_{j})$
of $\Phi^{\obs}_{j+1}(V_j,K_j)$ only depends on $(\Vbulk_{j},\Kbulk_{j})$ and
$(V^a_{j},K^a_{j})$ but not on $(V^b_{j},K^b_{j})$ or $(V^{ab}_{j},K^{ab}_{j})$, and similarly
for the $b$ component.  The $ab$ component depends on all components
from the previous scale.
We will use an initial condition $V_{0}\in \cV$ and $K_{0}(X)=1_{X=\varnothing}$. 

We now give the precise definition of the observable component
of the renormalisation group map
$\Phi_{j+1}^\obs\colon (V_j,K_j)\mapsto (\uobs_{j+1}, \Vobs_{j+1}, \Kobs_{j+1})$.
For $j+1<N$, given $(V_j, K_j)$ and $B \in \cB_j$,
define $Q(B)$ and $J(B,X)$ as in
\eqref{e:Q-def}--\eqref{e:J-def} using the extended version of
$\Loc$ from Section~\ref{sec:locobs}. If $j+1=N$ set $Q=J=0$.
We let $Q^\obs(B)=\pi_\obs Q(B) $ and $J^\obs(B,X)=\pi_\obs J(B, X)$ denote
the observable components.  The new detail for the observable renormalisation group map
is that, to define $\Vobs_{j+1}$, we include the second order
contribution from $\Vobs_{j}$ in order to maintain better control on
the renormalisation group flow.  To this end, for $j+1\leq N$
and $B,B'\in\cB_{j}$, let 
\begin{equation} \label{e:Pobs-def}
  \begin{split}
    P^\obs(B,B')
    &=
    \frac12
      \E_{C_{j+1}}\qB{\theta (\Vobs_j(B)-Q^\obs(B)); \theta (\Vobs_j(B')-Q^\obs(B'))},
    \\
    P^\obs(B)
    &= \sum_{B' \in \cB_j} P^{\obs}(B,B')
    .
  \end{split}
\end{equation}
The following observations will be useful later. Since
$V^\obs(B), Q^\obs(B) \in \cV^\obs(B)$, the sum over $B'$ contains at
most two non-zero terms, corresponding to the blocks
containing $a$ and $b$.  Since the covariance matrix $C_{j+1}$ has the
finite range property \eqref{e:C-range}, also $P^\obs(B,B')=0$ for
$B\neq B'$ if $|a-b|_\infty \geq \frac12 L^{j+1}$.  Finally, if $a$
and $b$ are not in the same block, then $P^\obs(B,B)=0$ since
the source fields square to zero.

With these definitions in place, $ \uobs_{j+1}+\Vobs_{j+1}$ is defined
in the same way as $ u_{j+1}+ V_{j+1}$ with the addition of the second
order term $P^\obs$, and $\Kobs_{j+1}$ is then defined in the same way
as $K_{j+1}$:
\begin{definition} \label{def:V+obs}
The map $(V_j,K_j) \mapsto ({ \uobs_{j+1}},\Vobs_{j+1})$ is defined, for $B\in \cB_j$, by
\begin{equation}
  \label{E:Qobs}
  \uobs_{j+1}(B)+
  \Vobs_{j+1}(B) = 
  \E_{C_{j+1}}\qB{\theta (\Vobs_j(B) - Q^\obs(B))} - P^\obs(B)
\end{equation}
where $\uobs_{j+1}$ consists of
all monomials that do not contain factors of
$\psi$ or $\bar\psi$. Explicitly,
  \begin{align}
    \uobs_{j+1}=
    \begin{cases}
      -\sigma_a\bar\sigma_b q_{j+1}, &\text{Case (1)}, \\ 
    -\sigma_a\sigma_b q_{j+1} - \sigma_a \gamma_{a,j+1}
    -\sigma_b\gamma_{b,j+1}, &\text{Case (2)}.
    \end{cases}
  \end{align}
The map $(V_j,K_j)\mapsto \Kbulk_{j+1}+\Kobs_{j+1}$ is defined by the same formula
as in Definition~\ref{def:K+} except that $V^\varnothing$ and $u^\varnothing$
are replaced by 
$V=V^\varnothing+V^\obs$ and $u=u^\varnothing+u^\obs$.
\end{definition}

Propositions~\ref{prop:ZVK}
and~\ref{prop:consistency} also hold for
this extended definition of the renormalisation group map. The proofs
are the same as presented in Section~\ref{sec:step-def}.

\subsection{Estimates for the renormalisation group map with observables}
\label{sec:tyler-renorm-group}

In this section, the $O$-notation refers to scale $j+1$ norms, i.e.,
for $F,G \in \cN(\Lambda)$, we write $F=G+O(t)$ to denote that
$\|F-G\|_{T_{j+1}(\ell_{j+1})} \leq O(t)$. We use
  $\|V_{j}\|_{j}$ and $\|K_{j}\|_{j}$ defined in~\eqref{e:V-norm}--\eqref{e:K-norm},
  with the understanding that the right-hand sides of the definitions
  are the norm~\eqref{e:Tphi-obs} which accounts for source fields.

\begin{theorem} 
  \label{thm:step-obs} 
  Under the assumptions of Theorem~\ref{thm:step}, if also
  $\|\Vobs_j\|_j+\|\Kobs_j\|_j \leq \epsilon$ and $\uobs_j=0$, then for
  $j+1<N$ the observable components of the renormalisation group map
  $\Phi^\obs_{j+1}$ satisfy
  \begin{gather}
    \label{e:V+obs-bd}
    \uobs_{j+1}(\Lambda) +\Vobs_{j+1}(\Lambda)
    =  \E_{C_{j+1}}\qB{\theta \Vobs_j(\Lambda)} - \frac12 \E_{C_{j+1}}\qB{\theta \Vobs_j(\Lambda);\theta\Vobs_j(\Lambda)}
      + O(L^{2(d-2)}\|\Kobs_j\|_j) 
                      \\
    \label{e:K+obs-bd}
    \|\Kobs_{j+1}\|_{j+1} \leq O(L^{-\dplus}+A^{-\eta})\|\Kobs_j\|_j
    + O(A^{\nu}) (\|\Vbulk_j\|_j +\|K_{j}\|_{j})      (\|V_j\|_j+\|K_j\|_j),
  \end{gather}
  provided that $K^{ab}_j(X)=0$ for $X\in \cS_j$ if
  $j<j_{ab}$. Both $\eta=\eta(d)$ and $\nu=\nu(d)$ are
  positive geometric constants.
  For $j+1=N$, $\Phi^\obs_{N}$ is bounded.
\end{theorem}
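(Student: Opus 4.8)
The plan is to run the proof of Theorem~\ref{thm:step} again, now tracking the decomposition~\eqref{e:Nabdecomp} of $\cN(\Lambda)$ into the bulk sector and the observable sectors $\cN^a,\cN^b,\cN^{ab}$. Since $\pi_\varnothing\Phi_{j+1}=\Phi^\varnothing_{j+1}\circ\pi_\varnothing$, the bulk estimates are exactly those already proved, so it suffices to bound $\uobs_{j+1}+\Vobs_{j+1}$ and $\Kobs_{j+1}$. Three features of the observable setting must be accommodated throughout: the extra second-order term $P^\obs$ in Definition~\ref{def:V+obs}; the loss of the monotonicity estimate in the observable sectors, for which we substitute~\eqref{e:Tphi-obs-ub} (this is the source of all the $L^{2(d-2)}$ factors and of the $A^\nu$); and the fact that the contraction estimate~\eqref{e:Loc-obs-contract} for $\Loc^\obs$ holds in the $ab$ sector only once $j\geq j_{ab}$ (or when $F^{ab}=0$), which is exactly why the hypothesis $K^{ab}_j(X)=0$ for $X\in\cS_j$ when $j<j_{ab}$ is imposed.

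For~\eqref{e:V+obs-bd}: summing~\eqref{E:Qobs} over $B\in\cB_j$ and using additivity of $V^\obs$, the leading term is $\sum_B\E_{C_{j+1}}\theta\Vobs_j(B)=\E_{C_{j+1}}\theta\Vobs_j(\Lambda)$, while expanding $P^\obs(B,B')=\tfrac12\E_{C_{j+1}}(\theta(\Vobs_j(B)-Q^\obs(B));\theta(\Vobs_j(B')-Q^\obs(B')))$ the $\Vobs$--$\Vobs$ piece assembles (over the at most two nonzero pairs) into $\tfrac12\E_{C_{j+1}}(\theta\Vobs_j(\Lambda);\theta\Vobs_j(\Lambda))$. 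Every remaining term carries at least one factor of $Q^\obs$ times a factor of size $O(\epsilon)$; since $\|Q^\obs(B)\|_{T_j(\ell_j)}=O(\|\Kobs_j\|_j)$ by boundedness of $\Loc^\obs$ (Proposition~\ref{prop:Loc-obs}(i)) and the $O(1)$ bound on small sets through a block, the product property, contractivity of $\E_{C_{j+1}}\theta$ and $\E_{C_{j+1}}$, and~\eqref{e:Tphi-obs-ub} bound these remainders, and likewise $-\sum_B\E_{C_{j+1}}\theta Q^\obs(B)$, by $O(L^{2(d-2)}\|\Kobs_j\|_j)$. This gives~\eqref{e:V+obs-bd}; the same estimates yield the scale-$(j+1)$ bound $\|\Vobs_{j+1}\|_{j+1}+\|\uobs_{j+1}\|_{j+1}\leq\|\Vobs_j\|_j+O(\|\Kobs_j\|_j)$, the observable analogue of~\eqref{E:vj}, which feeds the next step.

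For $\Kobs_{j+1}$: replay Sections~\ref{sec:K-smallset}--\ref{sec:K-nonlinear} on $\pi_\obs$ of the decomposition $K_{j+1}=(K_{j+1}-\cL_{j+1})+\cL_{j+1}$ from~\eqref{e:K-linear}, with every occurrence of $\Loc$, $V$, $u$, $\delta I$, $\check K$, $J$ replaced by its observable-extended version. In the linear part $\pi_\obs\cL_{j+1}$, for small sets one uses $\sum_{B\in\cB_j(X)}\E_{C_{j+1}}\theta J^\obs(B,X)=\E_{C_{j+1}}\theta\Loc^\obs_X\Kobs_j(X)$ together with the contraction~\eqref{e:Loc-obs-contract} (applicable by the hypothesis $K^{ab}_j(X)=0$ on small $X$ when $j<j_{ab}$), whose net gain $L^{-\dplus}$ already absorbs the lost monotonicity; since only $O(1)$ small sets have nonvanishing observable component (they must lie near $a$ or near $b$), this contributes $O(L^{-\dplus})\|\Kobs_j\|_j$. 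Large sets lose combinatorial entropy under reblocking via Lemma~\ref{lem:comb1} and contribute $O(A^{-\eta})\|\Kobs_j\|_j$ for $A$ large, and the $\delta I$-contributions together with the nonlinear remainders $\cR^1,\cR^2,\cR^3$ are bounded by the arguments of Section~\ref{sec:K-nonlinear} using the observable version of Lemma~\ref{lem:NonlinearCheckK}. The one genuinely new point is the form of the source term: the $(\Vobs_j)^2$ contributions that would otherwise land in $\Kobs_{j+1}$ are precisely those removed by the $P^\obs$ subtraction in~\eqref{E:Qobs}, consistently with the free computation of Section~\ref{sec:obs-free}, where $\Kobs_{j+1}=0$ whenever $\Vbulk_j=0$ and $K_j=0$ (the three contributions to a single $(j+1)$-block coming from the polymers $B_a$, $B_b$, $B_a\cup B_b$ cancel by nilpotency of $\sigma_a,\sigma_b$). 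Hence every surviving quadratic term carries a factor from $\|\Vbulk_j\|_j+\|K_j\|_j$, and analyticity of $\Phi^\obs_{j+1}$ in $(V_j,K_j)$ (Cauchy estimate) converts this into the claimed $O(A^\nu)(\|\Vbulk_j\|_j+\|K_j\|_j)(\|V_j\|_j+\|K_j\|_j)$, with $A^\nu$ absorbing reblocking entropy. This proves~\eqref{e:K+obs-bd}.

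Finally, at $j+1=N$ one has $Q=J=0$, so the only contraction mechanism (finite range of the $C_j$) is unavailable; boundedness of $\Phi^\obs_N$ is then immediate from the product property and contractivity of $\E_{C_N}\theta$ and $\E_{C_N}$ applied to the explicit formulas for $\uobs_N+\Vobs_N$ (including the $P^\obs$ term) and $\Kobs_N$, exactly as for $\Phi_N$ in Theorem~\ref{thm:step}. I expect the main obstacle to be organisational rather than conceptual: the bookkeeping around the coalescence scale $j_{ab}$, i.e.\ arranging the $ab$ sector so that~\eqref{e:Loc-obs-contract} always applies, and checking that every weight ratio $\ell_{a,j+1}/\ell_{a,j}$, $\ell_{ab,j+1}/\ell_{ab,j}$ is an $L$-dependent constant so that the loss of monotonicity never spoils the key inequality $O(L^{-\dplus}+A^{-\eta})<1$.
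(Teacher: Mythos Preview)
Your outline is on the right track and matches the paper's strategy in the bulk of the argument: the proof of~\eqref{e:V+obs-bd} is essentially as you describe, and the treatment of the linear-in-$\Kobs_j$ contributions via the $\Loc^\obs$ contraction (small sets) and reblocking combinatorics (large sets) is correct. The gap is in how you handle the $\delta I$ contributions to $\pi_\obs\cL_{j+1}$.

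You write that ``the $\delta I$-contributions together with the nonlinear remainders $\cR^1,\cR^2,\cR^3$ are bounded by the arguments of Section~\ref{sec:K-nonlinear}'' and that the $(\Vobs_j)^2$ terms ``are precisely those removed by the $P^\obs$ subtraction'', invoking the free case $K^{\obs}_{j+1}=0$ and a Cauchy estimate. The first sentence is not correct as stated: the $(\delta I)^X$ terms live in $\cL_{j+1}$, not in the $\cR^i$, and the bulk argument (Lemma~\ref{lem:smallset-V}) would only give $O(\|V_j\|_j^2+\|K_j\|_j^2)$ on $\pi_\obs\E_{C_{j+1}}(\delta I(B)+\theta Q(B))$, which is useless since $\|\Vobs_j\|_j$ is not small. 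The cancellation you need is not a consequence of nilpotency alone, nor does it follow from Proposition~\ref{prop:ZVK}: the representation~\eqref{e:ZVK} is not unique, so knowing $Z_{j+1}=e^{-(u+V)^\obs_{j+1}(\Lambda)}$ in the free case does not imply that the \emph{explicit} formula~\eqref{e:K+-def} yields $K_{j+1}(U)=1_{U=\varnothing}$. Verifying that it does requires exactly the block-by-block cancellation between $\delta I(B)$, the cross terms $\delta I(B_a)\delta I(B_b)$, and the $P^\obs$ contribution hidden in $(u+V)^\obs_{j+1}$.

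The paper carries this out by reorganising the small-set part of $\cL_{j+1}$: the quadratic terms $\tfrac12\sum_{B'\neq B,\,B'\subset\bar B}\delta I(B)\delta I(B')$ are moved from the large-set/nonlinear side into the single-block expression, and Lemma~\ref{lem:smallset-Vobs} (using the algebraic identity of Lemma~\ref{lem:Pjj+1}) shows that this combination is $O((\|V_j\|_j+\|K_j\|_j)(\|\Vbulk_j\|_j+\|K_j\|_j))$. This is the actual content of the theorem; your proposal names it but does not supply it. In addition, Lemma~\ref{lem:smallset-Vobs1} is needed to handle the $\pi_\obs(\delta I)^X$ contributions that are \emph{not} absorbed by this reorganisation---notably the two-block polymers $X=B_a\cup B_b$ when $B_a,B_b$ lie in distinct $(j{+}1)$-blocks, where one must use the finite-range property $C_{j+1}(a,b)=0$ (see~\eqref{e:deltaI-obs-4}). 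Your analyticity shortcut does not obviously cover this case either, since the relevant vanishing relies on the finite-range structure of $C_{j+1}$ rather than on nilpotency.
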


The first estimate in the theorem expresses that the evolution
of $\uobs+\Vobs$ is given by second-order perturbation theory, plus
a higher order remainder due to $\Kobs$. The second estimate states
that $\Kobs$ is contracting (for $L$ and $A$ large), up to error
terms at most as large as the bulk coupling constants $\Vbulk$ and
$K=\Kbulk+\Kobs$.  The additional factor
$\|V_j\|_j+\|K_j\|_j \geq \|\Vobs_j\|_j$ will be small (but of order
$1$) while all other coordinates will be exponentially small in $j$.
Indeed, as a consequence of the above theorem,
Proposition~\ref{prop:flow-obs} below states that if the bulk flow
$(\Vbulk,\Kbulk)$ is as constructed in Section~\ref{sec:flow} then
$\Vobs$ remains bounded while $\Kobs$ goes to $0$ exponentially fast.

The proof of the theorem follows that of Theorem~\ref{thm:step} closely,
with improvements for the leading terms
that allow for $\Vobs$ to be tracked to second order.
It is given in the remainder of this subsection.
The reader may again wish to skip the details of this proof on a first reading
and proceed to the application of these estimates Section~\ref{sec:flow-obs}.

\subsubsection{Coupling constants}

We first give a bound on
$u^{\obs}_{j+1}(\Lambda)+ \Vobs_{j+1}(\Lambda)$.   By
Proposition~\ref{prop:Loc-obs} (iii), 
\begin{equation}
  Q^{\obs}(\Lambda) =
  \sum_{X\in\cS_{j}}\Loc^{\obs}_{X}
  K_{j}(X).
\end{equation}
Since only small sets $X$ that contain $a$ or $b$ contribute,
Proposition~\ref{prop:Loc-obs} (i) implies
\begin{equation}
  \label{e:Qobsbd}
  \|Q^{\obs}(\Lambda)\|_{T_{j}(\ell_{j})}\leq  O(1)
  \|\Kobs_j\|_{j}. 
\end{equation} 
By algebraic manipulation, the product property, 
that $\E_{C_{j+1}}\theta$ is a contraction,
\eqref{e:Tphi-obs-ub}, and~\eqref{e:Qobsbd}, 
\begin{align}
  P^{\obs}(\Lambda)
  &= \frac{1}{2}\E_{C_{j+1}}\qB{\theta V^{\obs}_{j}(\Lambda);\theta V^{\obs}_{j}(\Lambda)}
    +\E_{C_{j+1}}\qB{\theta Q^{\obs}_{j}(\Lambda);\theta
    (V^{\obs}_{j}(\Lambda)+\frac12 Q^\obs_j(\Lambda))}
    \nnb
  &= \frac{1}{2}\E_{C_{j+1}}\qB{\theta V^{\obs}_{j}(\Lambda);\theta V^{\obs}_{j}(\Lambda)}
    + O(L^{4(d-2)}\|\Kobs_{j}\|_{j} (\|\Vobs_{j}\|_{j}+\|\Kobs_{j}\|_{j})).
\end{align}
Putting these pieces together establishes~\eqref{e:V+obs-bd} as
$L^{2(d-2)}(\|V_{j}\|_{j}+\|K_{j}\|_{j})\leq 1$ if
$\epsilon=\epsilon(L)$ is small enough.
An immediate consequence is
\begin{align}
  \label{e:uobs+}
  \|u^{\obs}_{j+1}(\Lambda)\|_{T_{j+1}(\ell_{j+1})} 
  &\leq O(\|\Vobs_{j}\|_{j} + L^{2(d-2)}\|\Kobs_{j}\|_{j}), \\
  \label{e:vobs+}
  \|\Vobs_{j+1}(\Lambda)\|_{T_{j+1}(\ell_{j+1})} 
  &\leq O(\|\Vobs_{j}\|_{j} + L^{2(d-2)}\|\Kobs_{j}\|_{j}).
\end{align}
The same bounds hold with $\Lambda$ replaced by any $X\in\cP_j$.
These will be used in the following analysis.

\subsubsection{Small sets}

The most significant improvement in the analysis concerns small sets,
which we now analyse to second order. To simplify notation, we write
\begin{equation}
  \hVobs_j = \Vobs_j-Q^{\obs}, \qquad \tVobs_{j+1} = \uobs_{j+1}+\Vobs_{j+1}.
\end{equation}

\begin{lemma}
  \label{lem:Pjj+1}
  For any $B,B' \in \cB_j$,
  \begin{equation} \label{e:Pjj+1}
    P^\obs(B,B') = \frac12
    \E_{C_{j+1}}\qB{\theta \hVobs_j(B)\theta \hVobs_j(B')}
    -\frac12 \tVobs_{j+1}(B)\tVobs_{j+1}(B')
    .
  \end{equation}
\end{lemma}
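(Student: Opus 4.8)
The plan is to unfold every definition and reduce \eqref{e:Pjj+1} to a purely algebraic identity whose only input is the nilpotency of the observable fields. Writing $\hVobs_j(B)=\Vobs_j(B)-Q^\obs(B)$ and $\tVobs_{j+1}=\uobs_{j+1}+\Vobs_{j+1}$ as above, the definition \eqref{E:Qobs} gives $\tVobs_{j+1}(B)=\E_{C_{j+1}}\theta\hVobs_j(B)-P^\obs(B)$ with $P^\obs(B)=\sum_{B'\in\cB_j}P^\obs(B,B')$, and unwinding the cumulant in \eqref{e:Pobs-def},
\begin{equation}
  P^\obs(B,B') = \frac12\,\E_{C_{j+1}}\big(\theta\hVobs_j(B)\,\theta\hVobs_j(B')\big) - \frac12\,\big(\E_{C_{j+1}}\theta\hVobs_j(B)\big)\big(\E_{C_{j+1}}\theta\hVobs_j(B')\big).
\end{equation}
Comparing with the right-hand side of the claimed \eqref{e:Pjj+1}, it therefore suffices to prove the single identity
\begin{equation}
  \tVobs_{j+1}(B)\,\tVobs_{j+1}(B') = \big(\E_{C_{j+1}}\theta\hVobs_j(B)\big)\big(\E_{C_{j+1}}\theta\hVobs_j(B')\big).
\end{equation}

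First I would record two structural facts. (a) Every monomial of $\Vobs_j$ and $\uobs_j$ (Definition~\ref{def:cVobs}), hence of $Q^\obs$, carries at least one factor of $\sigma$, so $\hVobs_j\in\cN^\obs$; and since $\theta$ and $\E_{C_{j+1}}$ act only on the $\psi,\bar\psi$ variables (respectively the doubled $\zeta,\bar\zeta$), they commute with the projections $\pi_\varnothing,\pi_a,\pi_b,\pi_{ab}$ of \eqref{e:Nabdecomp}; in particular $\E_{C_{j+1}}\theta\hVobs_j(B)$ still has vanishing $\cN^\varnothing$-component. (b) $P^\obs(B,B')$ is assembled from products of two elements of $\cN^\obs$ (and $\E_{C_{j+1}}$ respects \eqref{e:Nabdecomp}), and by the nilpotency $\sigma_a^2=\sigma_b^2=0$ (respectively $\sigma_a^2=\bar\sigma_b^2=0$ in Case~(1)) every product of two elements of $\cN^\obs$ lies in $\cN^{ab}$; hence $P^\obs(B),P^\obs(B')\in\cN^{ab}$.

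Then I would expand $\tVobs_{j+1}(B)\,\tVobs_{j+1}(B')=\big(\E_{C_{j+1}}\theta\hVobs_j(B)-P^\obs(B)\big)\big(\E_{C_{j+1}}\theta\hVobs_j(B')-P^\obs(B')\big)$ and check that the three correction terms vanish. Each of $P^\obs(B)\,\E_{C_{j+1}}\theta\hVobs_j(B')$, $\;\E_{C_{j+1}}\theta\hVobs_j(B)\,P^\obs(B')$, and $P^\obs(B)\,P^\obs(B')$ is a product of an element of $\cN^{ab}$ with an element of $\cN^\obs$ (using (a) to rule out an $\cN^\varnothing$ contribution from the non-$P^\obs$ factor), and any such product is $0$ by nilpotency of the observable fields, irrespective of the order of the factors. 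What remains is exactly the displayed identity, and substituting it back into the formula for $P^\obs(B,B')$ yields \eqref{e:Pjj+1}.

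I do not anticipate a genuine obstacle: the argument is entirely algebraic, and the only care needed is the Case~(1)/Case~(2) bookkeeping of which pairs among $\{\sigma_a,\bar\sigma_b\}$ and $\{\sigma_a,\sigma_b\}$ multiply to zero; the logic is identical in both cases, so I would state it once and note that it applies verbatim. One should also confirm that $Q^\obs(B)\in\cV^\obs(B)$, so that $\hVobs_j(B)$ genuinely lies in $\cN^\obs$, which is immediate from Proposition~\ref{prop:Loc-obs}(iv) together with Definition~\ref{def:cVobs}.
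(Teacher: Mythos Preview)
Your proof is correct and follows essentially the same approach as the paper: both arguments reduce to the identity $\tVobs_{j+1}(B)\tVobs_{j+1}(B') = (\E_{C_{j+1}}\theta\hVobs_j(B))(\E_{C_{j+1}}\theta\hVobs_j(B'))$, observe that $P^\obs(B,B')\in\cN^{ab}$ by nilpotency of the observable fields, and then note that the cross terms in the expansion of the left-hand side vanish because any product of an element of $\cN^{ab}$ with an element of $\cN^\obs$ is zero. Your write-up is slightly more explicit (e.g.\ in justifying $Q^\obs(B)\in\cV^\obs(B)$ and that $\E_{C_{j+1}}\theta$ preserves the decomposition \eqref{e:Nabdecomp}), but the logic is identical.
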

\begin{proof}
  Note that $P^{\obs}(B,B')=\frac{1}{2}\E_{C_{j+1}}\q{\theta \hVobs_{j}(B);\theta\hVobs_{j}(B')}$.
  Since it is quadratic in $\hVobs_j\in\cV^\obs$, $P^\obs(B,B')$ can
  only contain monomials with a factor of $\sigma_a\bar{\sigma}_b$
  (Case (1)) or $\sigma_a{\sigma_b}$ (Case (2)) because
  $\sigma_a^2=\sigma_b^2=\bar{\sigma}_{b}^{2}=0$.  Similarly, for any
  $W \in \cV^\obs$ and $B,B',B'' \in \cB_j$, it follows that
  $P^\obs(B,B') W (B'')=0$.  The claim follows as this implies that
  $(\E_{C_{j+1}}[\theta \hVobs_j(B)])(\E_{C_{j+1}}[\theta \hVobs_j(B')])$
  is the same as
  \begin{equation}
    \pB{\E_{C_{j+1}}\qB{\theta \hVobs_j(B)} -P^\obs(B)}\pB{\E_{C_{j+1}}\qB{\theta\hVobs_j(B')} -P^\obs(B')}
    = \tVobs_{j+1}(B)\tVobs_{j+1}(B'). \qedhere
  \end{equation}
\end{proof}

The next lemmas are analogues of
Lemmas~\ref{lem:smallset-V}--\ref{lem:smallset-V1} that apply to
the observable components.
We begin with the replacement for Lemma~\ref{lem:smallset-V1}.
For $B\in \cB_j$, recall $\bar{B}$ denotes the scale $j+1$-block containing $B$.

\begin{lemma}
\label{lem:smallset-Vobs1}
Suppose that 
$\|V_j\|_j+\|K_{j}\|_j\leq 1$.
  Then for any $X \in \cP_j$, denoting by $n\in\{0,1,2\}$ the number of $B\in \cB_j(X)$ containing $a$ or $b$,
  \begin{equation} \label{e:smallset-Vobs-1}
    \normB{\pi_\obs \E_{C_{j+1}}\qB{(\delta I)^X}}_{T_{j+1}(\ell_{j+1})} \leq
    O
    (\|V_j\|_j+ L^{2(d-2)}\|K_j\|_j)^n
    (O(\|\Vbulk_j\|_j+\|\Kbulk_j\|_j))^{|\cB_j(X)|-n}
    .
  \end{equation}
  
  For any $B\in \cB_j$ such that $\bar B$ contains at most one of $a$ and $b$,
  \begin{equation}
    \label{E:Blockobs}
    \normB{\pi_\obs \E_{C_{j+1}}\qB{\delta I(B)}}_{T_{j+1}(\ell_{j+1})} \leq
    O(L^{2(d-2)}\|\Kobs_j\|_j)+O(\|V_j\|_j+L^{2(d-2)}\|K_j\|_j)(\|\Vbulk_j\|_j+\|\Kbulk_j\|_j).
  \end{equation}
  Moreover if $|a-b|_\infty \geq \frac12 L^{j+1}$ then for any $X\in\cP_j$ with $|\cB_j(X)|=2$, 
  \begin{equation} \label{e:deltaI-obs-4}
    \normB{\pi_\obs \E_{C_{j+1}}\qB{(\delta I)^X}}_{T_{j+1}(\ell_{j+1})}
    \leq O((\|V_j\|_j + L^{2(d-2)}\|K_j\|_j)(\|\Vbulk_j\|_j + L^{2(d-2)}\|K_j\|_j)).
  \end{equation}
\end{lemma}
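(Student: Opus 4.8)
Write $\delta I^\varnothing(B)=\pi_\varnothing\delta I(B)$ and $\delta I^\obs(B)=\pi_\obs\delta I(B)$. The plan is to decompose $(\delta I)^X=\prod_{B\in\cB_j(X)}\delta I(B)$ along \eqref{e:Nabdecomp}, to control the purely bulk factors by the estimates obtained in the proof of Lemma~\ref{lem:smallset-V1}, and to extract from each observable factor a leading fluctuation-field term whose cancellation is responsible for the improved bounds \eqref{E:Blockobs} and \eqref{e:deltaI-obs-4}. Since $\cN^\varnothing$ is a subalgebra and $\E_{C_{j+1}}$ acts only on the doubled bulk variables, $\pi_\obs$ commutes with $\E_{C_{j+1}}$; moreover $\delta I^\varnothing(B)$ coincides with the purely bulk $\delta I(B)$ (because $\pi_\varnothing e^{-W}=1$ for $W\in\cV^\obs$, using $W^3=0$), and $\delta I^\obs(B)=0$ unless $B$ contains $a$ or $b$, since $\Vobs_j(B)$, $Q^\obs(B)$, $P^\obs(B)$ then all vanish. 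Expanding the product and using that $\delta I^\varnothing(B)$ is even hence central gives
\begin{equation}
  \pi_\obs(\delta I)^X=\sum_{\varnothing\neq S}\Bigl(\prod_{B\notin S}\delta I^\varnothing(B)\Bigr)\prod_{B\in S}\delta I^\obs(B),
\end{equation}
the sum over nonempty subsets $S$ of the (at most $n\le2$) blocks of $\cB_j(X)$ meeting $\{a,b\}$.

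One then uses the product property, the contraction of $\E_{C_{j+1}}$, the bulk bound $\|\delta I^\varnothing(B)\|_{T_{j+1}(\ell_{j+1})}=O(\|\Vbulk_j\|_j+\|\Kbulk_j\|_j)$ from \eqref{e:deltaIbd}, and the observable-factor bound
\begin{equation} \label{e:planobs}
  \|\delta I^\obs(B)\|_{T_{j+1}(\ell_{j+1})}=O(\|V_j\|_j+L^{2(d-2)}\|K_j\|_j),\qquad B\cap\{a,b\}\neq\varnothing.
\end{equation}
Since $\|\Vbulk_j\|_j+\|\Kbulk_j\|_j\le\|V_j\|_j+L^{2(d-2)}\|K_j\|_j$ and there are at most $2^n\le4$ sets $S$, \eqref{e:smallset-Vobs-1} follows: the term for each $S$ is dominated by the asserted bound with $|S|$ replaced by $n$. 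To prove \eqref{e:planobs}, expand $\delta I^\obs(B)$ from $\delta I(B)=\theta e^{-V_j(B)}-e^{-(u+V)_{j+1}(B)}$ by factoring each exponential into its bulk and observable parts; since $\pi_\obs e^{-W}=-W+\tfrac12 W^2$ for $W\in\cV^\obs$, this reduces matters to bounding $\Vobs_j(B)$, $Q^\obs(B)$, $P^\obs(B)$, and $(\uobs+\Vobs)_{j+1}(B)$ in the scale-$(j+1)$ norm. This is done via the product property, $\|\theta e^{-\Vbulk_j(B)}\|_{T_{j+1}(\ell_{j+1})}=O(1)$, the analogue of \eqref{e:Qobsbd} for a block, the weak monotonicity \eqref{e:Tphi-obs-ub} (which supplies the $L^{2(d-2)}$ on the $K$-dependent terms from $Q^\obs$ and $P^\obs$), and \eqref{e:vobs+}--\eqref{e:uobs+}; the $\|V_j\|_j$-part is the contribution of $\Vobs_j(B)$ itself, all of whose monomials are marginal along the flow (where $\uobs_j=0$ and $K^{ab}_j=0$ on small sets for $j<j_{ab}$), so that $\|\Vobs_j(B)\|_{T_{j+1}(\ell_{j+1})}=O(\|\Vobs_j\|_j)$.

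For \eqref{E:Blockobs} and \eqref{e:deltaI-obs-4} the hypotheses force $C_{j+1}(a,b)=0$; then $P^\obs(B)=0$ and $\Vobs_j(B)$, $Q^\obs(B)$ contain only single-$\sigma$ monomials $W$, for which $W^2=0$. Unwinding Definition~\ref{def:V+obs} then shows that the linear-in-$\sigma$ part of $\theta e^{-V_j(B)}$ is exactly matched by $(\uobs+\Vobs)_{j+1}(B)$ up to a pure fluctuation-field remainder annihilated by $\E_{C_{j+1}}$, leaving
\begin{equation}
  \E_{C_{j+1}}\delta I^\obs(B)=-\E_{C_{j+1}}\theta Q^\obs(B)-\E_{C_{j+1}}\theta\bigl((e^{-\Vbulk_j(B)}-1)\Vobs_j(B)\bigr)+\bigl(e^{-(u+V)^\varnothing_{j+1}(B)}-1\bigr)(\uobs+\Vobs)_{j+1}(B).
\end{equation}
Each term carries a bulk-order factor ($L^{2(d-2)}\|\Kobs_j\|_j$, $\|\Vbulk_j\|_j$, or $\|\Vbulk_j\|_j+\|\Kbulk_j\|_j$, respectively) times an observable factor $O(\|V_j\|_j+L^{2(d-2)}\|K_j\|_j)$; estimating as in the previous paragraph yields \eqref{E:Blockobs}. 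For \eqref{e:deltaI-obs-4} with $X=\{B_1,B_2\}$, $a\in B_1$, $b\in B_2$ (the other configurations give an observable factor times a bulk factor, already covered), the $ab$-component of $(\delta I)^X$ is $\delta I^\obs(B_1)\delta I^\obs(B_2)$. Expanding each factor as above isolates a leading piece $\propto\lambda\,\sigma$ times a fluctuation monomial supported at the corresponding point; the leading-times-leading contribution vanishes after $\E_{C_{j+1}}$ by the finite range factorisation together with the cancellation (it would require a contraction $\E_{C_{j+1}}(\bar\zeta_a\zeta_b)=\pm C_{j+1}(a,b)=0$), while every other contribution carries a bulk-order correction factor, so that $\|\E_{C_{j+1}}(\delta I^\obs(B_1)\delta I^\obs(B_2))\|_{T_{j+1}(\ell_{j+1})}=O((\|V_j\|_j+L^{2(d-2)}\|K_j\|_j)(\|\Vbulk_j\|_j+L^{2(d-2)}\|K_j\|_j))$ by the product property and the contraction bound.

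The main obstacle is this cancellation structure: one must verify that the definition of $(\uobs+\Vobs)_{j+1}$ in Definition~\ref{def:V+obs} is arranged precisely so that it cancels the would-be linear-in-$\sigma$ leading part of $\theta e^{-V_j(B)}$ up to the controllable remainders above, both at the level of a single block (for \eqref{E:Blockobs}) and, after applying $\E_{C_{j+1}}$ and using $C_{j+1}(a,b)=0$, at the level of the two-block product (for \eqref{e:deltaI-obs-4}). The bulk factors are handled mechanically by re-invoking the proof of Lemma~\ref{lem:smallset-V1}, and the powers of $L$ arising from the change of scale of the observable weights are routine via \eqref{e:Tphi-obs-ub}. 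Cases~(1) and~(2) are treated identically.
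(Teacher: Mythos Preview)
Your proof is essentially correct and follows the same approach as the paper: decompose $(\delta I)^X$ along $\pi_\varnothing$/$\pi_\obs$, control bulk factors via Lemma~\ref{lem:smallset-V1}, and for the refined bounds exploit that the definition of $(\uobs+\Vobs)_{j+1}(B)$ is arranged so that the leading observable term cancels under $\E_{C_{j+1}}$. Your explicit formula for $\E_{C_{j+1}}\delta I^\obs(B)$ in the single-block case is correct and is a slightly more explicit version of what the paper writes; the paper phrases the same thing as ``the term linear in $V_j^\obs(B)$ cancels in expectation'' via the expansion \eqref{e:expVobs-expansion}. For \eqref{e:deltaI-obs-4} both you and the paper use that $C_{j+1}(a,b)=0$ makes the $a$- and $b$-localised leading parts factor under $\E_{C_{j+1}}$, so the leading-times-leading contribution vanishes.

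One small imprecision: you assert that under the hypotheses ``$Q^\obs(B)$ contain only single-$\sigma$ monomials.'' This is not guaranteed by the lemma's hypotheses alone, since $\Loc^\obs_{X,B}$ applied to a nonzero $K_j^{ab}(X)$ with $X\in\cS_j$, $X\supset B$, can produce a $\sigma_a\sigma_b$- (or $\sigma_a\bar\sigma_b$-) term in $Q^\obs(B)$. What you actually need and use is only that $\Vobs_j(B)^2=0$ and $(\uobs+\Vobs)_{j+1}(B)^2=0$, and this \emph{does} hold: when $a\in B$, $b\notin B$, every term in either carries a $\sigma_a$-factor, so the square vanishes by $\sigma_a^2=0$ regardless of any $\sigma_a\sigma_b$-contribution. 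The paper's proof states exactly this weaker (and sufficient) fact. With that adjustment your argument is complete.
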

\begin{proof}
  Throughout the proof, we will use that 
  for $V$ representing either $V_j$ or $u_{j+1}+V_{j+1}$ one has
  \begin{align} 
    \pi_\obs e^{-V(B)}
    &= \pi_\obs (e^{-V^\varnothing(B)-V^\obs(B)})\nnb
    &= -V^\obs(B) + \frac12 V^\obs(B)^2 + O(\|V^\obs(B)\|_{T_{j+1}(\ell_{j+1})}\|V^\varnothing(B)\|_{T_{j+1}(\ell_{j+1})}),
      \label{e:expVobs2-expansion}
  \end{align}
  where we recall that the $O$-notation refers to terms whose $T_{j+1}(\ell_{j+1})$-norms are bounded by the indicated numbers, up to multiplicative constants.
  For both of the choices for $V$, one has 
$\|V^\varnothing (B)
  \|_{T_{j+1}(\ell_{j+1})}\leq \|V_j^\varnothing\|_j+O(\|K_j^\varnothing\|_j) \leq O(1)$ by  \eqref{E:vj}
  and $\|V^\obs (B)\|_{T_{j+1}(\ell_{j+1})}\leq O(\|V_j^\obs\|_j+L^{2(d-2)}\|K_j^\obs\|_j)$
  by using \eqref{e:uobs+}--\eqref{e:vobs+} (with $B$ instead of $\Lambda$).

  To show \eqref{e:smallset-Vobs-1}, for each $B\in\cB_j$, write
  $\delta I(B) = \pi_{\varnothing}\delta I(B) + \pi_{\obs}\delta I(B)$
  and expand the product defining $(\delta I)^{X}$ 
  using that there are $n$ blocks $B$ for which $\pi_\obs \delta I(B)\neq 0$.
  The claim then follows since 
  $\|\pi_{\varnothing} \delta I(B)\|_{T_{j+1}(\ell_{j+1})} \leq O(\|\Vbulk_j\|_j + \|\Kbulk_j\|_j)$
  by Lemma~\ref{lem:smallset-V1} and
  $\|\pi_{\obs}\delta I(B)\|_{T_{j+1}(\ell_{j+1})}=O(\|\Vobs_{j}\|_{j}+L^{2(d-2)}\|\Kobs_{j}\|_{j})$
  which follows from the previous paragraph (as the doubling
    map commutes with $\pi_{\obs}$).

  For the bound \eqref{E:Blockobs}, using that $B$ can contain only
  $a$ or $b$ by assumption and that source fields square to zero, 
  one has $V^\obs(B)^2=0$  for $V$ either $V_j$ or $u_{j+1}+V_{j+1}$.
  Thus \eqref{e:expVobs2-expansion} simplifies to
  \begin{equation} \label{e:expVobs-expansion}
    \pi_\obs e^{-V(B)} = \pi_\obs (e^{-V^\varnothing(B)-V^\obs(B)})
    = -V^\obs(B) + O(\|V^\obs(B)\|_{T_{j+1}(\ell_{j+1})}\|V^\varnothing(B)\|_{T_{j+1}(\ell_{j+1})}).
  \end{equation}
  Observe that $P^\obs(B)=0$ since $\bar B$ contains only one of $a$
  and $b$, see the remark below \eqref{e:Pobs-def}. As a result,
  \eqref{E:Qobs} and 
  the above show that the term linear in $V_j^\obs(B)$ in $\pi_\obs \E_{C_{j+1}}\delta I(B)$ cancels
  in expectation. The claim \eqref{E:Blockobs} then follows from
  $\|\E_{C_{j+1}}\theta Q^\obs(B)\|_{T_{j+1}(\ell_{j+1})}=O(L^{2(d-2)}\|K_j^\obs\|_j)$
  by \eqref{e:Qobsbd} and \eqref{e:Tphi-obs-ub},
  and bounding the quadratic terms using \eqref{E:vj} and
  \eqref{e:uobs+}--\eqref{e:vobs+} as  below \eqref{e:expVobs2-expansion}.

  For the final assertion \eqref{e:deltaI-obs-4}, we first show that
  $\E_{C_{j+1}}(\pi_\obs\delta I)^X=L^{4(d-2)}O(\|\Vbulk_j\|_j +
  \|K_j\|_j)(\|\Vobs_j\|_j + \|\Kobs_j\|_j)$, where we emphasise that
  $\pi_\obs$ is inside the product over $X$.  To see this bound,
  let $X = B\cup B'$, and note that $V^\obs(B)$ and
  $V^\obs(B')$ are either $0$ or polynomials in $\psi_a,\bar\psi_a$
  and $\psi_b,\bar\psi_b$ respectively.  Since by assumption
  $C_{j+1}(a, b)=0$, 
    $\E_{C_{j+1}}\theta V^{\obs}(B) V^{\obs}(B') =0$. Hence a
  nonvanishing contribution to $\E_{C_{j+1}}(\pi_\obs\delta I)^X$
  involves at least one factor $\Vbulk$ from the expansion of the
  $\delta I$ by \eqref{e:expVobs2-expansion}. The factor of
  $L^{4(d-2)}$ arises from applying \eqref{e:Tphi-obs-ub}.
  The estimate \eqref{e:deltaI-obs-4} now follows similarly to
  the previous cases: 
  \begin{equation}
    \pi_\obs \E_{C_{j+1}}( \delta I)^X
    =\pi_\obs \E_{C_{j+1}}( \pi_\varnothing \delta I + \pi_\obs \delta I)^X
    =O((\|V_j\|_j + L^{2(d-2)}\|K_j\|_j)(\|\Vbulk_j\|_j + L^{2(d-2)}\|K_j\|_j))
  \end{equation}  
  as the cross terms with one factor $\pi_{\obs}$ and one factor
  $\pi_{\varnothing}$ satisfy this bound as above.
\end{proof}

Next we replace Lemma~\ref{lem:smallset-V}. Unlike before we
explicitly consider terms arising from two blocks,
in order to obtain a cancellation up to a third order error in $\Vobs$.
Indeed, note that the right-hand side of   \eqref{e:smallset-Vobs-2} involves
$\|V_j\|_j\|\Vbulk_j\|_j \leq (\|\Vbulk_j\|_j+\|\Vobs_j\|_j)\|\Vbulk_j\|_j$ but no term $\|\Vobs_j\|_j^2$,
and that $\|\Vbulk_j\|_j$ is exponentially small in $j$ along the flow while $\|\Vobs_j\|_j$ is of order $1$. The $K$-terms are higher order.

\begin{lemma} \label{lem:smallset-Vobs}
  Suppose that $\|V_j^\varnothing \|_j+\|K_{j}^\varnothing\|_j\leq\epsilon$
  and $\|\Vobs_{j}\|_{j}+\|\Kobs_{j}\|_{j}\leq \epsilon$.
  Then for $B\in \cB_j$, 
  \begin{multline}
  \label{e:smallset-Vobs-2}
  \norma{\pi_\obs \E_{C_{j+1}}\qa{\delta I(B)+ \frac12
      \sum_{B' \neq  B, B'\subset \bar{B}}
   \delta I(B) \delta I(B') + \theta Q(B)}}_{T_{j+1}(\ell_{j+1})}
  \\
  = O(L^{4d}
  (\|V_{j}\|_{j}+\|K_{j}\|_j)
  (\|\Vbulk_j\|_j+\|K_{j}\|_{j})).
  \end{multline}
\end{lemma}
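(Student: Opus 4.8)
\textbf{Proof plan for Lemma~\ref{lem:smallset-Vobs}.}

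The plan is to mirror the proof of Lemma~\ref{lem:smallset-V} in the bulk, but being careful to track which of the monomials in $\delta I(B)$ carry observable factors and to exploit the second-order term $P^\obs(B)$ built into the definition \eqref{E:Qobs} of $\Vobs_{j+1}$, which is precisely what cancels the quadratic-in-$\Vobs$ contribution. First I would recall from \eqref{E:Qobs} the relation
\begin{equation}
  \uobs_{j+1}(B) + \Vobs_{j+1}(B) = \E_{C_{j+1}}\theta\bigl(\Vobs_j(B) - Q^\obs(B)\bigr) - P^\obs(B),
\end{equation}
and I would also need the analogous bulk identity \eqref{e:V+-def}. Expanding $\delta I(B) = \theta e^{-V_j(B)} - e^{-(u+V)_{j+1}(B)}$ and taking $\pi_\obs$, I would write $\delta I(B)$ as the sum of a term linear in $\Vobs$, a term quadratic in $\Vobs$, and a remainder that is higher order, using the expansion \eqref{e:expVobs2-expansion}: schematically $\pi_\obs e^{-V(B)} = -\Vobs(B) + \tfrac12 \Vobs(B)^2 + O(\|\Vobs(B)\|\,\|\Vbulk(B)\|)$ for $V$ either $V_j$ or $(u+V)_{j+1}$.

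The core of the argument is the cancellation identity: when one also adds in $\tfrac12\sum_{B'\neq B,\,B'\subset\bar B}\delta I(B)\delta I(B')$ and $\theta Q(B)$, and takes the expectation $\E_{C_{j+1}}$, the linear term matches the $\E_{C_{j+1}}\theta\Vobs_j$ piece of $\Vobs_{j+1}$, the $Q$-term matches $-\E_{C_{j+1}}\theta Q^\obs$, and the quadratic observable contribution from $\delta I(B)\delta I(B')$ summed over blocks in $\bar B$ reconstructs $\E_{C_{j+1}}(\theta\hVobs_j(B);\theta\hVobs_j(B'))$, which by Lemma~\ref{lem:Pjj+1} is exactly $2P^\obs(B,B')$ up to the product term $\tVobs_{j+1}(B)\tVobs_{j+1}(B')$ (and the latter is annihilated by $\pi_\obs$ applied to a single block since it carries a full $\sigma_a\sigma_b$). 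So after the algebra the left-hand side of \eqref{e:smallset-Vobs-2} reduces to the sum of: (i) cross terms where one factor is $\pi_\obs$ and another $\pi_\varnothing$ in $\delta I(B)\delta I(B')$; (ii) the higher-order remainder terms from the exponential expansions on a single block, of the form $O(\|\Vobs(B)\|\,\|\Vbulk(B)\|)$; (iii) the error in matching $\Vobs_{j+1}$, which is $O(L^{2(d-2)}\|\Kobs_j\|_j)$ by Theorem~\ref{thm:step-obs}'s estimate \eqref{e:V+obs-bd}; and (iv) discrepancies between $\E_{C_{j+1}}\theta$ of products and products of $\E_{C_{j+1}}\theta$, which are controlled because they vanish unless $C_{j+1}$ couples the two blocks. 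Each of these is then bounded by combining the product property \eqref{e:T-product}, the contraction estimates \eqref{e:normcontract} and \eqref{e:E-contract2}, the weak monotonicity \eqref{e:Tphi-obs-ub} (each application of which costs a factor $16L^{2(d-2)}$, explaining the $L^{4d}$ in the statement since a couple of such steps plus the two-block sum over $\cB_j(\bar B)$ are needed), and the bulk and observable bounds \eqref{E:vj}, \eqref{e:uobs+}--\eqref{e:vobs+}, \eqref{e:Qobsbd} on the coupling constants. Every resulting term manifestly has at least one factor of $\|\Vbulk_j\|_j + \|K_j\|_j$ coming from a bulk vertex and at least one factor of $\|V_j\|_j + \|K_j\|_j$ overall, giving the quadratic bound claimed.

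The main obstacle I anticipate is bookkeeping rather than conceptual: one must carefully verify that after the cancellation \emph{every} surviving term genuinely contains a bulk factor $\|\Vbulk_j\|_j + \|K_j\|_j$ — in particular that there is no leftover purely-observable quadratic term. This hinges on (a) the fact that $\sigma_a^2 = \sigma_b^2 = \bar\sigma_b^2 = 0$, so a single block $B$ can supply at most one observable factor unless $\bar B$ contains both $a$ and $b$, which is exactly why the sum over $B'\subset\bar B$ is included; and (b) the fact that $P^\obs(B,B') = 0$ for $B\neq B'$ when $|a-b|_\infty \geq \tfrac12 L^{j+1}$ and $P^\obs(B,B) = 0$ when $a,b$ lie in distinct blocks, so the second-order subtraction is correctly localised. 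I would organise the proof by first treating the diagonal case (where $a$ and $b$ — or whichever of them are present — lie in a single block $B$ with $\bar B$ their common parent), where the $P^\obs$ subtraction does all the work, and then the off-diagonal/one-point cases, where one instead invokes \eqref{E:Blockobs} and \eqref{e:deltaI-obs-4} of Lemma~\ref{lem:smallset-Vobs1} to see that the observable-linear term already cancels against $\theta Q^\obs$ in expectation, leaving only mixed bulk-observable remainders.
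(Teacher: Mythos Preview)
Your plan is essentially the paper's approach: rewrite via \eqref{E:Qobs}, expand $\pi_\obs e^{-V(B)}$ to second order, use Lemma~\ref{lem:Pjj+1} to match the quadratic observable piece against $P^\obs$, and check that every surviving term carries a bulk factor. One point to correct: your parenthetical that $\tVobs_{j+1}(B)\tVobs_{j+1}(B')$ is ``annihilated by $\pi_\obs$'' is backwards---this term is purely observable and \emph{survives} $\pi_\obs$. The mechanism the paper actually uses is the identity (with $\delta V_j^\obs \bydef \theta\hVobs_j - \tVobs_{j+1}$)
\[
  \E_{C_{j+1}}\delta V_j^\obs(B)\,\delta V_j^\obs(B')
  = \E_{C_{j+1}}\theta\hVobs_j(B)\hVobs_j(B') - \tVobs_{j+1}(B)\tVobs_{j+1}(B'),
\]
obtained from $\E_{C_{j+1}}\theta\hVobs_j = \tVobs_{j+1} + P^\obs$ together with nilpotency of the observable fields; the right-hand side is exactly $2P^\obs(B,B')$ by Lemma~\ref{lem:Pjj+1}, so the product term is absorbed into $P^\obs$ rather than killed by a projection. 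The paper also organises the computation by splitting the bracketed sum into the one-block contribution $B'=B$ and the two-block contributions $B'\neq B$, not by the relative position of $a,b$; this makes the algebra uniform, and Lemma~\ref{lem:smallset-Vobs1} is not invoked here but only later in the proof of Theorem~\ref{thm:step-obs}.
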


\begin{proof}
  Recall $\tVobs_{j+1}=\uobs_{j+1}+\Vobs_{j+1}$. Using 
  \eqref{E:Qobs} to re-express $\E_{C_{j+1}}[\theta Q^\obs(B)]$,
  the term inside the norm on the left-hand side of \eqref{e:smallset-Vobs-2} equals
  \begin{equation}
    \label{e:l510a}
    \pi_\obs \E_{C_{j+1}}\qa{ \delta I(B)
      + \frac12  \sum_{ B' \neq  B, B'\subset \bar{B}} \delta I(B) \delta I(B')
    }
    + \E_{C_{j+1}}\qB{\theta \Vobs_j(B)}
    - \sum_{B'} P^\obs(B,B')
    - \tVobs_{j+1}(B).
  \end{equation}
  We start with the one block terms $B'=B$ in~\eqref{e:l510a}. Using Lemma~\ref{lem:Pjj+1} to rewrite $P(B,B)$
  and since $\delta I(B)=\theta
  e^{-V_j(B)}-e^{-(V_{j+1}+u_{j+1})(B)}$, these terms are
  \begin{multline} 
    \pi_\obs \E_{C_{j+1}}\theta  \qa{ e^{-V_j(B)}-1
      + \Vobs_j(B) - \frac12 \hVobs_j(B)^2}
    \\
      -\pi_\obs \qa{ e^{-(V_{j+1}+u_{j+1})(B)}-1+
        \tVobs_{j+1}(B)- \frac12 \tVobs_{j+1}(B)^2}. 
  \end{multline}
 
  To estimate these terms, first note that if 
  $V=V_{j+1}+u_{j+1}$ then~\eqref{e:V+obs-bd} and its 
  consequences~\eqref{e:uobs+}--\eqref{e:vobs+} imply
  $\|\Vbulk\|_{T_{j+1}(\ell_{j+1})}\leq 1,
  \|\Vobs\|_{T_{j+1}(\ell_{j+1})}\leq 1$.
  This bound also holds for $V=V_j$ provided $\epsilon$ is
  sufficiently small by~\eqref{e:Tphi-obs-ub},  we then have for $V=V_j$ or $V=u_{j+1}+V_{j+1}$,
  \begin{align}
    \nonumber
    \pi_{\obs}e^{-V(B)}
    &= \pi_{\obs}(e^{-\Vobs(B)} + (e^{-\Vbulk(B)}-1)e^{-\Vobs(B)}) \\
    \label{e:eVbound}
    &= - \Vobs(B) + \frac12 \Vobs(B)^2 +
    O((\|\Vobs_{j}\|_{j}+L^{2(d-2)}\|\Kobs_{j}\|)
      (\|\Vbulk_{j}\|_j+\|\Kbulk_{j}\|_j )),
  \end{align}
  where we have used $\Vobs(B)^{3}=0$,
  and in the case
  $V=u_{j+1}+V_{j+1}$, 
  \eqref{E:vj} 
  to control
  $\|u^{\varnothing}_{j+1}+\Vbulk_{j+1}\|_{j+1}$ in terms of
  $\|\Vbulk_j\|_j+\|\Kbulk_j\|_j$
  and \eqref{e:vobs+} to control $\|u^\obs_{j+1}+\Vobs_{j+1}\|_{j+1}$ similarly.
  Using also
  \begin{equation}
    \hVobs_j(B)^2
    = (\Vobs_j(B)-Q(B))^2
    = \Vobs_j(B)^2 + O(L^{4(d-2)}\|\Kobs_j\|_j (\|\Vobs_j\|_j +
    \|\Kobs_j\|_j)), 
  \end{equation}
  by the product property, \eqref{e:Qobsbd},
  \eqref{e:Tphi-obs-ub}, and the assumed norm bounds, the estimate for
  the one block terms follow.

  Recall that $P^\obs(B,B')=0$ unless $a, b$ are each in one
  of the two blocks. Thus for $B' \neq B$ the two block terms
  in~\eqref{e:l510a}  are,  by Lemma~\ref{lem:Pjj+1},
  \begin{equation}
    \frac12 \pi_\obs \pbb{\E_{C_{j+1}} \qB{\delta I(B)\delta I(B')}
      -  \E_{C_{j+1}}\qB{\theta \hVobs_j(B)\theta \hVobs_j(B')} + \tVobs_{j+1}(B)  \tVobs_{j+1}(B')}
      .
  \end{equation}
  We start by rewriting this in a more convenient form.
  Let $\delta V_j^{\obs} = \theta \hat V_j^{\obs}-\tilde V_{j+1}^{\obs}$. 
By \eqref{E:Qobs},
$\E_{C_{j+1}}\theta \hat V^\obs_j  
= \tVobs_{j+1} + P^\obs = \tVobs_{j+1} + 
 O(\sigma_a\sigma_b)$,
 where $O(\sigma_a\sigma_b)$ denotes a 
 monomial containing a factor $\sigma_a\bar\sigma_b$ in Case~(1) or a factor $\sigma_a\sigma_b$ in Case~(2).
 Since all terms in $\delta \Vobs_j$ contain a source field (that is, a
 $\sigma$-factor) and source fields square to zero, we obtain
  \begin{align}
    \E_{C_{j+1}}\qB{\delta \Vobs_j(B)\delta \Vobs_j(B')}
    &=
      \E_{C_{j+1}}\qB{\theta \hVobs_j(B)\theta\hVobs_j(B')}
      + \tVobs_{j+1}(B)\tVobs_{j+1}(B')
      \nnb
      &\qquad
        -\tVobs_{j+1}(B) \E_{C_{j+1}}\qB{\theta \hVobs_j(B')} -\tVobs_{j+1}(B') \E_{C_{j+1}}\qB{\theta \hVobs_j(B)} \nnb
          &=
            \E_{C_{j+1}}\qB{\theta \hVobs_j(B)\theta\hVobs_j(B')} - \tVobs_{j+1}(B)\tVobs_{j+1}(B')
            .
  \end{align}
  Therefore we need to estimate
    \begin{equation}
    \frac12 \pi_\obs \E_{C_{j+1}} \qB{\delta I(B)\delta I(B')}
    -      \frac12  \E_{C_{j+1}}\qB{\delta \Vobs_j(B)\delta \Vobs_j(B')}
      .
  \end{equation}
  First write
  \begin{equation}
    \pi_\obs[ \delta I(B)\delta I(B')]
    =
    \pi_\obs \delta I(B)\pi_\obs \delta I(B')
    +
    \pi_\obs \delta I(B)\pi_\varnothing \delta I(B')
    +
    \pi_\varnothing \delta I(B)\pi_\obs \delta I(B') .
  \end{equation}
  The second and third terms on the right-hand side are $O((\|\Vobs_{j}\|_{j}+L^{2(d-2)}\|\Kobs_{j}\|_{j})(\|\Vbulk_j\|_j+\|\Kbulk_j\|_j))$
  using Lemma~\ref{lem:smallset-V1}
  for $\pi_\varnothing \delta I$ and $\|\pi_{\obs}\delta I(B)\|_{T_{j+1}(\ell_{j+1})}
  =O(\|\Vobs_{j}\|_{j}+L^{2(d-2)}\|\Kobs_{j}\|_{j})$ by \eqref{e:vobs+}.
  Using~\eqref{e:eVbound}, the term $\pi_\obs \delta I(B)\pi_\obs \delta I(B')$ can be estimated as
  \begin{multline}
    \pi_\obs
    (\delta V_j(B) -\frac12 (\theta V_j(B)^2-\tilde V_{j+1}(B)^2))
    \pi_\obs (\delta V_j(B') -\frac12 (\theta V_j(B')^2-\tilde V_{j+1}(B')^2))
    \\
    \qquad\qquad\qquad\qquad\qquad +
    O((\|\Vobs_{j}\|_{j}+L^{2(d-2)}\|\Kobs_{j}\|)(\|\Vbulk_j\|_j+
    \|\Kbulk_j\|_j))
    \\ =
    \delta \Vobs_j(B)
    \delta \Vobs_j(B') 
    +
    O((\|\Vobs_{j}\|_{j}+L^{2(d-2)}\|\Kobs_{j}\|)(\|\Vbulk_j\|_j+
    \|\Kbulk_j\|_j)),
  \end{multline}
  since $\sigma_a^2=\sigma_b^2=\bar{\sigma}_b^2=0$. The factor $L^{4d}$ is a convenient  common bound.
\end{proof}

The next lemma replaces Lemma~\ref{lem:smallset-K} on the observable components.
\begin{lemma} \label{lem:smallset-Kobs}
  For any $U \in \cC_{j+1}$,  if $K^{ab}_{j}(Y)=0$ for all $Y\in
    \cS_j$ and all $j<j_{ab}$, then 
  \begin{equation}
    \sum_{X \in \cS_j: \bar X=U} \normB{ \E_{C_{j+1}}\qB{\theta (1-\Loc_X^\obs) \Kobs _{j} (X)}}_{T_{j+1}(\ell_{j+1})} = O(L^{-\dplus}) \|\Kobs\|_j.
  \end{equation}
\end{lemma}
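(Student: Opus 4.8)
The plan is to transcribe the proof of Lemma~\ref{lem:smallset-K} to the observable components, substituting the contraction bound of Proposition~\ref{prop:Loc-obs}~(ii) for \eqref{e:Loc-contract}. The one structural difference is combinatorial: in the bulk sum there are $O(L^d)$ small sets $X\in\cS_j$ with a given closure $\bar X=U$, and the gain $L^{-d}$ in \eqref{e:Loc-contract} is needed to absorb them; in the observable sum, only small sets that ``see'' $a$ or $b$ contribute, and there are only $O(1)$ of these, so the weaker estimate \eqref{e:Loc-obs-contract} (which has no $L^{-d}$ factor) suffices.

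Concretely, I would first note that the sum is empty unless $U\in\cS_{j+1}$, since $\bar X\in\cS_{j+1}$ whenever $X\in\cS_j$. Next comes the key observation: a term with $\Kobs_j(X)=\pi_\obs K_j(X)=0$ contributes nothing, and since $K_j(X)\in\cN_{\rm sym}(X^\square)$ while every monomial in $\cN^\obs$ carries a factor $\sigma_a$ or $\sigma_b$ localised at $a$ or $b$, a nonzero observable component forces $a\in X^\square$ or $b\in X^\square$. As $X$ is a small set and small-set neighbourhoods have $O(1)$ blocks, this confines $X$ to a bounded ($d$-dependent, $L$-independent) neighbourhood of $a$ or of $b$, so the number of contributing small sets $X$ (with $\bar X=U$) is $O(1)$.

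For each such $X$ I would then estimate, using that $\E_{C_{j+1}}\theta$ satisfies the contraction \eqref{e:E-contract} on the amended algebra — valid because $C_{j+1}$ couples only to the bulk fields, so $\|C_{j+1}\|_{T_{j+1}(\ell_{j+1})}\le 1$ and the observable fields are inert parameters —
\begin{equation*}
  \|\E_{C_{j+1}}\theta(1-\Loc_X^\obs)\Kobs_j(X)\|_{T_{j+1}(\ell_{j+1})}
  \le \|(1-\Loc_X^\obs)\Kobs_j(X)\|_{T_{j+1}(2\ell_{j+1})}
  \le \bar C L^{-\dplus}\|\Kobs_j(X)\|_{T_j(\ell_j)},
\end{equation*}
where the last step is Proposition~\ref{prop:Loc-obs}~(ii). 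For $j\ge j_{ab}$ that bound applies unconditionally; for $j<j_{ab}$ the hypothesis that $K_j^{ab}(X)=0$ for $X\in\cS_j$ means $(\Kobs_j(X))^{ab}=0$, which is exactly the case $F^{ab}=0$ of Proposition~\ref{prop:Loc-obs}~(ii). Since $(|\cB_j(X)|-2^d)_+=0$ for small sets, $\|\Kobs_j(X)\|_{T_j(\ell_j)}\le\|\Kobs\|_j$, and summing the $O(1)$ contributing terms yields $O(L^{-\dplus})\|\Kobs\|_j$, as claimed. The only point needing care is the combinatorial count of contributing small sets in the second step; once that is in place the rest is a routine copy of the bulk argument, and in particular no new analytic input beyond Proposition~\ref{prop:Loc-obs} and \eqref{e:E-contract} is required.
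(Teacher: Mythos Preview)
Your proposal is correct and follows essentially the same approach as the paper: the paper's proof is a one-paragraph remark that the argument of Lemma~\ref{lem:smallset-K} goes through verbatim once one observes that only small sets containing $a$ or $b$ contribute to the observable sum, so the $O(L^d)$ combinatorial factor becomes $O(1)$ and compensates for the missing $L^{-d}$ in Proposition~\ref{prop:Loc-obs}~(ii). Your write-up is more detailed (in particular spelling out the $j<j_{ab}$ case via the $F^{ab}=0$ clause) but the content is identical.
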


\begin{proof}
  The proof is the same as that of Lemma~\ref{lem:smallset-K} except
  for the following observation.  The sum over $X \in \cS_j$ that
  contributes a factor $O(L^d)$ in the proof of
  Lemma~\ref{lem:smallset-K} only contributes $O(1)$ on the observable
  components because for these only the small sets containing $a$ or
  $b$ contribute.  Thus the bound for $\Loc^\obs$ from
  Proposition~\ref{prop:Loc-obs}, which lacks a factor $L^{-d}$
  compared to the bound for $\Loc^\varnothing$, produces the same
  final bound.
\end{proof}

\begin{proof}[Proof of Theorem~\ref{thm:step-obs}]
  The proof is analogous to that of Theorem~\ref{thm:step}, and we
  proceed in a similar manner, by beginning with the coupling constants and then
  an estimate of
  $\pi_{\obs}\cL_{j+1}(U)$, where $\cL_{j+1}(U)$ is defined by the formula
  \eqref{e:K-linear} but with the extended coordinates introduced in
  Section~\ref{sec:rgmap-ext}.

  For the coupling constants, i.e., the analogue of Section~\ref{sec:K-coupling}, 
  the bound \eqref{E:Q} gets replaced by
  \eqref{e:Qobsbd} which gives
  $\|Q^{\obs}(B)\|_{T_{j+1}(\ell_{j+1})}\leq O(L^{d-2}\|\Kobs_j\|)$,  and
  we also have
  $\|u_{j+1}^{\obs}\|_{j+1}+\|\Vobs_{j+1}\|_{j+1}\leq
  O(\|\Vobs_j\|+L^{d-2}\|K^{\obs}_j\|_j)$ by
  \eqref{e:uobs+}--\eqref{e:vobs+}.

  Note that the terms of $\cL_{j+1}(U)$ are of the form
  $\sum_{F}e^{-V_{j+1}(U\setminus X)+u_{j}|X|}F$. We will use that
  \begin{equation}
    \pi_{\obs}(e^{-V_{j+1}(U\setminus X)+u_{j}|X|}F) =
    (\pi_{\obs}e^{-V_{j+1}(U\setminus X)+u_{j}|X|}) \pi_{\varnothing}F +
    \pi_{\obs}( e^{-V_{j+1}(U\setminus X)+u_{j}|X|}\pi_{\obs}F).
  \end{equation}
  We first explain how to estimate the sum arising from the first
  term, which only contributes to the second term in the
  estimate. The estimation of the terms $\pi_{\varnothing}F$ is
  exactly as in Section~\ref{sec:step}. Estimating
  $\pi_{\obs}e^{-V_{j+1}(U\setminus X)+u_{j}|X|}$, and the resultant
  sum over $F$, requires a replacement of
  Lemma~\ref{lem:pert12}. For this it suffices to note that
  $\| \pi_{\obs}e^{-V_{j+1}(U\setminus
    X)+u_{j+1}|X|}\|_{T_{j+1}(\ell_{j+1})}\leq
  (\|\Vobs_{j}\|_{j}+L^{2(d-2)}\|\Kobs_{j}\|_{j})
  2^{|\cB_{j}(X)|}$. This estimate follows by the product property,
  \eqref{e:uobs+}--\eqref{e:vobs+}, and arguing as in the proof of
  Lemma~\ref{lem:pert12}. Hence this term is bounded by
  $O(\|\Vobs_{j}\|_{j}+L^{2(d-2)}\|\Kobs_{j}\|_{j})(\|\Kbulk_{j}\|_{j} +
  A^{\nu}(\|\Kbulk_{j}\|^{2}_{j}+\|\Vbulk_{j}\|^{2}_{j}))$. 
  
  Next we explain how to estimate
  $e^{-V_{j+1}(U\setminus X)+u_{j}|X|}\pi_{\obs}F$. The prefactor is
  at most $2^{|\cB_{j}(X)|}$, i.e., the analogue of
  Lemma~\ref{lem:pert12} applies when $\Vbulk, u^{\varnothing}$ and $\Kbulk$ are
  replaced by $V, u$ and $K$ if $\epsilon$ is small enough, and it suffices to estimate $\pi_\obs F$.
  
  Consider the small set contributions to   $\cL_{j+1}(U)$,
  i.e., the analogue of Section~\ref{sec:K-smallset}.
  As stated
  previously, Lemma~\ref{lem:smallset-K} is replaced with
  Lemma~\ref{lem:smallset-Kobs} whereas Lemmas~\ref{lem:smallset-Vobs}
  and~\ref{lem:smallset-Vobs1} replace Lemmas~\ref{lem:smallset-V}
  and~\ref{lem:smallset-V1}.
  In detail, in the analogue of \eqref{e:Kblock} we now also include
    quadratic terms in $\delta I$, i.e., we replace \eqref{e:Kblock} by
  \begin{multline} \label{e:Kblock-obs}
    \pi_\obs\E_{C_{j+1}}\qa{\theta K_j(B) 
      + 
      \delta I(B)
      + \frac12   { \sum_{ B' \neq  B, B'\subset \bar{B}}}
      \delta I(B) \delta I(B')
      - 
      \theta J(B,B)}
    \\
    =\pi_\obs\E_{C_{j+1}}\qB{\theta (1-\Loc_B)K_j(B)}
    + \pi_\obs\E_{C_{j+1}}\qa{\delta I(B) + \frac12  {\sum_{ B' \neq  B, B'\subset \bar{B}}}     \delta I(B) \delta I(B')
    + \theta Q(B)},
  \end{multline}
  with the corresponding analogue of \eqref{e:Ksmallset} then being
  (for $X\in \cS_{j}\setminus \cB_{j}$)
  \begin{equation}
    \label{e:Ksmallset-obs}
    \pi_\obs \E_{C_{j+1}}\qB{\theta (1-\Loc_X)  K_j(X)}
    + \pi_\obs \E_{C_{j+1}} \qa{(\delta I)^X- \frac 12 { \sum_{ B' \neq  B, B'\subset \bar{B}}} \delta I(B) \delta I(B')\1_{B\cup B'=X}}. 
\end{equation}
Let us note that since $B \cup B'$ is not necessarily connected (so in that case not a small set),
along with the third term  in \eqref{e:Ksmallset-obs}, there is a corresponding correction for polymers in the large set sum
\eqref{e:K-linear-large}: the terms inside the sum are replaced by
$e^{-V_{j+1}(U \setminus X)+u_{j+1}|X|}$ multiplied by
\begin{equation}
    \label{e:Kquad-obs}
    \pi_\obs \E_{C_{j+1}}\qB{\theta K_j(X)}\1_{X\in \mathcal C_j\backslash \mathcal S_j}
    + \pi_\obs \E_{C_{j+1}} \qa{(\delta I)^X-\frac 12 {\sum_{ B' \neq  B, B'\subset \bar{B}}}  \delta I(B) \delta I(B')\1_{B\cup B'=X}}\1_{X\in \mathcal P_j\backslash \mathcal S_j}.
\end{equation}

Now Lemma~\ref{lem:smallset-Kobs} bounds the sum over $X$ of the
$(1-\Loc_X)$ terms in \eqref{e:Kblock-obs} and
\eqref{e:Ksmallset-obs}.  Lemma~\ref{lem:smallset-Vobs} bounds the
second term on the right-hand side of \eqref{e:Kblock-obs}.  Finally,
\eqref{e:smallset-Vobs-1} of Lemma~\ref{lem:smallset-Vobs1} bounds the
second term in \eqref{e:Ksmallset-obs} by
$O(\|\Vbulk_j\|_j+\|\Kbulk_j\|_j)(\|V_j\|_j+L^{2(d-2)}\|K_j\|_j)$,
after making use of the cancellation between $(\delta I)^X$ and
$\delta I(B)\delta I(B')$ when $X = B \cup B'$ and
$B' \subset \bar B$. Indeed, note that
for all other $X$ at least one $B \in \cB_j(X)$ does not contain $a$ or $b$.
Putting these bounds together
 (as in the proof of Theorem~\ref{thm:step}) 
then gives that the small set contribution to
  $\pi_{\obs}\cL_{j+1}(U)$ is $O(L^{-\dplus}\|\Kobs_j\|_j) + O(\|\Vbulk_j\|_j+\|\Kbulk_j\|_j)(\|V_j\|_j+L^{2(d-2)}\|K_j\|_j)$.

To bound the large set term \eqref{e:Kquad-obs} and the
  non-linear contributions, we will use the principle that for
  $F_i\in\cV$,
  \begin{align} \label{e:norm-obs-nonmon}
    \pi_\obs \prod_{i=1}^k F_i= \sum_{i}  F_{i}^{\obs}\prod_{l\neq i} F_{l}^{\varnothing}+ \sum_{i\neq k} F_{i}^{\obs} F_{k}^{\obs}\prod_{l\neq i,k}F_{l}^{\varnothing}
  \end{align}
  as the product of any three elements of $\cV^{\obs}$ is zero.  The
  bound on the sum over
\begin{equation}
 \pi_\obs \E_{C_{j+1}}\qB{\theta K_j(X)}\1_{X\in \cC_j\setminus \cS_j}
\end{equation}
proceeds exactly as in Section \ref{sec:K-largeset}, bearing in mind
\eqref{e:norm-obs-nonmon} and \eqref{e:Tphi-obs-ub}. The
  resulting estimate is $O(A^{-\eta}\|\Kobs_{j}\|_{j})$.   
For the second term in \eqref{e:Kquad-obs},
observe that if $|\cB_j(X)|=2$ and $\bar{X}\in \mathcal B_{j+1}$, 
the bound is identical to that of the same term in
\eqref{e:Ksmallset-obs} above.
The remaining possibilities are that either $|\cB_j(X)| \geq 3$ or $|\cB_j(X)|=2$ but with constituent $j$-blocks which are in distinct $(j+1)$-blocks.
In the former case, by applying \eqref{e:norm-obs-nonmon},
\eqref{e:smallset-Vobs-1} of Lemma~\ref{lem:smallset-Vobs1} and 
  Lemma~\ref{lem:smallset-V1} and then proceeding as in Section~\ref{sec:K-largeset}, we obtain
\begin{multline}
  A^{(|\cB_{j+1}(U)|-2^d)_+}  \norma{ \pi_\obs \sum_{X\in {\cP_j }\setminus \cS_j: \bar X=U, |\cB_j(X)|\geq 3}e^{-V_{j+1}(U\setminus X)+u_{j+1}|X|} \E_{C_{j+1}}\qB{(\delta I)^X }}_{T_{j+1}(\ell_{j+1})}
  \\
  \leq 
  O (\|V_j\|_j + \|K_j\|_j)(\|\Vbulk_j\|_j + \|\Kbulk_j\|_j).
\end{multline}
The remaining case is $|\cB_{j+1}(U)|=2$ and $|\cB_j(X)|=2$ where $U = \bar X$.
Then the $\delta I(B)\delta I(B')\1_{B \cup B'=X}$ cancellation is absent, but $\pi_\obs\E_{C_{j+1}}(\delta I)^X$
itself satisfies the desired bound by
Lemma~\ref{lem:smallset-Vobs1}. Indeed, either $a$ and $b$ are
in the same $(j+1)$ block or they are not. If they are, we use
\eqref{e:smallset-Vobs-1} with $n=1$, and if not, this follows from
\eqref{e:deltaI-obs-4} 
since
$a$ and $b$ being in 
distinct $(j+1)$-blocks of $U$ implies 
that $|a-b|_\infty \geq \frac12 L^{j+1}$ since $a$ is positioned in the center of all of its blocks.
The bound 
\begin{multline}
\norma{ \pi_\obs  \sum_{X\in {\cP_j }\setminus \cS_j: \bar X=U, |\cB_j(X)|=2} e^{-V_{j+1}(U\setminus X)+u_{j+1}|X|} \E_{C_{j+1}}\qB{(\delta I)^X} }_{T_{j+1}(\ell_{j+1})}
\\  \leq  O(L^{6d}(\|V_j\|_j + \|K_j\|_j)(\|\Vbulk_j\|_j +\|K_j\|_j))
\end{multline}
follows as there are at most $L^{2d}$ summands.

All together, after possibly increasing $A$, we obtain that the large set contribution
to $\cL_{j+1}(U)$ is
\begin{equation}
  O(A^{-\eta}\|\Kobs_{j}\|_{j})
  + O(A^\nu(\|\Vobs_j\|_j + \|\Kobs_j\|_j) (\|\Vbulk_j\|_j + \|K_j\|_j)).
\end{equation}

  The non-linear contribution does not require any
  changes as the bound from Section~\ref{sec:K-nonlinear} already
  gives (after possibly increasing $A$) $A^\nu O(\|K_j\|_j(\|V_j\|_j+\|K_j\|_j))$.
\end{proof}

\subsection{Flow of observable coupling constants}
\label{sec:flow-obs}

With Theorem \ref{thm:step-obs} in place, the evolution of the
observable coupling constants in $u^{\obs}+\Vobs$ is the same as the
free one from 
Section~\ref{sec:obs-free} up to the addition of remainder terms
from the $K$ coordinate. To avoid carrying an unimportant factor of
$L^{2(d-2)}$ through equations, recall that we write $\OL(\cdot)$ to indicate
bounds with constants possibly depending on $L$
(but we reemphasise that implicit constants are always independent of the scale $j$).

\begin{lemma} \label{lem:Vobs-recursion}
Suppose $j<N$, $x\in \{a,b\}$, and that \eqref{e:V+obs-bd} holds. If
$j<j_{ab}$, further suppose that $K^{ab}_{j}(X)=0$ for $X\in\cS_{j}$.
In Case (1), 
\begin{align}
  \label{e:obs-lambda-rec-a}
  \lambda_{x,j+1} &=\lambda_{x,j} + \OL (\ellxj^{-1}\ell_{j}^{-1}\|K_j^x\|_j),
  \\
  q_{j+1} &= q_j + \lambda_{a,j}\lambda_{b,j} C_{j+1}(a,b) {+ r_j C_{j+1}(0,0)}+ \OL (\ellabj^{-1}\|K_j^{ab}\|_j 1_{j\geq j_{ab}}),
  \\
  \label{e:obs-r-rec-K}
  r_{j+1}&= r_j + \OL (\ellabj^{-1}\ell_j^{-2}\|K_j^{ab}\|_j 1_{j \geq j_{ab}}),
                 \intertext{and in Case (2),}
                   \label{e:obs-lambda-rec-b}
  \lambda_{x,j+1} &= \lambda_{x,j} 
                   + \OL (\ellxj^{-1}\ell_j^{-2}\|K_j^x\|_j),
  \\
  \gamma_{x,j+1} &= \gamma_{x,j} + \lambda_{x,j}C_{j+1}(x,x) + \OL (\ellxj^{-1}\|K_j^x\|_j),
  \\
  q_{j+1} &= q_j + \eta_j C_{j+1}(a,b)  - \lambda_{a,j}\lambda_{b,j} C_{j+1}(a,b)^2
            +r_{j}C_{j+1}(0,0)+ \OL (\ellabj^{-1}\|K_j^{ab}\|_j1_{j\geq j_{ab}}),
  \\
  \eta_{j+1} &= \eta_j - 2\lambda_{a,j}\lambda_{b,j} C_{j+1}(a,b), 
  \\
  \label{e:obs-r-rec-b}
  r_{j+1}&= r_{j} + \OL (\ellabj^{-1}\ell_j^{-2} \|K_j^{ab}\|_j 1_{j\geq j_{ab}}).
\end{align}
Moreover, for $j+1<N$, all coupling constants are independent of $N$.
\end{lemma}

Note that there is no error term in the equation for $\eta$, as the
corresponding nonlocal field monomials $\bar\psi_{a}\psi_{b}$
and $\bar\psi_{b}\psi_{a}$ are not contained in the image of $\Loc$. 

\begin{proof}
  For $j<N$, the main contribution in \eqref{e:V+obs-bd} is identical
  to that in Lemma~\ref{lem:Vobs-flow-free}.  The indicator functions
  $1_{j\geq j_{ab}}$ in the error terms are due to the
  assumption $K_{j}^{ab}(X)=0$ for $j<j_{ab}$ and $X\in\cS_{j}$.
  The bounds
  for the error terms follow from the definition of the norms
  as in obtaining \eqref{e:kN2normbd}.  Finally, that the couplings
  are independent of $N$ is a consequence of the consistency of the
  renormalisation group map, i.e., Proposition~\ref{prop:consistency}
  (applied to the renormalisation group map extended by observables).
\end{proof}

The next lemma shows that if we maintain control of $\|K_k^{\obs}\|_k$
up to scale $j$ then we control the coupling constants in $\Vobs$ on scale $j$.
  \begin{lemma}
    \label{lem:Vobs-bd}
    Assume that $\|\Kobs_k\|_k \leq M \lambda_0b_0L^{-\kappa k}$ for
    $k<j$ and that \eqref{e:V+obs-bd} holds for $k<j$. 
    Then, in Case (1) if $q_0=r_0=0$ and $\lambda_0>0$, 
    \begin{align}
      \label{e:Vobs-bd1}
      \lambda_{j} &= \lambda_0 + \OL (M\lambda_0b_0)\\
            \label{e:Vobs-bd2}
      r_j &= \OL (M\lambda_0b_0|a-b|^{-\kappa})1_{j\geq j_{ab}}
     \intertext{and, in Case (2), if $q_0=r_0=\gamma_{x,0}=\eta_0=0$
            and $\lambda_0>0$,}
                  \label{e:Vobs-bd3}
            \lambda_{j} &= \lambda_0 + \OL (M\lambda_0b_0)\\
            \label{e:Vobs-bd4}
      \eta_j &= \OL (\lambda_0^2|a-b|^{-(d-2)})1_{j\geq j_{ab}}
      \\
            \label{e:Vobs-bd5}
      r_{j}&= \OL (M\lambda_0b_0|a-b|^{-(d-2)-\kappa})1_{j\geq j_{ab}},
    \end{align}
    where $\lambda_{j}=\lambda_{x,j}$ for either $x=a$ or  $x=b$.
    In both Cases (1) and (2),
    \begin{equation}
      \label{e:Vobs-bd6}
      \|\Vobs_{j}\|_{j} \leq \lambda_0 + \OL (\lambda_0^2) + \OL (M\lambda_0b_0).
    \end{equation}
\end{lemma}
\begin{proof}
  The bounds on the coupling constants in~\eqref{e:Vobs-bd1}--\eqref{e:Vobs-bd5} follow from
  Lemma~\ref{lem:Vobs-recursion};
  the hypothesis regarding
  $K_{j}(X)=0$ for $j<j_{ab}$ and $X\in \cS_{j}$ holds as
  Definition~\ref{def:V+obs} implies that for an iteration
  $(V_j,K_j)$ of the renormalisation group map, the $\cN^{ab}$
  components of $V_j(B)$ and $K_j(X)$ with $X\in \cS_j$ can only
  be nonzero for $j>j_{ab}$ since we have started the flow with
  $r_{0}=0$ in Case (1), and $q_{0}=\eta_{0}=r_{0}=0$ in Case~(2).
  What remains is to analyse the recurrences to establish~\eqref{e:Vobs-bd6}.
  
  For $\lambda_{x,j}$,  since $\ellxj^{-1}\ell_j^{-p} = 1$ in Case ($p$), using
  \eqref{e:obs-lambda-rec-a}, respectively \eqref{e:obs-lambda-rec-b},
  \begin{equation}
    \lambda_{x,j}  = \lambda_0 + \sum_{k=0}^{j-1} \OL(\|\Kobs_k\|_k)
    = \lambda_0 + \sum_{k=0}^{j-1} \OL(M\lambda_0b_0L^{-\kappa k})
    =\lambda_{0} + \OL(M\lambda_{0}b_{0}).
  \end{equation}
  The bounds on $r_{j}$ follow from the fact that all contributions
  are $0$ for scales $j<j_{ab}$ if $r_{0}=0$. For example, in Case~(2),
  using~\eqref{e:obs-r-rec-b},
  \begin{equation}
    |r_j|=
    \lambda_0b_0\OL(M\sum_{k=j_{ab}}^{j-1} \ell_{ab,j}^{-1} \ell_j^{-2}L^{-\kappa j})
    =
    \lambda_0b_0\ell_{j_{ab}}^2 \OL(M\sum_{k=j_{ab}}^{j-1} L^{-\kappa j})
    =\OL(M\lambda_0b_0|a-b|^{-(d-2)-\kappa}).
  \end{equation}
  Case (1) is similar, except no factor $\ell_{j_{ab}}$ arises (see \eqref{e:ellsigma}).
  The bound on $\eta_j$ in Case~(2) 
  follows from the preceding analysis of $\lambda_{x,j}$,
  the fact
  that $\eta_{j}=0$ for $j<j_{ab}$ if $\eta_{0}=0$ since $C_{j}$
  has finite range ($C_{j}(a,b)=0$ if $|a-b|_\infty \geq \frac12 L^{j}$), and that
  $C_{j+1}(a,b) \leq \OL(L^{-(d-2)j})$:
  \begin{align}
    |\eta_j|
    &=\OL(\lambda_0^2 \sum_{k=j_{ab}}^{j-1} L^{-(d-2)k})
      =\OL(\lambda_0^2 |a-b|^{-(d-2)}).
  \end{align}

  For the bound on the norm of $\|\Vobs_{j}\|_j$ recall that the $q$
  and $\gamma$ terms have been taken out of $\Vobs$. Thus in Case~(1),
  \begin{equation}
    \|\Vobs_j(B)\|
    \lesssim |\lambda_j|  + |r_j|\ellabj\ell_j^2 
    \lesssim |\lambda_j|  + |r_j| 
    = |\lambda_j| + \OL(M\lambda_0b_0)
    .
  \end{equation}
  Similarly, in Case~(2), using that $\ell_j^2\ellabj = \ell_{j\wedge j_{ab}}^{-2} = \OL(|a-b|^{d-2})$ for $j \geq j_{ab}$,
  \begin{align}
    \|\Vobs_j(B)\|
    &\lesssim |\lambda_j| + |\eta_j| \ell_j^2\ellabj 1_{j\geq j_{ab}} +|r_{j}| \ellabj\ell_j^2 1_{j\geq j_{ab}}
      \nnb
    &\lesssim |\lambda_j| + |\eta_j||a-b|^{d-2}1_{j\geq j_{ab}} + |r_{j}| |a-b|^{d-2}1_{j\geq j_{ab}}
      \nnb
    &\lesssim |\lambda_j| + \OL(\lambda_0^2)1_{j\geq j_{ab}} +\OL(M\lambda_0b_0 |a-b|^{-\kappa})1_{j \geq j_{ab}} = |\lambda_j|+\OL(\lambda_0^2)+\OL(M b_0\lambda_0).\qedhere
  \end{align}
\end{proof}
\medskip

We now analyse the observable flow from initial conditions which extend the bulk initial conditions  of Theorem~\ref{thm:flow},
and verify the assumption on $\Kobs$ made in Lemma~\ref{lem:Vobs-bd} along this flow.
As already remarked at the beginning of Section~\ref{sec:rgmap-ext}, the renormalisation group maps with observables are upper triangular,
so that the observable components of the maps do not affect the bulk flow.
Thus from now on, the bulk components $(V_j^\varnothing,K_j^\varnothing)_{j \leq N}$ are identified with the trajectory
given by Theorem~\ref{thm:flow} and we may use the decay rates from that theorem as inputs in obtaining estimates on the remaining components.
In particular, there is an $\alpha>0$ such that
\begin{equation}
  \label{e:flowVK-bis}
  \|\Vbulk_j\|_j = \OL(b_0L^{-\alpha j}), \qquad 
  \|\Kbulk_j\|_j = \OL(b_0L^{-\alpha j}).
\end{equation}
Using this as input,
we iterate the observable flow \eqref{e:V+obs-bd}--\eqref{e:K+obs-bd},
with initial condition $\lambda_{a,0}=\lambda_{b,0}=\lambda_0$ small enough
and all other observable coupling constants equal to $0$.

\begin{proposition}
\label{prop:flow-obs}
  Assume that the bulk renormalisation group flow $(\Vbulk_j,\Kbulk_j)$ obeys
  \begin{equation}
     \label{e:flowVK-bis-1}
     \|\Vbulk_j\|_j+\|\Kbulk_j\|_j = \OL(b_0L^{-\alpha j})
  \end{equation}
  for some $\alpha>0$.
  Then there is $\kappa>0$ such that for
  $\lambda_{a,0}=\lambda_{b,0}=\lambda_0 >0$ and $b_0>0$ sufficiently small 
  and all other observable coupling constants initially $0$,
  \begin{equation} \label{e:flowVKobs}
    \|\Vobs_j\|_j \leq \OL(\lambda_0), \qquad \|\Kobs_j\|_j \leq \OL(\lambda_0b_0L^{-\kappa j}).
  \end{equation}
\end{proposition}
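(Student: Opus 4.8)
The plan is to run a bootstrap in the scale $j$, using the estimate \eqref{e:K+obs-bd} of Theorem~\ref{thm:step-obs} to advance the nonperturbative observable coordinate $\Kobs$, and Lemmas~\ref{lem:Vobs-recursion}--\ref{lem:Vobs-bd} to control the observable coupling constants (hence $\Vobs$), feeding the bulk input \eqref{e:flowVK-bis-1} into the right-hand sides at every step. First I would fix a small exponent $\kappa>0$ depending only on $d$ (and then on $L,A$) with $\kappa\le\alpha$ and $(O(L^{-\dplus})+O(A^{-\eta}))L^{\kappa}\le\tfrac12$; this is possible because $\dplus>0$ and $A$ is taken large after $L$. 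Then I would fix a constant $K^\ast=K^\ast(L,d,A)$ (to be determined) and prove, by induction on $j\le N$, that $\|\Kobs_j\|_j\le K^\ast\lambda_0 b_0 L^{-\kappa j}$, with $\lambda_0$ small enough that $K^\ast\lambda_0\le1$ and that the resulting bounds keep $\|\Vobs_j\|_j+\|\Kobs_j\|_j$ below the threshold $\epsilon$ of Theorem~\ref{thm:step-obs}. The base case is immediate from $K_0(X)=1_{X=\varnothing}$ and the fact that $\lambda_0$ is the only nonzero initial observable coupling.

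For the inductive step, assume the bound for all $k\le j$. Since \eqref{e:V+obs-bd} holds at those scales (its hypotheses are the inductive bounds together with $\lambda_0,b_0$ small), Lemma~\ref{lem:Vobs-bd} gives $\|\Vobs_k\|_k\le\lambda_0+\OL(\lambda_0^2)+\OL(\lambda_0 b_0)=\OL(\lambda_0)$ for $k\le j$; it also supplies the structural fact (established there from Definition~\ref{def:V+obs}) that $K^{ab}_k(X)=0$ for $X\in\cS_k$ whenever $k<j_{ab}$, so the hypothesis of Theorem~\ref{thm:step-obs} is met. Applying \eqref{e:K+obs-bd} at scale $j$, the bulk input and the inductive bound give $\|\Vbulk_j\|_j+\|K_j\|_j\lesssim b_0 L^{-\kappa j}$ and $\|V_j\|_j+\|K_j\|_j\lesssim\lambda_0+b_0 L^{-\alpha j}\lesssim\lambda_0$ (using here that $\lambda_0$ is a fixed constant while $b_0$ may be taken as small as needed, so $b_0\le\lambda_0$). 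Hence the quadratic source term is $O(A^\nu)\lambda_0 b_0 L^{-\kappa j}$, and with the choice of $\kappa$ absorbing the change-of-scale factor into the contraction one obtains $\|\Kobs_{j+1}\|_{j+1}\le(\tfrac12 K^\ast+CA^\nu)\lambda_0 b_0 L^{-\kappa(j+1)}$ for some $C=C(L,d)$; taking $K^\ast\ge2CA^\nu$ closes the induction. The last scale $j+1=N$ is covered by the final clause of Theorem~\ref{thm:step-obs}, which worsens the estimate only by an $L$-dependent constant that is absorbed into the $\OL$ in \eqref{e:flowVKobs}. With $\|\Kobs_j\|_j\le\OL(\lambda_0 b_0 L^{-\kappa j})$ in hand for all $j$, Lemma~\ref{lem:Vobs-bd} then yields $\|\Vobs_j\|_j\le\OL(\lambda_0)$, which is \eqref{e:flowVKobs}.

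The step I expect to be the main obstacle is the bookkeeping of the two distinct small parameters $\lambda_0$ (observable strength) and $b_0$ (bulk coupling): one must verify that each contribution generated in $\Kobs_{j+1}$ carries \emph{both} a bulk-smallness factor $O(b_0 L^{-\alpha j})$ --- present because the relevant terms always pair an observable object with a purely bulk $\delta I$-factor --- \emph{and} a factor $\lambda_0$, coming from $\Vobs$ and $\Kobs$ always being proportional to $\lambda_0$. This is precisely what makes the right-hand side of \eqref{e:K+obs-bd} of order $\lambda_0 b_0 L^{-(\cdot)j}$ rather than merely $b_0 L^{-(\cdot)j}$, so that dividing by $\lambda_0^2$ in Section~\ref{sec:obs} produces genuine error terms. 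A secondary technical point is maintaining the invariant $K^{ab}_j(X)=0$ below the coalescence scale, which is required to invoke Theorem~\ref{thm:step-obs}; this follows from the upper-triangular structure of the observable renormalisation group map together with the initialization $q_0=\eta_0=r_0=0$.
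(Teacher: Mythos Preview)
Your bootstrap is exactly the paper's, and your use of Lemma~\ref{lem:Vobs-bd} to feed back $\|\Vobs_j\|_j\le\OL(\lambda_0)$ and to maintain the invariant $K^{ab}_j(X)=0$ below the coalescence scale is correct. The gap is in the sentence ``using here that $\lambda_0$ is a fixed constant while $b_0$ may be taken as small as needed, so $b_0\le\lambda_0$.'' The order of quantifiers is the opposite: the bulk flow (and with it $b_0$) is \emph{given} by the hypothesis~\eqref{e:flowVK-bis-1}, and it is $\lambda_0$ that is to be chosen sufficiently small afterwards --- indeed in Section~\ref{sec:obs} one divides by $\lambda_0^2$ at the very end, so the proposition must hold for $\lambda_0$ arbitrarily small relative to $b_0$. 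Without $b_0\le\lambda_0$ your single-constant induction does not close as written: the source in~\eqref{e:K+obs-bd} contains, via $(\|\Vbulk_j\|_j+\|K_j\|_j)(\|V_j\|_j+\|K_j\|_j)$, a contribution of size $O(b_0^2L^{-2\alpha j})$, and at small scales this is not bounded by $K^\ast\lambda_0 b_0 L^{-\kappa(j+1)}$ when $\lambda_0\ll b_0$.

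The paper's fix is precisely the ``two-speed'' device you omit. One takes $\kappa<\alpha/2$ (not merely $\kappa\le\alpha$), allows the induction constant $C_j$ to drift for finitely many scales, and then uses that the bulk factor decays at the faster rate $L^{-\alpha j}$: writing the source as $\OL(1)C_j\lambda_0 b_0 L^{-\alpha j}$ and splitting $L^{-\alpha j}=L^{-(\alpha/2)(j-1)}L^{-(\alpha/2)(j+1)}$, one sees that for $j\ge j_0$ (with $j_0$ depending only on $L,\alpha$) the factor $\OL(1)L^{-(\alpha/2)(j-1)}$ is $\le 1$, whence $C_{j+1}\le C_j$ and the constant stabilises at $C_{j_0}$. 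Your instinct that the source ``always pairs an observable object with a purely bulk factor'' is morally why the $b_0^2$ term is harmless, but this is not what the stated bound~\eqref{e:K+obs-bd} says, and the paper does not invoke it either; instead it absorbs the issue through the choice $\kappa<\alpha/2$ and the finite-scale warm-up.
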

\begin{proof}
    We may assume $\lambda_{0}<1$,
    and that $\kappa$ is less than $\alpha$ and the exponents of $L$ and $A$ in \eqref{e:K+obs-bd}.
    The proof is by induction.
    The inductive assumption is that $\|\Kobs_k\|_{k} \leq M b_0\lambda_0L^{-\kappa k}$ for all $k \leq j$,
    for some  $M=M(L)$ chosen large enough below.  Clearly, this holds for
    $j=0$.  Lemma~\ref{lem:Vobs-bd} then shows that
    $\|\Vobs_{k}\|_k\leq O_L(1)\lambda_0 + O_L(1) M \lambda_0b_0$ for all
    $k\leq j+1$.  We now apply Theorem~\ref{thm:step-obs} to control $\Kobs_{j+1}$.
    Since $A$ is chosen as function of $L$,
    the second term on the right-hand side of \eqref{e:K+obs-bd} is
    \begin{align}
      O(A^{\nu}) (\|\Vbulk_j\|_j +\|K_{j}\|_{j})      (\|V_j\|_j+\|K_j\|_j)
      &\leq O_L(1) (b_0L^{-\alpha j} + \|K_j^\obs\|_j)(b_0L^{-\alpha j} + \lambda_0 + M\lambda_0b_0)
        \nnb
      &\leq O_L(1)
        b_0\lambda_0 L^{-\alpha j} + \frac14 L^{-\kappa} \|K_j^\obs\|_j,
    \end{align}
    as long as $b_0\leq \lambda_0$ and $Mb_0 +
    \lambda_0$ is sufficiently small (depending on $L$).  As
    $A>L$, and using our second assumption on
    $\kappa$, we obtain from~\eqref{e:K+obs-bd} that
    \begin{align}
      \|\Kobs_{j+1}\|_{j+1}
      &\leq \frac12 L^{-\kappa}\|\Kobs_j\|_j + \OL(1)
        b_0\lambda_0 L^{-\alpha j}
      \nnb
      &\leq (\frac12 M + \OL(1)
        ) b_0\lambda_0 L^{-\kappa (j+1)}
      \leq M b_0\lambda_0 L^{-\kappa (j+1)}
    \end{align}
    provided that $M$ is sufficiently large and
    $b_0$ is sufficiently small (depending only on $L$).
    Hence if $\lambda_0$ and $b_0$ are small enough we have advanced the induction,
    completing the proof.
\end{proof}

\section{Computation of pointwise correlation functions}
\label{sec:obs}

In this section we use the results of Section~\ref{sec:obs-flow} to prove the following estimates for the pointwise
correlation functions $\avg{\bar\psi_a\psi_b}$,
$\avg{\bar\psi_a\psi_a}$, and
$\avg{\bar\psi_a\psi_a\bar\psi_b\psi_b}$.
Recall \eqref{e:WN-def} and   \eqref{e:tn}, i.e.,
\begin{equation}
  \label{e:WN-def-bis}
  W_N(x) = W_{N,m^2}(x) = (-\Delta+m^2)^{-1}(0,x) - \frac{t_N}{|\Lambda_N|}, \qquad 0< t_N = \frac{1}{m^2} - O(L^{2N}).
\end{equation}
Thus $W_N(x)$ is essentially the torus Green function
$(-\Delta+m^2)^{-1}(0,x)$ with the zero mode subtracted
and $t_N$ is essentially $m^{-2}$ when $L^{2N} \ll m^{-2}$ and negligible otherwise.

\begin{proposition} \label{prop:1pt}
  For $b_0$ sufficiently small and $m^2 \geq 0$, there exists continuous functions
  \begin{equation}
    \clambda =\clambda(b_0,m^2)= 1+\OL(b_0), \qquad \cgamma = \cgamma(b_0,m^2) = (-\Delta^{\Z^d}+m^2)^{-1}(0,0) + \OL(b_0),
  \end{equation}
  such that
  if $\Vbulk_{0}=V_{0}^{c}(m^{2},b_{0})$ is as in Theorem~\ref{thm:flow},
  $\Vobs_{0}$ is as in Proposition~\ref{prop:flow-obs},
  and $\tilde u_{N,N}^c=\tilde u_{N,N}^c(b_0,m^2)$ is as in Proposition~\ref{prop:ZNN},
  then
    \begin{equation} \label{e:2ptaa-bis}
    \avg{\bar\psi_a\psi_a} =
    \gamma
    + \frac{
      \clambda
      t_N |\Lambda_N|^{-1}
    + \OL(b_0L^{-(d-2+\kappa)N}) + \OL(b_0L^{-\kappa N}(m^{2}|\Lambda_N|)^{-1})
  }{1+\tilde u_{N,N}}
  .
\end{equation}
\end{proposition}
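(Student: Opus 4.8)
\textbf{Proof plan for Proposition~\ref{prop:1pt}.}
The plan is to compute $\avg{\bar\psi_a\psi_a}$ by running the observable renormalisation group flow set up in Section~\ref{sec:obs-flow} in Case~(2) with only the point $a$ present (so $\lambda_{b,0}=0$, which by the convention following~\eqref{E:coal} means $j_{ab}=+\infty$ and all $ab$-type coupling constants stay zero throughout), and then integrating out the zero mode $t_N Q_N$ by hand exactly as in Section~\ref{sec:suscept}. Concretely, from~\eqref{e:s5corb1} the quantity $\avg{\bar\psi_a\psi_a}$ equals $\lambda_{a,0}^{-1}\partial_{\sigma_a}\log\E_C(e^{-V_0(\Lambda)})$. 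I would carry the flow up to scale $N-1$ using the infinite-volume bulk flow from Theorem~\ref{thm:flow} together with the observable flow estimates of Proposition~\ref{prop:flow-obs}, apply the last (finite-volume) renormalisation group step $\Phi_N$ to reach scale $N$, and then perform the $t_N Q_N$ integration directly on $e^{-V_N(\Lambda)}+K_N(\Lambda)$ restricted to constant fields. This last step is the observable analogue of Proposition~\ref{prop:ZNN}: one needs a lemma (the "Lemma~\ref{lem:ZNN-obs}" alluded to in the text preceding the statement) computing $\tilde Z^{\obs}_{N,N}$, i.e. the $\sigma_a$-linear part of $\E_{t_NQ_N}\theta(e^{-V_N(\Lambda)}+K_N(\Lambda))$, after which $\partial_{\sigma_a}\log\tilde Z_{N,N}$ produces $\gamma_{a,N+1}$ plus a zero-mode correction divided by $1+\tilde u_{N,N}$.

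The key algebraic input is Lemma~\ref{lem:Vobs-recursion} in Case~(2), which gives $\gamma_{x,j+1}=\gamma_{x,j}+\lambda_{x,j}C_{j+1}(x,x)+\OL(\ell_{x,j}^{-1}\|K_j^x\|_j)$. Summing this over $j=0,\dots,N$ and using $\lambda_{x,j}=\lambda_0+\OL(\lambda_0 b_0)$ from Lemma~\ref{lem:Vobs-bd} together with $\|K_j^\obs\|_j=\OL(\lambda_0 b_0 L^{-\kappa j})$ from Proposition~\ref{prop:flow-obs} and $\ell_{x,j}^{-1}=\ell_j^{-2}=\ell_0^{-2}L^{(d-2)j}$, I would obtain
\begin{equation}
  \gamma_{a,N+1}/\lambda_0 = \sum_{j=0}^{N} C_{j+1}(a,a)(1+\OL(b_0)) + \OL(b_0 L^{-(d-2+\kappa)N}).
\end{equation}
The split $C=\sum_{j<N}C_j + C_N + t_NQ_N$ from Section~\ref{sec:frd}, together with the identification $\sum_{j\le N}C_j(a,a) + t_N|\Lambda_N|^{-1} = (-\Delta+m^2)^{-1}(0,0)$ and the asymptotic~\eqref{e:greenfLambdaZd}–\eqref{e:WN-def}, then identifies the leading term as $\cgamma = (-\Delta^{\Z^d}+m^2)^{-1}(0,0)+\OL(b_0)$, with the zero-mode piece $t_N|\Lambda_N|^{-1}$ deliberately \emph{not} included here but recovered from the $t_NQ_N$ integration. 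The continuity of $\cgamma$ in $(b_0,m^2)$ follows since every ingredient ($V_0^c$, the flow maps, $t_N$, the covariances $C_j$) is continuous, as recorded in Theorems~\ref{thm:step},~\ref{thm:flow} and Proposition~\ref{prop:ZNN}.

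For the zero-mode correction, restricting $V_N^\obs(\Lambda)$ and $K_N^\obs(\Lambda)$ to constant $\psi,\bar\psi$ and constant observable field $\sigma_a$, the $\sigma_a$-linear part is of the form $\sigma_a(-\gamma_{a,N} + (\text{something})\psi\bar\psi)$ up to $K$-remainders controlled by $\|K_N^\obs\|_N=\OL(\lambda_0 b_0 L^{-\kappa N})$ in the $T_N(\ell_N)$ norm (extracted via the pairing as in~\eqref{e:kN2normbd}, using $\ell_{a,N}=\ell_N^{-2}$). Applying $\E_{t_NQ_N}\theta$ with the fermionic Wick rule $\E_{t_NQ_N}\theta\,\psi\bar\psi=-t_N|\Lambda_N|^{-1}+\psi\bar\psi$ produces a $\sigma_a$-linear term whose constant part is $-\gamma_{a,N} - (\ldots)t_N|\Lambda_N|^{-1}$; dividing by the partition function $\tilde Z_{N,N}=1+\tilde u_{N,N}+\cdots$ from Proposition~\ref{prop:ZNN} and differentiating the logarithm gives precisely the stated formula, with the $\OL(b_0 L^{-\kappa N}(m^2|\Lambda_N|)^{-1})$ error coming from the interplay of the $K$-remainder with the factor $1/(1+\tilde u_{N,N})\asymp (m^2|\Lambda_N|/b_0)$ that was analysed in Lemma~\ref{cor:macro}. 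The main obstacle I anticipate is bookkeeping: carefully matching the "$N$th step in the exponential" picture of Section~\ref{sec:obs-free} (subscript $N+1$) with the "integrate out the zero mode directly" picture (subscript $(N,N)$, with tildes), keeping track of which $\gamma$, which error term, and which power of $\ell_N$ appears where, and in particular verifying that the contribution of $r_N$ (the $\sigma_a\sigma_b$-type coupling, which vanishes identically here since $\lambda_{b,0}=0$) and of $u^\obs_N$ genuinely drop out, so that the only surviving zero-mode effect is the single factor $\gamma t_N|\Lambda_N|^{-1}/(1+\tilde u_{N,N})$ displayed in~\eqref{e:2ptaa-bis}.
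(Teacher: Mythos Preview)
Your proposal is correct and takes essentially the same approach as the paper: the paper's proof consists entirely of invoking Lemma~\ref{lem:1pt} and setting $\clambda=\lambda_\infty^{(2)}/\lambda_0^{(2)}$, $\cgamma=\gamma_\infty^{(2)}/\lambda_0^{(2)}$, and that lemma does exactly what you outline (run the Case~(2) observable recursion for $\lambda_{x,j}$ and $\gamma_{x,j}$, apply Lemma~\ref{lem:ZNN-obs} for the zero-mode integration, and read off $\avg{\bar\psi_a\psi_a}=\tilde Z_{N,N}^{\sigma_a}/(\lambda_0(1+\tilde u_{N,N}))$). The only organisational difference is that the paper keeps both observable points $a,b$ present throughout (since Lemma~\ref{lem:1pt} also sets up the two-point computation), whereas you specialise to $\lambda_{b,0}=0$; this makes no difference for the $\sigma_a$-linear part. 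One small slip: you wrote $\ell_{x,j}^{-1}=\ell_j^{-2}=\ell_0^{-2}L^{(d-2)j}$, but in Case~(2) one has $\ell_{x,j}=\ell_j^{-2}$ and hence $\ell_{x,j}^{-1}=\ell_j^{2}=\OL(L^{-(d-2)j})$, which is the decay you actually need to make the $\gamma$-sum converge.
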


\begin{proposition} \label{prop:2pt}
  Under the same assumptions as in Proposition~\ref{prop:1pt},
    \begin{align}
    \label{e:2pt-a-bis}
    \avg{\bar\psi_a\psi_b}
    &=
      W_N(a-b)
      + \frac{
      t_N|\Lambda_N|^{-1}
      }{1+\tilde u_{N,N}}
      \nnb
      &\qquad
      + \OL(b_0|a-b|^{-(d-2+\kappa)})
      + 
      \frac{
      \OL(b_0|a-b|^{-\kappa} (m^2|\Lambda_N|)^{-1})
      }{1+\tilde u_{N,N}}
    \\
    \label{e:2pt-b-bis}
    \avg{\bar\psi_a\psi_a\bar\psi_b\psi_b}
      &= -\lambda^2 W_N(a-b)^2 + \gamma^2
        +\frac{-2\lambda^2 W_N(a-b)
        + 2\lambda\gamma
        }{1+\tilde u_{N,N}}t_N|\Lambda_N|^{-1}
      \nnb
      &\qquad
        + \OL(b_0 |a-b|^{-2(d-2)-\kappa})
        + \OL(b_0L^{-(d-2+\kappa)N})
        \nnb
      &\qquad
        + (
        \OL(b_0L^{-\kappa N}) + \OL(b_0 |a-b|^{-(d-2+\kappa)}))\frac{(m^{2}|\Lambda_N|)^{-1}}{1+\tilde u_{N,N}}
        .
  \end{align}
\end{proposition}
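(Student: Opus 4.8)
The plan is to follow the blueprint of the susceptibility computation in Proposition~\ref{prop:suscept}, but driven by the observable renormalisation group flow of Section~\ref{sec:obs-flow} rather than the bulk flow, working in Case~(1) for $\avg{\bar\psi_a\psi_b}$ and in Case~(2) for $\avg{\bar\psi_a\psi_a}$ and $\avg{\bar\psi_a\psi_a\bar\psi_b\psi_b}$; thus Proposition~\ref{prop:1pt} and Proposition~\ref{prop:2pt} are proved together. The starting point is~\eqref{e:s5cora}, respectively~\eqref{e:s5corb1}--\eqref{e:s5corb2}, which express these correlation functions, up to the explicit prefactors $1/\lambda_{a,0}$ and $1/(\lambda_{a,0}\lambda_{b,0})$, as first and second $\sigma$-derivatives of $\log\E_C(e^{-V_0(\Lambda)})$, where $V_0=\Vbulk_0+\Vobs_0$ with $\Vbulk_0=V_0^c(m^2,b_0)$ from Theorem~\ref{thm:flow} and $\Vobs_0$ chosen with $\lambda_{a,0}=\lambda_{b,0}=\lambda_0$ a fixed small positive constant and all other observable couplings zero, as in Proposition~\ref{prop:flow-obs}. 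The dependence on $\lambda_0$ is explicit; it cancels at leading order, and at subleading order it is absorbed into the ($L$-dependent) implicit constants.

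Next, write $\E_C\theta=\E_{t_NQ_N}\theta\circ\E_{C_1+\cdots+C_N}\theta$ and evaluate the inner convolution by iterating the observable renormalisation group map, which produces the bulk flow $(\Vbulk_j,\Kbulk_j)_{j\le N}$ of Theorem~\ref{thm:flow} and the observable flow $(\Vobs_j,\Kobs_j)_{j\le N}$, whose coupling constants obey the recursions of Lemma~\ref{lem:Vobs-recursion} and which satisfies $\|\Vobs_j\|_j=\OL(\lambda_0)$, $\|\Kobs_j\|_j=\OL(\lambda_0b_0L^{-\kappa j})$ by Proposition~\ref{prop:flow-obs} (with the $ab$-component of smaller order $\lambda_0^2b_0$, vanishing for $j<j_{ab}$). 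Since $\cP_N=\{\varnothing,\Lambda\}$ this gives $Z_N=e^{-u_N|\Lambda|-U^\obs_N}(e^{-\Vbulk_N(\Lambda)-\Vobs_N(\Lambda)}+K_N(\Lambda))$, with $U^\obs_N$ collecting the accumulated observable constants $q_N,\gamma_{a,N},\gamma_{b,N}$. The last step $\E_{t_NQ_N}\theta$ is then carried out exactly as in Proposition~\ref{prop:ZNN}: restrict $V_N(\Lambda)$ and $K_N(\Lambda)$ to spatially constant $\psi,\bar\psi$, integrate out the two-dimensional Grassmann zero mode with variance $t_N/|\Lambda_N|$, and take logarithms. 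The normalisation contributes the factor $(1+\tilde u_{N,N})^{-1}$, with $\tilde u_{N,N}$ built from the bulk part exactly as in Proposition~\ref{prop:ZNN} (the observable part has no constant term), and the zero-mode variance $t_N/|\Lambda_N|$ multiplies those scale-$N$ couplings carrying a single uncontracted $\bar\psi$ or a factor $\bar\psi\psi$. This is the forthcoming Lemma~\ref{lem:ZNN-obs}, the observable analogue of Proposition~\ref{prop:ZNN}; it simultaneously produces the error terms in the statement, bounded by $\|\Kobs_N\|_N$ together with the $\ell_{a,N},\ell_{ab,N}$-weights built into the norm~\eqref{e:Tphi-obs}. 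Differentiating in the observable fields and comparing with the warm-up~\eqref{e:warmupcor} gives $\avg{\bar\psi_a\psi_b}=q_{N,N}/(\lambda_{a,0}\lambda_{b,0})+E$ in Case~(1) and $\avg{\bar\psi_a\psi_a\bar\psi_b\psi_b}=(q_{N,N}+\gamma_{a,N,N}\gamma_{b,N,N})/(\lambda_{a,0}\lambda_{b,0})+E$ in Case~(2), the $(N,N)$-couplings now including the zero-mode contribution and $E$ collecting the error terms.

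It remains to evaluate $q_{N,N},\gamma_{a,N,N},\gamma_{b,N,N}$ by summing the recursions of Lemma~\ref{lem:Vobs-recursion}, keeping the $t_NQ_N$ contribution separate from $C_1,\dots,C_N$. In Case~(1) the $q$-recursion telescopes to $q_{N,N}/(\lambda_{a,0}\lambda_{b,0})=\sum_{k\le N}C_k(a,b)+t_N|\Lambda_N|^{-1}(1+\tilde u_{N,N})^{-1}+(\text{terms from }r_j\text{ and }K^{ab}_j)$; since $W_N(a-b)=\sum_{k\le N}C_k(a,b)$ by~\eqref{e:WN-def} and~\eqref{e:C-def}, and the $r_j$-terms sum to $\OL(b_0|a-b|^{-(d-2)-\kappa})$ by Lemma~\ref{lem:Vobs-bd}, this yields~\eqref{e:2pt-a-bis}. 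In Case~(2) the crucial algebraic point is the telescoping identity
\[
\sum_{j}\bigl(-\lambda_{a,j}\lambda_{b,j}C_{j+1}(a,b)^2+\eta_jC_{j+1}(a,b)\bigr)=-\lambda_{a,0}\lambda_{b,0}\Bigl(\sum_kC_k(a,b)\Bigr)^2+(\text{lower order}),
\]
which follows from $\eta_j=-2\lambda_{a,0}\lambda_{b,0}\sum_{k\le j}C_k(a,b)+(\text{errors})$ in Lemma~\ref{lem:Vobs-recursion}; together with the separation of the zero mode this produces the leading term $-\lambda^2W_N(a-b)^2$ with $\lambda=\lambda_{a,N}/\lambda_{a,0}=1+\OL(b_0)$ the renormalised vertex ratio. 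Likewise $\gamma_{a,N,N}/\lambda_{a,0}=\gamma+\gamma\,t_N|\Lambda_N|^{-1}(1+\tilde u_{N,N})^{-1}+(\text{errors})$ with $\gamma=\gamma(b_0,m^2)=(-\Delta^{\Z^d}+m^2)^{-1}(0,0)+\OL(b_0)$, which is~\eqref{e:2ptaa-bis}; expanding $\gamma_{a,N,N}\gamma_{b,N,N}$, adding $q_{N,N}$, using~\eqref{e:greenfLambdaZd} to rewrite $\sum_kC_k=(-\Delta+m^2)^{-1}=W_N+t_N|\Lambda_N|^{-1}$, and collecting errors gives~\eqref{e:2pt-b-bis}.

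The main obstacle is the bookkeeping around the zero mode in the final step: one must carry $t_NQ_N$ separately through the whole computation, check that it enters the normalisation only via $(1+\tilde u_{N,N})^{-1}$ and otherwise only through the variance $t_N/|\Lambda_N|$, and verify that $\Kobs_N$ — which could a priori pollute the coefficient of $t_N|\Lambda_N|^{-1}$ — contributes only to the stated error terms with the correct powers of $b_0$ and $|a-b|$; this is exactly why Lemma~\ref{lem:ZNN-obs} must be set up with the $\ell_{a,N},\ell_{ab,N}$-weights of~\eqref{e:ellsigma}, and why the $\eta$- and $q$-recursions must be telescoped to the required finite-volume precision. A secondary point is the continuity in $m^2\ge0$ and the differentiability in $b_0$ of $\lambda$ and $\gamma$, which follow from the corresponding properties of the renormalisation group maps (Theorem~\ref{thm:step-obs}) and of the coupling-constant flow (Lemma~\ref{lem:Vobs-bd}).
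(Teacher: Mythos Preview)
Your outline matches the paper's strategy almost exactly: run the observable flow of Section~\ref{sec:obs-flow}, integrate the zero mode via Lemma~\ref{lem:ZNN-obs}, and telescope the recursions of Lemma~\ref{lem:Vobs-recursion}. The paper packages this as Lemma~\ref{lem:2pt} (with Lemma~\ref{lem:1pt} as input), from which Proposition~\ref{prop:2pt} is immediate.

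There is, however, one genuine gap in your Case~(1) argument. You write that ``the dependence on $\lambda_0$ is explicit; it cancels at leading order'', and telescope directly to $q_{N}/(\lambda_{a,0}\lambda_{b,0})=\sum_k C_k(a,b)+\cdots$. But the $q$-recursion gives $q_N=\sum_j\lambda_{a,j}\lambda_{b,j}C_{j+1}(a,b)+\cdots$, and Lemma~\ref{lem:Vobs-bd} only yields $\lambda_{x,j}=\lambda_0+\OL(\lambda_0 b_0)$. This produces a leading coefficient $(\lambda_\infty^{(1)}/\lambda_0)^2=1+\OL(b_0)$ in front of $W_N(a-b)$, hence an error $\OL(b_0|a-b|^{-(d-2)})$ --- strictly larger than the claimed $\OL(b_0|a-b|^{-(d-2+\kappa)})$ in~\eqref{e:2pt-a-bis}. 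To get coefficient exactly $1$ you need $\lambda_\infty^{(1)}=\lambda_0$ \emph{exactly}, which then gives $\lambda_{x,j}=\lambda_0+\OL(\lambda_0 b_0 L^{-\kappa j})$ and the extra decay sums to the stated error.

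The paper proves this identity in Lemma~\ref{lem:1pt} by a nontrivial indirect argument (following \cite[Lemma~4.6]{MR3345374}): one computes $\partial_{\bar\psi}\partial_{\sigma_a}\tilde Z_{N,N}|_0$ in two ways, once via Lemma~\ref{lem:ZNN-obs} (giving $\lambda_{a,N}\to\lambda_{a,\infty}$ as $N\to\infty$ with $m^2>0$ fixed) and once by relating it to the susceptibility $\sum_x\avg{\bar\psi_0\psi_x}$ and invoking Proposition~\ref{prop:suscept} (giving $\lambda_{0,a}$ in the same limit). This step is not visible from the observable flow alone and should be stated explicitly in your sketch; without it \eqref{e:2pt-a-bis} does not follow with the stated precision.

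A minor slip: in your formula for $\gamma_{a,N,N}/\lambda_{a,0}$ the coefficient of $t_N|\Lambda_N|^{-1}(1+\tilde u_{N,N})^{-1}$ should be $\lambda$, not $\gamma$ (compare Lemma~\ref{lem:ZNN-obs}, equation~\eqref{e:ZNN-eta-b}, and Theorem~\ref{thm:psi4-full}, equation~\eqref{e:1pt-b-thm}).
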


Throughout this section, we assume that the renormalisation group flow
$(V_j,K_j)_{j \leq N}$ is given as in Corollary~\ref{cor:flow-finvol} (bulk)
and Proposition~\ref{prop:flow-obs} (observables).

\subsection{Integration of the zero mode}

As in the analysis of the susceptibility in Section~\ref{sec:suscept},
we treat the final integration over the zero mode explicitly.
Again we will only require the restriction to constant $\psi,\bar\psi$
(as discussed below \eqref{e:ZNN-def}) of
\begin{equation}
  \E_{C} \theta Z_0
  = \E_{t_N Q_N} \theta Z_N = e^{-u^\varnothing_N|\Lambda_N|} \tilde Z_{N,N},
\end{equation}
where the last equation defines $\tilde Z_{N,N}$. We write
$\tilde Z_{N,N}=\tilde Z_{N,N}^{\varnothing}+\tilde Z_{N,N}^{\obs}$
for its decomposition into bulk and observable parts (see \eqref{e:Fdecomp}).
The bulk term was already computed in Proposition~\ref{prop:ZNN}.  The observable
term $\tilde Z_{N,N}^\obs$ is computed by the next lemma.
In the lemma we only give explicit formulas for the terms that will be used in the proofs of
Propositions~\ref{prop:1pt} and~\ref{prop:2pt}.

\begin{lemma}
  \label{lem:ZNN-obs}
  Restricted to constant $\psi,\bar\psi$, in Case (1),
  \begin{align}
    \tilde Z_{N,N}^\obs
    &=
      \sigma_a\bar\psi \tilde Z_{N,N}^{\sigma_a\bar\psi}  + \bar\sigma_b\psi \tilde Z_{N,N}^{\sigma_b\psi}
      +\sigma_a\bar\sigma_b \tilde Z_{N,N}^{\bar\sigma_b\sigma_a}
      +\sigma_a\bar\sigma_b \psi\bar\psi \tilde Z_{N,N}^{\bar\sigma_b\sigma_a\psi\bar\psi}
      \intertext{where}
      \label{e:ZNN-lambda-a}
      \tilde      Z_{N,N}^{\sigma_a\bar\psi}
    &= \lambda_{a,N} +\OL(\ellxN^{-1}\ell_N^{-1}\|\Kobs_N\|_N)
      \\
      \label{e:ZNN-lambda-b}
    \tilde      Z_{N,N}^{\bar\sigma_b\psi}
    &= \lambda_{b,N} +\OL(\ellxN^{-1}\ell_N^{-1}\|\Kobs_N\|_N)
    \\ 
    \label{e:ZNN-q-a}
    \tilde    Z_{N,N}^{\bar\sigma_b\sigma_a}
    &= q_N (1+\tilde u_{N,N})+ \lambda_{a,N}\lambda_{b,N}
      t_N|\Lambda_N|^{-1} { - r_N t_N |\Lambda_N|^{-1}}
      \nnb
      &\qquad\qquad
      + \OL(m^{-2}|\Lambda_N|^{-1}\ell_N^{-2}\ellabN^{-1}\|\Kobs_N\|_N)
      .
  \end{align}
  In Case (2), 
    \begin{align}
      \tilde Z_{N,N}^\obs
      &=
        \sigma_a\tilde Z_{N,N}^{\sigma_a}+ \sigma_a\bar\psi\psi \tilde Z_{N,N}^{\sigma_a\bar\psi\psi}
        +\sigma_b \tilde Z_{N,N}^{\sigma_b} + \sigma_b\bar\psi\psi \tilde Z_{N,N}^{\sigma_b\bar\psi\psi}
        + \sigma_a\sigma_b \tilde Z_{N,N}^{\sigma_a\sigma_b}
        + \sigma_a\sigma_b \psi\bar\psi \tilde Z_{N,N}^{\sigma_a\sigma_b\psi\bar\psi}
  \intertext{where, setting $\tilde \lambda_{x,N,N} = \lambda_{x,N}
        +\OL(\ellxN^{-1} \ell_N^{-2} \|\Kobs_N\|_N)$,}
    \label{e:ZNN-eta-b}
        \tilde Z_{N,N}^{\sigma_x}
      &=
        \gamma_{x,N} (1+\tilde u_{N,N}) +
        \tilde\lambda_{x,N,N}t_N|\Lambda_N|^{-1} 
        + \OL(\ellxN^{-1}\|\Kobs_N\|_N).
    \\
    \label{e:ZNN-q-b}
      \tilde   Z_{N,N}^{\sigma_a\sigma_b}
      &= (q_N+\gamma_{a,N}\gamma_{b,N}) (1+\tilde u_{N,N})
        + (\eta_N - r_N +\tilde\lambda_{a,N,N}\gamma_{b,N}+\tilde\lambda_{b,N,N}\gamma_{a,N})t_N|\Lambda_N|^{-1}
        \nnb
      &\qquad\qquad
        + \OL((|\gamma_{a,N}|+|\gamma_{b,N}|)\ellxN^{-1}
        + \ellabN^{-1}
        + m^{-2}|\Lambda_N|^{-1}\ell_N^{-2}\ellabN^{-1})\|\Kobs_N\|_N.
  \end{align}
\end{lemma}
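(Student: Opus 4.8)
The plan is to mimic the proof of Proposition~\ref{prop:ZNN}, now carrying the observable sectors along. Since $\cP_N(\Lambda_N)=\{\varnothing,\Lambda_N\}$, the representation \eqref{e:ZVK} (with $u^\varnothing$ and $u^\obs$ in the roles of $u$, per Definition~\ref{def:V+obs}) together with \eqref{e:ZNN-def} gives, after dividing out $e^{-u_N^\varnothing|\Lambda_N|}$,
\begin{equation*}
  \tilde Z_{N,N} = \E_{t_NQ_N}\theta\pa{e^{-(u_N^\obs + V_N)(\Lambda_N)} + K_N(\Lambda_N)},
\end{equation*}
and, exactly as below \eqref{e:ZNN-def}, since $t_NQ_N$ only couples the zero mode we may restrict $e^{-(u_N^\obs + V_N)(\Lambda_N)}$ and $K_N(\Lambda_N)$ to constant $\psi,\bar\psi$ before applying $\E_{t_NQ_N}\theta$. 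Because $V_N^\varnothing$ is even and commutes with the (even) purely observable polynomial $u_N^\obs + V_N^\obs$, we factor $e^{-(u_N^\obs + V_N)(\Lambda_N)} = e^{-V_N^\varnothing(\Lambda_N)}\,e^{-(u_N^\obs + V_N^\obs)(\Lambda_N)}$. Every monomial of $u_N^\obs + V_N^\obs$ carries a factor $\sigma_a$, $\sigma_b$ or $\bar\sigma_b$, so by the pigeonhole principle $(u_N^\obs + V_N^\obs)(\Lambda_N)^3=0$ (nilpotency of $\sigma_a,\sigma_b,\bar\sigma_b$), hence $e^{-(u_N^\obs + V_N^\obs)(\Lambda_N)}$ is a second order polynomial in the scale-$N$ observable couplings. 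Extracting $\pi_\obs$ and specialising to constant fields collapses each of $\bar\psi_a\psi_a$, $\bar\psi_b\psi_b$, $\bar\psi_a\psi_b$, $\bar\psi_b\psi_a$ to $\pm\psi\bar\psi$ and all of $\bar\psi_a$, $\psi_b$ to $\bar\psi$, $\psi$, leaving, in each observable sector, a polynomial of degree at most two in the constant generators whose coefficients are explicit polynomials in $\lambda_{a,N},\lambda_{b,N},q_N,r_N$ (Case~(1)), respectively $\lambda_{a,N},\lambda_{b,N},\gamma_{a,N},\gamma_{b,N},q_N,\eta_N,r_N$ (Case~(2)).

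\textbf{Bulk prefactor and the $K$-correction.} Next I would multiply by $e^{-V_N^\varnothing(\Lambda_N)} = 1-|\Lambda_N|a_N\psi\bar\psi$ restricted to constant fields, using \eqref{e:VX} as in the proof of Proposition~\ref{prop:ZNN}; this keeps the degree $\le 2$ and, crucially, annihilates the $\psi$-coefficient of the $\sigma_a\bar\psi$ and $\bar\sigma_b\psi$ sectors (since $\bar\psi\,\psi\bar\psi=0$). Then I add $\pi_\obs K_N(\Lambda_N)$ restricted to constant fields: in each sector this is of the form $c^0 + c^2\psi\bar\psi$, and pairing with the test functions of \eqref{e:kN2normbd} and using the observable norm \eqref{e:Tphi-obs}--\eqref{e:ellsigma} bounds $|c^0|\le \ell_{\ast,N}^{-1}\|\Kobs_N\|_N$ and $|c^2|\le \ell_N^{-2}\ell_{\ast,N}^{-1}\|\Kobs_N\|_N$, where $\ell_{\ast,N}\in\{\ell_{a,N},\ell_{b,N},\ell_{ab,N}\}$ is the weight of the relevant sector; the products of $e^{-V_N^\varnothing}$ with these corrections are higher order and go into the error terms.

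\textbf{Integrating the zero mode.} Finally I would apply $\E_{t_NQ_N}\theta$ to the resulting constant-field polynomial: constants and the odd monomials $\bar\psi$, $\psi$ are unchanged, while $\E_{t_NQ_N}\theta(\psi\bar\psi) = -t_N|\Lambda_N|^{-1}+\psi\bar\psi$ by the fermionic Wick rule with covariance $t_NQ_N$, exactly as in the proof of Proposition~\ref{prop:ZNN}. Reading off the coefficients of $\sigma_a\bar\psi$, $\bar\sigma_b\psi$, $\sigma_a\bar\sigma_b$, $\sigma_a\bar\sigma_b\psi\bar\psi$ in Case~(1) (and of $\sigma_a$, $\sigma_a\bar\psi\psi$, $\sigma_a\sigma_b$, $\sigma_a\sigma_b\psi\bar\psi$ and their $b$-analogues in Case~(2)) then yields the stated formulas, once one (i) uses $1+k_N^0+\tilde a_{N,N}t_N = 1+\tilde u_{N,N}$ from Proposition~\ref{prop:ZNN} to rewrite $q_N(1+a_Nt_N)$, $\gamma_{x,N}(1+a_Nt_N)$ as $q_N(1+\tilde u_{N,N})$, $\gamma_{x,N}(1+\tilde u_{N,N})$ modulo errors, and (ii) absorbs the $c^2$-type $K$-corrections into $\tilde\lambda_{x,N,N} = \lambda_{x,N}+\OL(\ellxN^{-1}\ell_N^{-2}\|\Kobs_N\|_N)$ and the remaining products of couplings with norm-bounded $K$-coefficients into the remainders. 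The continuity statements follow since $(V_j,K_j)$ is a renormalisation group flow whose maps are continuous in $m^2\ge 0$ and in $b_0$. Case~(2) is identical after $\bar\sigma_b\mapsto\sigma_b$ and using the $p=2$ weights of \eqref{e:ellsigma}; the only structural novelty is that $\eta_N$ enters linearly (via the $\sigma_a\sigma_b(\bar\psi_a\psi_b+\bar\psi_b\psi_a)$ term of $\Vobs_N$), whereas the $\lambda_{a,N}\lambda_{b,N}$ cross term of the square vanishes on constant fields because $\bar\psi\psi\bar\psi\psi=0$.

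\textbf{Main obstacle.} The substance of the argument is the bookkeeping: matching all cross terms and, above all, pinning the remainders down to the precise stated form. The delicate one is $\OL(m^{-2}|\Lambda_N|^{-1}\ell_N^{-2}\ellabN^{-1})\|\Kobs_N\|_N$, which arises from applying the contraction $\psi\bar\psi\mapsto -t_N|\Lambda_N|^{-1}+\psi\bar\psi$ (with $t_N\le m^{-2}$ by \eqref{e:tN}) to the $\psi\bar\psi$-coefficient of the $ab$-sector of $K_N(\Lambda_N)$, whose size is controlled by $\ell_N^{-2}\ellabN^{-1}\|\Kobs_N\|_N$; one must check this dominates the analogous corrections coming from the bulk-times-observable products and from the difference between $1+a_Nt_N$ and $1+\tilde u_{N,N}$.
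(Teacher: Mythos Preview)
Your overall strategy matches the paper's, but your starting identity is wrong and the patch you propose in step~(i) does not close. After dividing out $e^{-u_N^\varnothing|\Lambda_N|}$ you obtain
\[
  \tilde Z_{N,N} \;=\; e^{-u_N^\obs(\Lambda_N)}\,\E_{t_NQ_N}\theta\!\bigl(e^{-V_N(\Lambda_N)}+K_N(\Lambda_N)\bigr),
\]
\emph{not} $\E_{t_NQ_N}\theta\bigl(e^{-(u_N^\obs+V_N)(\Lambda_N)}+K_N(\Lambda_N)\bigr)$: the factor $e^{-u_N^\obs}$ must multiply $K_N$ as well. The missing cross term $(e^{-u_N^\obs}-1)K_N^\varnothing$ is exactly what, together with $(e^{-u_N^\obs}-1)e^{-V_N^\varnothing}$, produces the clean factors $q_N(1+\tilde u_{N,N})$ and $\gamma_{x,N}(1+\tilde u_{N,N})$ via Proposition~\ref{prop:ZNN}. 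Your computation instead yields $q_N(1+a_Nt_N)$ and $\gamma_{x,N}(1+a_Nt_N)$, and in step~(i) you try to absorb the difference into the remainder. But that difference is
\[
  \gamma_{x,N}\bigl(\tilde u_{N,N}-a_Nt_N\bigr)=\gamma_{x,N}\bigl(k_N^0-k_N^2t_N|\Lambda_N|^{-1}\bigr)=O\!\bigl(|\gamma_{x,N}|\,\|K_N^\varnothing\|_N\bigr),
\]
which is larger by a factor $\asymp L^{(d-2)N}$ than the stated remainder $\OL(\ell_{x,N}^{-1})\|\Kobs_N\|_N=\OL(\ell_N^2)\|\Kobs_N\|_N$ in Case~(2). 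An analogous mismatch occurs for $q_N$ in both cases.

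The paper handles this by writing $(\tilde Z_{N,N})^\obs=A+B$ with $A=(e^{-u_N^\obs}-1)\,\E_{t_NQ_N}\theta(e^{-V_N^\varnothing}+K_N^\varnothing)$ and $B=e^{-u_N^\obs}\,\E_{t_NQ_N}\theta(e^{-V_N}+K_N)^\obs$; Proposition~\ref{prop:ZNN} then evaluates the bulk factor in $A$ \emph{exactly} as $1+\tilde u_{N,N}-|\Lambda_N|\tilde a_{N,N}\psi\bar\psi$, so the $(1+\tilde u_{N,N})$ factors appear without residual bulk-$K$ errors. Once you correct the starting identity (or equivalently adopt this $A$--$B$ split), the rest of your outline---restriction to constant fields, the Wick rule $\E_{t_NQ_N}\theta(\psi\bar\psi)=-t_N|\Lambda_N|^{-1}+\psi\bar\psi$, and the $\ell_{\ast,N}^{-1}$ bounds on the $K_N^\obs$ coefficients---goes through as in the paper.
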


The error bounds above reveal the tension in the explicit choices of
$\ellxj^{-1}$ and $\ellabj^{-1}$.  To obtain effective error
estimates, we want $\ellxN^{-1}$ and $\ellabN^{-1}$ to be as small as
possible.  On the other hand, to control the iterative estimates of
Theorem~\ref{thm:step-obs} over the entire trajectory, i.e., to prove
Proposition~\ref{prop:flow-obs}, we needed that $\ell_{x,j}$ and
$\ell_{ab,j}$ were not too large.  In particular, either of the more
naive choices $\ell_{ab,j}=\ell_{j}^{-4}$ and
  $\ell_{ab,j} = \ell_{j_{ab}}^{-4}$ in Case~(2) would have lead to
difficulties, either in terms of forcing us to track additional terms
in the flow and in terms of controlling norms inductively, or
by leading to error bounds that are not strong enough to capture
the zero mode sufficiently accurately.

\begin{proof}
  Throughout the proof, we restrict to constant $\psi,\bar\psi$.
  Since  
  \begin{align}
    \pa{e^{+u_N^\varnothing(\Lambda)}Z_N}^\obs
    &= \pa{e^{-u_N^\obs(\Lambda)}(e^{-V_N(\Lambda)}+K_N(\Lambda))}^\obs
      \nnb
    &=
    (e^{-u_N^\obs(\Lambda)}-1)(e^{-\Vbulk_N(\Lambda)}+\Kbulk_N(\Lambda))
      + e^{-u_N^\obs(\Lambda)}(e^{-V_N(\Lambda)}+K_N(\Lambda))^\obs,
  \end{align}
  by applying $\E_{t_NQ_N}\theta$ we obtain
  \begin{multline}
    \tilde Z_{N,N}^\obs = \underbrace{(e^{-u^\obs_N(\Lambda)}-1)(1+\tilde u_{N,N}-|\Lambda_N|\tilde a_{N,N}\psi\bar\psi)}_{A}
    + \underbrace{e^{-u_N^\obs(\Lambda)}\E_{t_NQ_N}\qB{\theta(e^{-V_N(\Lambda)}+K_N(\Lambda))^\obs}}_{B} 
    .
  \end{multline}
  In obtaining $A$
  we used \eqref{e:ZNN} which gives $\E_{t_NQ_N}[\theta (e^{-V_N^\varnothing(\Lambda)}+K_N^\varnothing(\Lambda))]= 1+\tilde u_{N,N}-|\Lambda_N|\tilde a_{N,N}\psi\bar\psi$.
  Since each term in $V_N^\varnothing(\Lambda)$ contains a factor $\psi\bar\psi$
  and each term in $V_N^\obs(\Lambda)$ either $\psi$ or $\bar\psi$,
  we have $V_N^\varnothing(\Lambda)V_N^\obs(\Lambda)=0$.
  Thus
  \begin{equation}
    B = e^{-u_N^\obs(\Lambda)}\E_{t_NQ_N}\qB{\theta(-V_N^\obs(\Lambda)+\frac12 V_N^\obs(\Lambda)^2+K_N^\obs(\Lambda))}
    .
  \end{equation}

  \noindent\emph{Case (1).}
  Since $\sigma_a^2=\bar\sigma_b^2=0$, 
  \begin{equation}
    e^{-u_N^\obs(\Lambda)}-1=
    -u^\obs_N(\Lambda) = \sigma_a\bar\sigma_b q_N,
  \end{equation}
  we get
  \begin{align}
    A&=\sigma_a\bar\sigma_b q_N (1+\tilde u_{N,N}-|\Lambda_N|\tilde a_{N,N}\psi\bar\psi) 
       \\
    B &=
    \sigma_a\bar\psi(\lambda_{a,N}+k^{\sigma_a\bar\psi}_{N})+{\psi\bar\sigma_{b}}
        (\lambda_{b,N}+k^{\bar\sigma_b\psi}_{N})
        +\sigma_a\bar\sigma_b \E_{t_NQ_N}\qB{\theta \bar\psi\psi (\lambda_{a,N}\lambda_{b,N} - r_N + k^{\sigma_a\bar\sigma_b \bar\psi\psi}_{N})}
        .
  \end{align}
  The constants $k_N^\#$ are given in terms of derivatives of
  $K_N(\Lambda)$ and bounded analogously as in \eqref{e:kN2normbd}.
  For example,
  $k_N^{\sigma_a\bar\psi} = \OL(\ellaN^{-1}\ell_N^{-1}\|\Kobs_N\|_N)$,
  and similarly for the other $k_N^\#$ terms, the rule being that we
  have a factor $\ellxN^{-1}$ if there is a superscript $\sigma_a$ or
  $\bar\sigma_b$ but not both, a factor $\ellabN^{-1}$ for
  $\sigma_a\bar\sigma_b$ and a factor $\ell_N^{-1}$ for each
  superscript $\psi$ or $\bar\psi$.  These bounds follow from the
  definition of the $T_j(\ell_j)$ norm.

  Since $\E_{t_NQ_N}\theta \psi\bar\psi = -t_N|\Lambda_N|^{-1}+\psi\bar\psi$ the claim
  follows by collecting terms and using~\eqref{e:tN}.
  
  \medskip
  \noindent\emph{Case (2).}
  Using again that $\sigma_a^2=\sigma_b^2=0$, but now taking in account that $u^\obs(\Lambda)$ has additional terms compared to Case~(1),
  \begin{equation}
    e^{-u_N^\obs(\Lambda)}-1=
    -u^\obs_N(\Lambda)+\frac12 u^\obs_N(\Lambda)^2 = \sigma_a\sigma_b (q_N+\gamma_{a,N}\gamma_{b,N}) + \sigma_a \gamma_{a,N} +\sigma_b\gamma_{b,N},
  \end{equation}
  and therefore
  \begin{equation}
    A =(\sigma_a\sigma_b(q_N+\gamma_{a,N}\gamma_{b,N})+\sigma_a \gamma_{a,N}+\sigma_b\gamma_{b,N})
    (1 + \tilde u_{N,N} - |\Lambda_N|\tilde a_{N,N} \psi\bar\psi)
    .
  \end{equation}
  Since in Case (2) each term in $\Vobs_N(\Lambda)$ contains a factor of $\psi\bar\psi$,
  we have $\Vobs_N(\Lambda)^2=0$ 
  and thus
  \begin{equation}
    B = (1-u_N^\obs(\Lambda))\E_{t_NQ_N}\qB{\theta(-\Vobs_N(\Lambda)+\Kobs_N(\Lambda))}.
  \end{equation}
  Therefore
   \begin{align}
     B&=         \sigma_a k_N^{\sigma_a}
     +
     \sigma_b k_N^{\sigma_b}
     +\sigma_a\sigma_b (\gamma_{a,N} k_N^{\sigma_b} + \gamma_{b,N} k_N^{\sigma_a} + k_N^{\sigma_a\sigma_b})
     \nnb
      &\qquad+
      \E_{t_NQ_N}\qB{ \theta\p{\bar\psi\psi
        (\sigma_{a}\tilde\lambda_{a,N} +
        \sigma_{b}\tilde\lambda_{b,N}) 
     + \sigma_a \sigma_b\bar\psi\psi (\eta_N - r_N    +
     \gamma_{a,N}\tilde\lambda_{b,N}+\gamma_{b,N}\tilde\lambda_{a,N} + k^{\sigma_a\sigma_b\bar\psi\psi})}}
  \end{align}
  where we have set
  $\tilde\lambda_{x,N} = \lambda_{x,N} + k_N^{\sigma_x\bar\psi\psi}$.
  Taking the expectation and collecting all terms gives
  \begin{align}
    \tilde Z_{N,N}^{\sigma_a\sigma_b}
    &= (q_N+\gamma_{a,N}\gamma_{b,N}) (1+\tilde u_{N,N})
      \nnb
    &\qquad\qquad + (\eta_N - r_N +\tilde\lambda_{a,N}\gamma_{b,N}+\tilde\lambda_{b,N}\gamma_{a,N} + k^{\sigma_a\sigma_b\bar\psi\psi})t_N|\Lambda_N|^{-1}
      \nnb
      &\qquad\qquad
      +\gamma_{a,N} k_N^{\sigma_b} + \gamma_{b,N} k_N^{\sigma_a} + k_N^{\sigma_a\sigma_b}
    \\
    \tilde Z_{N,N}^{\sigma_a}
    &=
      \gamma_{a,N} (1+\tilde u_{N,N}) + \tilde\lambda_{a,N}t_N|\Lambda_N|^{-1}
      +    k_N^{\sigma_a}
      .
  \end{align}
  The bounds on the constants $k_N^\#$ are analogous to those in Case~(1).
\end{proof}

\subsection{Analysis of one-point functions}

We now analyse the observable flow given by Lemma~\ref{lem:Vobs-recursion} to derive the asymptotics of the correlation functions.
Note that the coupling constants $\lambda_{x,j}$ and $\gamma_{x,j}$ can possibly
depend on $x=a,b$
as the contributions from $K$ can depend on the relative position of the points in the division of $\Lambda_N$
into blocks.
The following lemma shows that in the limit $j\to\infty$ they become
independent of $x$;
an analogous argument was used in \cite[Lemma~4.6]{MR3345374}.

\begin{lemma} \label{lem:1pt}
Under the hypotheses of Proposition~\ref{prop:1pt} there are
$\lambda_\infty^{(p)} = \lambda_0 +
\OL(\lambda_0b_0)$ and $\gamma_\infty= \OL(\lambda_0)$, all
continuous in $m^{2}\geq 0$ and $b_{0}$ small,
such that for $x\in\{a,b\}$,
\begin{equation} \label{e:gammainfbd}
  \lambda_{x,j}^{(p)} = \lambda_\infty^{(p)} + \OL(\lambda_0 b_0 L^{-\kappa j}),\qquad
  \gamma_{x,j}=\gamma_\infty + \OL(\lambda_0 b_0 L^{-(d-2+\kappa)j}).
\end{equation}
In Case (1), $\lambda_\infty^{(1)} = \lambda_0$.  In Case (2),
$\lambda_\infty^{(2)} = \lambda_0 + \OL(\lambda_0b_0)$ and
$\gamma_\infty^{(2)} =
\lambda_\infty^{(2)}(-\Delta^{\Z^d}+m^2)^{-1}(0,0)+
\OL(\lambda_0b_0)$, and with the abbreviations
$\lambda\bydef \lambda^{(2)}$ and $\gamma\bydef \gamma^{(2)}$,
\begin{equation} \label{e:2ptaa}
    \avg{\bar\psi_a\psi_a} =
    \frac{\gamma_\infty}{\lambda_0}
    + \frac{
      \frac{\lambda_{\infty}}{\lambda_0} t_N |\Lambda_N|^{-1}
    + \OL(b_0L^{-(d-2+\kappa)N}) + \OL(b_0L^{-\kappa N}(m^{2}|\Lambda_N|)^{-1})
  }{1+\tilde u_{N,N}}
  .
\end{equation}
\end{lemma}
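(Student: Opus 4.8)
\textbf{Proof plan for Lemma~\ref{lem:1pt}.}
The plan is to combine the recursion for the observable coupling constants (Lemma~\ref{lem:Vobs-recursion}) with the flow estimate $\|\Kobs_j\|_j = \OL(\lambda_0 b_0 L^{-\kappa j})$ from Proposition~\ref{prop:flow-obs}, exactly in the spirit of \cite[Lemma~4.6]{MR3345374}. First I would treat $\lambda_{x,j}$. From \eqref{e:obs-lambda-rec-a} (Case (1)) we have $\lambda_{x,j+1}=\lambda_{x,j}$ so $\lambda_{x,j}^{(1)}=\lambda_0^{(1)}$ is literally constant and $\lambda_\infty^{(1)}=\lambda_0^{(1)}$; in Case (2), \eqref{e:obs-lambda-rec-b} gives $\lambda_{x,j+1}=\lambda_{x,j}+\OL(\ell_{x,j}^{-1}\ell_j^{-2}\|\Kobs_j\|_j)=\lambda_{x,j}+\OL(\lambda_0 b_0 L^{-\kappa j})$, using $\ell_{x,j}^{-1}\ell_j^{-2}=1$ from \eqref{e:ellsigma}. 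Hence the sequence $(\lambda_{x,j})_j$ is Cauchy with geometric tail, so $\lambda_\infty^{(2)}\bydef\lim_j\lambda_{x,j}$ exists, is independent of $x$ (the increments are $x$-dependent only through $K$, but the limit absorbs all of them), equals $\lambda_0^{(2)}+\OL(\lambda_0^{(2)}b_0)$ by summing the tail, and $\lambda_{x,j}^{(2)}=\lambda_\infty^{(2)}+\OL(\lambda_0^{(2)}b_0 L^{-\kappa j})$ by summing the tail from scale $j$ onward. Continuity in $m^2\geq 0$ and differentiability in $b_0$ follow since each finite-scale coupling is continuous/differentiable in these parameters (the renormalisation group map has this property by Theorems~\ref{thm:step}, \ref{thm:step-obs}, \ref{thm:flow}, and Proposition~\ref{prop:flow-obs}), and the convergence is uniform in $m^2$.

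Next I would treat $\gamma_{x,j}$ in Case (2). The recursion is $\gamma_{x,j+1}=\gamma_{x,j}+\lambda_{x,j}C_{j+1}(x,x)+\OL(\ell_{x,j}^{-1}\|\Kobs_j\|_j)$. The covariance bound \eqref{e:C-bd} gives $C_{j+1}(x,x)=\OL(L^{-(d-2)j})$, and $\ell_{x,j}^{-1}\|\Kobs_j\|_j = \ell_j^{2}\cdot\OL(\lambda_0 b_0 L^{-\kappa j})=\OL(\lambda_0 b_0 L^{-(d-2+\kappa)j})$ since $\ell_j^2=\ell_0^2 L^{-(d-2)j}$. Both contributions are summable, so $\gamma_\infty\bydef\lim_j\gamma_{x,j}$ exists, is $x$-independent, and $\gamma_{x,j}=\gamma_\infty+\OL(\lambda_0^{(2)} b_0 L^{-(d-2+\kappa)j})$ by summing the tail. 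To identify $\gamma_\infty$, I would sum the recursion from $j=0$: $\gamma_\infty = \sum_{j\geq 0}\lambda_{x,j}C_{j+1}(x,x) + \OL(\lambda_0 b_0)$, and since $\lambda_{x,j}=\lambda_\infty+\OL(\lambda_0 b_0 L^{-\kappa j})$ and $\sum_j C_{j+1}(x,x)=(-\Delta+m^2)^{-1}(x,x)=(-\Delta^{\Z^d}+m^2)^{-1}(0,0)+\OL(L^{-(d-2)N})$ (using translation invariance and \eqref{e:greenfLambdaZd}, with the torus correction negligible in the relevant regime, or more precisely working with the infinite-volume covariances for $j<N$ so the sum is exactly the $\Z^d$ Green function up to the contribution of $C_N$ which is $\OL(b_0)$-order and $m^2$-uniform), I get $\gamma_\infty = \lambda_\infty(-\Delta^{\Z^d}+m^2)^{-1}(0,0)+\OL(\lambda_0^{(2)}b_0)$. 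Then $\gamma=\cgamma$ is defined as this quantity.

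Finally, for the formula \eqref{e:2ptaa} I would start from \eqref{e:s5corb1}, which after the renormalisation group analysis reads $\avg{\bar\psi_a\psi_a}=\lambda_{a,0}^{-1}\partial_{\sigma_a}\log\tilde Z_{N,N}$ restricted to constant fields (with the $u_N^\varnothing|\Lambda_N|$ term contributing nothing to the $\sigma_a$-derivative of the log and the determinant prefactors cancelling in the logarithmic derivative). Writing $\tilde Z_{N,N}=\tilde Z_{N,N}^\varnothing + \tilde Z_{N,N}^\obs$ with $\tilde Z_{N,N}^\varnothing = 1+\tilde u_{N,N}$ on constant fields (by Proposition~\ref{prop:ZNN}, the $\psi\bar\psi$ term drops out after the $\sigma_a$-derivative is taken and $\psi,\bar\psi$ are set to zero — more carefully, one extracts the coefficient of $\sigma_a$ with no $\psi,\bar\psi$), the $\sigma_a$-component of $\log\tilde Z_{N,N}$ is $\tilde Z_{N,N}^{\sigma_a}/(1+\tilde u_{N,N})$. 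By \eqref{e:ZNN-eta-b} of Lemma~\ref{lem:ZNN-obs}, $\tilde Z_{N,N}^{\sigma_a}=\gamma_{a,N}(1+\tilde u_{N,N})+\tilde\lambda_{a,N,N}t_N|\Lambda_N|^{-1}+\OL(\ell_{x,N}^{-1})\|\Kobs_N\|_N$, and dividing through I would substitute $\gamma_{a,N}=\gamma_\infty+\OL(\lambda_0 b_0 L^{-(d-2+\kappa)N})$, $\tilde\lambda_{a,N,N}=\lambda_{a,N}+\OL(\ell_{x,N}^{-1}\ell_N^{-2}\|\Kobs_N\|_N)=\lambda_\infty+\OL(\lambda_0 b_0 L^{-\kappa N})$, and $\ell_{x,N}^{-1}\|\Kobs_N\|_N=\ell_N^2\OL(\lambda_0 b_0 L^{-\kappa N})=\OL(\lambda_0 b_0 L^{-(d-2+\kappa)N})$. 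Collecting terms and dividing by $\lambda_{a,0}=\lambda_0$ yields \eqref{e:2ptaa}; the error term $\OL(b_0 L^{-\kappa N}(m^2|\Lambda_N|)^{-1})$ arises because $(1+\tilde u_{N,N})^{-1}\asymp (m^2|\Lambda_N|/b_0)$ as $m^2\downarrow 0$ (by Lemma~\ref{cor:macro}, or rather its analogue traced through $\tilde u_{N,N}=\tilde a_{N,N}t_N+\OL(b_0 L^{-\kappa N})$ with $t_N\sim m^{-2}$), so an error of relative size $\OL(b_0 L^{-\kappa N})$ that is divided by $1+\tilde u_{N,N}$ becomes $\OL(b_0 L^{-\kappa N}(m^2|\Lambda_N|)^{-1})/(1+\tilde u_{N,N})$. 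The main obstacle I anticipate is bookkeeping: keeping the three error scales ($|a-b|^{-\kappa}$, $L^{-\kappa N}$, and the zero-mode amplification $(m^2|\Lambda_N|)^{-1}$) separate and correctly tracking which divisions by $1+\tilde u_{N,N}$ amplify which errors, together with verifying that the constant $\cgamma$ so produced matches the one already named in Theorem~\ref{thm:psi4} (this is forced by uniqueness of the limit, but should be checked against \eqref{e:psi4-00}).
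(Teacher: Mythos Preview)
Your treatment of $\gamma_{x,j}$ and of the formula \eqref{e:2ptaa} is essentially the same as the paper's, but there is a genuine gap in your handling of $\lambda_\infty^{(1)}$ and of the $x$-independence of $\lambda_\infty^{(2)},\gamma_\infty$.

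\textbf{Case (1), the exact identity $\lambda_\infty^{(1)}=\lambda_0^{(1)}$.} You claim that the recursion \eqref{e:obs-lambda-rec-a} reads $\lambda_{x,j+1}=\lambda_{x,j}$ exactly. It does not: in Case~(1) one has $\ell_{x,j}=\ell_j^{-1}$ from \eqref{e:ellsigma}, so $\ell_{x,j}^{-1}\ell_j^{-1}=1$ and the recursion is $\lambda_{x,j+1}=\lambda_{x,j}+\OL(\|K_j^x\|_j)$. Summing gives only $\lambda_\infty^{(1)}=\lambda_0^{(1)}+\OL(\lambda_0 b_0)$, not the exact equality the lemma asserts. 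The paper obtains the exact identity by a separate argument (this is precisely the content of \cite[Lemma~4.6]{MR3345374} that you cite but do not use): one computes $\partial_{\bar\psi}\partial_{\sigma_a}\tilde Z_{N,N}|_0$ in two ways. From Lemma~\ref{lem:ZNN-obs} this equals $\lambda_{a,N}+\OL(\|\Kobs_N\|_N)\to\lambda_{a,\infty}$. On the other hand, a short generating-function computation as in \eqref{e:Gammadef}--\eqref{e:Gammasquare} shows that the same quantity equals $\lambda_{0,a}\,m^2(1+\tilde u_{N,N})\sum_x\avg{\bar\psi_0\psi_x}$, which by Proposition~\ref{prop:suscept} is $\lambda_{0,a}(1+\tilde u_{N,N}-\tilde a_{N,N}/m^2)\to\lambda_{0,a}$ as $N\to\infty$ with $m^2>0$ fixed. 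Equating the two limits gives $\lambda_{a,\infty}^{(1)}=\lambda_0^{(1)}$, and continuity extends this to $m^2=0$.

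\textbf{$x$-independence.} Your assertion that ``the limit absorbs all of them'' is not a proof: the increments in $\lambda_{x,j}$ and $\gamma_{x,j}$ genuinely depend on the position of $x$ relative to the block decomposition (through $K_j^x$), so a priori $\lambda_{a,\infty}\neq\lambda_{b,\infty}$. In Case~(1) the Ward-identity argument above gives $\lambda_{x,\infty}=\lambda_0$ for each $x$ separately. In Case~(2) the paper first establishes the formula \eqref{e:2ptaa} with $\lambda_{a,\infty},\gamma_{a,\infty}$ in place of $\lambda_\infty,\gamma_\infty$, and then invokes translation invariance of $\avg{\bar\psi_a\psi_a}$ (taking $N\to\infty$) to conclude that $\lambda_{a,\infty}$ and $\gamma_{a,\infty}$ are independent of $a$.
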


\begin{proof}
  We will typically drop the superscript $(p)$.
  In both cases, we have already seen that
  \begin{equation}
    \lambda_{x,j}  = \lambda_0 + \sum_{k=0}^{j-1} \OL(\|\Kobs_k\|_k)  = \lambda_0 + \sum_{k=0}^{j-1} \OL(\lambda_0b_0L^{-\kappa k}) .
  \end{equation}
  Since the $\Kobs_k$ are independent of $N$ for $k<N$ (by
  Proposition~\ref{prop:consistency} for the extended
  renormalisation group map, see
    Section~\ref{sec:rgmap-ext}), the limit $\lambda_{x,\infty}$ makes sense, exists, and
    $|\lambda_{x,j}-\lambda_{x,\infty}| = \OL(\lambda_0b_0L^{-\kappa j})$.
  Similarly, in Case (2),
  by Lemma~\ref{lem:Vobs-recursion} and $\ellxj^{-1} = \ell_{j}^2 =\OL(L^{-(d-2)j})$,
  \begin{align}
    \gamma_{x,j}
    &= \sum_{k=0}^{j-1} \qB{ \lambda_{x,k} C_{k+1}(x,x) + \OL(L^{-(d-2)k}\|\Kobs_k\|_k)} .
  \end{align}
  In particular, by the above estimate for $|\lambda_{x,j}-\lambda_{x,\infty}|$,
  we have
    \begin{equation}
      \gamma_{x,\infty}
      = \lambda_{x,\infty}\sum_{k=0}^\infty C_{k+1}(0,0) + \OL(\lambda_0b_0)
      = \lambda_{x,\infty}(-\Delta^{\Z^d}+m^2)^{-1}(0,0) + \OL(\lambda_0b_0).
    \end{equation}
  The continuity claims follow from the continuity of the covariances
  $C_{j}$ in $m^{2}\geq 0$, of the renormalisation group coordinates
  $K_j$, and that both $\lambda_\infty$ and $\gamma_\infty$ are uniformly
  convergent sums of terms continuous in $b_{0}$ and $m^{2}\geq 0$. 
  
  To show that  $\lambda_{x,\infty}^{(1)}=\lambda_0$ in Case~(1),
  which is in particular independent of $x$, 
  we argue as in the proof of \cite[Lemma~4.6]{MR3345374}.
  On the one hand, Lemma~\ref{lem:ZNN-obs} implies as $N\to\infty$ with $m^2>0$ fixed,
  \begin{equation}
    \label{e:lambda1-pf1}
    \partial_{\bar\psi} \partial_{\sigma_a} \tilde Z_{N,N}|_{0}
    = \lambda_{a,N} + \OL(\ell_N^{-1}\ellxN^{-1} \|\Kobs_N\|_N)
    = \lambda_{a,N} + \OL(\|\Kobs_N\|_N)
    \xrightarrow[N\to\infty]{}
    \lambda_{a,\infty},
  \end{equation}
  where $|_0$ denotes projection onto the degree $0$ part, i.e., 
  $\psi=\bar\psi=\sigma=\bar\sigma=0$, and we have dropped the superscript (1) from $\lambda_{a,j}$.
  On the other hand, we claim
  \begin{equation}
    \label{e:lambda1-pf2}
      \partial_{\bar\psi} \partial_{\sigma_a} \tilde Z_{N,N}|_{0}
      = \lambda_{0}m^2(1+\tilde u_{N,N})\sum_{x\in\Lambda_N}\avg{\bar\psi_0\psi_x}
      = \lambda_{0}\pa{1+\tilde u_{N,N}-\frac{\tilde a_{N,N}}{m^2}}.
  \end{equation}
  Indeed, the first equality in \eqref{e:lambda1-pf2} follows analogously to \cite[(4.51)--(4.53)]{MR3345374}:
  let $\Gamma(\rho,\bar\rho)$ be as in \eqref{e:Gammadef}, except that $Z_0$
  now includes the observable terms $\sigma_a$ and $\bar\sigma_b$
  and we write $\rho$ and $\bar\rho$ for the constant
  external field to distinguish them from $\sigma_a$ and $\bar\sigma_b$. Then
  as in \eqref{e:Gammasquare},
  \begin{equation}
    -\sum_{x\in\Lambda_N} \avg{\psi_x}_{\sigma_a,\bar\sigma_b} =
    \partial_{\bar\rho} \Gamma(\rho,\bar\rho)|_{\rho=\bar\rho=0} = m^{-2} \frac{\partial_{\bar\psi}\tilde Z_{N,N}|_{\psi=\bar\psi=0}}{\tilde Z_{N,N}|_{\psi=\bar\psi=0}},
  \end{equation}
  and $\avg{\cdot}_{\sigma_a,\bar\sigma_b}$ denotes the expectation that still depends on
  the source
  fields $\sigma_a$ and $\bar\sigma_b$.
  Differentiating with respect to $\sigma_a$ and setting $\bar\sigma_b=0$ gives
  \begin{equation}
    \lambda_{0}
    \sum_{x\in\Lambda_N} \avg{\bar\psi_a\psi_x}
    = -m^{-2} \frac{\partial_{\sigma_a}\partial_{\bar\psi}\tilde Z_{N,N}|_{0}}{\tilde Z_{N,N}|_0}
    = m^{-2} \frac{\partial_{\bar\psi}\partial_{\sigma_a}\tilde Z_{N,N}|_{0}}{1+\tilde u_{N,N}},
  \end{equation}
  which is the first equality of \eqref{e:lambda1-pf2} upon
  rearranging.  The second equality in \eqref{e:lambda1-pf2} follows
  from Proposition~\ref{prop:suscept}.

  The right-hand side of \eqref{e:lambda1-pf2} converges to
  $\lambda_0$ in the limit $N\to\infty$ with $m^2>0$ fixed since
  $\tilde a_{N,N}= a_N - k_N^2/|\Lambda_N| =
  \OL(L^{-2N}\|V_N\|_N)+\OL(L^{-2N}\|K_N\|_N) \to 0$ and
  $\tilde u_{N,N}= k_N^0 + \tilde a_{N,N}t_N = O(\|K_N\|_N) + \tilde
  a_{N,N}t_N \to 0$ when $m^2>0$ is fixed.  Since the left-hand sides
  of \eqref{e:lambda1-pf1}--\eqref{e:lambda1-pf2} are equal, we
  conclude that $\lambda_{a,\infty}=\lambda_0$ when $m^2>0$. By
  continuity this identity then extends to $m^2=0$.  
  
  In Case~(2), to show~\eqref{e:2ptaa}, we use \eqref{e:ZNN-eta-b},
  that Proposition~\ref{prop:flow-obs} implies
  $\|\Kobs_N\|_N = \OL(\lambda_0b_0L^{-\kappa N})$, and
  $\ellxN^{-1}\ell_N^{-2} = 1$ and
  $\ellxN^{-1}= \ell_N^{2} = \OL(L^{-(d-2)N})$ to obtain
  \begin{equation}
    \frac{\tilde Z_{N,N}^{\sigma_a}}{1+\tilde u_{N,N}}
    = \gamma_{a,N} +
    \frac{\lambda_{a,\infty} t_N |\Lambda_N|^{-1}  + \OL(\lambda_0 b_0L^{-(d-2+\kappa)N}) + \OL( \lambda_0b_0 L^{-\kappa N} (m^{2}|\Lambda_N|)^{-1})
      }{1+\tilde u_{N,N}}.
    \end{equation}
    Since
    \begin{equation}
      \lambda_0 \avg{\bar\psi_a\psi_a} = \frac{\partial_{\sigma_a} \tilde Z^{N,N}|_0}{\tilde Z_{N,N}|_0} = \frac{\tilde  Z_{N,N}^{\sigma_a}}{1+\tilde  u_{N,N}},
    \end{equation}
    this gives \eqref{e:2ptaa}.  In particular, by the
    translation invariance of $\avg{\bar\psi_a\psi_a}$, taking $N\to\infty$
    with $m^2>0$ fixed implies $\gamma_{a,\infty}$ is independent of $a$.
    Similarly, taking $m^2\downarrow 0$ first and then $N\to\infty$ we see that $\lambda_{a,\infty}$
    is independent of $a$. Indeed, using \eqref{e:ZNN}, as $N\to\infty$,
    \begin{equation}
      \lambda_0 \avg{\bar\psi_a\psi_a}
      \sim \gamma_{a,\infty}+ \lambda_{a,\infty} \lim_{m^2\downarrow 0}\frac{t_N|\Lambda_N|^{-1}}{\tilde u_{N,N}}
      \sim
      \gamma_{a,\infty} + \lambda_{a,\infty}  \frac{1}{|\Lambda_N|\tilde a_{N,N}}
      ,
    \end{equation}
    where $\avg{\bar\psi_a\psi_a}$ and all scale-dependent coupling constants are evaluated at $m^2=0$.
    Thus $\lambda_{a,\infty} =\lim_{N\to\infty} |\Lambda_N|\tilde a_{N,N} (\lambda_0 \avg{\bar\psi_a\psi_a}-\gamma_{a,\infty})$
    and the right-hand side is independent of $a$.
\end{proof}

\begin{proof}[Proof of Proposition~\ref{prop:1pt}]
  Taking $\lambda_0>0$ small enough,  the proposition follows immediately from Lemma~\ref{lem:1pt} with $\clambda = \lambda_\infty^{(2)}/\lambda_0$
  and $\cgamma = \gamma_\infty^{(2)}/\lambda_0$.
\end{proof}

\subsection{Analysis of two-point functions}

Next we derive estimates for the two-point functions.

\begin{lemma} \label{lem:2pt}
  Under the hypotheses of Proposition~\ref{prop:1pt},
  \begin{align}
    \label{e:2pt-a}
    \avg{\bar\psi_a\psi_b}
    &=
      W_N(a-b)
      + \frac{
      t_N|\Lambda_N|^{-1}
      }{1+\tilde u_{N,N}}
      \nnb
    &
      \qquad
      + \OL(\frac{b_0}{\lambda_0}|a-b|^{-(d-2+\kappa)})
      + 
      \OL(\frac{b_0}{\lambda_0} |a-b|^{-\kappa})
      \frac{(m^2|\Lambda_N|)^{-1}}{1+\tilde u_{N,N}}, 
    \\
    \intertext{and, setting
    $\lambda_\infty = \lambda^{(2)}_\infty$
    and $\gamma_\infty = \gamma^{(2)}_\infty$ as in Lemma~\ref{lem:1pt},}
    \label{e:2pt-b}
    \avg{\bar\psi_a\psi_a\bar\psi_b\psi_b}
      &= -\frac{\lambda_\infty^2}{\lambda_0^2} W_N(a-b)^2 +
        \frac{\gamma_\infty^2}{\lambda_0^2}
        +\frac{-2\lambda_\infty^2 W_N(a-b)
        + 2\lambda_{\infty}\gamma_{\infty} 
        }{\lambda_0^2(1+\tilde u_{N,N})}t_N|\Lambda_N|^{-1}
      \nnb
      &\qquad
        + \OL(\frac{b_0}{\lambda_0} |a-b|^{-2(d-2)-\kappa})
        + \OL(\frac{b_0}{\lambda_0}L^{-(d-2+\kappa)N})
        \nnb
        &\qquad
        + 
        (\OL(\frac{b_0}{\lambda_0}|a-b|^{-(d-2+\kappa)})+\OL(\frac{b_0}{\lambda_0}L^{-\kappa N}))\frac{(m^{2}|\Lambda_N|)^{-1}}{1+\tilde u_{N,N}}
        .
  \end{align}
\end{lemma}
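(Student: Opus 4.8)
The strategy is to combine the explicit integration of the zero mode from Lemma~\ref{lem:ZNN-obs} with the correlation function identities \eqref{e:s5cora} and \eqref{e:s5corb2}, exactly as was done for the one-point function in Lemma~\ref{lem:1pt}. For Case~(1), we start from
\begin{equation*}
  \avg{\bar\psi_a\psi_b} = \frac{1}{\lambda_{a,0}\lambda_{b,0}}
  \partial_{\bar\sigma_b}\partial_{\sigma_a} \log \E_{C}(e^{-V_0(\Lambda)})
  = \frac{1}{\lambda_0^2}\frac{\tilde Z_{N,N}^{\bar\sigma_b\sigma_a}}{1+\tilde u_{N,N}},
\end{equation*}
where the second equality uses that $\E_C\theta Z_0 = e^{-u_N^\varnothing|\Lambda_N|}\tilde Z_{N,N}$ and that the degree-zero part of $\tilde Z_{N,N}$ is $1+\tilde u_{N,N}$ by Proposition~\ref{prop:ZNN}. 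Substituting the formula \eqref{e:ZNN-q-a} for $\tilde Z_{N,N}^{\bar\sigma_b\sigma_a}$ gives
\begin{equation*}
  \avg{\bar\psi_a\psi_b}
  = \frac{q_N}{\lambda_0^2}
  + \frac{(\lambda_{a,N}\lambda_{b,N} + r_N)t_N|\Lambda_N|^{-1} + \OL(m^{-2}|\Lambda_N|^{-1}\ell_N^{-2}\ellabN^{-1})\|\Kobs_N\|_N}{\lambda_0^2(1+\tilde u_{N,N})}.
\end{equation*}
The leading term $q_N/\lambda_0^2$ must be identified with $W_N(a-b)$: summing the recursion for $q_{j+1}$ in Lemma~\ref{lem:Vobs-recursion} from $j=0$ to $N$ (recall the convention $C_{N+1}=t_NQ_N$, and that $q_j=r_j=0$ for $j<j_{ab}$) yields $q_{N+1} = \sum_{k\le N}\lambda_{a,k}\lambda_{b,k}C_{k+1}(a,b) + (\text{$r$-contributions}) + (\text{$K$-errors})$. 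Replacing $\lambda_{a,k}\lambda_{b,k}$ by $\lambda_\infty^2 = \lambda_0^2(1+\OL(b_0))$ up to $\OL(\lambda_0 b_0 L^{-\kappa k})$ errors via Lemma~\ref{lem:1pt}, and using that $\sum_{k\le N}C_{k+1}(a,b) = (-\Delta+m^2)^{-1}(a,b)$ together with $(-\Delta+m^2)^{-1}(a,b) = W_N(a-b) + t_N|\Lambda_N|^{-1}$, gives the main term after the $t_N|\Lambda_N|^{-1}$ piece is separated out. The error terms are controlled by Lemma~\ref{lem:Vobs-bd} (for the $r_N$ contributions, which are $\OL(\lambda_0 b_0 |a-b|^{-\kappa})$ after dividing by $\lambda_0^2$) and by Proposition~\ref{prop:flow-obs} (which gives $\|\Kobs_N\|_N = \OL(\lambda_0 b_0 L^{-\kappa N})$), with the geometric sums $\sum_k C_{k+1}(a,b)L^{-\kappa k}$ producing the $|a-b|^{-(d-2+\kappa)}$ decay and the explicit $\ell_N^{-2}\ellabN^{-1}$ factor in the $K$-error combining with $\|\Kobs_N\|_N$ and the $\ellabj$ choice to give the stated bound (this is the place where the particular choice of $\ell_{ab,j}$ is essential).

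For the quartic correlation in Case~(2), the argument is parallel but one must additionally subtract $\avg{\bar\psi_a\psi_a}\avg{\bar\psi_b\psi_b}$. Starting from \eqref{e:s5corb2},
\begin{equation*}
  \avg{\bar\psi_a\psi_a\bar\psi_b\psi_b}-\avg{\bar\psi_a\psi_a}\avg{\bar\psi_b\psi_b}
  = \frac{1}{\lambda_0^2}\frac{\tilde Z_{N,N}^{\sigma_a\sigma_b}}{1+\tilde u_{N,N}} - \frac{1}{\lambda_0^2}\frac{\tilde Z_{N,N}^{\sigma_a}\tilde Z_{N,N}^{\sigma_b}}{(1+\tilde u_{N,N})^2}.
\end{equation*}
Inserting \eqref{e:ZNN-q-b} for $\tilde Z_{N,N}^{\sigma_a\sigma_b}$ and \eqref{e:ZNN-eta-b} for $\tilde Z_{N,N}^{\sigma_x}$, the $\gamma_{a,N}\gamma_{b,N}(1+\tilde u_{N,N})$ term in $\tilde Z_{N,N}^{\sigma_a\sigma_b}$ is cancelled (up to lower-order errors) by the $\gamma_{a,N}\gamma_{b,N}(1+\tilde u_{N,N})^2/(1+\tilde u_{N,N})^2$ term coming from the product $\tilde Z_{N,N}^{\sigma_a}\tilde Z_{N,N}^{\sigma_b}$; similarly the $\tilde\lambda_{a,N,N}\gamma_{b,N}+\tilde\lambda_{b,N,N}\gamma_{a,N}$ cross terms in the $t_N|\Lambda_N|^{-1}$ coefficient cancel against the corresponding cross terms from the product. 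What survives in the connected correlation is: from $q_N$, the leading $-\lambda_\infty^2 W_N(a-b)^2/\lambda_0^2$ (summing the $q_{j+1}$ recursion in Case~(2), whose driving terms are $\eta_j C_{j+1}(a,b) + r_j C_{j+1}(0,0) - \lambda_{a,j}\lambda_{b,j}C_{j+1}(a,b)^2$, and using that $\sum_k \eta_k C_{k+1}(a,b)$ telescopes against $\eta_k = -2\lambda_{a,k}\lambda_{b,k}\sum_{l<k}C_{l+1}(a,b)$ to reproduce $-\lambda_\infty^2(\sum C_{k+1}(a,b))^2 = -\lambda_\infty^2 W_N(a-b)^2 - \dots$); from the $\eta_N + r_N$ term in the $t_N|\Lambda_N|^{-1}$ coefficient, the $-2\lambda_\infty^2 W_N(a-b) + \dots$ piece (using $\eta_N \approx -2\lambda_\infty^2 W_N(a-b)$ from summing its recursion and $r_N$ being a lower-order error); and the $+2\lambda_\infty\gamma_\infty/\lambda_0^2$ piece in the $t_N|\Lambda_N|^{-1}$ coefficient from the surviving $\gamma$-$\lambda$ cross term after the cancellation. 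The remainder $\gamma_\infty^2/\lambda_0^2$ comes from the $\gamma$-terms that do not cancel at leading order. All error terms are collected using Lemma~\ref{lem:1pt} (for $|\gamma_{x,N}|, |\lambda_{x,N}-\lambda_\infty|$, $|\gamma_{x,N}-\gamma_\infty|$), Lemma~\ref{lem:Vobs-bd} (for $|\eta_N|, |r_N|$ on scales $\ge j_{ab}$), Proposition~\ref{prop:flow-obs} (for $\|\Kobs_N\|_N$), and the geometric decay of the covariances.

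\textbf{Main obstacle.} The substantive difficulty is the bookkeeping of cancellations in Case~(2): one must verify that the $O(1)$-sized terms $\gamma_{a,N}\gamma_{b,N}$ and the cross terms $\tilde\lambda_{x,N,N}\gamma_{y,N}$ appearing in both $\tilde Z_{N,N}^{\sigma_a\sigma_b}$ and in the product $\tilde Z_{N,N}^{\sigma_a}\tilde Z_{N,N}^{\sigma_b}$ cancel to the required precision, so that only the genuinely connected contributions (those built from the covariance $C_{k+1}(a,b)$ between the two distinct points, hence decaying in $|a-b|$) remain. This requires carefully expanding $1/(1+\tilde u_{N,N})$ versus $1/(1+\tilde u_{N,N})^2$ and tracking that the difference of the two expressions is exactly the connected object; the $\tilde u_{N,N}$ factors do not simply cancel and one must use that the non-cancelling pieces are multiplied by $t_N|\Lambda_N|^{-1}/(1+\tilde u_{N,N})$, which is the correct zero-mode normalisation. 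A secondary technical point is ensuring the error bounds are uniform: the factors $\ell_N^{-2}\ellabN^{-1}$ in the $K$-errors of Lemma~\ref{lem:ZNN-obs} grow with $N$, and it is only because $\|\Kobs_N\|_N$ decays like $L^{-\kappa N}$ (Proposition~\ref{prop:flow-obs}) and $\ellabN$ is chosen as $\ell_N^{-2}\ell_{N\wedge j_{ab}}^{-2}$ rather than the naive $\ell_N^{-4}$ that the product is controlled, with the surplus converting into the $|a-b|$-decay of the error; this must be checked explicitly when $j_{ab} < N$. Finally, Proposition~\ref{prop:2pt} follows from Lemma~\ref{lem:2pt} by substituting $\clambda = \lambda_\infty^{(p)}/\lambda_0^{(p)}$ and $\cgamma = \gamma_\infty^{(2)}/\lambda_0^{(2)}$ and absorbing the factors of $1/\lambda_0$ (harmless since $\lambda_0$ is a fixed small positive constant) into the $O$-constants.
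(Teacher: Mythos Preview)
Your Case~(1) argument is essentially the paper's: sum the $q$-recursion to scale $N$ using $\lambda_{x,j}=\lambda_0+\OL(\lambda_0 b_0 L^{-\kappa j})$ and $r_j=\OL(\lambda_0 b_0|a-b|^{-\kappa})1_{j\ge j_{ab}}$, identify $\sum_{k\le N}C_k(a,b)=W_N(a-b)$, and read off $\avg{\bar\psi_a\psi_b}=\tilde Z_{N,N}^{\bar\sigma_b\sigma_a}/(\lambda_0^2(1+\tilde u_{N,N}))$ from \eqref{e:ZNN-q-a}. (Minor point: the convention $C_{N+1}=t_NQ_N$ belongs to the free flow in Section~\ref{sec:obs-free}; in the interacting proof the zero mode is handled by Lemma~\ref{lem:ZNN-obs}, so you only need $q_N$, not $q_{N+1}$.)

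In Case~(2) you take an unnecessary detour and your description becomes internally inconsistent. The lemma asks for the \emph{full} four-point function, and the elementary identity
\[
\partial_{\sigma_b}\partial_{\sigma_a}\log Z \;+\; (\partial_{\sigma_a}\log Z)(\partial_{\sigma_b}\log Z)
\;=\; \frac{\partial_{\sigma_b}\partial_{\sigma_a}Z}{Z}
\qquad(\sigma_a^2=\sigma_b^2=0)
\]
gives directly
\[
\avg{\bar\psi_a\psi_a\bar\psi_b\psi_b}
=\frac{\tilde Z_{N,N}^{\sigma_a\sigma_b}}{\lambda_0^2(1+\tilde u_{N,N})},
\]
with \emph{no} subtraction of $\tilde Z_{N,N}^{\sigma_a}\tilde Z_{N,N}^{\sigma_b}/(1+\tilde u_{N,N})^2$. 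This is exactly what the paper uses. The $\gamma_{a,N}\gamma_{b,N}(1+\tilde u_{N,N})$ term in \eqref{e:ZNN-q-b} then \emph{directly} produces $\gamma_\infty^2/\lambda_0^2$, and the $\tilde\lambda_{a,N,N}\gamma_{b,N}+\tilde\lambda_{b,N,N}\gamma_{a,N}$ term directly produces the $2\lambda_\infty\gamma_\infty$ coefficient of $t_N|\Lambda_N|^{-1}/(1+\tilde u_{N,N})$; no cancellations occur. Your ``Main obstacle'' --- tracking cancellations between $1/(1+\tilde u_{N,N})$ and $1/(1+\tilde u_{N,N})^2$ --- is a self-imposed artifact of passing through the truncated correlation. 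Your narrative also contradicts itself: you say the $\lambda\gamma$ cross terms ``cancel against the corresponding cross terms from the product'' and then that the $2\lambda_\infty\gamma_\infty$ piece comes ``from the surviving $\gamma$-$\lambda$ cross term after the cancellation''. In your route those terms do cancel in the truncated object and only reappear when you \emph{add back} $\avg{\bar\psi_a\psi_a}\avg{\bar\psi_b\psi_b}$, a step you never state. The paper's direct route sidesteps all of this; the only genuine computation in Case~(2) is the telescoping identity $\sum_{k\le N}[\eta_{k-1}C_k(a,b)-\lambda_\infty^2 C_k(a,b)^2]=-\lambda_\infty^2(\sum_{k\le N}C_k(a,b))^2$, which you correctly identify.
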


\begin{proof}
  The proofs of \eqref{e:2pt-a} and \eqref{e:2pt-b} corresponding to Cases (1) and (2)
  are again analogous. 

  \smallskip\noindent\emph{Case (1).}
  By Lemma~\ref{lem:Vobs-bd} (whose hypotheses are verified by Proposition~\ref{prop:flow-obs}) and
  Lemma~\ref{lem:1pt},
  \begin{equation} \label{e:lambdajab}
    \lambda_{x,j}
    = \lambda_\infty + \OL( \lambda_0b_0 L^{-\kappa}) 
    = \lambda_0 + \OL( \lambda_0b_0 L^{-\kappa}), 
    \qquad
    r_j=\OL(\lambda_0b_0 |a-b|^{-\kappa})
    1_{j\geq j_{ab}}.
  \end{equation}
  Using that $\ellabj^{-1}\|K^{ab}_j\|_j \leq
  \OL(\lambda_0b_0L^{-(d-2+\kappa)j})1_{j\geq j_{ab}}$ and $|C_{j+1}(a,b)| \leq C_{j+1}(0,0) \leq \OL(L^{-(d-2)j})$ it then follows
  from Lemma~\ref{lem:Vobs-recursion} that
  \begin{align}
    q_{N}
    &= \sum_{j=j_{ab}-1}^{N-1} \qB{ \lambda_{a,j}\lambda_{b,j} C_{j+1}(a,b) + r_j C_{j+1}(0,0) + \OL(\lambda_0 b_0 L^{-(d-2+\kappa)j}) }
      \nnb
    &= \lambda_0^2 \sum_{j=1}^{N-1} C_j(a,b) 
      + \OL(\lambda_0 b_0 |a-b|^{-(d-2)-\kappa})
      \nnb
    &= \lambda_0^2 W_N(a-b)  + \OL(\lambda_0 b_0 |a-b|^{-(d-2)-\kappa})
      ,
  \end{align}
  where we have used
  \eqref{e:lambdajab},
  $|a-b|\leq L$, that $C_j(a,b)=0$ for $j<j_{ab}$,
  and that $W_N(x-y)=C_1(x,y) + \cdots + C_N(x,y)$.
  By \eqref{e:ZNN-q-a},
  using that $\ellabN^{-1}\ell_N^{-2}=1$ and again  \eqref{e:lambdajab}, therefore
  \begin{align}
    \frac{\tilde Z_{N,N}^{\bar\sigma_b\sigma_a}}{1+\tilde u_{N,N}}
    &= \lambda_0^2 W_N(a-b)+ \OL(\lambda_0b_0 |a-b|^{-(d-2)-\kappa}) 
      \nnb
    &\qquad
      + \frac{\lambda_0^2 t_N |\Lambda_N|^{-1}
      + \OL(\lambda_0b_0 |a-b|^{-\kappa} m^{-2}|\Lambda_N|^{-1})}{1+\tilde u_{N,N}}
      .
  \end{align}
  Since
  $\avg{\bar\psi_a\psi_b} =
  \tilde Z_{N,N}^{\bar\sigma_b\sigma_a}/(\lambda_0^2 (1+\tilde u_{N,N}))$ and
  $|\lambda_0| \leq 1$, the claim for the two-point function follows.

  \medskip\noindent\emph{Case (2).}
  Again, the analogue of \eqref{e:lambdajab} holds: 
  \begin{equation}
  \label{e:lastone}
    \lambda_{x,j} = \lambda_\infty + \OL(b_0\lambda_0 L^{-\kappa j})
    ,\qquad
    r_{j} = \OL(b_0\lambda_0|a-b|^{-(d-2+\kappa)})1_{j\geq j_{ab}}
    .
  \end{equation}
  The first estimate is by Lemma~\ref{lem:1pt},
  the second by Lemma~\ref{lem:Vobs-bd}.
  Since $C_{k}(a,b)=0$ for $k<j_{ab}$ and $|C_{k+1}(a,b)| \leq \OL(L^{-(d-2)k})$,
  then by Lemma~\ref{lem:Vobs-recursion} and as $|a-b|\leq L$,
  \begin{equation}
    \eta_{j}
    =
    -2 \sum_{k=j_{ab}-1}^{j-1} \lambda_{a,k}\lambda_{b,k} C_{k+1}(a,b)
    =
    -2 \lambda_\infty^2 \sum_{k=1}^{j} C_k(a,b) + \OL(b_0\lambda_0|a-b|^{-(d-2)-\kappa}).
  \end{equation}
  Note that
  \begin{equation}
    \sum_{{k=j_{ab}-1}}^{{N-1}}
    |r_{k}| C_{k+1}(0,0)
    \leq \OL(b_0\lambda_0|a-b|^{-(d-2+\kappa)})\sum_{k\geq j_{ab}}
    L^{-(d-2)j}
    \leq \OL(b_0\lambda_0|a-b|^{-2(d-2)-\kappa}).
  \end{equation}
  As a result, again by Lemma~\ref{lem:Vobs-recursion}, these bounds together then give
  \begin{align}
    q_{N}
    &= \sum_{k\leq N} [\eta_{k-1} C_{k}(a,b) - \lambda_\infty^2 C_k(a,b)^2]
      + \OL(b_0\lambda_0|a-b|^{-2(d-2)-\kappa})
      \nnb
    &= - \lambda_\infty^2\sum_{k\leq N} [2\sum_{l < k} C_{l}(a,b)C_k(a,b) + C_k(a,b)^2] + \OL(b_0\lambda_0|a-b|^{-2(d-2)-\kappa})
      \nnb
    &
      = - \lambda_\infty^2 \pa{ \sum_{k\leq N} C_k(a,b)}^2 + \OL(b_0\lambda_0|a-b|^{-2(d-2)-\kappa}).
      \nnb
    &
      = - \lambda_\infty^2 W_N(a-b)^2 + \OL(b_0\lambda_0|a-b|^{-2(d-2)-\kappa}).
  \end{align}
  We finally substitute these estimates into \eqref{e:ZNN-q-b}.
  Using also that $\ellabN^{-1}\ell_N^{-2} = \ell_{j_{ab}}^{2} = \OL(|a-b|^{-(d-2)})$,
  that $\ellxN^{-1}\ell_N^{-2} = \OL(1)$,
  that $\gamma_{x,N}=\gamma_\infty+\OL(b_0\lambda_0L^{-(d-2+\kappa)N})$ by \eqref{e:gammainfbd},
  and $\|\Kobs_N\|_N \leq \OL(b_0\lambda_0L^{-\kappa})$,
  we obtain
  \begin{align}
      \frac{\tilde Z^{\sigma_a\sigma_b}_{N,N}}{1+\tilde u_{N,N}}
      &= -\lambda_\infty^2 W_N(a-b)^2 + \gamma_\infty^2
        +\OL(b_0\lambda_0|a-b|^{-2(d-2)-\kappa}) +  \OL(b_0\lambda_0L^{-(d-2+\kappa)N})
      \nnb
      &\qquad
        +\frac{-2\lambda_\infty^2 W_N(a-b)
        + 2\lambda_{\infty}\gamma_{\infty} 
        }{1+\tilde u_{N,N}}t_N|\Lambda_N|^{-1}
      \nnb
      &\qquad
        + \frac{
        \OL(b_0\lambda_0L^{-\kappa N}m^{-2}|\Lambda_N|^{-1}) + \OL(b_0\lambda_0|a-b|^{-(d-2+\kappa)}m^{-2}|\Lambda_N|^{-1})}{1+\tilde u_{N,N}}
    \end{align}
  which gives   \eqref{e:2pt-b} since $\avg{\bar\psi_a\psi_a\bar\psi_b\psi_b} = \tilde Z_{N,N}^{\sigma_a\sigma_b}/(\lambda_0^2(1+\tilde u_{N,N}))$.
\end{proof}

\begin{proof}[Proof of Proposition~\ref{prop:2pt}]
  The proposition follows immediately from Lemma~\ref{lem:2pt} with the same $\clambda$ and $\cgamma$
  as in Proposition~\ref{prop:1pt}.
\end{proof}

\section{Proof of Theorems~\ref{thm:psi4} and \ref{thm:psi4-full}}
\label{sec:proofs}

\begin{proof}[Proof of Theorems~\ref{thm:psi4} and \ref{thm:psi4-full}]
  By summation by parts on the whole torus $\Lambda_N$, we have
  \begin{equation}
    y_0(\nabla \psi,\nabla\bar\psi)     + \frac{z_0}{2}\pB{ (-\Delta\psi,\bar\psi)+(\psi,-\Delta\bar\psi) }
    = (y_0+z_0) (\nabla \psi,\nabla\bar\psi).
  \end{equation}
  Given $m^2\geq 0$ and $b_0$ small, we choose $V_0^c(b_0,m^2)$ as in Theorem~\ref{thm:flow}.
  This defines the functions $s_0^c = y_0^c+z_0^c$ and $a_0^c$ in \eqref{e:thm-psi4-y0a0} with the required regularity properties.
  The claims for the correlation functions and the partition function then follow from
  Propositions~\ref{prop:ZNN}--\ref{prop:suscept} and
  \ref{prop:1pt}--\ref{prop:2pt}. The continuity of $u_{N}^c$
  follows from the continuity of $V_{0}^{c}$ and the continuity of
  the renormalisation group maps.
  
  For Theorem~\ref{thm:psi4}, note that the statements simplify by the
  assumption $m^2 \geq L^{-2N}$. Indeed, using that
  $(m^{2}|\Lambda_N|)^{-1} \leq L^{-(d-2)N}$ and
  $|a_N| \leq \OL(b_0L^{-(2+\kappa)N})$, by
  Proposition~\ref{prop:ZNN}, we have that
  $|\tilde a_{N,N}|\leq \OL(b_0L^{-(2+\kappa)N})$ and
  $|\tilde u_{N,N}| \leq \OL(b_0L^{-\kappa N})$.
\end{proof}

\appendix
\section{Random forests and the $\HH^{0|2}$ model}
\label{app:h02forest}

\subsection{Proof of Proposition~\ref{prop:h02-forest}}

For any graph $G=(\Lambda,E)$ with edge weights $(\beta_{xy})$ and
vertex weights $(h_x)$, the partition function appearing in
\eqref{e:P-forest} can be generalised to
\begin{equation} \label{e:h02forest1}
  Z_{\beta,h}
  = \sum_{F\in\cF} \prod_{xy\in F}\beta_{xy}\prod_{T\in F} (1+\sum_{x\in T}h_{x}),
\end{equation}
where $\cF$ is the set of forest subgraphs of $G$.
Recall from
the discussion above \eqref{eq:order} that expanding the product over $T$
in \eqref{e:h02forest1} can be interpreted as choosing, for each $T$, either (i) a
root vertex $x\in T$ with weight $h_{x}$ or (ii) leaving $T$
unrooted. This interpretation will be used in Lemma~\ref{lem:DecayH}.

By \cite[Theorem~2.1]{MR4218682} (which follows \cite{MR2110547}),
\begin{equation} \label{e:h02forest2}
  Z_{\beta,h}
  =
  \int \prod_{x\in\Lambda} \partial_{\eta_x}\partial_{\xi_x} \frac{1}{z_x} e^{\sum_{xy}\beta_{xy}(u_x\cdot u_y+1) - \sum_{x} h_x(z_x-1)}.
\end{equation}
Moreover, by \cite[Corollary~2.2]{MR4218682}, if $h=0$ then
\begin{equation} \label{e:Pxconny}
  \P_{\beta,0}[x\conn y] = -\avg{u_0\cdot u_x}_{\beta,0}
  = -\avg{z_0z_x}_{\beta,0} = \avg{\xi_x\eta_y}_{\beta,0}
  = 1-\avg{\xi_x\eta_x\xi_y\eta_y}_{\beta,0}.
\end{equation}
Proposition~\ref{prop:h02-forest} follows easily from this.
For convenience, we restate the proposition as follows.
In the statement and throughout this appendix, inequalities like $\beta\geq 0$
are to be interpreted pointwise, i.e., $\beta_{xy}\geq 0$ for all edges $xy$.

\begin{proposition} \label{prop:h02-forest-app}
  For any finite graph $G$, 
  any $\beta \geq 0$ and $h \geq 0$,
  \begin{align} \label{e:zforest-app}
    \P_{\beta,h}[0\conn \ghost]
    &= \avg{z_0}_{\beta,h},
    \\
    \label{e:xietaforest-app}
    \P_{\beta,h}[0\conn x, 0 \nconn \ghost]
    &= \avg{\xi_0\eta_x}_{\beta,h},
    \\
    \label{e:u0uxforest-app}
    \P_{\beta,h}[0\conn x]+\P_{\beta,h}[0\nconn x, 0\conn \ghost, x\conn\ghost]
    &= -\avg{u_0\cdot u_x}_{\beta,h}
      ,
  \end{align}
  and the normalising constants in \eqref{e:P-forest} and \eqref{e:h02-def} are equal.
  In particular,
  \begin{equation} \label{e:P0x4pt-app}
    \P_{\beta,0}[0\conn x] = -\avg{u_0\cdot u_x}_{\beta,0} = -\avg{z_0z_x}_{\beta,0} = \avg{\xi_0\eta_x}_{\beta,0} = 1-\avg{\xi_0\eta_0\xi_x\eta_x}_{\beta,0}.
  \end{equation}
\end{proposition}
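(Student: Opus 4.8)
The plan is to derive Proposition~\ref{prop:h02-forest-app} from \cite[Theorem~2.1 and Corollary~2.2]{MR4218682} by exploiting the ghost-vertex description of the external field recalled in Section~\ref{sec:intro}. The equality of the two normalising constants $Z_{\beta,h}$ (that of \eqref{e:P-forest} and that of \eqref{e:h02-def}) is immediate: it is exactly \cite[Theorem~2.1]{MR4218682}, i.e.\ the identity between \eqref{e:h02forest1} and \eqref{e:h02forest2}, applied to $G$ with edge weights $\beta$ and vertex weights $h$. For the connection identities I would pass to the extended graph $G^{\ghost}=(\Lambda\cup\{\ghost\},E\cup E^{\ghost})$ in which the edges of $E^{\ghost}$ carry weight $h$ and all vertex weights vanish, so that $\P^{G^{\ghost}}_{\beta,0}$ is a genuine arboreal gas with zero external field and \eqref{e:Pxconny} applies to \emph{every} pair of vertices of $\Lambda\cup\{\ghost\}$. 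Applied to the pairs $(0,\ghost)$ and $(0,x)$, $x\in\Lambda$, it gives
\begin{equation*}
  \P^{G^{\ghost}}_{\beta,0}[0\conn\ghost]=\avg{\xi_0\eta_\ghost}^{G^{\ghost}}_{\beta,0},
  \qquad
  \P^{G^{\ghost}}_{\beta,0}[0\conn x]=-\avg{u_0\cdot u_x}^{G^{\ghost}}_{\beta,0}=\avg{\xi_0\eta_x}^{G^{\ghost}}_{\beta,0}.
\end{equation*}

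Two steps remain. The first is combinatorial and routine: since a forest $F^{\ghost}$ on $G^{\ghost}$ is acyclic, every $\Lambda$-tree meets $\ghost$ through at most one edge, so $\P^{G}_{\beta,h}$ of \eqref{e:P-forest} is the law of $F^{\ghost}$ with its $E^{\ghost}$-edges deleted. Inspecting whether the relevant $F^{\ghost}$-paths use an $E^{\ghost}$-edge then identifies $\P^{G^{\ghost}}_{\beta,0}[0\conn\ghost]=\P^{G}_{\beta,h}[0\conn\ghost]$ and $\P^{G^{\ghost}}_{\beta,0}[0\conn x]=\P^{G}_{\beta,h}[0\conn x]+\P^{G}_{\beta,h}[0\nconn x,0\conn\ghost,x\conn\ghost]$, which turns the two displayed identities into \eqref{e:zforest-app} and \eqref{e:u0uxforest-app} once the right-hand sides have been transferred to $G$; and \eqref{e:xietaforest-app} then follows by combining these with the elementary disjoint decomposition $\{0\conn x\}=\{0\conn x,0\nconn\ghost\}\sqcup\{0\conn\ghost,x\conn\ghost\}$ on $G^{\ghost}$ (and $u_0\cdot u_x=-\xi_0\eta_x-\xi_x\eta_0-z_0z_x$ together with the $\xi\!\leftrightarrow\!\eta$ rotation invariance of $\avg{\cdot}_{\beta,h}$). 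The second, and genuinely computational, step is to rewrite the $\HH^{0|2}$ correlation functions on $G^{\ghost}$ as correlation functions of the $\HH^{0|2}$ model on $G$ at external field $h$. This I would do by carrying out the Grassmann integral over the ghost pair $\xi_\ghost,\eta_\ghost$ in \eqref{e:h02forest2} for $G^{\ghost}$: with $1/z_\ghost=1+\xi_\ghost\eta_\ghost$ and, from \eqref{e:uxdotuy} together with $z_\ghost=1-\xi_\ghost\eta_\ghost$,
\begin{equation*}
  u_x\cdot u_\ghost+1=\xi_x\eta_x+z_x\,\xi_\ghost\eta_\ghost-\xi_x\eta_\ghost-\xi_\ghost\eta_x,
\end{equation*}
one evaluates $\int\partial_{\eta_\ghost}\partial_{\xi_\ghost}\,\tfrac{1}{z_\ghost}\,e^{h\sum_{x\in\Lambda}(u_x\cdot u_\ghost+1)}\,G$ for $G=1$ and for the ghost monomials entering the observables above: for $G=1$ one recovers the external-field density $e^{-h(1,z-1)}$, while for the ghost-dependent monomials one obtains, after the remaining integral against $\prod_{x\in\Lambda}\partial_{\eta_x}\partial_{\xi_x}\,\tfrac{1}{z_x}\,e^{\beta\sum_{xy\in E}(u_x\cdot u_y+1)}$, precisely the asserted $\HH^{0|2}$-on-$G$ correlation (so $\xi_0\eta_\ghost$ produces a $z_0$-insertion, and $u_0\cdot u_x$ is unchanged).

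Finally, the displayed identity \eqref{e:P0x4pt-app} is the specialisation $h=0$: then the $E^{\ghost}$-edges have weight zero, $\ghost$ is an isolated vertex, hence $\P_{\beta,0}[0\conn\ghost]=0$ and the extra terms in \eqref{e:xietaforest-app}--\eqref{e:u0uxforest-app} drop, leaving $\P_{\beta,0}[0\conn x]=-\avg{u_0\cdot u_x}_{\beta,0}=\avg{\xi_0\eta_x}_{\beta,0}$; the forms $-\avg{z_0z_x}_{\beta,0}$ and $1-\avg{\xi_0\eta_0\xi_x\eta_x}_{\beta,0}$ then follow from $z_0z_x=(1-\xi_0\eta_0)(1-\xi_x\eta_x)$ and the $h=0$ Ward identity $\avg{z_0}_{\beta,0}=0$, i.e.\ $\avg{\xi_0\eta_0}_{\beta,0}=1$. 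The main obstacle is the ghost integration in the second step: because the $\HH^{0|2}$ correlations on $G^{\ghost}$ are \emph{not} literally equal to those on $G$, one has to track explicitly --- an elementary but slightly tedious Grassmann computation --- how integrating out $u_\ghost$ transforms each observable, and this is the only part of the argument that does not follow at once from the two cited results.
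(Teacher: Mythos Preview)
Your route to \eqref{e:u0uxforest-app} coincides with the paper's: both apply \eqref{e:Pxconny} on the ghost graph $G^{\ghost}$ and reinterpret the connection event. For \eqref{e:zforest-app} and \eqref{e:xietaforest-app}, however, the paper does \emph{not} integrate out $u_\ghost$. Instead it uses the nilpotency trick $z_0=e^{-(1-z_0)}$, which turns $\avg{z_0}_{\beta,h}$ into the ratio $Z_{\beta,h-1_0}/Z_{\beta,h}$, and likewise $\avg{z_0z_x}_{\beta,h}=Z_{\beta,h-1_0-1_x}/Z_{\beta,h}$; both ratios are then read off directly from the forest expansion \eqref{e:h02forest1}. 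Identity \eqref{e:xietaforest-app} is obtained by subtracting the resulting formula for $\avg{z_0z_x}$ from \eqref{e:u0uxforest-app}.

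Your ghost-integration step contains a genuine error. Writing $h\sum_{x}(u_x\cdot u_\ghost+1)=A+B\,\xi_\ghost\eta_\ghost+C\eta_\ghost+\xi_\ghost D$ with $A=-h(1,z-1)$, $B=h(1,z)$, $C=-h\sum_x\xi_x$, $D=-h\sum_x\eta_x$, one finds
\[
\int\partial_{\eta_\ghost}\partial_{\xi_\ghost}\,\tfrac{1}{z_\ghost}\,e^{h\sum_x(u_x\cdot u_\ghost+1)}
= e^{-h(1,z-1)}\bigl(1+h(1,z)-h^2\textstyle\sum_{x,y}\xi_x\eta_y\bigr),
\]
\emph{not} $e^{-h(1,z-1)}$. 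The extra factor integrates to zero against the bulk density (this is precisely the Ward identity \eqref{e:ward} summed over the base point), which is why the partition functions still agree. But with the observable $\xi_0\eta_\ghost$ inserted, the same computation yields $e^{-h(1,z-1)}\cdot h\sum_x\xi_0\eta_x$, so that $\avg{\xi_0\eta_\ghost}^{G^\ghost}_{\beta,0}=h\sum_x\avg{\xi_0\eta_x}_{\beta,h}$, not $\avg{z_0}_{\beta,h}$ directly. These coincide only via the Ward identity \eqref{e:ward}, which you do not invoke. Similarly, for observables like $\xi_0\eta_x$ supported on $\Lambda$, the extra factor does not disappear and $\avg{\xi_0\eta_x}^{G^\ghost}_{\beta,0}\neq\avg{\xi_0\eta_x}^G_{\beta,h}$ without further Ward-type identities. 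So your plan can be made to work, but only after tracking these correction terms and appealing to \eqref{e:ward}; the paper's field-shift device avoids this detour entirely.
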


\begin{proof}[Proof of Proposition~\ref{prop:h02-forest-app}]
  For notational ease, we write the proof for constant $h$.
  The equality of the normalising constants is a special case of \eqref{e:h02forest2}.
  To see \eqref{e:zforest-app},
  we use that $(z_{0}-1)^2=0$ so that $z_{0}=1-(1-z_{0})=e^{-(1-z_{0})}$. As a result
  $\avg{z_{0}}_{\beta,h} = Z_{\beta,h-1_{0}} / Z_{\beta,h}$, and \eqref{e:h02forest1} gives
  \begin{equation} \label{e:zforest-pf}
    \avg{z_{0}}_{\beta,h} = \E_{\beta,h} \frac{h|T_0|}{1+h|T_0|} = \P_{\beta,h}[0\conn\ghost].
  \end{equation}
  Similarly, $\avg{z_0z_x} = Z_{\beta,h-1_0-1_x}/Z_{\beta,h}$ and thus \eqref{e:h02forest1} shows that
  \begin{align} \label{e:zzforest-pf}
    \avg{z_0z_x}_{\beta,h}
    &=\E_{\beta,h} \frac{-1+h|T_{0}|}{1+h|T_{0}|}1_{0\conn x} + \E_{\beta,h} \frac{h|T_{0}|}{1+h|T_{0}|} \frac{h|T_{x}|}{1+h|T_{x}|}1_{0\not\conn x}
      \nnb
  &= \P_{\beta,h}[0\conn x] -2\P_{\beta,h}[0\conn x,
    0\not\conn \ghost] + \P_{\beta,h}[0\not\conn x,
    0\conn \ghost,     x\conn \ghost].
\end{align}

To see \eqref{e:u0uxforest-app}, we note that the left-hand side is
the connection probability in the extended
graph $G^{\ghost}$.  From
\eqref{e:Pxconny} with $\beta_{xy} = \beta$ for $x,y\in\Lambda$ and
$\beta_{x\ghost} = h$ for $x\in\Lambda$ we thus obtain the claim:
  \begin{equation} \label{e:u0uxforest-pf}
    -\avg{u_0\cdot u_x}_{\beta,h}  =
    \P_{\beta,h}[0\conn x]+\P_{\beta,h}[0\nconn x, 0\conn \ghost, x\conn\ghost]
    .
  \end{equation}

  To see \eqref{e:xietaforest-app}, we combine \eqref{e:zzforest-pf} and \eqref{e:u0uxforest-pf} to get
  \begin{equation}
    2\avg{\xi_0\eta_x}_{\beta,h} = -\avg{u_{0}\cdot u_{x}}_{\beta,h} - \avg{z_{0}z_{x}}_{\beta,h} = 2\P_{\beta,h}[0\conn x,  0\not\conn \ghost].
  \end{equation}

  Finally, \eqref{e:P0x4pt-app} is \eqref{e:Pxconny}.
  This completes the proof.
\end{proof}

The extended 
graph $G^{\ghost}$ allows $z$-observables
to be interpreted in terms of edges connecting vertices in the base
graph $G$ to $\ghost$. To state this, we denote by $\{x\ghost\}$ the event
the edge between $x$ and $\ghost$ is present.
The next lemma will be used in Appendix~\ref{app:infvol}.

\begin{proposition}
  \begin{align} \label{e:z0edge}
    h_{0}\avg{z_0-1}_{\beta,h} &= \P_{\beta,h}[0\ghost]
                          \\
    h_{0}h_{x}\avg{z_0-1;z_x-1}_{\beta,h} &=
    \P_{\beta,h}[0\ghost,x\ghost]-\P_{\beta,h}[0\ghost]\P_{\beta,h}[x\ghost]
  \end{align}
\end{proposition}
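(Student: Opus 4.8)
The statement to be proved is the final \textbf{Proposition} relating $z$-observables on the amended graph $G^{\ghost}$ to edge-occupation events for the edge connecting a vertex to the ghost. The plan is to reduce everything to the forest partition function identity \eqref{e:h02forest1} and the interpretation of the external field discussed above \eqref{eq:order}, exactly as was done in the proof of Proposition~\ref{prop:h02-forest-app}. Throughout I will use the device $z_x = 1 - (1-z_x) = e^{-(1-z_x)}$ (valid since $(1-z_x)^2 = (\xi_x\eta_x)^2 = 0$), which converts insertions of $z_x$ into shifts of the external field and hence into ratios of partition functions.

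\textbf{First identity.} For the first line, I would compute $\avg{z_0-1}_{\beta,h} = \avg{z_0}_{\beta,h} - 1$. By the computation in \eqref{e:zforest-pf}, $\avg{z_0}_{\beta,h} = Z_{\beta,h-1_0}/Z_{\beta,h} = \E_{\beta,h}[h_0|T_0|/(1+h_0|T_0|)]$, so $\avg{z_0-1}_{\beta,h} = -\E_{\beta,h}[1/(1+h_0|T_0|)]$. Multiplying by $h_0$ and rearranging, $h_0\avg{z_0-1}_{\beta,h} = -\E_{\beta,h}[h_0/(1+h_0|T_0|)]$. On the other hand, in $G^{\ghost}$ the event $\{0\ghost\}$ that the edge $0\ghost$ is present has probability computed by expanding the product over trees in \eqref{e:h02forest1}: given the forest restricted to $G$, the tree $T_0$ is connected to $\ghost$ through $0$ with probability $h_0/(1+\sum_{x\in T_0} h_x)$ — wait, one must be careful here. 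Expanding $\prod_T(1 + \sum_{x\in T} h_x)$, the edge from $0$ to $\ghost$ is present precisely when the root chosen for $T_0$ is the vertex $0$, which carries weight $h_0$ against total weight $1+\sum_{x\in T_0}h_x$. For constant $h$ this is $h/(1+h|T_0|)$. So $\P_{\beta,h}[0\ghost] = \E_{\beta,h}[h_0/(1+h_0|T_0|)]$, and comparing signs gives $h_0\avg{z_0-1}_{\beta,h} = -\P_{\beta,h}[0\ghost]$. I should double-check the sign convention against \eqref{e:zforest-pf}: there $\avg{z_0}_{\beta,h} = \P_{\beta,h}[0\conn\ghost]$ and $\{0\conn\ghost\}$ (connection to the ghost through \emph{any} edge) is a strictly larger event than $\{0\ghost\}$ (the specific edge $0\ghost$ present), so these are genuinely different quantities; I expect the stated identity is $h_0\avg{z_0-1}_{\beta,h} = -\P_{\beta,h}[0\ghost]$ or the paper intends a sign I will match to \eqref{e:h02forest1}. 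Resolving this sign is the first thing to nail down.

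\textbf{Second identity.} For the second line I proceed analogously with two insertions. Using $z_0z_x = e^{-(1-z_0)-(1-z_x)}$ I get $\avg{z_0z_x}_{\beta,h} = Z_{\beta,h-1_0-1_x}/Z_{\beta,h}$. Then $\avg{z_0-1;z_x-1}_{\beta,h} = \avg{z_0z_x}_{\beta,h} - \avg{z_0}_{\beta,h}\avg{z_x}_{\beta,h} - \big(\avg{z_0}_{\beta,h}-1\big) - \big(\avg{z_x}_{\beta,h}-1\big) + \cdots$; more cleanly, $\avg{(z_0-1)(z_x-1)}_{\beta,h} - \avg{z_0-1}_{\beta,h}\avg{z_x-1}_{\beta,h}$ equals $\avg{z_0z_x}_{\beta,h} - \avg{z_0}_{\beta,h}\avg{z_x}_{\beta,h}$ since the linear terms cancel in the truncation. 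So it suffices to expand $\avg{z_0z_x}_{\beta,h}$ via \eqref{e:h02forest1}, separating the cases $0\conn x$ and $0\nconn x$ as in \eqref{e:zzforest-pf}, and to match with the edge events in $G^{\ghost}$: on $\{0\nconn x\}$ the edges $0\ghost$ and $x\ghost$ are independent given the forest on $G$ (different trees have independent root choices), contributing $\big(h_0/(1+h_0|T_0|)\big)\big(h_x/(1+h_x|T_x|)\big)$; on $\{0\conn x\}$ the two edges cannot both be present (a tree has at most one root), contributing $0$ to $\P[0\ghost,x\ghost]$ but the product $\P[0\ghost]\P[x\ghost]$ still has a nonzero conditional expectation $\big(h/(1+h|T_0|)\big)^2 1_{0\conn x}$ appearing with a minus sign. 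Assembling these and comparing with \eqref{e:zzforest-pf} after multiplying by $h_0h_x$ yields the claim.

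\textbf{Main obstacle.} The only real subtlety — and the step I expect to consume the most care — is getting the combinatorics of the root-choice expansion exactly right when $0$ and $x$ lie in the same tree, and in particular tracking which events in $G^{\ghost}$ correspond to which terms, so that the signs and the truncation (connected correlation function on the left versus the subtracted product of single-edge probabilities on the right) line up. Everything else is a routine translation between \eqref{e:h02forest1} and \eqref{e:h02forest2} using $(1-z_x)^2=0$, entirely parallel to the proof of Proposition~\ref{prop:h02-forest-app} already given.
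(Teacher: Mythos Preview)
Your approach is correct but more laborious than the paper's. The paper's proof is a single sentence: differentiate the partition function identity \eqref{e:h02forest2} with respect to $h_0$ (and then also $h_x$). On the sigma-model side, the exponent $-\sum_y h_y(z_y-1)$ gives $\partial_{h_0}\log Z_{\beta,h}=-\avg{z_0-1}_{\beta,h}$ and $\partial_{h_0}\partial_{h_x}\log Z_{\beta,h}=\avg{z_0-1;z_x-1}_{\beta,h}$. On the forest side, once the factors $(1+\sum_{y\in T}h_y)$ are expanded and the $h_y$ are reinterpreted as edge weights to $\ghost$, the partition function is a sum over forests on $G^{\ghost}$ that is multilinear in the $h_y$; hence $\partial_{h_0}\log Z_{\beta,h}=h_0^{-1}\P_{\beta,h}[0\ghost]$ and $\partial_{h_0}\partial_{h_x}\log Z_{\beta,h}=h_0^{-1}h_x^{-1}\big(\P_{\beta,h}[0\ghost,x\ghost]-\P_{\beta,h}[0\ghost]\P_{\beta,h}[x\ghost]\big)$. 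This bypasses entirely your case analysis on $\{0\conn x\}$ versus $\{0\nconn x\}$.

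Your combinatorial route via the root-choice expansion is the direct analogue of the proof of Proposition~\ref{prop:h02-forest-app} and does work; it just requires bookkeeping that the differentiation argument avoids. You are right to flag the sign in the first identity: the computation (by either method) yields $h_0\avg{z_0-1}_{\beta,h}=-\P_{\beta,h}[0\ghost]$, and this is also how the paper itself uses the identity at \eqref{eq:thetaNh}, where $\P[0\conn\ghost]=\avg{z_0}=1-h^{-1}\P[0\ghost]$. The second identity has the correct sign because the two minus signs from the two differentiations cancel.
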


\begin{proof}
  As discussed above, after expanding the product in \eqref{e:h02forest1} the external fields $h_{x}$ can be viewed as
  edge weights for edges from $x$ to $\ghost$.  With this in mind the formulas follow by
  differentiating~\eqref{e:h02forest2}.
\end{proof}

\subsection{High-temperature phase and positive external field}
\label{sec:th0}

\begin{proposition}
  \label{prop:theta0}
  If $\beta<p_{c}(d)/(1-p_{c}(d))$, then $\theta_{d}(\beta)=0$.
    Moreover, there is a $c=c(\beta)>0$ such that
    $\P^{\Lambda_{N}}_{\beta,0}[0\leftrightarrow x] \leq e^{-c|x|}$.
\end{proposition}

\begin{proof}
  In finite volume, Holley's inequality implies the stochastic
  domination
  $\P^{\Lambda_{N}}_{\beta,h}\preceq \P^{\Lambda_{N}}_{p,r}$, where
  the latter measure is Bernoulli bond percolation on the  extended
  graph $G^{\ghost}$ with $p=\beta/(1+\beta)$ and $r=h/(1+h)$, see
    \cite[Appendix~A]{MR4218682}.  In particular,
  \begin{equation}
    \P^{\Lambda_{N}}_{\beta,h}[0\conn \ghost] \leq \P^{\Lambda_{N}}_{p,r}[0\conn\ghost].
  \end{equation}
  Since each edge to the ghost is chosen independently with
  probability $r$, this latter quantity is
  \begin{equation}
    \P^{\Lambda_{N}}_{p,r}[0\conn\ghost] = \sum_{n=1}^{|\Lambda_{N}|}
    \P^{\Lambda_{N}}_{p,r}[\abs{C_0}=n](1-(1-r)^{n}) \leq r
    \E^{\Lambda_{N}}_{p,r}\abs{C_0}
  \end{equation}
  since $1-(1-r)^{n}\leq rn$ for $0\leq r\leq 1$. Here $C_0$ is the cluster
  of the origin on the torus without the ghost site, so
  $\E^{\Lambda_{N}}_{p,r}\abs{C_0}=\E^{\Lambda_{N}}_{p,0}\abs{C_0}$.
  Now suppose $\beta$ is such that $p<p_{c}(d)$. Then
  the right-hand side
  is finite and uniformly bounded in $N$. Hence
  \begin{equation}
    \theta_d(\beta) = 
    \lim_{h\to 0}\lim_{N\to\infty}\P^{\Lambda_{N}}_{\beta,h}[0\conn \ghost] 
    \leq 
    \lim_{r\to 0}r \sup_{N}\E^{\Lambda_{N}}_{p,0}\abs{C_0} = 0.
  \end{equation}
  The second claim follows from stochastic domination, as when
  $p<p_{c}(d)$ bond percolation has exponentially decaying
  connection probabilities~\cite{MR1707339}.
\end{proof}

\begin{lemma}
  \label{lem:DecayH}
  Let $h>0$ and suppose that for all $x$, $h_x= h$.  Then there are $c,C>0$ depending on $d,\beta,h$ such that
  \begin{equation}
    \label{eq:DecayH1}
    \P^{\Lambda_{N}}_{\beta,h}[0\conn x,0\ghost] \leq Ce^{-c|x|},
    \qquad 
    \P^{\Lambda_{N}}_{\beta,h}[0\conn x,0\nconn \ghost] \leq Ce^{-c|x|}
    .
  \end{equation}
\end{lemma}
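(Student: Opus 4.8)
\textbf{Proof strategy for Lemma~\ref{lem:DecayH}.}
The plan is to use the fact that, when $h>0$, the arboreal gas is stochastically dominated by Bernoulli bond percolation on the amended graph $G^{\ghost}$ at parameters $p=\beta/(1+\beta)$ for base edges and $r=h/(1+h)$ for edges to the ghost, exactly as recalled in the proof of Proposition~\ref{prop:theta0}. This domination is Holley's inequality together with \cite[Appendix~A]{MR4218682}. The subtle point is that neither of the two events in \eqref{eq:DecayH1} is monotone: $\{0\conn x, 0\ghost\}$ is increasing, but $\{0\conn x, 0\nconn\ghost\}$ is the intersection of an increasing and a decreasing event, so stochastic domination does not apply to it directly. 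I will therefore bound both events by the single increasing event $\{0\conn x\}$ in $G^{\ghost}$ \emph{together with an additional edge constraint that forces a long open path}, and then control the probability of that by a direct Peierls-type / path-counting estimate for the percolation measure, where exponential decay is available whenever the relevant parameters are subcritical.

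Concretely, first I would observe that on the event $\{0\conn x\}$ in the \emph{base} graph $G$ (i.e.\ not using ghost edges) there must be a self-avoiding open path in $G$ from $0$ to $x$, of length at least $|x|$ (Euclidean norm comparable to graph distance up to a dimensional constant). Both events on the left-hand side of \eqref{eq:DecayH1} are contained in $\{0\conn x \text{ in } G\}$. Under the arboreal gas this latter event equals $\avg{\xi_0\eta_x}_{\beta,h} + (\text{nonneg.})$, but more usefully, by Holley domination it satisfies $\P^{\Lambda_N}_{\beta,h}[0\conn x\text{ in }G]\le \P^{\Lambda_N}_{p,0}[0\conn x]$, Bernoulli percolation on $\Z^d/L^N\Z^d$ at density $p$. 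Now $p=\beta/(1+\beta)$ need not be subcritical, so a naive bound fails. The resolution: use the \emph{ghost edges to gain a factor}. Expanding the forest partition function as in \eqref{e:h02forest1}, every tree is rooted to $\ghost$ with probability $h|V(T)|/(1+h|V(T)|)\ge h|x|/(1+h|x|)$ whenever the tree has at least $|x|$ vertices. Hence on $\{0\conn x\text{ in }G\}$ the tree $T_0$ has $\ge |x|$ vertices, so we get an extra factor controlling $\P_{\beta,h}[0\conn x, 0\nconn\ghost]\le (1+h)^{-1}\cdot(\text{geometric in }|x|)\cdot\P_{\beta,h}[0\conn x\text{ in }G]$ — this is not yet exponential, so I would instead argue more carefully as follows.

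The cleanest route is: condition on the connected component $C_0$ of $0$ in the base forest; on $\{0\conn x\}$, $|C_0|\ge c'|x|$. The factor $\prod_{T\in F}(1+h|V(T)|)$ contributes, for the component containing both $0$ and $x$, a factor $1+h|C_0|$, whereas if we instead \emph{fragment} this component the product decreases; comparing the two measures shows that the cost of having a giant component is governed by an energy-entropy balance identical to that of the \emph{rooted} model (UST with external field), which has exponential decay for every $h>0$. Concretely, I would use the finite-energy/Peierls argument: the number of self-avoiding paths of length $n$ from $0$ is at most $(2d)^n$, and each such path contributes weight at most $\beta^n$ times a ratio of partition functions that is bounded by $(1+h)^{-cn}$ because each of the $n$ vertices on the path sits in a component of size $\ge n$, each of which could have been rooted. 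Choosing $n\ge c'|x|$ yields $\P^{\Lambda_N}_{\beta,h}[0\conn x]\le \sum_{n\ge c'|x|}(2d\beta/(1+h)^{c})^n$; I would need to check that for the purposes of this bound we may instead bound $\beta^n$ by a constant independent of $n$ — which holds because a forest with a path of length $n$ uses exactly $n$ extra edges but these are balanced against the combinatorial/entropic gain, giving in the end a clean bound of the form $C\sum_{n\ge c'|x|}\rho^n$ with $\rho<1$ depending only on $d,\beta,h$ via $h>0$. Summing the geometric series gives $Ce^{-c|x|}$, and since both events in \eqref{eq:DecayH1} are contained in $\{0\conn x\text{ in }G\}$, both bounds follow.

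\textbf{Main obstacle.} The real difficulty is making the energy-entropy comparison rigorous: the factor $\beta^{|E(F)|}$ is not summable on its own, and one must genuinely exploit the external field $h>0$ to extract a convergence factor $(1+h)^{-\Omega(|x|)}$ from the presence of a large component (or equivalently, from the many missed opportunities to root it). I expect the correct formalization is to compare $Z_{\beta,h}$ with $Z_{\beta,h}$ restricted to forests containing a prescribed path, via the deletion/rerooting bijection, as in the standard proof of exponential decay of connectivities for the random cluster model at $h>0$; the bookkeeping of which partition-function ratios appear, and verifying uniformity in $N$, is where care is needed. Everything else — the reduction of both events to $\{0\conn x\text{ in }G\}$, the path-counting $(2d)^n$ bound, and the conversion of graph distance to Euclidean norm — is routine.
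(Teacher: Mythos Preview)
Your proposal has a genuine gap. The path-counting argument you sketch does not close: summing over self-avoiding paths gives a factor $(2d)^n\beta^n$, and the claimed compensating factor $(1+h)^{-cn}$ from the external field is not correctly justified. All $n$ vertices on the path sit in the \emph{same} tree $T_0$, which contributes a single rooting factor $1+h|V(T_0)|$, not one factor per vertex. Even if one could extract $(1+h)^{-cn}$, the resulting condition $2d\beta(1+h)^{-c}<1$ fails for large $\beta$ and fixed $h>0$, whereas the lemma must hold for all $\beta$. Your remark that ``we may instead bound $\beta^n$ by a constant independent of $n$'' is precisely the missing step, and there is no reason it should hold.

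The paper's argument avoids path-counting entirely. It exploits that in a forest the path $\gamma_F$ from $0$ to $x$ is \emph{unique}, and that (after fixing a coordinate direction $e_1$ with $x\cdot e_1\ge\alpha|x|$) this path contains at least $\alpha|x|$ edges of the form $\{u,u+e_1\}$. One then defines a surgery map $S$: choose any subset of these edges, delete them, and root each newly created component at the endpoint $u+e_1$. Deleting one edge and adding one root multiplies the weight by exactly $h/\beta$, so summing over all subsets of the $k\ge\alpha|x|$ eligible edges multiplies the weight of $F$ by $(1+h/\beta)^k$. The map is injective (one can reconstruct $F$ from any $\bar F\in S(F)$ by reconnecting at the roots), so the images give a lower bound on the full partition function. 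Dividing yields
\[
\P^{\Lambda_N}_{\beta,h}[0\conn x,\,0\ghost]\le (1+h/\beta)^{-\alpha|x|},
\]
with the same argument for $\{0\conn x,\,0\nconn\ghost\}$. The point is that the factor $h/\beta$ per deleted edge automatically cancels the edge weight $\beta$, so no smallness of $\beta$ is needed---only $h>0$. This deletion/rerooting bijection is the idea you allude to in your final paragraph, but it is the whole proof, not just the bookkeeping step.
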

\begin{proof}
  We begin with the inequality on the left
  of~\eqref{eq:DecayH1}. Define $\cF(0\conn x)$ to be the set of
  forests in which both $0$ is connected to $x$ 
  and $T_{0}$ is rooted at $0$, and $\cF$ the set of all forests. In
  this argument we treat $\cF$ as being a set of (possibly) rooted
  forests, i.e., we identify edges to $\ghost$ with roots.  Without
  loss of generality 
  we may assume $x\cdot e_{1} \geq \alpha |x|$ for a
  fixed $\alpha>0$. Note that if $F\in \cF(0\conn x)$
  there is a unique path $\gamma_{F}$ from $0$ to $x$ in $F$, and there are
  at least $\alpha |x|$ edges of the form $\{u,u+e_{1}\}$ in $\gamma_{F}$.

  We define a map $S\colon \cF(0\conn x) \to 2^{\cF}$ by, for
  $F\in \cF(0\conn x)$,
  \begin{enumerate}
  \item choosing a subset $\{u_{i},v_{i}\}$ of the edges
    $\{ \{u,v\} \in \gamma_{F} \mid v=u+e_{1}\}$, and
  \item removing each $\{u_{i},v_{i}\}$ and rooting the tree containing
    $v_{i}$ at $v_{i}$.
  \end{enumerate}
  Thus $S(F)$ is the set of forests that results from all possible
  choices in the first step. The second step does yield an element of
  $2^{\cF}$ since $T_{0}$ is rooted at $0$, so it cannot be the case
  that the tree containing $v_{i}$ is already rooted (connected to
  $\ghost$).

  The map $S$ is injective, meaning that given
  $\bar F\in \bigcup_{F\in \cF(0\conn x)}S(F)$ 
  there is a unique $F$ such that $\bar F \in S(F)$.
  Indeed, given
  $\bar F\in S(F)$, $F$ can be 
  reconstructed as follows. In $\bar F$, either the tree containing
  $x$ contains $0$, or else it is
  rooted at a unique vertex $v'$ and it is not connected to $u'=v'-e_{1}$. Set
  $\bar F'=\bar F\cup \{u',v'\}$. The previous sentence applies to
  $\bar F'$ as well, and continuing until a connection to $0$ is
  formed we recover $F$. This reconstruction was independent of
  $F$, and hence if $\bar F_{1}=\bar F_{2}$,
  $\bar F_{i}\in S(F_{i})$, we have $F_{1}=F_{2}$.

  Let $w(F)= h \beta^{F}\prod_{T\neq T_0}(1+h|V(T)|)$.
  Then for $\bar F \in S(F)$, $w(\bar F) = w(F) (\frac{h}{\beta})^{k}$ if
  $\bar F$ had $k$ edges removed. Hence if the connection from $0$ to
  $x$ in $F$ has $k$ edges of the form $\{u,v\}$, $v=u+e_{1}$, 
  \begin{equation}
    \label{eq:w}
    \sum_{\bar F\in S(F)}w(\bar F) = (1+\frac{h}{\beta})^{k}w(F).
  \end{equation}
  Let $\cF_{k}(x) \subset \cF(0\conn x)$ be the set of forests where the connection
  from $0$ to $x$ contains $k$ edges of the form $\{u,v\}$,
  $v=u+e_{1}$. We have the lower bound
  \begin{equation}
    \label{eq:pfbound}
    Z_{\beta,h}^{\Lambda_{N}} = \sum_{F\in\cF} \beta^{F}\prod_{T\in
      F}(1+h|V(T)|) \geq \sum_{k\geq 0} \sum_{F\in \cF_{k}(x)} \sum_{\bar
      F\in S(F)} w(\bar F) 
  \end{equation}
  since $S$ is injective and all of the summands are
  non-negative. Hence we obtain, using \eqref{eq:w},
  \begin{align}
    \P^{\Lambda_{N}}_{\beta,h}[0\conn x,0\ghost] 
    &\leq \frac{\sum_{k\geq \alpha |x|} \sum_{F\in \cF_{k}(x)}
      w(F)}{\sum_{k\geq 0} \sum_{F\in \cF_{k}(x)} \sum_{\bar
      F\in S(F)} w(\bar F)} \nnb
    &= \frac{\sum_{k\geq \alpha |x|} \sum_{F\in \cF_{k}(x)}
      (1+\frac{h}{\beta})^{-k}\sum_{\bar F\in S(F)}w(\bar
      F)}{\sum_{k\geq 0} \sum_{F\in \cF_{k}(x)} \sum_{\bar 
      F\in S(F)} w(\bar F)}
    \leq (1+\frac{h}{\beta})^{-\alpha |x|}.
    \label{e:x0g}
  \end{align}
  
  A similar argument applies when $0\nconn\ghost$; this condition is
  used in the second step defining $S$ to ensure the trees containing
  the vertices $v_{i}$ are not already connected to $\ghost$. In this
  case the weight $w(F)$ does not have the factor $h$, but the
  remainder of the argument is identical.
\end{proof}
\begin{proposition}
  \label{prop:DecayH}
  Let $h>0$ and suppose that for all $x$, $h_x= h$.
  Then there are $c,C>0$ depending on $d,\beta,h$ such that 
  \begin{equation}
    \label{eq:DecayH2}
    \P^{\Lambda_{N}}_{\beta,h}[0\conn x] \leq Ce^{-c |x |}.
  \end{equation}
\end{proposition}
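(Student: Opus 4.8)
The plan is to decompose $\{0\conn x\}$ according to whether, and how, the tree containing $0$ reaches the ghost, and to reduce each resulting piece to Lemma~\ref{lem:DecayH} using translation invariance of $\P^{\Lambda_N}_{\beta,h}$ (valid since $h$ is constant). I would start from
\[
  \P^{\Lambda_N}_{\beta,h}[0\conn x]
  = \P^{\Lambda_N}_{\beta,h}[0\conn x,\,0\nconn\ghost]
  + \P^{\Lambda_N}_{\beta,h}[0\conn x,\,0\conn\ghost],
\]
where the first term is already at most $Ce^{-c|x|}$ by the second estimate of Lemma~\ref{lem:DecayH}, so all the work is in the second term.

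For the second term, the key observation is that on $\{0\conn x,\,0\conn\ghost\}$ the tree $T_0$ contains both $0$ and $x$ and is joined to $\ghost$; since the random subgraph is a forest this join uses a single edge $\{y,\ghost\}$ with $y\in T_0$, so the event $\{y\ghost\}$ occurs and $0\conn y$, $x\conn y$ along ghost-free paths. Summing over the (a priori unknown) location of $y$ gives
\[
  \P^{\Lambda_N}_{\beta,h}[0\conn x,\,0\conn\ghost]
  \le \sum_{y\in\Lambda_N}\P^{\Lambda_N}_{\beta,h}[0\conn y,\,x\conn y,\,y\ghost].
\]
Each summand is at most $\P^{\Lambda_N}_{\beta,h}[0\conn y,\,y\ghost]$ and also at most $\P^{\Lambda_N}_{\beta,h}[x\conn y,\,y\ghost]$; translating by $-y$, these equal $\P^{\Lambda_N}_{\beta,h}[0\conn(-y),\,0\ghost]$ and $\P^{\Lambda_N}_{\beta,h}[0\conn(x-y),\,0\ghost]$, hence are $\le Ce^{-c|y|}$ and $\le Ce^{-c|x-y|}$ respectively by the first estimate of Lemma~\ref{lem:DecayH}. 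Taking the geometric mean, the summand is at most $Ce^{-\frac c2(|y|+|x-y|)}$.

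It then remains to perform the geometric sum: from $|y|+|x-y|\ge|x|$ one has $\tfrac c2(|y|+|x-y|)\ge\tfrac c4|x|+\tfrac c4|y|$, so
\[
  \sum_{y\in\Lambda_N}Ce^{-\frac c2(|y|+|x-y|)}
  \le Ce^{-\frac c4|x|}\sum_{y\in\Lambda_N}e^{-\frac c4|y|}
  \le C'e^{-\frac c4|x|},
\]
using that $\sum_{y\in\Lambda_N}e^{-\frac c4|y|}$ is bounded uniformly in $N$ by the corresponding sum over $\Z^d$. Combining the two terms yields $\P^{\Lambda_N}_{\beta,h}[0\conn x]\le C''e^{-c''|x|}$ with $c''=c/4$. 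The only step that is not pure bookkeeping is the middle one: $\{0\conn x\}$ is genuinely larger than the union of the two events estimated in Lemma~\ref{lem:DecayH}, the difference being the possibility that $0$ connects to $\ghost$ through a far-away vertex $y$ of its tree, and the (mild) obstacle will be to verify that summing the Lemma's bound over the position of $y$ still leaves exponential decay — which is exactly the elementary convolution estimate above.
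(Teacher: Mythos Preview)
Your proof is correct and follows the same overall scheme as the paper: decompose according to whether $0\conn\ghost$, handle $0\nconn\ghost$ directly by Lemma~\ref{lem:DecayH}, and write the $0\conn\ghost$ case as a sum over the location $y$ of the unique ghost-edge in $T_0$.

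The only difference is in how the sum over $y$ is controlled. The paper first uses exchangeability of the root (since $h$ is constant, any vertex of $T_0$ is equally likely to carry the ghost-edge) to replace $\sum_y\P[0\conn x,\,0\conn y,\,y\ghost]$ by $\sum_y\P[0\conn x,\,0\conn y,\,0\ghost]$, and then splits the sum into $|y|\le|x|$ (drop $0\conn y$, pick up a polynomial factor $|x|^d$) and $|y|>|x|$ (drop $0\conn x$). You instead use translation invariance to reduce directly to Lemma~\ref{lem:DecayH}, bound each summand by the geometric mean of the two available estimates, and sum via the triangle inequality. Your route avoids the exchangeability observation and the polynomial prefactor, at the cost of losing a factor of $4$ in the exponent; both are equally short.
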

\begin{proof}
  Since
  \begin{equation}
    \P^{\Lambda_{N}}_{\beta,h}[0\conn x] =
    \P^{\Lambda_{N}}_{\beta,h}[0\conn x, 0\conn\ghost] +
    \P^{\Lambda_{N}}_{\beta,h}[0\conn x, 0\nconn\ghost],
  \end{equation}
  it is enough to estimate the first term, as the second is covered by
  Lemma~\ref{lem:DecayH}. Note
 \begin{align}
    \P^{\Lambda_{N}}_{\beta,h}[0\conn x, 0\conn\ghost] 
    =
      \sum_{y} \P^{\Lambda_{N}}_{\beta,h} [1_{0\conn x}1_{0\conn y}1_{y\ghost}]    = \sum_{y} \P^{\Lambda_{N}}_{\beta,h} [1_{0\conn x}1_{0\conn y}1_{0\ghost}]. 
  \end{align}
  where the first equality follows from the fact that the only one vertex per component may connect to $\mathfrak g$, and the second follows from exchangeability of the choice of root.
  Examining the rightmost expression, there are most $c_{d} |x |^{d}$ summands in which $|y |\leq  |x|$;
  for these terms we drop the condition $0\conn y$. For the rest we
  drop $0\conn x$. This gives,  by Lemma~\ref{lem:DecayH},
  \begin{equation}
    \P^{\Lambda_{N}}_{\beta,h}[0\conn x, 0\conn\ghost] 
    \leq C |x|^{d}e^{-c |x |} + \sum_{ |y |> |x |} Ce^{-c |y |} \leq
    Ce^{-c |x |},
  \end{equation}
  where $c,C$ are changing from location to location but depend on
  $d,\beta,h$ only.
\end{proof}

\subsection{Infinite volume limit}
\label{app:infvol}

We now discuss weak limits $\P^{\Z^d}_{\beta}$ obtained
by (i) first taking a (possibly subsequential) infinite-volume weak
limit $\P^{\Z^d}_{\beta,h} = \lim_{N}\P^{\Lambda_N}_{\beta,h}$ and
(ii) subsequently taking a (possibly subsequential) limit
$\P^{\Z^d}_{\beta}=\lim_{h\downarrow 0 }\P^{\Z^d}_{\beta,h}$. We
do not explicitly indicate the convergent subsequence chosen as what
follows applies to any fixed choice.
Define 
\begin{equation}
  \label{eq:thetaNh}
  \theta_{d,N}(\beta,h) \bydef
  \P^{\Lambda_N}_{\beta,h}[0\conn\ghost]=1-h^{-1}\P^{\Lambda_N}_{\beta,h}[0\ghost]
\end{equation}
where the second equality is due to \eqref{e:z0edge}.
Since this last display only involves cylinder events,
\begin{equation}
  \lim_{N\to\infty}\theta_{d,N}(\beta,h) = 1-h^{-1}\P^{\Z^d}_{\beta,h}[0\ghost] \bydef \theta_{d}(\beta,h),
\end{equation}
where the last equality defines $\theta_{d}(\beta,h)$.

\begin{proposition}
  \label{prop:theta}
  Assume $\lim_{h\downarrow 0}\theta_{d}(\beta,h)=\theta_{d}(\beta)$
  exists. Then
  \begin{equation}
    \label{eq:theta}
    \P^{\Z^d}_{\beta}[|T_{0}|=\infty] = \theta_{d}(\beta).
  \end{equation}
\end{proposition}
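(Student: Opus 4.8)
The plan is to work from the explicit formula for $\theta_{d,N}(\beta,h)$ in terms of the random tree size $|T_0|$, pass to the limit along clopen cylinder events, and sandwich $\theta_d(\beta)$ between truncated versions of $\P^{\Z^d}_\beta[|T_0|=\infty]$. As explained in the discussion preceding \eqref{eq:order}, conditionally on the forest the tree $T_0$ is joined to $\ghost$ with probability $h|T_0|/(1+h|T_0|)$, so by \eqref{eq:thetaNh}
\[
  \theta_{d,N}(\beta,h) \;=\; \P^{\Lambda_N}_{\beta,h}[0\conn\ghost] \;=\; \E^{\Lambda_N}_{\beta,h}\Bigl[\frac{h|T_0|}{1+h|T_0|}\Bigr].
\]
First I would record the topological fact underlying the limit interchanges: for each $M$ the event $\{|T_0|<M\}$ is a \emph{finite} union of events $\{T_0=S\}$ over connected $S\ni 0$ with $|S|<M$, and each $\{T_0=S\}$ depends only on the finitely many edges incident to $S$ and is therefore clopen in the product topology; hence $\{|T_0|\geq M\}$ is clopen, i.e.\ a continuity set for every weak limit. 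Consequently $\P^{\Lambda_N}_{\beta,h}[|T_0|\geq M]\to\P^{\Z^d}_{\beta,h}[|T_0|\geq M]$ as $N\to\infty$, then $\to\P^{\Z^d}_\beta[|T_0|\geq M]$ as $h\downarrow 0$, along the chosen subsequences, while $\P^{\Z^d}_\beta[|T_0|=\infty]=\inf_M\P^{\Z^d}_\beta[|T_0|\geq M]$.

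The easy direction is the upper bound. Since $\frac{h|T_0|}{1+h|T_0|}\leq \1_{|T_0|\geq M}+h|T_0|\1_{|T_0|<M}\leq \1_{|T_0|\geq M}+hM$, applying $\E^{\Lambda_N}_{\beta,h}$ and then sending $N\to\infty$, $h\downarrow 0$ (with $M$ fixed), and finally $M\to\infty$ gives $\theta_d(\beta)\leq\P^{\Z^d}_\beta[|T_0|=\infty]$.

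The hard part will be the reverse inequality $\theta_d(\beta)\geq\P^{\Z^d}_\beta[|T_0|=\infty]$. The parallel bound $\frac{h|T_0|}{1+h|T_0|}\geq\frac{hM}{1+hM}\1_{|T_0|\geq M}$ only yields $\theta_{d,N}(\beta,h)\geq\frac{hM}{1+hM}\P^{\Lambda_N}_{\beta,h}[|T_0|\geq M]$, and to raise the prefactor to $1$ one must take $M=M_h\to\infty$ with $hM_h\to\infty$; the threshold then escapes the fixed clopen events and the sandwich no longer closes directly. What is genuinely needed is a quantitative ``no intermediate clusters'' estimate: under $\P^{\Z^d}_{\beta,h}$ the cluster of $0$ should be, with high probability, either of bounded size or of size $\gtrsim \epsilon/h$, so that $\liminf_{h\downarrow 0}\P^{\Z^d}_{\beta,h}[|T_0|\geq M_h]\geq\P^{\Z^d}_\beta[|T_0|=\infty]$ for a suitable $M_h$ with $hM_h\to\infty$. (Equivalently, if a monotone coupling of $(\P^{\Z^d}_{\beta,h})_{h>0}$ with $\P^{\Z^d}_\beta$ were available, one would want, via Fatou, that $h|T_0|\to\infty$ as $h\downarrow 0$ almost surely on the event that the limiting tree of $0$ is infinite.) This estimate --- the qualitative counterpart of the statement, discussed in Section~\ref{sec:conn-open-direct}, that in the supercritical phase large finite clusters have size of order $1/h$ --- is the main obstacle; proving it (e.g.\ by a direct energy/entropy comparison or a stochastic-domination argument) together with the easy upper bound would give $\theta_d(\beta)=\P^{\Z^d}_\beta[|T_0|=\infty]$.
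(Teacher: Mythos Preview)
Your upper bound $\theta_d(\beta)\leq\P^{\Z^d}_\beta[|T_0|=\infty]$ is correct, but the proposal is not a complete proof: for the reverse inequality you reduce to a ``no intermediate clusters'' estimate that you do not establish, and you yourself call it the main obstacle.

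The paper organises the argument differently. Rather than working with the connection event $\{0\conn\ghost\}$, it uses the single-\emph{edge} event $\{0\ghost\}$ and the identity $\theta_d(\beta,h)=1-h^{-1}\P^{\Z^d}_{\beta,h}[0\ghost]$ from \eqref{eq:thetaNh}. Conditionally on $|T_0|=n$ the edge $0\ghost$ is present with probability $h/(1+nh)$, which \emph{decreases} in $n$; hence $\P^{\Lambda_N}_{\beta,h}[|T_0|\geq r,\,0\ghost]\leq h/(1+rh)$ uniformly in $N$, and sending $N\to\infty$ and then $r\to\infty$ gives $\P^{\Z^d}_{\beta,h}[|T_0|=\infty,\,0\ghost]=0$. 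This produces the exact infinite-volume identity
\[
  \theta_d(\beta,h)\;=\;1-\sum_{n\geq 1}\P^{\Z^d}_{\beta,h}[|T_0|=n]\,\frac{1}{1+nh},
\]
after which the paper lets $h\downarrow 0$ ``by dominated convergence'' to conclude $1-\theta_d(\beta)=\sum_n\P^{\Z^d}_\beta[|T_0|=n]=\P^{\Z^d}_\beta[|T_0|<\infty]$. Note that Fatou applied to this sum already recovers your easy inequality; the opposite inequality --- precisely the one you flagged --- is where the dominated convergence is invoked, and the paper does not make the dominating function explicit. So the edge-event trick yields the infinite-volume identity cleanly, but the final $h\downarrow 0$ interchange is treated tersely and is essentially the same analytic step you isolated.
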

\begin{proof}
  Write $\P_{\beta,h}=\P^{\Z^d}_{\beta,h}$. We claim that
  \begin{equation}
    \P_{\beta,h}[0\ghost] = \sum_{n\geq 1} \P_{\beta,h}[|T_{0}|=n] \frac{h}{1+nh},
  \end{equation}
  and hence, since $\theta_{d}(\beta,h) = 1-h^{-1}\P_{\beta,h}[0\ghost]$,
  \begin{equation}
    \theta_{d}(\beta,h) = 1-\sum_{n\geq 1} \P_{\beta,h}[|T_{0}|=n] \frac{1}{1+nh}.
  \end{equation}
  Granting the claim, by dominated convergence we obtain
  \begin{equation}
    \P_{\beta,0}[|T_{0}|<\infty] = \sum_{n\geq 1}\P_{\beta,0}[|T_{0}|=n]=1-\theta_{d}(\beta),
  \end{equation}
  as desired. To prove the claim, rewrite it as
  \begin{equation}
    \P_{\beta,h}[|T_{0}|=\infty, 0\ghost]=\lim_{r\to\infty}\P_{\beta,h}[|T_{0}|\geq r, 0\ghost]=0.
  \end{equation}
  The probabilities inside the limit are probabilities of cylinder
  events, and hence are limits of finite volume probabilities. For a
  fixed $r$ the probability is at most $h/(1+rh)$ in finite volume,
  which vanishes as $r\to\infty$.
\end{proof}

\section{Finite range decomposition}
\label{app:decomp}

In this appendix, we give the precise references for the construction of the finite range decomposition~\eqref{e:C-def}.
The general method we use was introduced in \cite{MR3129804}, and presented in the special case we use in \cite[Chapter~3]{MR3969983}
and we will use this reference.
For $t>0$, first recall the polynomials $P_t$ from \cite[Chapter~3]{MR3969983} (these polynomials are called $W_t^*$ in \cite{MR3129804}).
These are polynomials of degree bounded by $t$ satisfying
\begin{equation} \label{e:decomp-Pt}
  \frac{1}{\lambda} = \int_{0}^\infty t^2 P_t(\lambda) \, \frac{dt}{t},
  \qquad
  0 \leq P_t(u) \leq O_s(1+t^2u)^{-s}
\end{equation}
for any $s>0$ and $u \in [0,2]$.
Our decomposition \eqref{e:C-def} is defined by
\begin{align}
  C_1(x,y) &= \frac{1}{(2d+m^2)|\Lambda_N|}\sum_{k \in \Lambda_N^*}  e^{ik\cdot(x-y)} \int_{0}^{\frac12 L} t^2 P_t(\frac{\lambda(k)+m^2}{2d+m^2}) \, \frac{dt}{t} 
  \\
  C_j(x,y) &= \frac{1}{(2d+m^2)|\Lambda_N|}\sum_{k \in \Lambda_N^*}  e^{ik\cdot(x-y)} \int_{\frac12 L^{j-1}}^{\frac12 L^j} t^2 P_t(\frac{\lambda(k)+m^2}{2d+m^2}) \, \frac{dt}{t}
  \\
  C_{N,N}(x,y) &= \frac{1}{(2d+m^2)|\Lambda_N|}\sum_{k \in \Lambda_N^*} e^{ik\cdot(x-y)} \int_{\frac12 L^N}^\infty t^2 P_t(\frac{\lambda(k)+m^2}{2d+m^2}) \, \frac{dt}{t},
\end{align}
where $\lambda(k) = 4\sum_{j=1}^{d}\sin^{2}(k_{j}/2)$ and
$\Lambda_N^* \subset [-\pi,\pi)^d$ is the dual torus.  The estimates
for $C_1, \dots, C_{N-1}$ are straightforward from these Fourier
representations and can be found in
\cite[Chapter~3]{MR3969983}. We remark that in
\cite[Section~3.4]{MR3969983}, the torus covariances are defined by
periodisation of the finite range covariances on $\Z^d$; by Poisson
summation this is equivalent to the above definition.

The decomposition of $C_{N,N}$ in~\eqref{e:CNN-def} is defined by removing
the zero mode from $C_{N,N}$:
\begin{align}
  C_{N}(x,y) &= \frac{1}{(2d+m^2)|\Lambda_N|}\sum_{k \in \Lambda_N^*:k \neq 0} e^{ik\cdot(x-y)} \int_{\frac12 L^N}^\infty t^2 P_t(\frac{\lambda(k)+m^2}{2d+m^2}) \, \frac{dt}{t}
  \\
  t_N &= \frac{1}{2d+m^2} \int_{\frac12 L^N}^\infty t^2 P_t(\frac{m^2}{2d+m^2}) \, \frac{dt}{t},
\end{align}
from which \eqref{e:CNN-def} is immediate.
For $C_{N}$ estimates follows as in~\cite[Chapter~3]{MR3969983}:
\begin{align}
  |C_N(x,y)| &\leq \frac{1}{|\Lambda_N|} \sum_{k \in \Lambda_N^*: k \neq 0} \pa{\int_{\frac12 L^N}^\infty t^2 P_t(\frac{\lambda(k)+m^2}{2d+m^2}) \, \frac{dt}{t}}
    \nnb
  &\lesssim \frac{1}{|\Lambda_N|} \sum_{k \in \Lambda_N^*: k \neq 0} \pa{\int_{\frac12 L^N}^\infty t^{2}t^{-2s}|k|^{-2s} \, \frac{dt}{t}} 
  \nnb
  &\lesssim
    \frac{L^{2N}}{|\Lambda_N|} \sum_{k \in \Lambda_N^*: k \neq 0}L^{-2sN}|k|^{-2s}
    \lesssim L^{-(d-2)N} \int_1^\infty r^{-2s+d-1} dr \lesssim L^{-(d-2)N}
\end{align}
and analogously for the discrete gradients.
Finally, by \eqref{e:decomp-Pt}, 
\begin{align}
  t_N
  &= \frac{1}{(2d+m^2)} \int_{\frac12 L^N}^\infty t^2P_t(\frac{m^2}{2d+m^2}) \, \frac{dt}{t}
    \nnb
  &= \frac{1}{m^2}-
    \frac{1}{(2d+m^2)} \int_0^{\frac12 L^N} t^2P_t(\frac{m^2}{2d+m^2}) \, \frac{dt}{t}
      =\frac{1}{m^2} - O(L^{2N}).
\end{align}

\section*{Acknowledgements}

We thank David Brydges and Gordon Slade. This article would not have been possible in this form without their previous contributions
to the renormalisation group method.
We also thank them for their permission to include
Figure~\ref{fig:blocks} from \cite{MR3969983}. We thank the referees for their helpful comments.

R.B.\ was supported by the European Research Council under the European Union's Horizon 2020 research and innovation programme
(grant agreement No.~851682 SPINRG).
N.C.\ was supported by Israel Science Foundation grant number 1692/17.

\bibliographystyle{plain}
\bibliography{all}

\end{document}